\newtheorem{thm}{Theorem}[section]
\newtheorem{lemma}[thm]{Lemma}
\newtheorem{fact}[thm]{Fact}
\newtheorem{cor}[thm]{Corollary}
\newtheorem{prop}[thm]{Proposition}
\newtheorem{conj}[thm]{Conjecture}
\newtheorem{obs}[thm]{Observation}
\theoremstyle{definition}
\newtheorem{defn}[thm]{Definition}
\newtheorem{quest}[thm]{Question}
\newtheorem*{claim}{Claim}
\newtheorem{claimGnp}{Claim}
\newcommand{\Aut}{\mathrm{Aut}}
\newcommand{\Find}{\mathcal{F}^{\mathrm{ind}}}
\newcommand{\Fn}{\Find_n}
\newcommand{\Fnm}{\Find_{n,m}}
\newcommand{\FnmCf}{\Fnm(C_4)}
\newcommand{\Gnpind}{G _{n,p}^{\mathrm{ind}}}
\newcommand{\ellnm}{\ell_{n,m}}
\newcommand{\Nnm}{N_{n,m}}
\newcommand{\Snm}{\mathcal{S}_{n,m}}
\newcommand{\Ex}{\mathbb{E}}
\renewcommand{\Pr}{\mathbb{P}}
\newcommand{\scolon}{:}
\newcommand\cP{\mathcal{P}}
\newcommand{\Pnm}{\cP_{n,m}}
\def\Q{\mathcal{Q}}
\def\Bin{\textup{Bin}}
\newcommand{\indicator}{\mathbbm{1}}
\newcommand{\eps}{\varepsilon}
\newcommand{\ex}{\mathrm{ex}}
\newcommand{\cA}{\mathcal{A}}
\newcommand{\CC}{\mathcal{C}}
\newcommand{\FF}{\mathcal{F}}
\newcommand{\GG}{\mathcal{G}}
\newcommand{\HH}{\mathcal{H}}
\newcommand{\II}{\mathcal{I}}
\newcommand{\N}{\mathbb{N}}
\newcommand{\cS}{\mathcal{S}}
\newcommand{\T}{\mathcal{T}}
\newcommand{\UU}{\mathcal{U}}
\newcommand{\GGn}{\GG_*}
\newcommand{\NO}{\texttt{NO}}
\newcommand{\YES}{\texttt{YES}}
\newcommand{\STOP}{\texttt{STOP}}
\newcommand{\Delf}[4]{\Delta_{(\ell_0{#1}, \ell_1{#2})}^{(i_0{#3},i_1{#4})}}
\newcommand{\Delp}{\Delta_{(\ell_0',\ell_1')}^{(i_0',i_1')}}
\newcommand{\Delnp}{\Delta_{(\ell_0,\ell_1)}^{(i_0',i_1')}}
\newcommand{\Del}{{\Delf{}{}{}{}}}
\newcommand{\MDel}{M^{(i_0,i_1)}_{(\ell_0,\ell_1)}}
\newcommand{\MDelp}{M^{(i_0',i_1')}_{(\ell_0,\ell_1)}}
\renewcommand{\le}{\leqslant}
\renewcommand{\ge}{\geqslant}
\title{An asymmetric container lemma and \\ the structure of graphs with no induced $4$-cycle}
\author{Robert Morris}
\address{IMPA, Estrada Dona Castorina 110, Jardim Bot\^anico, Rio de Janeiro, RJ, Brasil}
\email{rob@impa.br}
\author{Wojciech Samotij}
\address{School of Mathematical Sciences, Tel Aviv University, Tel Aviv 6997801, Israel}
\email{samotij@post.tau.ac.il}
\author{David Saxton}
\address{DeepMind, London, UK}
\email{saxton@google.com}
\thanks{Research supported in part by CNPq (Proc.~303275/2013-8) and FAPERJ (Proc.~201.598/2014) (RM) and by ISF grant 1147/14 (WS)}
\begin{document}

\begin{abstract}
  The method of hypergraph containers, introduced recently by Balogh, Morris, and Samotij, and independently by Saxton and Thomason, has proved to be an extremely useful tool in the study of various monotone graph properties. In particular, a fairly straightforward application of this technique allows one to locate, for each non-bipartite graph $H$, the threshold at which the distribution of edges in a typical $H$-free graph with a given number of edges undergoes a transition from `random-like' to `structured'. On the other hand, for non-monotone hereditary graph properties the standard version of this method does not allow one to establish even the existence of such a threshold.

  In this paper we introduce a refinement of the container method that takes into account the asymmetry between edges and non-edges in a sparse member of a hereditary graph property. As an application, we determine the approximate structure of a typical graph with $n$ vertices, $m$ edges, and no induced copy of the $4$-cycle, for each function $m = m(n)$ satisfying $n^{4/3} (\log n)^4 \le m \ll n^2$. We show that almost all such graphs $G$  have the following property: the vertex set of $G$ can be partitioned into an `almost-independent' set (a set with $o(m)$ edges) and an `almost-clique' (a set inducing a subgraph with density $1-o(1)$). The lower bound on $m$ is optimal up to a polylogarithmic factor, as standard arguments show that if $n \ll m \ll n^{4/3}$, then almost all such graphs are `random-like'. As a further consequence, we deduce that the random graph $G(n,p)$ conditioned to contain no induced $4$-cycles undergoes phase transitions at $p = n^{-2/3 + o(1)}$ and $p = n^{-1/3 + o(1)}$. 
\end{abstract}

\maketitle

\section{Introduction}
\label{sec:intro}

Two of the central objects of study in combinatorics are the family of \emph{$H$-free} graphs, that is, the collection of graphs that do not contain $H$ as a subgraph, and the family of \emph{induced-$H$-free} graphs, that is, graphs without an induced subgraph isomorphic to $H$. An extremely well-studied problem (see,~e.g.,~\cite{FuSi13} and references therein) is to determine the largest number of edges in an $H$-free graph with a given number of vertices. This line of research dates back to the seminal works of Tur\'an~\cite{Tu41} and of Erd\H{o}s and Stone~\cite{ErSt46}, which are considered to be the cornerstones of the field of extremal graph theory. 

Another natural and well-studied problem, which also makes sense in the setting of induced-$H$-free graphs, can be informally phrased as follows:\smallskip
\begin{center}
  What does a typical $H$-free (or induced-$H$-free) graph look like?\smallskip
\end{center}
The first to address this problem were Erd\H{o}s, Kleitman, and Rothschild~\cite{ErKlRo76}, who proved that almost all triangle-free graphs are bipartite. That is, the proportion of triangle-free graphs on a given set of $n$ vertices that are bipartite (among all triangle-free graphs) tends to one as $n$ tends to infinity. This result was generalised by Kolaitis, Pr\"omel, and Rothschild~\cite{KoPrRo87}, who showed that for every $r \ge 2$, almost all $K_{r+1}$-free graphs are $r$-partite, and later by Pr\"omel and Steger~\cite{PrSt92}, who showed that the same remains true if one replaces $K_{r+1}$ with any $(r+1)$-colourable edge-critical\footnote{A graph $H$ is \emph{edge-critical} if it contains an edge $e$ such that $\chi(H \setminus e) < \chi(H)$.} graph. Further results in this direction were obtained by Hundack, Pr\"omel, and Steger~\cite{HuPrSt93} and by Balogh, Bollob\'as, and Simonovits~\cite{BaBoSi04,BaBoSi09,BaBoSi11}. Since the problem of describing the typical structure of an $H$-free graph is essentially a counting problem in disguise, we should also mention here the closely-related work of Erd\H{o}s, Frankl, and R\"odl~\cite{ErFrRo86}, who estimated the number of $H$-free graphs for every non-bipartite $H$, observing a close connection between this counting problem and the extremal question mentioned above.

The problem of understanding the typical structure of induced-$H$-free graphs seems to be significantly harder and, as a result, much less is known. The pioneers of this line of research were Pr\"omel and Steger, who described the typical structure of induced-$C_4$-free graphs~\cite{PrSt91} and induced-$C_5$-free graphs~\cite{PrStBerge}. They also proved an analogue of the Erd\H{o}s--Frankl--R\"odl theorem for induced-$H$-free graphs~\cite{PrStIII} after finding the correct generalisation of the extremal question in this setting~\cite{PrStII}, which involves the notion of a \emph{colouring number}. (This notion was later extended to the more general context of hereditary graph properties by Alekseev~\cite{Al92} and by Bollob\'as and Thomason~\cite{BoTh97}.) Much later, Alon, Balogh, Bollob\'as, and Morris~\cite{AlBaBoMo11} gave a rough structural description of a typical induced-$H$-free graph for an arbitrary $H$ (in fact, their result applies to all hereditary properties of graphs). Soon afterwards, Balogh and Butterfield~\cite{BaBu11} gave a precise structural description of a typical induced-$H$-free graph for all $H$ that are critical\footnote{The definition of criticality in the context of induced-$H$-free graphs is rather complicated, so we will only note here that it is a natural analogue of the notion of edge-criticality and refer the interested reader to~\cite{BaBu11} for the details.}. Finally, let us mention two recent works of Kim, K\"uhn, Osthus, and Townsend~\cite{KiKuOsTo} and of Keevash and Lochet~\cite{KeLo} on the typical structure of induced-$C_{2\ell}$-free graphs and induced-$(K_{a+b} \setminus K_a)$-free graphs, respectively.

Even though many of the theorems above describe very precisely the structure of a typical $H$-free  (or induced-$H$-free) graph, they say nothing about sparse $H$-free graphs. This is because the number of $H$-free graphs with $n$ vertices is in each case much greater than the number of all $n$-vertex graphs with $o(n^2)$ edges; for example, there are more than $2^{n^2/4}$ bipartite (and hence $H$-free for any non-bipartite $H$) graphs with $n$ vertices. This fact naturally leads one to consider the following refined question: 

\begin{quest} \label{quest:refined}
  Given a graph $H$ and a function $m = m(n)$, what does a typical $H$-free (or induced-$H$-free) graph with $n$ vertices and $m$ edges look like?
\end{quest}

The first to address this question were Pr\"omel and Steger~\cite{PrSt96}, who proved that almost every triangle-free graph with $n$ vertices and $m$ edges is bipartite whenever $m \ge Cn^{7/4}\log n$ and it is not bipartite if $n \ll m \ll n^{3/2}$. A few years later, {\L}uczak~\cite{Lu00} showed that this latter bound is (in some sense) sharp, by proving that if $m \gg n^{3/2}$, then almost every triangle-free graph with $n$ vertices and $m$ edges can be made bipartite by removing from it only $o(m)$ edges. More generally, it is not very hard to verify that if $n \ll m \ll n^{2-1/m_2(H)}$, where 
\[
  m_2(H) = \max\left\{ \frac{e(F)-1}{v(F)-2} \scolon F \subseteq H \text{ and } e(F) \ge 2 \right\}
\]
is the so-called \emph{$2$-density} of $H$, then almost all $H$-free graphs with $n$ vertices and $m$ edges are \emph{quasirandom}, in the sense that all sets of vertices of size $\Omega(n)$  induce subgraphs of (asymptotically) the same density. {\L}uczak~\cite{Lu00} proved that, for every non-bipartite $H$, if a certain probabilistic version of the embedding lemma for regular partitions of sparse graphs (conjectured a few years earlier by Kohayakawa, {\L}uczak, and R\"odl~\cite{KoLuRo97}) holds, then above the threshold, if $m \gg n^{2-1/m_2(H)}$, almost every $H$-free graph with $n$ vertices and $m$ edges can be made $(\chi(H)-1)$-partite by removing only $o(m)$ edges.
 
The existence of this phase transition was confirmed several years ago by Balogh, Morris, and Samotij~\cite{BaMoSa15} and by Saxton and Thomason~\cite{SaTh15}, using (what is now known as) the \emph{method of hypergraph containers}. This method (see Section~\ref{sec:asymmetric-container-lemma} or the recent survey~\cite{BaMoSa-survey}) allows one to prove the conjecture of Kohayakawa, {\L}uczak, and R\"odl mentioned above, but also provides a more direct way of determining the rough structural description of a typical $H$-free graph above the $2$-density threshold $n^{2-1/m_2(H)}$. We should also mention here the earlier works of Conlon and Gowers~\cite{CoGo16} and Schacht~\cite{Sc16} on the closely related problem of determining the size and structure of the largest $H$-free subgraph of a random graph, since these breakthroughs had a significant impact on~\cite{BaMoSa15,SaTh15}.

The exact analogue of the Erd\H{o}s--Kleitman--Rothschild theorem in the setting of sparse graphs was obtained by Osthus, Pr\"omel, and Taraz~\cite{OsPrTa03}, who proved that in fact $m = \frac{\sqrt{3}}{4} n^{3/2}(\log n)^{1/2}$ is a sharp threshold at which a typical triangle-free graph with $n$ vertices and $m$ edges becomes bipartite. A generalization of this result from triangle-free to $K_{r+1}$-free graphs, the sparse analogue of the Kolaitis--Pr\"omel--Rothschild theorem, was obtained recently by Balogh, Morris, Samotij, and Warnke~\cite{BaMoSaWa16}. The exact analogue of Tur\'an's theorem in $G(n,p)$, which sharpens the results of Conlon--Gowers and Schacht in the case $H = K_{r+1}$, was obtained by DeMarco and Kahn~\cite{DeKa, DeKa15}.

Despite the significant developments described above on the problem of determining the typical structure of a sparse $H$-free graph, there has been (as far as we are aware) essentially no progress on the corresponding problem for induced-$H$-free graphs. One reason for this is that, in contrast to the case of $H$-free graphs, the hypergraph container method does not (in general) provide the correct threshold for the appearance of structure in a typical induced-$H$-free graph. From the point of view of the container theorems, an induced-$H$-free graph is a two-edge-coloured graph that does not contain a (two-edge-coloured) clique with the same number of vertices. Since the container method does not take into account the asymmetry between the two colours, it cannot distinguish between an induced copy of $H$ and a clique. 

In this paper we introduce a new `asymmetric' version of the method of hypergraph containers that can distinguish between these two settings and provides the correct threshold for the emergence of structure in typical induced-$H$-free graphs (at least for non-bipartite graphs $H$, see Theorem~\ref{thm:KaSa}). As an illustrative example, we use it to determine the structure of a~typical induced-$C_4$-free graph with $n$ vertices and $m$ edges whenever $n^{4/3} (\log n)^4 \le m \ll n^2$. The lower bound on $m$ is best possible up to a polylogarithmic factor, as we shall also show that if $n \ll m \ll n^{4/3} (\log n)^{1/3}$, then a typical such graph does not exhibit a similar structure, and if $n \ll m \ll n^{4/3}$, then it is actually quasirandom (in the precise sense described above). We expect that the ideas contained in this work will allow analogous thresholds to be determined for families of graphs containing no induced copy of an arbitrary graph $H$, see Section~\ref{sec:open:problems}. 

\subsection{The structure of graphs with no induced $4$-cycle}

Given a graph $H$ and $n \in \N$, let $\Fn(H)$ denote the family of all graphs with vertex set $\{1, \dotsc, n\}$ that contain no induced copy of $H$ and let $\Fnm(H)$ denote the family of graphs in $\Fn(H)$ with precisely $m$ edges. A \emph{split graph} is a graph whose vertex set can be partitioned into a clique and an independent set. It is easy to check that a split graph cannot contain an induced copy of $C_4$; indeed, the property of being a split graph is hereditary and $C_4$ itself is not a split graph. Conversely, as mentioned above, it was proved by Pr\"omel and Steger~\cite{PrSt91} over 25 years ago that almost all graphs in $\Fn(C_4)$ are split graphs. However, since almost all $n$-vertex split graphs admit a partition into a clique and an independent set of roughly equal sizes and have approximately $n^2/4$ edges, this result says nothing about a typical member of $\FnmCf$ when $m$ is not approximately $n^2/4$. It is worth mentioning that Gishboliner and Shapira~\cite{GiSh} recently described the structure of \emph{all} induced-$C_4$-free graphs; their description is much coarser, however.

We will prove that if $n^{4/3}(\log n)^4 \le m \ll n^2$, then a typical member of $\FnmCf$ is `almost' a split graph. We will write a.a.s.\ (shorthand for asymptotically almost surely) as an abbreviation of ``with probability tending to $1$ as $n \to \infty$'' and say that a graph $G$ with $n$ vertices and $p\binom{n}{2}$ edges is \emph{$\eps$-quasirandom} if every subset of more than $\eps n$ vertices of $G$ induces a subgraph with density between $(1-\eps)p$ and $(1+\eps)p$. We will say that a graph $G$ is \emph{$\eps$-close to a split graph} if there exists a partition $V(G) = A \cup B$ such that $e_G(A) \ge (1-\eps)\binom{|A|}{2}$ and $e_G(B) \le \eps e(G)$. Our first main result is the following structural description of a typical graph in $\FnmCf$.

\begin{thm}
  \label{thm:asymC4free}
  For every $\eps > 0$, there exists $\delta > 0$ such that the following holds. Let $G$ be a uniformly chosen random graph in $\FnmCf$. 
  \begin{enumerate}[label={(\alph*)}]
  \item
    \label{item:C4-free-quasirandom}
    If $n \ll m \le \delta n^{4/3}$, then a.a.s.\ $G$ is $\eps$-quasirandom.\smallskip
  \item
    \label{item:C4-free-non-split}
    If $n \ll m \le \delta n^{4/3} (\log n)^{1/3}$, then a.a.s.\ $G$ is not $1/4$-close to a split graph.\smallskip
  \item
    \label{item:C4-free-almost-split}
    If $n^{4/3} (\log n)^4 \le m \le \delta n^2$, then a.a.s.\ $G$ is $\eps$-close to a split graph.
  \end{enumerate}
\end{thm}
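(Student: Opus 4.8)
Our plan is to deduce part~\ref{item:C4-free-almost-split} from the asymmetric container lemma of Section~\ref{sec:asymmetric-container-lemma}, and to obtain parts~\ref{item:C4-free-quasirandom} and~\ref{item:C4-free-non-split} by direct counting. Throughout we regard a graph $G$ on $[n]$ as a two-colouring of $\binom{[n]}{2}$, each pair being an \emph{edge} or a \emph{non-edge} of $G$; then $G\in\FnmCf$ precisely when this colouring contains no copy of the $6$-element pattern obtained by colouring the four sides of a $4$-cycle \emph{edge} and its two diagonals \emph{non-edge}. The whole point of the asymmetric lemma is that this pattern is lopsided --- four edge-slots, only two non-edge-slots --- so that in a sparse member of $\FnmCf$, where edges are scarce and non-edges abundant, revealing an edge and revealing a non-edge should be charged to the container at very different rates; the ordinary container method, charging both equally, cannot tell an induced $C_4$ from a clique.

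For part~\ref{item:C4-free-almost-split} the plan is as follows. After verifying the asymmetric degree and codegree estimates for the hypergraph encoding the forbidden pattern --- roughly, that in a graph with $m$ edges each edge and each non-edge lies in relatively few induced $4$-cycles, compared with the $\Omega(m^4/n^4)$ present on average, with pairs of slots rarer still --- one applies the asymmetric container lemma to produce a family $\CC$ of containers with $\log|\CC|=o(m)$ such that every $G\in\FnmCf$ is trapped in some $C\in\CC$, where each $C$ has \emph{split-like} shape: it is specified by a set $A\subseteq[n]$ and an exceptional set $D$ of $o(m)$ pairs, its free pairs being those inside $A$, those crossing between $A$ and $[n]\setminus A$, and those in $D$, while every other pair of $[n]\setminus A$ is forced to be a non-edge. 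That the lemma can be made to output containers of this shape, rather than arbitrary ones, is the crux of the new method; and it is here that the hypothesis $m\ge n^{4/3}(\log n)^4$ enters, being the point at which the $\Omega(m^4/n^4)$ copies of the pattern overcome the polylogarithmic overhead of the container iteration (the exponent $4$ of $\log n$ tracing back to the four edge-slots of the pattern).

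One then classifies, for a fixed split-like container $C$, the graphs $G\in\FnmCf$ with $e(G)=m$ that it traps. If $A$ is $(1-\eps)$-complete in $G$, i.e.\ $e_G(A)\ge(1-\eps)\binom{|A|}{2}$, then --- since $G$ has only $o(m)$ edges inside $[n]\setminus A$ --- the partition $[n]=A\cup([n]\setminus A)$ already shows that $G$ is $\eps$-close to a split graph. Otherwise $A$ contains at least $\eps\binom{|A|}{2}$ non-edges of $G$, and for any such non-edge $\{u,v\}$ the vertices $u,v$ have at most one common neighbour in $[n]\setminus A$ (two such neighbours would complete an induced $C_4$), so the bipartite graph between $A$ and $[n]\setminus A$ behaves like a $C_4$-free graph; one then argues that the number of ways to build such a $G$ is, summed over all $C\in\CC$ (recall $|\CC|=\exp(o(m))$), only a vanishing fraction of the number of $\eps$-close-to-split graphs with $n$ vertices and $m$ edges --- the latter being comfortably bounded below, e.g.\ by planting a clique on a set $A$ with $\binom{|A|}{2}=o(m)$ and distributing the remaining $(1-o(1))m$ edges freely between $A$ and $[n]\setminus A$. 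A union bound over $\CC$ then completes part~\ref{item:C4-free-almost-split}. I expect the main obstacle to be exactly this structural output of the asymmetric lemma --- extracting $A$ and the $o(m)$ exceptional pairs by controlling how forced non-edges accumulate through the iteration --- together with the supersaturation estimate that drives it and the book-keeping needed to push the threshold down to $n^{4/3}(\log n)^4$.

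Parts~\ref{item:C4-free-quasirandom} and~\ref{item:C4-free-non-split} rest on two ingredients. The first is the lower bound $|\FnmCf|\ge\binom{\binom n2}{m}\exp(-O(m^4/n^4))$: every $C_4$-free graph is induced-$C_4$-free, and as $m\ll n^{3/2}$ with $\ex(n,C_4)=\Theta(n^{3/2})$, a uniformly random $m$-edge graph is $C_4$-free with probability at least $\exp(-O(m^4/n^4))$ --- by Harris's inequality applied to the monotone events ``this $4$-set spans a $C_4$'', with a routine transference from $G(n,p)$ to the uniform model. The second is a crude count of the structured graphs. For part~\ref{item:C4-free-quasirandom}, $m\le\delta n^{4/3}$ gives $m^4/n^4\le\delta^3 m$, so $|\FnmCf|\ge\binom{\binom n2}{m}\exp(-O(\delta^3 m))$; meanwhile a hypergeometric tail bound and a union bound over the at most $2^n$ candidate witness sets show that at most $2^{n+1}\exp(-\Omega(\eps^4 m))\binom{\binom n2}{m}$ of all $m$-edge graphs fail to be $\eps$-quasirandom, and for $\delta$ small relative to $\eps$ and $m\gg n$ this is $o(|\FnmCf|)$. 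For part~\ref{item:C4-free-non-split}, counting the $1/4$-close-to-split graphs (choose $A$, then the $\le m$ pairs inside $A$ or crossing to $[n]\setminus A$, then the $\le m/4$ pairs inside $[n]\setminus A$) gives at most $\exp\big((3/4+o(1))\,m\log(n^2/m)\big)$ of them, whereas $m\le\delta n^{4/3}(\log n)^{1/3}$ forces $m^4/n^4\le\delta^3 m\log n$ and hence $|\FnmCf|\ge\exp\big((1-O(\delta^3)-o(1))\,m\log(n^2/m)\big)$; choosing $\delta$ small enough that $1-O(\delta^3)>3/4$ makes the structured graphs a vanishing fraction. It is this last estimate that pins the threshold in part~\ref{item:C4-free-non-split} at $n^{4/3}(\log n)^{1/3}$ --- any larger, and $m^4/n^4$ would swamp the gap between the exponents $3/4$ and $1$.
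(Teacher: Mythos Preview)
Your plan for part~\ref{item:C4-free-quasirandom} is sound and essentially matches the paper's: a deletion/correlation lower bound on $|\FnmCf|$ beats a concentration upper bound on non-quasirandom graphs. But parts~\ref{item:C4-free-non-split} and~\ref{item:C4-free-almost-split} each contain a genuine gap.

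For part~\ref{item:C4-free-non-split}, the Harris bound does not transfer to the uniform model once $m\ge n^{4/3}$. Decreasing events are \emph{negatively} correlated in $G_{n,m}$ (the edge indicators are negatively associated), so Harris fails there outright; and the standard transference from $G(n,p)$ --- take $p$ with $p\binom{n}{2}=2m$ and subtract the Chernoff probability that $e(G(n,p))<m$ --- only works when the Harris bound $\exp(-\Theta(m^4/n^4))$ dominates the error $e^{-\Theta(m)}$, i.e.\ precisely when $m\ll n^{4/3}$. In the interesting range $n^{4/3}\le m\le\delta n^{4/3}(\log n)^{1/3}$ this gives nothing. The paper instead invokes a theorem of Kohayakawa--Kreuter--Steger (resting on the Ajtai--Koml\'os--Pintz--Spencer--Szemer\'edi / Duke--Lefmann--R\"odl bound for independent sets in sparse hypergraphs) to show that $G_{n,m'}$ with $m'=n^{4/3+\gamma}$ typically contains a $C_4$-free subgraph with $\Theta(n^{4/3}(\gamma\log n)^{1/3})$ edges; this yields $|\FnmCf|\ge n^{-\gamma m}\binom{\binom{n}{2}}{m}$ for arbitrarily small $\gamma$, which is what is needed to beat the upper bound on the number of $1/4$-close-to-split graphs.

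For part~\ref{item:C4-free-almost-split}, the asymmetric container lemma does \emph{not} by itself output split-like containers; it outputs pregraphs $(M,E)$ in which there are few ``good'' copies of $C_4$ (copies of $C_4$ in $M$ whose four vertices are independent in $E$). Passing from ``few good $C_4$s'' to ``$E$ is close to a clique on some set $U$ and almost all of $M$ touches $U$'' is a separate, substantial \emph{stability} theorem for pregraphs (Theorem~\ref{thm:robust-stability}), and proving it occupies an entire section of the paper. Your sketch collapses this into ``the supersaturation estimate that drives it'', but what is required is genuinely a robust stability statement --- if $e(E)\le\binom{\ell}{2}$, $e(M)\ge(1-\delta)\ell n$, and there are $o(\ell^4)$ good $C_4$s, then $E$ is $\eps$-close to $K_\ell$ --- and this does not follow from anything off the shelf. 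The container iteration must moreover be \emph{interleaved} with this stability theorem: at each node of the tree one checks whether the current pregraph is already close to split, and if not, the stability theorem supplies the well-distributed hypergraph of good $C_4$s to which the asymmetric lemma is reapplied. Finally, your description of the container shape is backwards: the split structure emerges because forced \emph{edges} accumulate into a near-clique on $U$, not because forced non-edges pile up outside $A$.
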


The following result is a relatively straightforward consequence of Theorem~\ref{thm:asymC4free}. It determines the number of edges in (and therefore, by Theorem~\ref{thm:asymC4free}, the typical structure) of the random graph $G(n,p)$ conditioned on not containing an induced copy of $C_4$. We write $\Gnpind(C_4)$ to denote the random graph chosen according to this conditional distribution. 

\begin{cor}\label{cor:GnpC4free}
The following bounds hold asymptotically almost surely as $n \to \infty$:
\[
e \big( \Gnpind(C_4) \big) =
    \begin{cases}
      \big( 1 + o(1) \big) p {n \choose 2} & \text{if $n^{-1} \ll p \ll n^{-2/3}$}, \\[+0.1ex]
      n^{4/3} (\log n)^{O(1)} & \text{if $n^{-2/3} \le p \le n^{-1/3} (\log n)^4$}, \\[+0.1ex]
      \Theta\big( p^2 n^2 / \log(1/p) \big) & \text{if $p \ge n^{-1/3} (\log n)^4$}.
    \end{cases}
  \]
\end{cor}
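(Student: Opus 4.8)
Reducing to the uniform model, note that conditioned on $\{e(G)=m\}$ the graph $\Gnpind(C_4)$ is uniform on $\FnmCf$, so the law of $e\big(\Gnpind(C_4)\big)$ gives each $m$ mass proportional to
\[
  w(m):=|\FnmCf|\cdot p^{m}(1-p)^{\binom n2-m},
\]
and the corollary amounts to (i) locating the maximiser $m^\ast=m^\ast(n,p)$ of $\log w(m)=\log|\FnmCf|+m\log\tfrac{p}{1-p}+\mathrm{const}$, and (ii) showing that $e\big(\Gnpind(C_4)\big)$ concentrates around $m^\ast$. Granting suitable regularity (approximate concavity) of $m\mapsto\log|\FnmCf|$, step (ii) is routine: the relevant second derivative of $\log w$ is $-\Theta(1/m^\ast)$ or smaller in magnitude, which localises $e$ to an interval of length $O\!\big(\sqrt{m^\ast\log n}\big)=o(m^\ast)$ (hence to within a $1+o(1)$, in particular polylogarithmic, factor), and concavity precludes a far-away second maximiser.

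The quantitative input is a two-sided estimate for $|\FnmCf|$, extracted from the proof of Theorem~\ref{thm:asymC4free}. In the \emph{structured range} $n^{4/3}(\log n)^4\le m=o(n^2)$ one has
\[
  \log|\FnmCf|=\frac m2\log\frac{n^2}{m}-\frac m2\log\log\frac{n^2}{m}+O(m);
\]
the lower bound counts split graphs in which $A$ is a clique of size $a\approx\sqrt{2m/\log(n^2/m)}$ with the remaining $\approx m$ edges placed arbitrarily between $A$ and its complement (this $a$ maximises $\binom{|A|(n-|A|)}{m-\binom{|A|}2}$), and the upper bound is Theorem~\ref{thm:asymC4free}\ref{item:C4-free-almost-split} applied with $\eps=\eps(n)\to0$ at a polylogarithmic rate, so that the container ``flexibility'' factors $2^{O(\eps m\log n)}$ are only $e^{O(m)}$. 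The $\log\log$-term is essential: it is exactly what converts $\log(1/p)$ into the denominator in the third case. In the \emph{random-like range} $m\ll n^{4/3}$ the first/second moment and Janson-type arguments behind Theorem~\ref{thm:asymC4free}\ref{item:C4-free-quasirandom}--\ref{item:C4-free-non-split} give $\log|\FnmCf|=\log\binom{\binom n2}{m}-\Theta(m^4/n^4)$, the correction being the expected number of induced $4$-cycles in $G(n,m)$. In the narrow \emph{transition window} $n^{4/3}(\log n)^{1/3}\lesssim m\lesssim n^{4/3}(\log n)^4$ only the crude sandwich $\exp\!\big((\tfrac12-o(1))m\log(n^2/m)\big)\le|\FnmCf|\le\binom{\binom n2}{m}$ is available.

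With these estimates the three cases become calculus. If $n^{-1}\ll p\ll n^{-2/3}$, then $m^\ast$ lies in the random-like range near $p\binom n2$: the derivative $\Theta(m^3/n^4)$ of the correction equals $\Theta(p^3n^2)=o(1)$ at $m\approx p\binom n2$, so it moves the maximiser of $\log\binom{\binom n2}m+m\log\tfrac p{1-p}$ by only $o(p\binom n2)$, and concentration gives $e\big(\Gnpind(C_4)\big)=(1+o(1))p\binom n2$. If $p\ge n^{-1/3}(\log n)^4$, the stationarity condition $\log\tfrac{n^2/m^\ast}{\log(n^2/m^\ast)}=2\log(1/p)+O(1)$ coming from the structured estimate yields $n^2/m^\ast=\Theta\!\big(p^{-2}\log(1/p)\big)$, i.e.\ $m^\ast=\Theta\!\big(p^2n^2/\log(1/p)\big)$; one checks $m^\ast\gg n^{4/3}(\log n)^4$, so the structured estimate applies near $m^\ast$, and that $\log w(m^\ast)-\binom n2\log(1-p)=\Theta(m^\ast)$ dominates the quantity $O\!\big(n^{4/3}(\log n)^{O(1)}\big)$ coming from the other ranges. (When $p$ is bounded away from $0$ and so $m^\ast=\Theta(n^2)$, the structured estimate fails and one instead uses the Pr\"omel--Steger description of a dense typical induced-$C_4$-free graph, which still gives $\Theta(n^2)=\Theta(p^2n^2/\log(1/p))$.) Finally, if $n^{-2/3}\le p\le n^{-1/3}(\log n)^4$, then $p\binom n2$ ranges over $[\Theta(n^{4/3}),\Theta(n^{5/3}(\log n)^4)]$; the $\Theta(m^4/n^4)$ correction — or, once $m$ enters the structured range, the strictly sublinear growth of $\log|\FnmCf|$ — pulls $m^\ast$ down, and a direct comparison of the tail sums $\sum_{m<n^{4/3}(\log n)^{-C}}w(m)$ and $\sum_{m>n^{4/3}(\log n)^{C}}w(m)$ with $\sum_m w(m)$ (using the split-graph lower bound to see $w$ is sizeable near $m\approx n^{4/3}$, and the structured estimate to see $w$ decays past $m\approx p^2n^2/\log(1/p)\le n^{4/3}(\log n)^{7}$) shows $m^\ast=n^{4/3}(\log n)^{O(1)}$.

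The principal difficulty is the transition window: there the container bound carries an undetermined constant and the $4$-cycle count has broken down, so $|\FnmCf|$ is not pinned down — which is precisely why the middle case of the corollary is stated only up to a polylogarithmic factor. One therefore cannot read off $m^\ast$ directly in that regime but must argue, as above, by bounding $\sum_m w(m)$ above and below using only the structured and random-like ranges where the estimates are sharp. Two secondary points: obtaining the $\log\log$-precise structured estimate requires the structural input of Theorem~\ref{thm:asymC4free} to be invoked with $\eps\to0$ fast enough to swallow the flexibility factors; and the concentration step needs the estimates for $\log|\FnmCf|$ to be regular enough (approximately concave, with controlled derivative) to compare neighbouring values of $w$ and to exclude a far-away second mode.
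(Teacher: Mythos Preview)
Your overall strategy --- analyse the weight $w(m)\propto|\FnmCf|\,(p/(1-p))^m$ and locate where it is largest --- is exactly the one the paper uses (the paper writes $\varphi(m)$ for your $w(m)$, up to a constant). The two-sided estimate you state for $\log|\FnmCf|$ in the structured range is also correct and is essentially Claims~\ref{claim:Gnp-trivial} and~\ref{claim:Gnp-containers} of the paper's proof. So the skeleton is right.

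There are, however, two genuine gaps that the paper's argument avoids. First, you repeatedly appeal to ``approximate concavity'' of $m\mapsto\log|\FnmCf|$, both to obtain concentration around $m^\ast$ and to rule out a second mode. This is never established, and it is not at all obvious how to prove it (indeed, near the transition window $m\approx n^{4/3}(\log n)^{O(1)}$ you yourself note that $|\FnmCf|$ is not pinned down well enough). The paper sidesteps this entirely: instead of analysing $m^\ast$ as a critical point, it simply picks one explicit $m$ in the target range, shows $\varphi(m)$ is large (via the split-graph lower bound or the deletion lower bound of Proposition~\ref{prop:lower-bound-strong}), and then shows $\varphi(m')\le n^{-2}\varphi(m)$ for every $m'$ outside the range, using the trivial upper bound $|\Find_{n,m'}(C_4)|\le\binom{\binom n2}{m'}$ together with the container upper bound of Theorem~\ref{thm:containers:for:C4free}. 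No regularity of $\log|\FnmCf|$ is needed --- just pointwise bounds at $m$ and at $m'$.

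Second, your estimate $\log|\FnmCf|=\log\binom{\binom n2}{m}-\Theta(m^4/n^4)$ in the random-like range is stronger than anything proved in the paper: Proposition~\ref{prop:lower-bound-strong} only gives $\log|\FnmCf|\ge\log\binom{\binom n2}{m}-\gamma m$ for arbitrary $\gamma>0$, via a deletion argument, not Janson. The sharper statement may well be true, but you are importing it without proof, and the paper does not need it --- the cruder bound is enough for the direct comparison. Relatedly, you invoke Theorem~\ref{thm:asymC4free}\ref{item:C4-free-almost-split} with $\eps=\eps(n)\to0$, which is not how the theorem is stated; the paper instead uses the container output (Theorem~\ref{thm:containers:for:C4free}) directly to get the structured-range upper bound on $|\FnmCf|$.

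In short: same idea, but you have replaced an elementary pointwise comparison with an unjustified regularity hypothesis. Drop the concavity and instead bound $\varphi(m')/\varphi(m)$ directly for a single good $m$ and all bad $m'$; then the argument goes through with only the estimates the paper actually proves.
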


Note that it follows immediately from Theorem~\ref{thm:asymC4free} that $\Gnpind(C_4)$ is a.a.s.\ $\eps$-quasirandom if $n^{-1} \ll p \ll n^{-2/3}$ and a.a.s.\ $\eps$-close to a split graph if $p \ge n^{-1/3} (\log n)^4$. We remark that we have not attempted to optimize the exponents of $\log n$, since (we believe that) our technique cannot give the correct power.

We would like to draw the reader's attention to the (somewhat surprising) fact that in the middle range $n^{-2/3+o(1)} \le p \le n^{-1/3+o(1)}$, the typical value of $e\big(\Gnpind(C_4)\big)$ stays essentially constant. This is because the proportion of $n$-vertex graphs with $m$ edges that are induced-$C_4$-free drops very sharply from $e^{-o(m)}$ to $e^{-\Omega(m \log n)}$ as $m$ crosses a very narrow interval around $n^{4/3}$, as shown by Theorem~\ref{thm:asymC4free}. A similar phenomenon has been observed in several random Tur\'an problems for forbidden bipartite graphs (even cycles~\cite{KoKrSt98, MoSa16} and complete bipartite graphs~\cite{MoSa16}) as well as Tur\'an-type problems in additive combinatorics~\cite{DeKoLeRoSa-B3, DeKoLeRoSa-Bh}. It would be very interesting to determine whether a similar `long flat segment' appears in the graphs of $p \mapsto e\big(\Gnpind(H)\big)$ and $p \mapsto \ex\big( G(n,p), H \big)$ for every bipartite $H$.

Our proof of Theorem~\ref{thm:asymC4free} relies on two new results: (i) an asymmetric container lemma, which generalises the main results of~\cite{BaMoSa15, SaTh15}, and (ii) a new robust stability theorem for induced copies of $C_4$ in `pregraphs' (see below). We discuss these two ingredients in the remainder of this section.

\subsection{The asymmetric container lemma}

\label{sec:asymmetric-container-lemma}

The hypergraph container theorems, proved independently by Balogh, Morris, and Samotij~\cite{BaMoSa15} and by Saxton and Thomason~\cite{SaTh15}, state (roughly speaking) that the family of independent sets of a uniform hypergraph whose edges are distributed somewhat evenly can be covered with a small number of sets, called \emph{containers}, each of which is `almost independent' in the sense that it contains only few edges of the hypergraph. This fact has proved to be a very convenient and useful tool in the study of the families of $H$-free graphs, as well as other monotone properties of graphs, hypergraphs, sets of integers, etc. There are several reasons for this. First, there is a natural correspondence between $H$-free graphs with a given number $n$ of vertices and independent sets in the $e_H$-uniform hypergraph $\HH$ whose vertex set is $E(K_n)$, the edge set of the complete graph with $n$ vertices, and whose edges are the edge sets of all copies of $H$ found in $K_n$. Second, classical results in extremal graph theory provide very precise and explicit descriptions of graphs with few copies of $H$, which correspond to the containers for independent sets of $\HH$. Third, the bounds for the number of containers given by~\cite{BaMoSa15,SaTh15} are essentially optimal, which allows one to deduce many best-possible estimates on the number of $H$-free graphs with given numbers of vertices and edges and describe their typical structure.

The container theorems can also be used to enumerate graphs with no \emph{induced} copy of $H$. In fact, this was already done by Saxton and Thomason in their original paper~\cite{SaTh15}, where they obtained (implicitly) upper bounds on $|\Find_n(H)|$ for all $H$.  One way to phrase this problem in the language of independent sets is to consider the hypergraph $\HH$ whose vertex set is $E(K_n) \times \{0,1\}$ and whose edges are
\begin{enumerate}[label={(\textit{\roman*})}]
\item
  all the $\binom{v_H}{2}$-element sets of the form $E \times \{1\} \cup (\binom{W}{2} \setminus E) \times \{0\}$, where $W$ ranges over all $v_H$-element sets of vertices of $K_n$ and $E$ is the subset of $\binom{W}{2}$ covered by $E(H)$ in one of the $v_H!/|\Aut(H)|$ non-isomorphic embeddings of $H$ into $W$ and
\item
  \label{item:edges-ii}
  all the $\binom{n}{2}$ pairs $\{(e,0), (e,1)\}$, where $e$ ranges over all edges of $K_n$.
\end{enumerate}
One can see that $n$-vertex graphs with no induced copy of $H$ are in a natural one-to-one correspondence with the independent sets of $\HH$ with $\binom{n}{2}$ elements. Even though the container theorems may be applied only to uniform hypergraphs, since one is usually interested in upper bounds, one may disregard the $2$-uniform edges of type~\ref{item:edges-ii} and construct containers for independent sets of the resulting smaller $\binom{v_H}{2}$-uniform hypergraph, which clearly include all independent sets of the original hypergraph.

One soon realises that the above approach is somewhat flawed when one is interested in the family $\Find_{n,m}(H)$ whenever $m$ is either very small or very close to $\binom{n}{2}$ and $H$ is neither complete nor empty. This is because the original container theorems completely disregard the obvious asymmetry between the edges and the non-edges of $H$ in each of the $\binom{v_H}{2}$-uniform edges of $\HH$. As a result, one cannot expect to deduce optimal bounds on $|\Find_{n,m}(H)|$ for all $m$ using this approach. Our main motivation for this work is to address this issue.

Departing somewhat from the language of independent sets, we shall regard a graph $G \subseteq K_n$ as the characteristic function $h_G \colon E(K_n) \to \{0,1\}$ of its edge set; that is, $h_G(e) = 1$ if $e \in E(G)$ and $h_G(e) = 0$ otherwise. The family $\Fn(H)$, viewed as a set of functions $h \colon E(K_n) \to \{0,1\}$, may be described by a set of \emph{constraints} of the form
\[
  \neg\left( h|_E \equiv 1 \wedge h|_{\binom{W}{2} \setminus E} \equiv 0 \right).
\]
In other words, a function $h \in \Fn(H)$ cannot simultaneously map all elements of $E$ to $1$ and all elements of $\binom{W}{2} \setminus E$ to $0$, for any $W \subset V(K_n)$ with $|W| = v_H$ and any $E$ that is the edge set of an embedding of $H$ into $W$.

%every such $h$ cannot simultaneously map all elements of $E$ to $1$ and all elements of $\binom{W}{2} \setminus E$ to $0$, where, as above, $W$ ranges over all $v_H$-element subsets of $V(K_n)$ and $E$ ranges over the edge sets of all embeddings of $H$ into $W$.

There is nothing special here about the family $\Fn(H)$ or the set $E(K_n)$. Therefore, for the remainder of this discussion, we shall replace $E(K_n)$ with an arbitrary finite set $V$, let $\HH$ be an arbitrary family of pairs of disjoint subsets of $V$, and let
\[
  \FF(\HH) = \left\{ h \in \{0,1\}^V \scolon \neg( h|_{A_0} \equiv 0 \wedge h|_{A_1} \equiv 1) \text{ for all $(A_0, A_1) \in \HH$} \right\}.
\]
In other words, one obtains the family $\FF(\HH)$ from $\{0,1\}^V$ by discarding all $h \colon V \to \{0,1\}$ that map each element of $A_0$ to $0$ and each element of $A_1$ to $1$ for some pair $(A_0, A_1) \in \HH$. We shall informally refer to these pairs of sets as \emph{constraints} and say that $h$ \emph{violates} (resp.~\emph{satisfies}) a constraint $(A_0, A_1)$ if $h$ maps (resp.~does not map) each element of $A_0$ to $0$ and each element of $A_1$ to $1$. Finally, let us note here for future reference that according to the above definition, $\FF(\HH)$ is empty whenever $\HH$ contains the pair $(\emptyset, \emptyset)$; in other words, every function violates the `empty' constraint $(\emptyset, \emptyset)$.

The container theorems imply that if such a family $\HH$ contains only pairs $(A_0, A_1)$ with a given value of $|A_0|+|A_1|$ and the sets $A_0 \cup A_1$ are distributed somewhat uniformly, then there is a small family $\CC$ of partitions $V = V_0 \cup V_1 \cup V_*$ such that
\[
  \FF(\HH) \subseteq \bigcup_{(V_0,V_1,V_*) \in \CC} \{0\}^{V_0} \times \{1\}^{V_1} \times \{0,1\}^{V_*}
\]
and, importantly, every function in each of the cylinders $\{0\}^{V_0} \times \{1\}^{V_1} \times \{0,1\}^{V_*}$ violates only few constraints in $\HH$. In particular, one does not allow a trivial covering of $\FF(\HH)$ with $\{0,1\}^V$, which corresponds to $V_* = V$. Roughly speaking, we might say that $\FF(\HH)$ may be `tightly' covered by a small family of cylinders. 

In this work, we take a refined approach to this covering problem. We shall build families of containers that are tailored to the subfamily of all $h \in \FF(\HH)$ that attain the values $0$ and $1$ given numbers of times, unlike in previous works. More precisely, for each integer $m$ with $0 \le m \le |V|$, we shall consider the subfamily $\FF_m(\HH) \subseteq \FF(\HH)$ defined by
\[
  \FF_m(\HH) = \big\{h \in \FF(\HH) \scolon |h^{-1}(1)| = m\big\}
\]
and build a family of containers for the elements of $\FF_m(\HH)$ only.

We shall focus our attention on families $\FF(\HH)$ determined by collections $\HH$ of constraints that are uniform in the sense that each $(A_0, A_1) \in \HH$ satisfies $|A_0| = k_0$ and $|A_1| = k_1$ for some fixed integers $k_0$ and $k_1$. We shall refer to such collections $\HH$ as \emph{$(k_0,k_1)$-uniform hypergraphs}. In standard applications of the container method this should not be a huge restriction, provided that we are only interested in constraints of bounded size, that is, pairs $(A_0, A_1)$ where $|A_0| + |A_1|$ is bounded from above by a constant. Indeed, given a non-uniform family of constraints of bounded size, we may restrict our attention to the `densest' $(k_0, k_1)$-uniform hypergraph that is contained in the family, losing only some constant factors. In fact, this is precisely what we are going to do in our proof of Theorem~\ref{thm:asymC4free}. 

For a $(k_0, k_1)$-uniform hypergraph $\HH$ and two disjoint sets $T_0$ and $T_1$, we define
\[
  \deg_{\HH}(T_0, T_1) = |\{ (A_0, A_1) \in \HH \scolon T_0 \subseteq A_0 \text{ and } T_1 \subseteq A_1 \}|.
\]
Furthermore, for each pair of integers $(\ell_0, \ell_1)$, we let
\[
  \Delta_{(\ell_0,\ell_1)}(\HH) = \max \left\{ \deg_{\HH} (T_0, T_1) \scolon \text{$T_0, T_1 \subseteq V$ with $|T_0| = \ell_0$ and $|T_1| = \ell_1$} \right\}.
\]
Abusing notation somewhat, we shall identify a partition $V = V_0 \cup V_1 \cup V_*$ with the cylinder $\{0\}^{V_0} \times \{1\}^{V_1} \times \{0,1\}^{V_*}$ and the function $a \colon V(\HH) \to \{0,1,*\}$ defined by $a^{-1}(x) = V_x$ for each $x \in \{0,1,*\}$. In particular, a function $h \colon V(\HH) \to \{0, 1\}$ belongs to the cylinder $a \colon V(\HH) \to \{0, 1, *\}$ if $h(v) = a(v)$ for all $v \in V(\HH)$ such that $a(v) \neq *$. In other words, $h(v)$ is forced to equal $a(v)$ unless $a(v) = *$, in which case $h(v)$ can be either $0$ or $1$.

We are now ready to state the main result of this section, an asymmetric container theorem. In the statement of the theorem, $\FF_{\le m}(\HH)$ is a shorthand for $\bigcup_{m'=0}^m \FF_{m'}(\HH)$.

\begin{thm}
  \label{thm:container}
  For all integers $k_0, k_1 \ge 0$, not both zero, and each $K > 0$, the following holds. Suppose that $\HH$ is a non-empty $(k_0, k_1)$-uniform hypergraph and $b$, $m$, and $r$ are integers satisfying
  \begin{equation}
    \label{eq:container-Del}
    \Delta_{(\ell_0, \ell_1)}(\HH) \le K \cdot \frac{b^{\ell_0+\ell_1-1}}{m^{\ell_0} \cdot v(\HH)^{\ell_1}} \cdot e(\HH) \cdot \left(\frac{m}{r}\right)^{\indicator[\ell_0 > 0]}
  \end{equation}
  for every pair $(\ell_0,\ell_1) \in \{0, \ldots, k_0\} \times \{0, \ldots, k_1\}$ with $(\ell_0,\ell_1) \neq (0,0)$. Then there exist a~family $\cS \subseteq \binom{V(\HH)}{\le k_0b} \times \binom{V(\HH)}{\le k_1b}$ and functions $f \colon \cS \to \{0,1,*\}^{V(\HH)}$ and $g \colon \FF_{\le m}(\HH) \to \cS$ such that, letting $\delta = 2^{-(k_0+k_1)(k_0+k_1+1)} K^{-1}$:
  \begin{enumerate}[label={(\alph*)}]
  \item
    \label{item:container-1}
    Every $h \in \FF_{\le m}(\HH)$ belongs to the cylinder $f(g(h))$.\smallskip
  \item
    \label{item:container-2}
    Either $|f(S)^{-1}(0)| \ge \delta v(\HH)$ or $|f(S)^{-1}(1)| \ge \delta r$ for every $S \in \cS$; moreover, the former can hold only if $k_1 > 0$ and the latter can hold only if $k_0 > 0$.\smallskip
     \item
       \label{item:container-3} 
       If $g(h) = (S_0, S_1)$ for some $h \in \FF_{\le m}(\HH)$, then $S_0 \subseteq h^{-1}(0)$ and $S_1 \subseteq h^{-1}(1)$.
  \end{enumerate}
\end{thm}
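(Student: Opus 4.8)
The plan is to prove Theorem~\ref{thm:container} by running an asymmetric refinement of the hypergraph container algorithm of Balogh--Morris--Samotij~\cite{BaMoSa15} (see also Saxton--Thomason~\cite{SaTh15}). The organising principle is that the two co-ordinates should be processed on different scales: predictions of the value $0$ are to be governed by a parameter $\tau_0 := b/v(\HH)$, while predictions of the value $1$ are governed by $\tau_1 := b/m$, with the rôle of the size of the ambient set on the $1$-side played by $r$ rather than $v(\HH)$. With these substitutions the hypotheses~\eqref{eq:container-Del} decode into exactly the codegree bounds one needs in order to run the usual algorithm ``on each side'': for $\ell_0 = 0$ they read $\Delta_{(0,\ell_1)}(\HH) \le K\tau_0^{\ell_1-1}\cdot e(\HH)/v(\HH)$, while for $\ell_0 \ge 1$ they read $\Delta_{(\ell_0,\ell_1)}(\HH) \le K\tau_1^{\ell_0-1}\tau_0^{\ell_1}\cdot e(\HH)/r$, and the weight $(m/r)^{\indicator[\ell_0>0]}$ is precisely what interpolates between these regimes. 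Since the construction will automatically yield $\cS \subseteq \binom{V(\HH)}{\le k_0 b}\times\binom{V(\HH)}{\le k_1 b}$, there is no separate estimate on $|\cS|$ to prove; all the content lies in producing $f$ and $g$ with properties \ref{item:container-1}--\ref{item:container-3} while keeping the fingerprint within this budget.

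I would set up the algorithm as follows. Fix $h \in \FF_{\le m}(\HH)$ and write $Z := h^{-1}(0)$ and $O := h^{-1}(1)$, so $|O| \le m$. The algorithm maintains disjoint \emph{decided} sets $D_0 \subseteq Z$ and $D_1 \subseteq O$ (vertices committed to the values $0$ and $1$), a fingerprint pair $(S_0, S_1)$ with $S_0 \subseteq D_0$ and $S_1 \subseteq D_1$, and the \emph{live} hypergraph $\HH'$ obtained from $\HH$ by discarding every constraint that meets $D_1$ in its $0$-slot or $D_0$ in its $1$-slot and then deleting the decided vertices from the constraints that survive; thus a surviving constraint whose live part is a single vertex in its $0$-slot (resp.\ its $1$-slot) \emph{forces} that vertex to equal $1$ (resp.\ $0$). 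It runs for at most $b$ rounds, each consisting of a \emph{cleaning step} --- which carries out all available forced assignments as above and, in addition, moves into $D_0$ every still-undecided vertex lying in the $1$-slot of more than a threshold of the appropriate order (comparable to $m\cdot e(\HH')/r$; the hypotheses~\eqref{eq:container-Del} for pairs $(\ell_0,\ell_1)$ with $\ell_0 = 1$, together with $|O|\le m$, certify that such a vertex cannot take the value $1$) of live constraints --- followed by a \emph{selection step}, which takes the undecided vertex of largest degree in $\HH'$, consults $h$, and routes it --- together with a bounded number of auxiliary vertices, so that the fingerprint grows by at most $k_0$ vertices in $S_0$ and $k_1$ in $S_1$ --- into the appropriate sides of the fingerprint and of the decided sets according to its value under $h$; the degenerate cases $k_0 = 0$ and $k_1 = 0$ reduce directly to the symmetric lemma. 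The crucial point is that every decision apart from the selected vertices themselves is made by a rule that does not consult $h$, so that from any $S = (S_0,S_1) \in \cS$ one may recover the decided sets --- and hence the cylinder $f(S)$ given by $f(S)^{-1}(0) = D_0$, $f(S)^{-1}(1) = D_1$, and all other co-ordinates equal to $*$ --- by replaying the process and routing at each selection step according to which of $S_0, S_1$ contains the relevant vertices.

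Granting this, the three properties would be checked in turn. Property~\ref{item:container-3}, and then property~\ref{item:container-1}, follow from $S_0 \subseteq D_0 \subseteq Z$ and $S_1 \subseteq D_1 \subseteq O$; these inclusions hold because every decision the algorithm makes is correct for $h$ --- the degree-cleaning decisions by the codegree/$|O|\le m$ argument, the forced assignments because a fully-decided surviving constraint would be violated by $h$ in contradiction with $h \in \FF(\HH)$, and the selected vertices by construction. For property~\ref{item:container-2} one shows that when the process halts, so many constraints have been eliminated that either $|D_0| \ge \delta v(\HH)$ or $|D_1| \ge \delta r$; the former alternative is available only when $k_1 > 0$, since when $k_1 = 0$ the set $D_0$ is only ever enlarged by the at most $b$ selection steps, and symmetrically the latter requires $k_0 > 0$ --- which is exactly the stated restriction. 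Finally, the budget is verified by a charging argument in the spirit of~\cite{BaMoSa15}: each selection is charged against a codegree factor, the product of these factors over a run of $t$ selections is bounded below by a quantity of the relevant order and above by $\Delta_{(\ell_0,\ell_1)}(\HH)$ with $\ell_0 + \ell_1 = t$, and~\eqref{eq:container-Del} forces $t \le b$, so that $|S_0| \le k_0 b$ and $|S_1| \le k_1 b$.

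I expect the main obstacle to be making the two scales cohere. In the symmetric container lemma there is a single fingerprint parameter and a single charging argument, whereas here the selected vertices fall into ``$0$-type'' and ``$1$-type'' ones, the live hypergraph must be shown after each round to inherit~\eqref{eq:container-Del} in the correct asymmetric form (with $v(\HH)$, $m$, $e(\HH)$, and $r$ all updated consistently), and the accounting that both caps the number of rounds at $b$ and forces one of the two largeness alternatives in~\ref{item:container-2} has to balance a ``$0$-budget'' of size $\sim v(\HH)$ against a ``$1$-budget'' of size $\sim r$; it is precisely the weight $(m/r)^{\indicator[\ell_0>0]}$ in~\eqref{eq:container-Del} that renders this balance feasible. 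Subordinate to this, but still requiring care, are the certification of the cleaning step when $k_1 > 1$ (where a vertex in the $1$-slot of many live constraints need not itself be forced, but each such constraint must ``spend'' a sufficiently distinct $1$ elsewhere, which is where the higher mixed codegrees enter) and the verification that each round removes enough of the live hypergraph for the charging to close.
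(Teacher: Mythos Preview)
Your outline is a plausible container-style sketch, but it misses the structural mechanism that actually makes the asymmetry work, and the substitute you propose does not close. The paper does \emph{not} run a single pass with a cleaning step; it runs $k_0+k_1-1$ sequential rounds, each reducing the uniformity of the working hypergraph by one, processing the $1$-slot completely first (taking $(i_0,i_1)$ from $(k_0,k_1)$ down to $(k_0,0)$) and only then the $0$-slot. The asymmetric gain enters precisely once the live hypergraph is $(i_0,0)$-uniform: every constraint $(A_0,\emptyset)$ must be hit by $h^{-1}(1)$, and since $|h^{-1}(1)|\le m$, pigeonhole gives a vertex of $0$-slot degree at least $e(\GG)/m$ rather than $e(\GG)/v(\HH)$. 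This is what replaces $v(\HH)$ by $m$ on the $0$-side and produces the $r$ in part~\ref{item:container-2}; it is a statement about \emph{finding a high-degree vertex}, not about forcing a value. Your cleaning rule --- move to $D_0$ any vertex lying in the $1$-slot of more than roughly $m\cdot e(\HH')/r$ live constraints, on the grounds that such a vertex ``cannot take the value $1$'' --- is not justified: having $h(v)=1$ with $v\in A_1$ neither satisfies nor by itself violates $(A_0,A_1)$, and your sketch of how the mixed codegrees would rescue this (each constraint must ``spend a distinct $1$ elsewhere'') only bounds the constraints satisfied via a $1$ in $A_0$; the ones satisfied via a $0$ elsewhere in $A_1$ are uncontrolled, since $|h^{-1}(0)|$ can be as large as $v(\HH)$.

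You correctly identify ``making the two scales cohere'' as the crux and then leave it open; that is exactly the gap. In the paper the coherence is achieved not by balancing two budgets inside one pass but by \emph{separating} them: the $1$-side rounds are analysed with the usual average-degree bound $\Delta_{(0,1)}(\cA)\ge e(\cA)/v(\HH)$, yielding (when they terminate early) $|W|\ge \delta v(\HH)$ and hence $|f(S)^{-1}(0)|\ge\delta v(\HH)$; the $0$-side rounds use the pigeonhole bound $\Delta_{(1,0)}(\cA)\ge e(\cA)/m$, yielding $|W|\ge\delta r$ and hence $|f(S)^{-1}(1)|\ge\delta r$. The recursive degree thresholds $\Delta^{(i_0,i_1)}_{(\ell_0,\ell_1)}$ of Definition~\ref{dfn:Delta} are what propagate~\eqref{eq:container-Del} through the rounds; there is no charging argument bounding the number of selections by $b$ --- each round is simply halted after $b$ \YES\ answers, and the analysis (Lemma~\ref{lemma:alg-analysis-progress}) shows that either the next hypergraph has enough edges to continue or $|W|$ is already large. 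If you want to salvage your single-pass picture, you would need to replace the faulty cleaning rule by this sequential uniformity reduction and the pigeonhole-on-$|h^{-1}(1)|$ step.
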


Yet another rephrasing of condition~\ref{item:container-1} is that whenever $g(h) = S$, then $h$ is forced to take the value $0$ on $f(S)^{-1}(0)$ and it is forced to take the value $1$ on $f(S)^{-1}(1)$. Note the asymmetry between the guaranteed lower bounds on the cardinalities of the sets $f(S)^{-1}(0)$ and $f(S)^{-1}(1)$ in~\ref{item:container-2}. Roughly speaking, we are equally satisfied with \ref{item:cube-0}~containers forcing our function $h$ to take the value $0$ on a positive proportion of $V(\HH)$ and \ref{item:cube-1}~containers forcing our function to take the value $1$ only on some $\delta r$ elements of $V(\HH)$. Condition~\ref{item:container-3} states that for every $h \in \FF_{\le m}(\HH)$, the value of $g(h)$ is `consistent' with $h$. This additional property of the function $g$ will not be used in our application of the theorem to enumerating $\FnmCf$. However, we state it here as the analogous property in the original container theorems was crucial in avoiding superfluous logarithmic factors in many applications of the container method. Finally, let us point out here that we shall be allowing all of our hypergraphs to contain edges with multiplicities greater than one. In particular, both $e(\cdot)$ and $\deg_{\HH}(\cdot, \cdot)$ count edges with their multiplicities.

%\pagebreak

A reader who is familiar with the container method might notice that by setting $r = m = v(\HH)$ in Theorem~\ref{thm:container}, one recovers the statement of the original container theorem~\cite[Proposition~3.1]{BaMoSa15} in the somewhat 
more general context of $(k_0, k_1)$-uniform hypergraphs. To illustrate the `asymmetry' in Theorem~\ref{thm:container}, we need to assume that $m \ll v(\HH)$. For brevity, let $N = v(\HH)$ and consider two cylinders, described by the following two partitions of~$V(\HH)$:
\begin{enumerate}[label={(\textit{\roman*})}]
\item 
  \label{item:cube-0}
  $V(\HH) = V_0 \cup V_1 \cup V_*$, where $|V_0| = \delta N$ and $V_1 = \emptyset$,
\item
  \label{item:cube-1}
  $V(\HH) = V_0' \cup V_1' \cup V_*'$, where $V_0' = \emptyset$ and $|V_1'| = \delta r$.
\end{enumerate}
Observe that the cylinder described in~\ref{item:cube-0} contains at most $\binom{(1-\delta)N}{m}$ functions from $\FF_m(\HH)$, whereas the cylinder described in~\ref{item:cube-1} contains at most $\binom{N-\delta r}{m - \delta r}$ functions from $\FF_m(\HH)$. Assume that $r \ll m \ll N$. Since
\[
  \binom{(1-\delta)N}{m} \approx (1-\delta)^m \cdot \binom{N}{m} \qquad \text{and} \qquad \binom{N-\delta r}{m - \delta r} \approx \bigg( \frac{m}{N} \bigg)^{\delta r} \binom{N}{m},
\]
then both cylinders will have equal \emph{volume}\footnote{By the volume of a cylinder $\{0\}^{V_0} \times \{1\}^{V_1} \times \{0,1\}^{V_*}$, we will mean the number of functions $h \colon V(\HH) \to \{0,1\}$, with $|h^{-1}(1)| = m$, that are contained in the cylinder, that is, $\binom{|V_*|}{m-|V_1|}$.} when $r \approx m / \log(N/m) \ll m$. On the other hand, when $r \ll m \ll N$, then the assumptions on the maximum degrees of $\HH$ stated in~\eqref{eq:container-Del} are weaker by a factor of $\left(\frac{N}{m}\right)^{\ell_0} \cdot \left(\frac{m}{r}\right)^{\indicator[\ell_0 > 0]}$ when compared to the original container theorems, see~\cite[Proposition~3.1]{BaMoSa15}. This allows one to choose a smaller $b$, which results in a smaller family of containers.

As we believe that having a trade-off between the upper bound on the size of containers for independent sets in a hypergraph $\HH$ and the upper bounds on maximum degrees $\Delta_\ell(\HH)$ can be useful in other applications of the container method, we conclude this section with a sharpening of the original container theorems, \cite[Proposition~3.1]{BaMoSa15} and \cite[Theorem~3.4]{SaTh15}, that follows easily from Theorem~\ref{thm:container}. We write $\II(\HH)$ for the family of independent sets of $\HH$ and $\Delta_\ell(\HH)$ for the largest number of edges of $\HH$ that contain a particular $\ell$-element subset of $V(\HH)$.

\begin{thm}
  \label{thm:container-mono}
  Suppose that positive integers $b$, $k$, and $r$ and a non-empty $k$-uniform hypergraph $\HH$ satisfy
  \begin{equation}
    \label{eq:container-mono-Del}
    \Delta_\ell(\HH) \le \left(\frac{b}{v(\HH)}\right)^{\ell-1} \frac{e(\HH)}{r}
  \end{equation}
  for every $\ell \in \{1, \ldots, k\}$. Then there exist a family $\cS \subseteq \binom{V(\HH)}{\le kb}$ and functions $f \colon \cS \to \cP(V(\HH))$ and $g \colon \II(\HH) \to \cS$ such that for every $I \in \II(\HH)$,
  \[
    g(I) \subseteq I \subseteq f(g(I)) \qquad \text{and} \qquad |f(g(I))| \le v(\HH) - \delta r,
  \]
  where $\delta = 2^{-k(k+1)}$.
\end{thm}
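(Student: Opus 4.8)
The plan is to derive Theorem~\ref{thm:container-mono} directly from the asymmetric container theorem, Theorem~\ref{thm:container}, by encoding each independent set through the \emph{complement} of its characteristic vector. Concretely, let $\HH'$ be the $(k,0)$-uniform hypergraph on vertex set $V(\HH') := V(\HH)$ whose constraints are the pairs $(A, \emptyset)$, one for each edge $A$ of $\HH$ (taken with the same multiplicity). Since a function $h$ satisfies the constraint $(A, \emptyset)$ precisely when it does not map every element of $A$ to $0$, we have
\[
  \FF(\HH') = \big\{ h \in \{0,1\}^{V(\HH)} \scolon h^{-1}(0) \in \II(\HH) \big\},
\]
and the map $I \mapsto h_I$, where $h_I$ is the unique function with $h_I^{-1}(0) = I$, is a bijection from $\II(\HH)$ onto $\FF(\HH')$. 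Because $h_I^{-1}(1) = V(\HH) \setminus I$ has size at most $v(\HH)$ for every $I \in \II(\HH)$, taking $m := v(\HH)$ we have $\FF_{\le m}(\HH') = \FF(\HH')$.

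Next I would check that the hypotheses of Theorem~\ref{thm:container} hold with $k_0 = k$, $k_1 = 0$, $K = 1$, and this value of $m$ (together with the given integers $b$ and $r$). If $\ell_1 > 0$ then $\Delta_{(\ell_0,\ell_1)}(\HH') = 0$, since no constraint of $\HH'$ has a nonempty second coordinate, so \eqref{eq:container-Del} is trivial; and if $\ell_1 = 0$ and $\ell_0 = \ell \in \{1,\dots,k\}$, then $\Delta_{(\ell,0)}(\HH') = \Delta_\ell(\HH)$, because the edges of $\HH'$ whose first coordinate contains a given $\ell$-element set are exactly the edges of $\HH$ containing that set, whereas the right-hand side of \eqref{eq:container-Del} equals
\[
  \frac{b^{\ell-1}}{v(\HH)^{\ell}} \cdot e(\HH) \cdot \frac{v(\HH)}{r} = \left(\frac{b}{v(\HH)}\right)^{\ell-1} \frac{e(\HH)}{r},
\]
using $m = v(\HH)$ and $\indicator[\ell > 0] = 1$; this is exactly the bound assumed in \eqref{eq:container-mono-Del}. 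Finally $\HH'$ is non-empty because $\HH$ is, and the constant $\delta = 2^{-(k_0+k_1)(k_0+k_1+1)}K^{-1}$ provided by Theorem~\ref{thm:container} is $2^{-k(k+1)}$.

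It then remains to translate the conclusion back into the language of $\II(\HH)$. Theorem~\ref{thm:container} yields $\cS \subseteq \binom{V(\HH)}{\le kb} \times \{\emptyset\}$, which I identify with a subfamily of $\binom{V(\HH)}{\le kb}$, together with functions $f$ and $g$. Define $f' \colon \cS \to \cP(V(\HH))$ by $f'(S) := V(\HH) \setminus f(S)^{-1}(1)$ and $g' \colon \II(\HH) \to \cS$ by letting $g'(I)$ be the first coordinate of $g(h_I)$. Property~\ref{item:container-3} gives $g'(I) \subseteq h_I^{-1}(0) = I$. Property~\ref{item:container-1} says $h_I$ lies in the cylinder $f(g(h_I))$, i.e.\ $h_I$ is forced to equal $0$ on $f(g(h_I))^{-1}(1)$, whence $I = h_I^{-1}(0) \subseteq V(\HH) \setminus f(g(h_I))^{-1}(1) = f'(g'(I))$. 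Property~\ref{item:container-2} says that either $|f(S)^{-1}(0)| \ge \delta v(\HH)$ or $|f(S)^{-1}(1)| \ge \delta r$, but the first alternative is excluded because $k_1 = 0$; thus $|f(S)^{-1}(1)| \ge \delta r$ for every $S \in \cS$, so $|f'(g'(I))| = v(\HH) - |f(g'(I))^{-1}(1)| \le v(\HH) - \delta r$. Renaming $f'$ and $g'$ as $f$ and $g$, this is precisely the assertion of Theorem~\ref{thm:container-mono}.

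I expect essentially all of the difficulty here to be conceptual rather than technical, and to lie in the choice of encoding. One must represent an independent set $I$ by the function taking the value $0$ on $I$ (equivalently, use constraints of the form $(A,\emptyset)$ rather than $(\emptyset,A)$), because only with this orientation does the asymmetric guarantee in property~\ref{item:container-2} of Theorem~\ref{thm:container} — the lower bound $\delta r$, which holds exactly when $k_0 > 0$ — apply to the complement of the container $f'(S)$ and deliver the sharp bound $|f(g(I))| \le v(\HH) - \delta r$; the opposite encoding would only yield a container of size $(1-\delta)v(\HH)$. Everything after that is routine bookkeeping: matching the maximum-degree conditions after the substitutions $K = 1$ and $m = v(\HH)$, and passing between cylinders in $\{0,1,*\}^{V(\HH)}$ and subsets of $V(\HH)$.
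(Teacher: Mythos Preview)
Your derivation is correct, but it takes a different route from the paper's. The paper encodes an independent set $I$ by its \emph{characteristic function} and applies Theorem~\ref{thm:container} to the $(0,k)$-uniform hypergraph with edges $(\emptyset,A)$, taking $K = v(\HH)/r$; then $\delta_{\ref{thm:container}} = 2^{-k(k+1)} r/v(\HH)$ and property~\ref{item:container-2} (which for $k_0=0$ forces $|f(S)^{-1}(0)| \ge \delta_{\ref{thm:container}} v(\HH)$) gives exactly $|f(S)^{-1}(0)| \ge 2^{-k(k+1)} r$, the same bound you obtain. Your $(k,0)$ encoding with $K=1$ is arguably cleaner, since $\delta$ comes out directly as $2^{-k(k+1)}$ without the extra factor of $r/v(\HH)$ being absorbed into $K$. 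Your final paragraph's claim that the opposite encoding ``would only yield a container of size $(1-\delta)v(\HH)$'' is therefore not quite right: the $(0,k)$ encoding works perfectly well once one chooses $K = v(\HH)/r$, and that is precisely what the paper does.

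One small slip: in the sentence translating property~\ref{item:container-1}, you write that $h_I$ ``is forced to equal $0$ on $f(g(h_I))^{-1}(1)$''. It is forced to equal $1$ there (since $h$ agrees with the cylinder on its non-$*$ coordinates), and it is this that gives $f(g(h_I))^{-1}(1) \subseteq h_I^{-1}(1) = V(\HH) \setminus I$, hence $I \subseteq f'(g'(I))$. Your conclusion is correct; only the intermediate clause needs fixing.
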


To obtain Theorem~\ref{thm:container-mono}, we simply apply Theorem~\ref{thm:container} to the $(0, k)$-uniform hypergraph with the same vertex set as $\HH$ whose edges are all pairs $(\emptyset, A)$ such that $A$ is an edge of~$\HH$. We shall spell out a few more details at the end of Section~\ref{sec:asym:containers}.

\subsection{Robust balanced stability for induced $C_4$s}

\label{sec:robust-stability-intro}

In order to determine the structure of a typical graph in $\FnmCf$ using the container method, we ought to characterise all containers whose volume is (close to) the largest possible. Our containers for $\FnmCf$ will be cylinders in $\{0,1\}^{E(K_n)}$ that correspond to partitions $E(K_n) = E_0 \cup E_1 \cup E_*$ with the following property: There are only few $4$-vertex subsets $\{v_1, v_2, v_3, v_4\}$ such that $v_1v_2, v_3v_4 \in E_0 \cup E_*$ and $v_1v_3, v_1v_4, v_2v_3, v_2v_4 \in E_1 \cup E_*$. Each such set $\{v_1, v_2, v_3, v_4\}$ induces a copy of $C_4$ in some graph described by the partition $E(K_n) = E_0 \cup E_1 \cup E_*$.\footnote{These are all $G$ such that $E_1 \subseteq E(G) \subseteq E_1 \cup E_*$ and $E_0 \subseteq E(K_n) \setminus E(G) \subseteq E_0 \cup E_*$.} Since we are interested only in graphs with exactly $m$ edges, the volume of a container is simply the number of graphs with $m$ edges that this cylinder contains, that is, $\binom{|E_*|}{m-|E_1|}$. The precise statements of our results are rather technical, but roughly speaking we show that each container whose volume is close to largest possible has the following structure: the graph $E_1$ contains an `almost-complete' graph with vertex set $W$, and most edges in $E_*$ have an endpoint in $W$.

To avoid excessive use of indices, we shall view partitions of $E(K_n)$ of the above type as partial two-colourings of the edges of $K_n$ that we shall call pregraphs. More precisely, by a \emph{pregraph} $\cP$ of $E(K_n)$ we will mean a pair $(M,E)$ of disjoint subsets of $E(K_n)$. We shall refer to the elements of the set $E$ as \emph{edges} and the elements of the set $M$ as \emph{mixed edges}.\footnote{The sets $E$ and $M$ correspond to the sets $E_1$ and $E_*$ above, respectively.} A~\emph{good copy} of $C_4$ in $\cP$ is a copy of $C_4$ in $M$ whose vertex set is independent in $E$. Note that each good copy of $C_4$ corresponds to a set $\{v_1, v_2, v_3, v_4\}$ described in the previous paragraph (but not vice-versa). This means, in particular, that the pregraph corresponding to each container contains only few good copies of $C_4$. We shall therefore restrict our attention to characterising pregraphs with few good copies of $C_4$. As we will later see, a sufficiently precise and useful characterisation of containers can be derived from a robust stability theorem for pregraphs, which we state here in an abbreviated form; for the full statement, we refer the reader to Section~\ref{sec:robust-stability}. We will say that a graph $G$ is \emph{$\eps$-close to $K_\ell$} if one can transform $G$ into $K_\ell$ by adding or deleting at most $\eps \binom{\ell}{2}$ edges.

\begin{thm}
  \label{thm:robust-stability}
  For every $\eps > 0$ there exist positive constants $C$, $\delta$, and $\beta$ such that the following holds for all integers $\ell$ and $n$ with $\ell \ge C \sqrt{n}$. Let $\cP = (M,E)$ be a pregraph on $n$ vertices with 
  \[
    |E| \le \binom{\ell}{2} \qquad \textup{and} \qquad |M| \ge (1 - \delta)\ell n.
  \]
  Then either $E$ is $\eps$-close to $K_\ell$ or $\cP$ contains at least $\beta \ell^4$ good copies of $C_4$.
\end{thm}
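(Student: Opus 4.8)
The plan is to prove the contrapositive: assuming $\cP = (M,E)$ has fewer than $\beta\ell^4$ good copies of $C_4$, deduce that $E$ is $\eps$-close to $K_\ell$, with $C$ large and $\delta,\beta$ small in terms of $\eps$. Write $c(u,v) := |N_M(u) \cap N_M(v)|$ for the number of common $M$-neighbours of a pair $u \ne v$, and put
\[
  N := \sum_{\{u,v\}} \binom{c(u,v)}{2}
  \qquad\text{and}\qquad
  \sigma := \sum_{\{u,v\}\in E} \binom{c(u,v)}{2},
\]
the first sum over all pairs of vertices. Here $N$ counts the pairs (copy of $C_4$ in $M$, one of its two diagonals), so it equals twice the number of copies of $C_4$ in $M$ (up to the usual overcounting from $K_4$'s and $K_4^-$'s), and a copy of $C_4$ in $M$ is good exactly when neither of its two diagonals lies in $E$ --- the four cycle-pairs lie in $M$ and hence avoid $E$ automatically. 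The first ingredient is supersaturation: by the standard Kővári--Sós--Turán convexity estimate (first $\sum_{\{u,v\}} c(u,v) = \sum_v \binom{d_M(v)}{2} \ge n\binom{2|M|/n}{2}$, then Jensen applied to $x \mapsto \binom{x}{2}$), together with $|M| \ge (1-\delta)\ell n$ and $\ell \ge C\sqrt n$ --- which makes the relative error terms, of order $1/C$ and $1/\ell$, negligible --- one gets $N \ge (4 - o(1))(1-\delta)^4 \ell^4 \ge 3\ell^4$, say.

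I then reduce to a statement about $E$ alone. A short inclusion--exclusion over the two diagonals of each copy of $C_4$ in $M$ yields the exact identity $2 \cdot \#\{\text{good copies}\} = N - 2\sigma + 2b$, where $b \ge 0$ counts the copies of $C_4$ in $M$ both of whose diagonals lie in $E$. With the hypothesis and the supersaturation bound this gives $\sigma \ge \tfrac12 N - \beta\ell^4 \ge (\tfrac32 - \beta)\ell^4$, while $|E| \le \binom{\ell}{2}$: so $E$ has at most $\binom{\ell}{2}$ edges, yet these edges carry $\binom{c(\cdot,\cdot)}{2}$-weight at least $(\tfrac32 - \beta)\ell^4$. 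A complementary bound comes from a ``triangle trick'': for a \emph{non}-edge $\{u,v\}$ of $E$, every pair inside $N_M(u) \cap N_M(v)$ that is not an edge of $E$ is the other diagonal of a good copy with diagonal $\{u,v\}$, so $\binom{c(u,v)}{2} = e\big(E[N_M(u) \cap N_M(v)]\big) + g(u,v) \le \binom{\ell}{2} + g(u,v)$, where $g(u,v)$ is the number of good copies with diagonal $\{u,v\}$ and $\sum_{\{u,v\} \notin E} g(u,v) = 2 \cdot \#\{\text{good copies}\} < 2\beta\ell^4$. Hence all but at most $2\beta\ell^2$ of the non-edges $\{u,v\}$ of $E$ satisfy $c(u,v) \le 2\ell$.

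It remains to turn the weighted statement into clique structure. Since $|E| \le \binom{\ell}{2}$, it suffices to find a set $W$ of exactly $\ell$ vertices for which $E[W]$ has at least $(1-\eps/2)\binom{\ell}{2}$ edges: the number of edges of $E$ missing from $\binom{W}{2}$ is then at most $(\eps/2)\binom{\ell}{2}$, and --- because $e(E) \le \binom{\ell}{2}$ --- so is the number of edges of $E$ meeting $V \setminus W$, so $E$ is $\eps$-close to $K_\ell$. To produce $W$ I would first discard the ``light'' edges of $E$, those with $c(e) \le 2\ell$: they contribute at most $\binom{\ell}{2}\binom{2\ell}{2} < \ell^4$ to $\sigma$, so the remaining ``heavy'' edges carry $\binom{c(\cdot,\cdot)}{2}$-weight at least $(\tfrac12 - \beta)\ell^4$; in particular heavy edges exist, each lying in a common $M$-neighbourhood of size at least $2\ell$. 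I then expect to show, by a local argument that again invokes the triangle trick (for non-edges of $E$ inside or near these large common neighbourhoods) together with the supersaturation count of the first paragraph, that the set of vertices incident to heavy edges of $E$ induces an almost-clique in $E$; since $e(E) \le \binom{\ell}{2}$, that almost-clique spans at most $(1 + o(1))\ell$ vertices, and truncating or padding it to size exactly $\ell$ yields the required $W$.

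The main obstacle is this last step: converting ``$|E| \le \binom{\ell}{2}$ but $\sigma \ge (\tfrac32 - \beta)\ell^4$'' into the existence of a near-clique of roughly $\binom{\ell}{2}$ edges in $E$. The difficulty --- precisely the asymmetry that this paper sets out to exploit --- is that an edge of $E$ may have a common $M$-neighbourhood as large as $n \gg \ell$, so no purely local degree or convexity argument localises $E$ onto $\ell$ vertices. One genuinely has to couple the triangle trick (which bounds the common $M$-degree of a non-edge of $E$) with the supersaturation bound for $M$ (which prevents $M$ from concentrating its copies of $C_4$ on a few very dense pairs) to force the weight carried by $E$ onto a small, almost-complete set of vertices; getting this to work with usable constants, probably via the sharper quantitative form of the theorem stated in Section~\ref{sec:robust-stability}, is where the real work lies.
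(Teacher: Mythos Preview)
Your setup is correct and elegant: the supersaturation bound $N \ge (4-o(1))(1-\delta)^4\ell^4$, the identity $2\cdot\#\{\text{good}\} = N - 2\sigma + 2b$, and the triangle trick bounding $\binom{c(u,v)}{2} \le \binom{\ell}{2} + g(u,v)$ for non-edges of $E$ are all valid (the key observation that the four cycle-pairs, being in $M$, automatically avoid $E$ is exactly right). However, as you yourself flag, the final step --- passing from ``$\sigma \ge (\tfrac32 - \beta)\ell^4$ on at most $\binom{\ell}{2}$ edges, and most non-edges of $E$ have $c \le 2\ell$'' to ``$E$ is close to $K_\ell$'' --- is a genuine gap, and I do not see how to close it along the lines you sketch. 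The obstruction is concrete: a single edge of $E$ can carry $\binom{c}{2}$-weight up to $\binom{n}{2} \le \ell^4/(2C^4)$, so the constraint on $\sigma$ forces only $\Omega(C^4)$ heavy edges, far short of a near-clique on $\ell$ vertices; and the triangle trick controls $c$ only on \emph{non}-edges of $E$, so it says nothing about how heavy $E$-edges are arranged. Your proposed ``local argument'' in the last paragraph is not an argument but a hope.

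The paper takes a completely different route: it never tries to reconstruct the near-clique from counting information. Instead it builds good copies directly, assuming $E$ is \emph{not} $\eps$-close to $K_\ell$. The scheme is: (i) sample a random set $R$ and use Caro--Wei to extract an $E$-independent set $I \subseteq R$; (ii) count ``good cherries'' --- copies of $K_{1,2}$ in $M$ centred at vertices of $I$ whose leaf-pair is not in $E$; (iii) pair up good cherries sharing their leaves to form good $C_4$'s (two centres in $I$ guarantees all six pairs avoid $E$). The hypothesis that $E$ is far from $K_\ell$ enters only to ensure that an $M$-neighbourhood of size $\approx \ell$ contains many non-$E$ pairs, hence many good cherries. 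The real work is a case analysis on the $E$-degree sequence: if many vertices have $E$-degree noticeably below average, a sharpened convexity bound makes $|I|$ large enough; otherwise the high-$E$-degree vertices are localised to a small set and two auxiliary supersaturation propositions (random sampling with non-uniform probabilities) handle the subcases. So where you try to read off structure of $E$ from a weighted inequality, the paper exploits the \emph{failure} of structure to manufacture the copies by hand.
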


Observe that Theorem~\ref{thm:robust-stability} provides a structural characterisation of all those pregraphs $(M,E)$ on $n$ vertices with $|E| \le \binom{\ell}{2}$ and $|M| \ge (1-o(1))\ell n$ for some $\ell \gg \sqrt{n}$ that contain only $o(\ell^4)$ good copies of $C_4$. For each such pregraph $(M, E)$, there is a set $U$ of $\ell$ vertices on which $E$ is almost complete. Moreover, all but $o(\ell n)$ mixed edges have an endpoint in~$U$. Indeed, if some $\Omega(\ell n)$ mixed edges did not have an endpoint in $U$, then Theorem~\ref{thm:robust-stability} applied to the pregraph induced by the complement of $U$ would produce $\Omega(\ell^4)$ good copies of $C_4$.

\subsection{Organisation of the paper}
\label{sec:organisation-paper}

The rest of the paper is organised as follows. In Section~\ref{sec:asym:containers} we prove the asymmetric container lemma, in Section~\ref{sec:robust-stability} we prove Theorem~\ref{thm:robust-stability},  in Section~\ref{sec:non-structured-regime} we prove the lower bounds in Theorem~\ref{thm:asymC4free}, and in Section~\ref{sec:C4s:proof} we complete the proof of Theorem~\ref{thm:asymC4free} and Corollary~\ref{cor:GnpC4free}. Finally, in Section~\ref{sec:open:problems} we discuss some open questions and further applications of the asymmetric container lemma. 

%\pagebreak

\section{The proof of the asymmetric container lemma}\label{sec:asym:containers}

\subsection{Proof outline}
\label{sec:proof-outline}

Our proof of Theorem~\ref{thm:container} follows the general strategy of~\cite{BaMoSa15}. Namely, we construct a function $f^* \colon \FF(\HH) \to \{0,1,*\}^{V(\HH)}$ that satisfies the following two conditions for every $h \in \FF(\HH)$. Writing $f^*_h$ as a shorthand for $f^*(h)$, the two conditions are:
\begin{enumerate}[label={(\textit{\alph*})}]
\item
  \label{item:outline-f*h-1}
  $h$ belongs to the cylinder $f^*_h$,
\item
  \label{item:outline-f*h-2}
  $|(f^*_h)^{-1}(0)| \ge \delta v(\HH)$ or $|(f^*_h)^{-1}(1)| \ge \delta r$,
\end{enumerate}
cf.~\ref{item:container-1} and \ref{item:container-2} in the statement of Theorem~\ref{thm:container}. Crucially, the function $f^*$ takes only at most $\binom{v(\HH)}{\le k_0b} \cdot \binom{v(\HH)}{\le k_1b}$ different values. This last property is a simple consequence of the fact that the algorithmic construction of $f^*$ can be encoded as a sequence of decisions that naturally correspond to a pair of subsets of $V(\HH)$ containing at most $k_0b$ and $k_1b$ elements, respectively. In particular, we shall obtain an implicit decomposition $f^* = f \circ g$ promised in Theorem~\ref{thm:container}.

The function $f^*$ is constructed by an algorithm that operates in a sequence of at most $k_0+k_1-1$ rounds. At the beginning of each round, we are given an $(i_0, i_1)$-uniform hypergraph $\GG$ with the same vertex set as $\HH$ and such that $h \in \FF(\GG)$; at the beginning of the first round, $(i_0, i_1) = (k_0, k_1)$ and $\GG = \HH$. We let $(i_0', i_1') = (i_0, i_1-1)$ if $i_1 > 0$ and let $(i_0', i_1') = (i_0-1, i_1) = (i_0-1, 0)$ otherwise. By the end of the round, we will have either (i)~defined a function $f^*_h \colon V(\HH) \to \{0,1,*\}$ satisfying both~\ref{item:outline-f*h-1} and~\ref{item:outline-f*h-2} above, or (ii)~constructed an $(i_0', i_1')$-uniform hypergraph $\GG^*$ with $V(\GG^*) = V(\HH)$ and such that $h \in \FF(\GG^*)$ whose maximum degrees satisfy conditions akin to the conditions on the maximum degrees of $\HH$ given by~\eqref{eq:container-Del}. This is achieved in the following way.

We start with $\GG^*$ empty and $f^*_h \equiv *$. We set $c = 1$ if $i_1 > 0$ and $c = 0$ otherwise, so $i_c' = i_c - 1$. Our algorithm considers a sequence of questions of the form ``Is $h(v) = c$?''~for some carefully chosen (sequence of) vertices $v \in V(\HH)$. If the answer is \YES, then we set $f^*_h(v) = c$ and, more importantly, we add new $(i_0', i_1')$-uniform constraints to $\GG^*$ in the following way. As $h(v) = c$, if $h$ satisfies a constraint\footnote{Recall from Section~\ref{sec:asymmetric-container-lemma} that $h$ satisfies the constraint $(A_0, A_1)$ if an only if $f$ does \emph{not} simultaneously take only the value $0$ on $A_0$ and only the value $1$ on $A_1$; equivalently, $f$ either takes the value $1$ on some element of $A_0$ or the value $0$ on some element of $A_1$.} $(A_0, A_1)$ with $v \in A_c$, then it also satisfies the constraint $(A_0', A_1')$ defined by $A_c' = A_c \setminus \{v\}$ and $A_{1-c}' = A_{1-c}$. In view of this, for each $(A_0, A_1) \in \GG$ with $v \in A_c$, we add to $\GG^*$ the corresponding $(A_0', A_1')$. If the answer is \NO, then we only set $f^*_h(v) = 1-c$. (We thus choose to ignore all the constraints $(A_0, A_1) \in \GG$ such that $v \in A_{1-c}$.) The round ends when either the number of \YES\ answers reaches $b$ or if no constraints remain involving only the vertices that we have not yet asked about. Our assumptions on the maximum degrees of the hypergraph $\GG$ imply that in the latter case, the number of \NO\ answers will be sufficiently large to deduce that $|(f^*_h)^{-1}(1-c)|$ is sufficiently large (that is, at least $\delta v(\HH)$ if $c=1$ and at least $\delta r$ if $c = 0$). If this does not happen (and hence the number of \YES\ answers reaches $b$), then we shall be able to show that the hypergraph $\GG^*$, which we have created based on the \YES\ answers, contains a subhypergraph with sufficiently many edges, whose maximum degrees satisfy the required conditions. In this case, we let $\GG \leftarrow \GG^*$ and $(i_0, i_1) \leftarrow (i_0', i_1')$ and proceed to the next round.

Since, as noted before, no function satisfies the empty constraint $(\emptyset, \emptyset)$, it follows that in the round when $i_0 + i_1 = 1$, no \YES\ answers can be given. (Otherwise, a non-empty $(0,0)$-uniform hypergraph $\GG^*$ with $h \in \FF(\GG^*)$ would be constructed.) In particular, the function $f^*_h$ will have to be defined in this round, provided that the algorithm reaches it. 

Even though the sequence of values of $c$ that we choose (i.e., we let $c  = 1$ as long as $i_1$ is not yet zero) may seem somewhat arbitrary, it has a very important consequence. Namely, if $\GG$ is an $(i_0, 0)$-uniform hypergraph with $V(\GG) = V(\HH)$ and $h \in \FF_{\le m}(\GG)$, then there must be a vertex $v \in V(\GG)$ such that $\deg_{\GG}(\{v\}, \emptyset) \ge e(\GG) / m$. Indeed, the set $h^{-1}(1)$ has at most $m$ elements and it has to intersect $A_0$ for each $(A_0, \emptyset) \in \GG$. Note that if $m \ll v(\HH)$, then $e(\GG) / m$ is much larger than the average degree of $\GG$. This simple observation is the reason why restricting to the family $\FF_{\le m}(\HH)$ allows us to create a smaller family of containers.

Finally, since each of the questions asked by the algorithm is a \YES/\NO\ question, we may encode the execution of the algorithm, and thus also the function $f^*$, as a set of at most $(k_0+k_1-1) \cdot b$ vertices for which the answer was \YES.

We conclude this outline with an important technical remark. Throughout this section we allow all of our hypergraphs to contain edges with multiplicities greater than one. Moreover, when computing various degrees $\deg(\cdot,\cdot)$ or cardinalities $e(\cdot)$ of the edge sets of various hypergraphs, we shall always count edges with multiplicities. As first discovered by Saxton and Thomason in~\cite{SaTh15} and later reiterated in~\cite{BaMoSa15}, this seemingly insignificant detail has far-reaching consequences in both the statement and the proof of the container theorems.

\subsection{Setup}

Let $k_0$ and $k_1$ be nonnegative integers and let $K$ be a positive real. Let $b$, $m$, and $r$ be positive integers and suppose that $\HH$ is a $(k_0, k_1)$-uniform hypergraph satisfying~\eqref{eq:container-Del} for every pair $(\ell_0, \ell_1)$ as in the statement of Theorem~\ref{thm:container}. We claim that without loss of generality we may assume that $b \le m \le v(\HH)$. Indeed, if $m > v(\HH)$, then we may replace $m$ with $v(\HH)$ as $\FF_{\le m} \subseteq \FF(\HH) = \FF_{\le v(\HH)}(\HH)$ and the right-hand side of~\eqref{eq:container-Del} is a non-increasing function of $m$. If $b > v(\HH) \ge m$, then we may replace $b$ with $v(\HH)$. This is because $\binom{V(\HH)}{\le k_i b} = \binom{V(\HH)}{\le k_i v(\HH)}$ and the assumed upper bounds on the maximum degrees of $\HH$ remain true even after we replace $b$ with $v(\HH)$. Indeed, if $\ell_0 > 0$, then for every $\ell_1 \in \{0, \dotsc, k_1\}$,
\[
  \Delta_{(\ell_0, \ell_1)}(\HH) \le \Delta_{(1, 0)}(\HH) \le K \cdot \frac{e(\HH)}{r} \le K \cdot \frac{v(\HH)^{\ell_0+\ell_1-1}}{r \cdot m^{\ell_0-1} \cdot v(\HH)^{\ell_1}} \cdot e(\HH), 
\]
as $v(\HH) \ge m$, and if $\ell_0 = 0$, then for every $\ell_1 \in \{1, \dotsc, k_1\}$,
\[
  \Delta_{(0, \ell_1)}(\HH) \le \Delta_{(0,1)}(\HH) \le K \cdot \frac{e(\HH)}{v(\HH)} = K \cdot \frac{v(\HH)^{\ell_1-1}}{v(\HH)^{\ell_1}} \cdot e(\HH).
\]
Finally, if $v(\HH) \ge b > m$, then we may replace $m$ with $b$, since $\FF_{\le m}(\HH) \subseteq \FF_{\le b}(\HH)$, the bound on $\Delta_{(0, \ell_1)}(\HH)$ in~\eqref{eq:container-Del} does not depend on $m$, and if $\ell_0 > 0$, then
\[
  \Delta_{(\ell_0, \ell_1)}(\HH) \le \Delta_{(1,\ell_1)}(\HH) \le K \cdot \frac{b^{\ell_1}}{r \cdot v(\HH)^{\ell_1}} \cdot e(\HH) = K \cdot \frac{b^{\ell_0+\ell_1-1}}{r \cdot b^{\ell_0-1} \cdot v(\HH)^{\ell_1}} \cdot e(\HH)
\]
for every $\ell_1 \in \{0, \dotsc, k_1\}$.

We shall be working only with hypergraphs whose uniformities come from the set 
\[
  \UU := \big\{ (1,0), (2,0), \ldots, (k_0,0), (k_0,1), \ldots, (k_0,k_1) \big\}.
\]
We now define a collection of numbers that will be upper bounds on the maximum degrees of the hypergraphs constructed by our algorithm. To be more precise, for each $(i_0, i_1) \in \UU$ and all $(\ell_0, \ell_1)$, we shall force the maximum $(\ell_0, \ell_1)$-degree of the $(i_0, i_1)$-uniform hypergraph not to exceed the quantity $\Del$, defined as follows.

\begin{defn}
  \label{dfn:Delta}
  For every $(i_0, i_1) \in \UU$ and every $(\ell_0, \ell_1) \in \{0, \ldots, i_0\} \times \{0, \ldots, i_1\}$ with $(\ell_0, \ell_1) \neq (0,0)$, we define the number $\Del$ using the following recursion:
  \begin{enumerate}[label=(\arabic*)]
  \item 
    Set $\Delta_{(\ell_0, \ell_1)}^{(k_0,k_1)} := \Delta_{(\ell_0, \ell_1)}(\HH)$ for all $(\ell_0, \ell_1) \in \{0, \ldots, k_0\} \times \{0, \ldots, k_1\} \setminus \{(0,0)\}$.\smallskip
  \item
    If $i_0 = k_0$ and $0 \le i_1 < k_1$, then
    \[
      \Del := \max \left\{ 2 \cdot \Delf{}{+1}{}{+1}, \, \frac{b}{v(\HH)} \cdot \Delf{}{}{}{+1} \right\}.
    \]
  \item
    If $0 < i_0 < k_0$ and $i_1 = 0$, then
    \[
      \Del := \max \left\{ 2 \cdot \Delf{+1}{}{+1}{}, \, \frac{b}{m} \cdot \Delf{}{}{+1}{} \right\}.
    \]
  \end{enumerate}
\end{defn}

The above recursive definition will be convenient in some parts of our analysis. In other parts, we shall require the following explicit formula for $\Del$, which one easily derives from Definition~\ref{dfn:Delta} using a straightforward induction on $k_0 + k_1 - i_0 - i_1$.

\begin{obs}
  \label{obs:Delta}
  For all $i_0$, $i_1$, $\ell_0$, and $\ell_1$ as in Definition~\ref{dfn:Delta},
  \[
    \Del = \max \left\{ 2^{d_0+d_1} \left(\frac{b}{v(\HH)}\right)^{k_1-i_1-d_1} \left(\frac{b}{m}\right)^{k_0-i_0-d_0} \Delta_{(\ell_0+d_0, \ell_1+d_1)}(\HH) \scolon 0 \le d_j \le k_j-i_j \right\}.
  \]
\end{obs}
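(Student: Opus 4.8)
The plan is to prove the identity by a straightforward induction on the quantity $k_0 + k_1 - i_0 - i_1$, unwinding the recursion in Definition~\ref{dfn:Delta} one step at a time. For brevity, denote by $R^{(i_0,i_1)}_{(\ell_0,\ell_1)}$ the right-hand side of the claimed identity, i.e.
\[
  R^{(i_0,i_1)}_{(\ell_0,\ell_1)} = \max \left\{ 2^{d_0+d_1} \left(\frac{b}{v(\HH)}\right)^{k_1-i_1-d_1} \left(\frac{b}{m}\right)^{k_0-i_0-d_0} \Delta_{(\ell_0+d_0, \ell_1+d_1)}(\HH) \scolon 0 \le d_j \le k_j-i_j \right\},
\]
so that the goal is $\Del = R^{(i_0,i_1)}_{(\ell_0,\ell_1)}$ for every $(i_0,i_1) \in \UU$ and every admissible pair $(\ell_0,\ell_1) \neq (0,0)$. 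The base case $i_0+i_1 = k_0+k_1$, i.e.\ $(i_0,i_1)=(k_0,k_1)$, is immediate: then $k_j - i_j = 0$ forces $d_0 = d_1 = 0$, and $R^{(k_0,k_1)}_{(\ell_0,\ell_1)} = \Delta_{(\ell_0,\ell_1)}(\HH)$, which is exactly the value assigned by part~(1) of Definition~\ref{dfn:Delta}.

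For the inductive step, I would fix $(i_0,i_1) \in \UU$ with $i_0 + i_1 < k_0 + k_1$ and assume the identity for all elements of $\UU$ with a strictly larger index sum. By the structure of $\UU$ there are exactly two cases. If $i_0 = k_0$ and $i_1 < k_1$, I would expand $\Del$ using clause~(2) of Definition~\ref{dfn:Delta}, apply the inductive hypothesis to the pair $(i_0,i_1+1) \in \UU$ inside each of the two terms $\Delf{}{+1}{}{+1}$ and $\Delf{}{}{}{+1}$ (legitimately: for the first the lower pair $(\ell_0,\ell_1+1)$ is never $(0,0)$ since $\ell_1+1 \ge 1$, and for the second it equals $(\ell_0,\ell_1) \neq (0,0)$ by hypothesis), and then re-index. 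Since $k_0 - i_0 = 0$, the exponent of $b/m$ is identically $1$ and $d_0$ is forced to be $0$ throughout; the factor $2$ multiplying $\Delf{}{+1}{}{+1}$ together with the shift $\ell_1 \mapsto \ell_1+1$ produces, after the substitution $d_1 \mapsto d_1+1$, exactly the terms of $R^{(i_0,i_1)}_{(\ell_0,\ell_1)}$ with $1 \le d_1 \le k_1-i_1$, while the factor $b/v(\HH)$ multiplying $\Delf{}{}{}{+1}$ produces exactly those with $0 \le d_1 \le k_1-i_1-1$. Taking the maximum of the two recovers the full range $0 \le d_1 \le k_1 - i_1$, as required.

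The case $0 < i_0 < k_0$ and $i_1 = 0$ is entirely analogous, using clause~(3) of Definition~\ref{dfn:Delta} and the inductive hypothesis at $(i_0+1,0) \in \UU$. Here $i_1 = 0$ forces $\ell_1 = 0$, so $(\ell_0,\ell_1) \neq (0,0)$ gives $\ell_0 \ge 1$ and hence $(\ell_0,0) \neq (0,0)$, which is what permits invoking the hypothesis on the term $\Delf{}{}{+1}{}$. The factor $2$ and the shift in $\Delf{+1}{}{+1}{}$ yield, after $d_0 \mapsto d_0+1$, the terms of $R^{(i_0,i_1)}_{(\ell_0,\ell_1)}$ with $1 \le d_0 \le k_0-i_0$, and the factor $b/m$ in $\Delf{}{}{+1}{}$ yields those with $0 \le d_0 \le k_0-i_0-1$; in both, $d_1$ ranges over $\{0,\dots,k_1\} = \{0,\dots,k_1-i_1\}$, and the maximum of the two is $R^{(i_0,i_1)}_{(\ell_0,\ell_1)}$.

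There is no real obstacle here: the content is entirely bookkeeping, as the observation merely records the solution of the recursion. The only points that require a little care are (i) confirming that every uniformity one recurses into — namely $(k_0,i_1+1)$ in the first case and $(i_0+1,0)$ in the second — indeed belongs to $\UU$, so that the inductive hypothesis is available; (ii) checking the side condition $(\ell_0',\ell_1') \neq (0,0)$ before each application of the hypothesis; and (iii) performing the substitutions $d_j \mapsto d_j+1$ carefully enough that the two parameter ranges one obtains glue together to precisely $\{0,\dots,k_0-i_0\}\times\{0,\dots,k_1-i_1\}$ with the correct exponents of $b/v(\HH)$ and $b/m$.
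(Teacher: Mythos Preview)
Your proposal is correct and follows exactly the approach the paper itself indicates: a straightforward induction on $k_0+k_1-i_0-i_1$ that unwinds the recursion of Definition~\ref{dfn:Delta}. The paper does not spell out the details but merely asserts that the formula ``one easily derives from Definition~\ref{dfn:Delta} using a straightforward induction on $k_0 + k_1 - i_0 - i_1$''; your argument is precisely this induction, with the bookkeeping (membership of the recursed-into pair in $\UU$, the side condition $(\ell_0',\ell_1') \neq (0,0)$, and the re-indexing $d_j \mapsto d_j+1$) handled carefully.
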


For future reference, we note the following two simple corollaries of Observation~\ref{obs:Delta} and our assumptions on the maximum degrees of $\HH$, see~\eqref{eq:container-Del}. Suppose that $(i_0, i_1) \in \UU$. If $i_1 > 0$, then necessarily $i_0 = k_0$ and hence,
\begin{equation}
  \label{eq:Delta01}
  \begin{split}
    \Delta_{(0,1)}^{(i_0,i_1)} & \le \max \left\{ 2^{d_1} \left(\frac{b}{v(\HH)}\right)^{k_1-i_1-d_1} K \cdot \frac{b^{d_1}}{v(\HH)^{d_1+1}} \cdot e(\HH) \scolon 0 \le d_1 \le k_1 - i_1\right\} \\
    & \le 2^{k_1}K \left(\frac{b}{v(\HH)}\right)^{k_1-i_1} \frac{e(\HH)}{v(\HH)} = 2^{k_1}K \left(\frac{b}{v(\HH)}\right)^{k_1-i_1} \left(\frac{b}{m}\right)^{k_0-i_0} \frac{e(\HH)}{v(\HH)}.
  \end{split}  
\end{equation}
Moreover, if $i_0 > 0$ then 
\begin{equation}
  \label{eq:Delta10}
  \begin{split}
    \Delta_{(1,0)}^{(i_0,i_1)} & \le \max \left\{ 2^{d_0+d_1} \left(\frac{b}{v(\HH)}\right)^{k_1-i_1-d_1}\left(\frac{b}{m}\right)^{k_0-i_0-d_0} K \cdot \frac{b^{d_0+d_1}}{m^{d_0} \cdot v(\HH)^{d_1}} \cdot \frac{e(\HH)}{r} \right\}\\
    & \le 2^{k_0+k_1}K \left(\frac{b}{v(\HH)}\right)^{k_1-i_1} \left(\frac{b}{m}\right)^{k_0-i_0} \frac{e(\HH)}{r},
  \end{split}
\end{equation}
where the maximum is over all pairs $(d_0, d_1)$ of integers satisfying $0 \le d_j \le k_j - i_j$.

\begin{defn}
  \label{dfn:MDel}
  Given $(i_0, i_1) \in \UU$, $(\ell_0, \ell_1) \in \{0, \ldots, i_0\} \times \{0, \ldots, i_1\}$ with $(\ell_0, \ell_1) \neq (0,0)$, and an $(i_0, i_1)$-uniform hypergraph $\GG$, we define
  \[
    \MDel(\GG) = \left\{ (T_0, T_1) \in \binom{V(\GG)}{\ell_0} \times \binom{V(\GG)}{\ell_1} \scolon \deg_\GG(T_0,T_1) \ge \frac{1}{2} \cdot \Del \right\}.
  \]
\end{defn}

Finally, let us say that $c \in \{0,1\}$ is \emph{compatible} with $(i_0, i_1) \in \UU$ if the unique pair $(i_0', i_1')  \in \UU \cup \{(0,0)\}$ with $i_0' + i_1' = i_0 + i_1 - 1$ satisfies $i_c' = i_c - 1$ (and $i_{1-c}' = i_{1-c}$). By the definition of $\UU$, it follows that $1$ is compatible with $(i_0, i_1) \in \UU$ if and only if $i_1 > 0$.

\subsection{The algorithm}
\label{sec:algorithm}

We shall now define precisely a single round of the algorithm that we described informally in Section~\ref{sec:proof-outline}. To this end, fix some $(i_0, i_1) \in \UU$ and a compatible $c \in \{0,1\}$ and (as in the definition of a compatible $c$) set
\begin{equation}
  \label{eq:i-prime}
  i_c' = i_c - 1 \qquad \text{and} \qquad i_{1-c}' = i_{1-c}.
\end{equation}
Suppose that $\GG$ is an $(i_0, i_1)$-uniform hypergraph with $V(\GG) = V(\HH)$. A single round of the algorithm takes as input an arbitrary $h \in \FF(\GG)$ and outputs an $(i_0', i_1')$-uniform hypergraph $\GG^*$ satisfying $V(\GG^*) = V(\GG)$ and $h \in \FF(\GG^*)$ as well as a~set of vertices of $\GG$ on which $h$ takes the value $c$ at most $b$ times. Crucially, the number of possible outputs of the algorithm (over all possible input functions $h \in \FF(\GG)$) is at most $\binom{v(\HH)}{\le b}$.

Assume that there is an implicit linear order $\preccurlyeq$ on $V(\GG)$. The \emph{$c$-maximum vertex} of a hypergraph $\cA$ with $V(\cA) = V(\GG)$ is the $\preccurlyeq$-smallest vertex among those $v$ that maximise $|\{(A_0, A_1) \in \cA \scolon v \in A_c\}|$.

\medskip

\noindent
\textbf{The algorithm.}
Set $\cA^{(0)} := \GG$, let $S$ be the empty set, and let $\GGn^{(0)}$ be the empty $(i_0',i_1')$-uniform hypergraph on $V(\GG)$. Do the following for each integer $j \ge 0$ in turn:
\begin{enumerate}[label={(S\arabic*)}]
\item
  \label{item:alg-stop}
  If $|S| = b$ or $\cA^{(j)}$ is empty, then set $J := j$ and \STOP.
\item
  \label{item:alg-c-maximum}
  Let $v_j$ be the $c$-maximum vertex of $\cA^{(j)}$.
\item
  \label{item:alg-main-step}
  If $h(v_j) = c$, then add $j$ to the set $S$ and let
  \[
    \GGn^{(j+1)} := \GGn^{(j)} \cup \Big\{ \big( A_0 \setminus \{v_j\},  A_1 \setminus \{v_j\}\big) \scolon (A_0, A_1) \in \cA^{(j)} \text{ and } v_j \in A_c \Big\}.
  \]
\item
  \label{item:alg-cleanup}
  Let $\cA^{(j+1)}$ be the hypergraph obtained from $\cA^{(j)}$ by removing from it all pairs $(A_0, A_1)$ such that either of the following hold:
  \begin{enumerate}[label={(\alph*)}]
  \item
    \label{item:cleanup-1}
    $v_j \in A_c$;
  \item
    \label{item:cleanup-2}
    there exist $T_0 \subseteq A_0$ and $T_1 \subseteq A_1$, not both empty, such that 
    \[
      (T_0, T_1) \in \MDelp\big( \GGn^{(j+1)} \big)
    \]
 for some $\ell_0 \in \{ 0, \ldots, i_0'\}$ and $\ell_1 \in \{0, \ldots, i_1'\}$.
  \end{enumerate}
\end{enumerate}
Finally, set $\cA := \cA^{(J)}$ and $\GGn := \GGn^{(J)}$. Moreover, set
\[
  W := \big\{ 0, \dotsc, J-1 \big\} \setminus S = \Big\{ j \in \big\{ 0, \dotsc, J-1 \big\} \scolon h(v_j) \neq c\Big\}.
\]

\smallskip

Observe that the algorithm always stops after at most $v(\GG)$ iterations of the main loop. Indeed, since all constraints $(A_0, A_1)$ with $v_j \in A_c$ are removed from $\cA^{(j+1)}$ in part~\ref{item:cleanup-1} of step~\ref{item:alg-cleanup}, the vertex $v_j$ cannot be the $c$-maximum vertex of any $\cA^{(j')}$ with $j' > j$ and hence the map $\{0, \dotsc, J-1\} \ni j \mapsto v_j \in V(\GG)$ is injective. 

\subsection{The analysis}

We shall now establish some basic properties of the algorithm described in the previous subsection. To this end, let us fix some $(i_0, i_1) \in \UU$ and a compatible $c \in \{0,1\}$ and let $i_0'$ and $i_1'$ be the numbers defined in~\eqref{eq:i-prime}. Moreover, suppose that $\GG$ is an $(i_0, i_1)$-uniform hypergraph and that we have run the algorithm with input $h \in \FF(\GG)$ and obtained the $(i_0', i_1')$-uniform hypergraph $\GGn$, the integer $J$, the injective map $\{0, \dotsc, J-1\} \ni j \mapsto v_j \in V(\GG)$, and the partition of $\{0, \dotsc, J-1\}$ into $S$ and $W$ such that $h(v_j) = c$ if and only if $j \in S$. We first state two straightforward, but fundamental, properties of the algorithm.

\begin{obs}
  \label{obs:h-in-FF-Gn}
  If $h \in \FF(\GG)$, then $h \in \FF(\GGn)$.
\end{obs}

\begin{proof}
  Observe that $\GGn$ contains only constraints of the form:
  \begin{enumerate}[label={(\textit{\roman*})}]
  \item
    \label{item:constraint-i}
    $(A_0 \setminus \{v\}, A_1)$, where $v \in A_0$ and $h(v) = 0$, or
  \item
    \label{item:constraint-ii}
    $(A_0, A_1 \setminus \{v\})$, where $v \in A_1$ and $h(v) = 1$,
  \end{enumerate}
  where $(A_0, A_1) \in \GG$, see~\ref{item:alg-main-step}. Hence, if $h$ violated a constraint of type~\ref{item:constraint-i} (resp.~\ref{item:constraint-ii}) then $h$ would also violate the constraint $(A_0, A_1)$, as $h(v) = 0$ (resp.\ $h(v) = 1$).
\end{proof}

The next observation says that if the algorithm applied to two functions $h$ and $h'$ outputs the same set $\{v_j \scolon j \in S\}$, then the rest of the output is also the same. 

\begin{obs}
  \label{obs:number-of-containers}
Suppose that the algorithm applied to $h' \in \FF(\GG)$ outputs a hypergraph~$\GGn'$, an integer $J'$, a map $j \mapsto v_j'$, and a partition of $\{0, \dotsc, J'-1\}$ into $S'$ and $W'$. If $\{v_j \scolon j \in S\} = \{v_j' \scolon j \in S'\}$, then $\GGn = \GGn'$, $J = J'$, $v_j = v_j'$ for all $j$, and $W = W'$.
\end{obs}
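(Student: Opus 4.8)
The plan is to argue by induction on $j$ that the first $j$ iterations of the algorithm behave identically for $h$ and $h'$, in the sense that for every $j' < \min\{J, J'\}$ we have $v_{j'} = v_{j'}'$, $\cA^{(j')} = (\cA')^{(j')}$, $\GGn^{(j'+1)} = (\GGn')^{(j'+1)}$, and $j' \in S \iff j' \in S'$. Note first that the hypothesis $\{v_j : j \in S\} = \{v_j' : j \in S'\}$ means that the \textbf{set} of vertices receiving a \YES\ answer is the same for both runs; what we must show is that they are in fact processed in the same order and interleaved with the same \NO\ vertices, and that the resulting hypergraphs coincide. The key point making the induction go through is that at every step the algorithm's choice depends \emph{only} on the current auxiliary hypergraph $\cA^{(j)}$ and on the values of $h$ (resp.\ $h'$), and the auxiliary hypergraphs are determined by the history; moreover, which branch of step~\ref{item:alg-main-step} is taken at vertex $v_{j'}$ depends only on whether $v_{j'} \in \{v_j : j \in S\}$, because $h(v_{j'}) = c$ exactly when $v_{j'}$ is a \YES-vertex.

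Carrying this out: the base case is immediate since $\cA^{(0)} = \GG = (\cA')^{(0)}$ and $\GGn^{(0)} = (\GGn')^{(0)}$ is the empty hypergraph in both runs, and $S = W = \emptyset$ at the start. For the inductive step, assume the two runs agree through the end of iteration $j-1$; in particular $\cA^{(j)} = (\cA')^{(j)}$. If this common hypergraph is empty or $|S| = b$ has already been reached, then both runs stop, and since the histories agree we get $J = J'$, $W = W'$, and $\GGn = \GGn'$. Otherwise step~\ref{item:alg-c-maximum} selects $v_j$ as the $c$-maximum vertex of $\cA^{(j)}$, which is a deterministic function of $\cA^{(j)}$ and the fixed order $\preccurlyeq$, hence $v_j = v_j'$. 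Now by the injectivity of $j \mapsto v_j$ and the assumption that the \YES-vertex sets coincide, $v_j \in \{v_{j'} : j' \in S, j' < j\}$ is impossible (vertices are not repeated), so $v_j$ is a \YES-vertex for the run on $h$ iff $v_j \in \{v_{j'} : j' \in S'\}$, which happens iff it is a \YES-vertex for the run on $h'$. Thus $j \in S \iff j \in S'$, step~\ref{item:alg-main-step} updates $\GGn^{(j)}$ and $(\GGn')^{(j)}$ by the same rule applied to the same data, giving $\GGn^{(j+1)} = (\GGn')^{(j+1)}$, and step~\ref{item:alg-cleanup} removes the same constraints from $\cA^{(j)} = (\cA')^{(j)}$ (since both the condition~\ref{item:cleanup-1} involving $v_j$ and the condition~\ref{item:cleanup-2} involving $\MDelp(\GGn^{(j+1)})$ now refer to identical objects), so $\cA^{(j+1)} = (\cA')^{(j+1)}$. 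This completes the induction, and evaluating at the common stopping time yields $\GGn = \GGn'$, $J = J'$, $v_j = v_j'$ for all $j$, and $W = W'$.

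I do not anticipate a genuine obstacle here; the statement is essentially a formal consequence of the fact that the algorithm is deterministic once the set of \YES-answers is fixed. The only mildly delicate point worth stating carefully is the one above: that membership of $v_j$ in the \YES-set is equivalent to $h(v_j) = c$, which is what lets the equality of \YES-vertex sets (an a posteriori statement about the two full runs) be fed back into the step-by-step induction. One should also observe explicitly that the $c$-maximum vertex is well defined and history-determined (this is where the linear order $\preccurlyeq$ is used to break ties), so that there is no ambiguity in the claim ``$v_j = v_j'$.''
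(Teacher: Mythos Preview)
Your proof is correct and follows essentially the same approach as the paper, just spelled out in more detail. The paper's proof is a two-sentence observation that the only step of the algorithm depending on the input $h$ is~\ref{item:alg-main-step}, where $j$ is placed in $S$ exactly when $h(v_j)=c$, so the entire execution is determined by the set $\{v_j : j \in S\}$; your induction on $j$ is simply an explicit unpacking of this observation, and the ``mildly delicate point'' you flag (that $v_j$ lies in the \YES-set iff $h(v_j)=c$, via injectivity of $j\mapsto v_j$) is precisely the content the paper leaves implicit.
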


\begin{proof}
  The only step of the algorithm that depends on the input function $h$ is~\ref{item:alg-main-step}. There, an index $j$ is added to the set $S$ if and only if $h(v_j) = c$. Therefore, the execution of the algorithm depends solely on the set $\{v_j \scolon j \in S\}$.
\end{proof}

The next two lemmas will allow us to maintain suitable upper and lower bounds on the degrees and densities of the hypergraphs obtained by applying the algorithm iteratively. The first lemma, which is the easier of the two, states that if all the maximum degrees of $\GG$ are appropriately bounded, then all the maximum degrees of $\GGn$ are also appropriately bounded.

\begin{lemma}\label{lemma:alg-analysis-degrees}
Given $(\ell_0, \ell_1) \in \{0, \ldots, i_0\} \times \{0, \ldots, i_1\}$ with $\ell_0 + \ell_1 \ge 2$ and $\ell_c > 0$, set $\ell_c' = \ell_c-1$ and $\ell_{1-c}' = \ell_{1-c}$. If $\Delta_{(\ell_0,\ell_1)}(\GG) \le \Del$, then $\Delta_{(\ell_0',\ell_1')}(\GGn) \le \Delp$.
\end{lemma}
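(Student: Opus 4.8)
The plan is to relate the $(\ell_0',\ell_1')$-degrees of $\GGn$ to the $(\ell_0,\ell_1)$-degrees of $\GG$ by tracking, for a fixed pair $(T_0',T_1')$, which constraints of $\GGn$ contain it and where they came from. Fix sets $T_0',T_1' \subseteq V(\GG)$ with $|T_0'| = \ell_0'$ and $|T_1'| = \ell_1'$; I want to bound $\deg_{\GGn}(T_0',T_1')$. By construction (step~\ref{item:alg-main-step}), every constraint of $\GGn$ is of the form $(A_0 \setminus \{v_j\}, A_1 \setminus \{v_j\})$ for some $(A_0,A_1) \in \cA^{(j)} \subseteq \GG$ with $v_j \in A_c$, where $j \in S$. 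Such a constraint contains $(T_0',T_1')$ precisely when $T_0' \subseteq A_0 \setminus \{v_j\}$ and $T_1' \subseteq A_1 \setminus \{v_j\}$, i.e.\ when $T_0' \subseteq A_0$, $T_1' \subseteq A_1$, and $v_j \notin T_0' \cup T_1'$. So each such constraint of $\GGn$ arises from a genuine constraint $(A_0,A_1)$ of $\GG$ that contains the pair $(T_0^{(j)}, T_1^{(j)})$, where $T_c^{(j)} = T_c' \cup \{v_j\}$ and $T_{1-c}^{(j)} = T_{1-c}'$; note $|T_c^{(j)}| = \ell_c' + 1 = \ell_c$ and $|T_{1-c}^{(j)}| = \ell_{1-c}$, since $v_j \notin T_c'$ (as $v_j \notin T_0' \cup T_1'$).

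Next I would bound the total count. The index $j$ ranges over $S$, so over at most $b$ values, and the map $j \mapsto v_j$ is injective; for a fixed such $j$ the number of contributing constraints of $\GG$ is at most $\deg_\GG(T_0^{(j)}, T_1^{(j)}) \le \Delta_{(\ell_0,\ell_1)}(\GG) \le \Del$. Hence a first crude bound is $\deg_{\GGn}(T_0',T_1') \le b \cdot \Del$. But this is too weak — the recursion in Definition~\ref{dfn:Delta} only gives a factor of $2$ or $b/v(\HH)$ or $b/m$, not $b$. The point is that the cleanup step~\ref{item:alg-cleanup}\ref{item:cleanup-2} is exactly designed to prevent this: once a pair $(T_0,T_1)$ has accumulated degree at least $\tfrac12 \Delp$ in $\GGn^{(j+1)}$, all constraints of $\cA$ containing it (in the appropriate shifted sense) are removed, so they can never again contribute a new edge to $\GGn$ through that pair. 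Concretely, consider the pair $(T_0',T_1')$ itself: if at some step the degree $\deg_{\GGn^{(j+1)}}(T_0',T_1')$ first reaches $\tfrac12 \Delp$, then from that point on no constraint $(A_0,A_1) \in \cA^{(j+1)}$ with $T_0' \subseteq A_0$, $T_1' \subseteq A_1$ survives (taking $(\ell_0,\ell_1) = (\ell_0',\ell_1')$ in the cleanup condition, which is allowed since $\ell_0' \le i_0'$ and $\ell_1' \le i_1'$), so no further edges containing $(T_0',T_1')$ are ever added to $\GGn$. Therefore the degree of $(T_0',T_1')$ in $\GGn$ can exceed $\tfrac12\Delp$ by at most the number of edges added in the single step that pushed it over the threshold, which is at most $\deg_\GG(T_0^{(j)},T_1^{(j)}) \le \Del \le \Delp$ by the recursion (the recursion always has $\Del \le \Delp$, since e.g.\ $\Delp \ge 2\Del$ when we strip a coordinate in the relevant direction — here $\ell_c' = \ell_c - 1$, matching the recursion's $(\ell_c - 1) \to \ell_c$ increment). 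Combining, $\deg_{\GGn}(T_0',T_1') \le \tfrac12\Delp + \Del$, and using $\Del \le \tfrac12\Delp$ this gives $\deg_{\GGn}(T_0',T_1') \le \Delp$, as required; taking the maximum over all $(T_0',T_1')$ yields $\Delta_{(\ell_0',\ell_1')}(\GGn) \le \Delp$.

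The main subtlety — and the step I would be most careful about — is checking that the cleanup condition applies with the right parameter $(\ell_0,\ell_1)$ for the pair $(T_0',T_1')$ we are tracking: we need $\ell_0' \in \{0,\dots,i_0'\}$ and $\ell_1' \in \{0,\dots,i_1'\}$ so that $(T_0',T_1') \in \MDelp(\GGn^{(j+1)})$ is a legitimate trigger in step~\ref{item:alg-cleanup}\ref{item:cleanup-2}. Since $(\ell_0,\ell_1) \le (i_0,i_1)$ coordinatewise, $\ell_c > 0$, $\ell_c' = \ell_c - 1$, and $i_c' = i_c - 1$, we get $\ell_c' \le i_c'$; and $\ell_{1-c}' = \ell_{1-c} \le i_{1-c} = i_{1-c}'$. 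Also the case $(\ell_0',\ell_1') = (0,0)$ can only occur when $\ell_0 + \ell_1 = 1$, contradicting $\ell_0 + \ell_1 \ge 2$, so $(T_0',T_1')$ is a nonempty pair and the "not both empty" clause in~\ref{item:cleanup-2} is satisfied; thus the trigger is valid. The remaining bookkeeping — that once a pair triggers cleanup at step $j$ it keeps blocking all later additions, because $\cA^{(j')} \subseteq \cA^{(j+1)}$ for $j' \ge j+1$ and $\GGn^{(j')} \supseteq \GGn^{(j+1)}$ so the pair stays in $\MDelp$ — is routine monotonicity. I would spell out that $\GGn^{(j)}$ is nondecreasing and $\cA^{(j)}$ is nonincreasing in $j$, which is immediate from steps~\ref{item:alg-main-step} and~\ref{item:alg-cleanup}, and then the argument above goes through cleanly.
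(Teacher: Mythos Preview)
Your proof is correct and follows essentially the same approach as the paper's: both argue that once $\deg_{\GGn^{(j+1)}}(T_0',T_1')$ first exceeds $\tfrac12\Delp$, the cleanup step~\ref{item:alg-cleanup}\ref{item:cleanup-2} removes every constraint of $\cA$ containing $(T_0',T_1')$, so the final degree is at most $\tfrac12\Delp$ plus the single-step increment $\Del \le \tfrac12\Delp$. Your explicit verification that $(\ell_0',\ell_1')$ lies in $\{0,\dots,i_0'\}\times\{0,\dots,i_1'\}\setminus\{(0,0)\}$ (using $\ell_0+\ell_1\ge 2$) is a detail the paper leaves implicit.
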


\begin{proof}
Suppose (for a contradiction) that there exist sets $T_0'$ and $T_1'$, with $|T_0'| = \ell_0'$ and $|T_1'| = \ell_1'$, such that $\deg_{\GGn}(T_0', T_1') > \Delp$. Let $j$ be the smallest integer satisfying
  \[
    \deg_{\GGn^{(j+1)}}(T_0', T_1') > \frac{1}{2} \cdot \Delp
  \]
  and note that $j \ge 0$, since $\GGn^{(0)}$ is empty. We claim first that
   \begin{equation}
    \label{eq:deg-j-is-deg-final}
    \deg_{\GGn} (T_0', T_1') = \deg_{\GGn^{(j+1)}}(T_0', T_1').
  \end{equation}
  Indeed, observe that $(T_0', T_1') \in M^{(i_0',i_1')}_{(\ell_0',\ell_1')} \big( \GGn^{(j+1)} \big)$, and therefore the algorithm removes from $\cA^{(j)}$ (when forming $\cA^{(j+1)}$ in step~\ref{item:alg-cleanup}) all pairs $(A_0, A_1)$ such that $T_0' \subseteq A_0$ and $T_1' \subseteq A_1$. As a consequence, no further pairs $(A_0', A_1')$ with $T_0' \subseteq A_0'$ and $T_1' \subseteq A_1'$ are added to $\GGn$ in step~\ref{item:alg-main-step}.

  We next claim that
  \begin{equation}
    \label{eq:one-step-deg-change}
    \deg_{\GGn^{(j+1)}}(T_0', T_1') - \deg_{\GGn^{(j)}}(T_0', T_1') \le \Del.
  \end{equation}
  To see this, recall that when we extend $\GGn^{(j)}$ to $\GGn^{(j+1)}$ in step~\ref{item:alg-main-step}, we only add pairs $\big( A_0 \setminus \{v_j\}, A_1 \setminus \{v_j\} \big)$ such that $(A_0, A_1) \in \cA^{(j)} \subseteq \GG$ and $v_j \in A_c$. Therefore, setting $T_c = T_c' \cup \{v_j\}$ and $T_{1-c} = T_{1-c}'$, we have
  \begin{equation*}
    \deg_{\GGn^{(j+1)}}(T_0', T_1') - \deg_{\GGn^{(j)}}(T_0', T_1') \le \deg_{\GG}(T_0, T_1) \le \Delta_{(\ell_0, \ell_1)}(\GG) \le \Del,   
  \end{equation*}
  where the last inequality is by our assumption, as claimed.

  Combining~\eqref{eq:deg-j-is-deg-final} and~\eqref{eq:one-step-deg-change}, it follows immediately that
  \[
    \deg_{\GGn}(T_0',T_1') \le \frac{1}{2} \cdot \Delp + \Del \le \Delp,
  \]
  where the final inequality holds by Definition~\ref{dfn:Delta}. This contradicts our choice of $(T_0', T_1')$ and therefore the lemma follows.
\end{proof}

We are now ready for our final lemma, which is really the heart of the matter. We will show that if $\GG$ has sufficiently many edges and all of the maximum degrees of $\GG$ are appropriately bounded, then either the output hypergraph~$\GGn$ has sufficiently many edges or the value of $h(v)$ will be determined for sufficiently many vertices~$v$. We remark that here we shall use the assumption that $h$ takes the value $1$ at most $m$ times.

\begin{lemma}
  \label{lemma:alg-analysis-progress}
  Suppose that $|h^{-1}(1)| \le m$ and let $\alpha > 0$. If 
  \begin{enumerate}[label=(A\arabic*)]
  \item
    \label{item:assumption-edges}
    $e(\GG) \ge \alpha \cdot \big( \frac{b}{v(\HH)} \big)^{k_1-i_1} \big( \frac{b}{m} \big)^{k_0-i_0} e(\HH)$ and\smallskip
     \item
    \label{item:assumption-Delta}
    $\Delta_{(\ell_0, \ell_1)}(\GG) \le \Del$ for every $(0,0) \neq (\ell_0, \ell_1) \in \{0, \ldots, i_0\} \times \{0, \ldots, i_1\}$,\smallskip
  \end{enumerate}
  then at least one of the following statements is true:
  \begin{enumerate}[label=(P\arabic*)]
  \item
    \label{item:reduce-uniformity}
    $e(\GGn) \ge 2^{-i_0-i_1-1} \alpha \cdot \big( \frac{b}{v(\HH)} \big)^{k_1-i_1'} \big( \frac{b}{m} \big)^{k_0-i_0'} e(\HH)$.\smallskip
  \item
    \label{item:determine-many-0}
    $c = 1$ and $|W| \ge 2^{-k_1-1} K^{-1} \alpha \cdot v(\HH)$.\smallskip
  \item
    \label{item:determine-many-1}
    $c = 0$ and $|W| \ge 2^{-k_0-k_1-1} K^{-1} \alpha  \cdot r$.
  \end{enumerate}
\end{lemma}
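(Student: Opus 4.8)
The plan is to track, throughout the execution of a single round of the algorithm, how many edges of $\GG$ are ``lost'' at each cleanup step, and to charge each type of loss either to a \YES\ answer (which feeds $\GGn$) or to a \NO\ answer (which enlarges $W$). Concretely, every edge $(A_0,A_1)\in\GG$ is eventually removed from $\cA^{(j)}$ for exactly one index $j$: either in part~\ref{item:cleanup-1} because $v_j\in A_c$ (a ``type-1'' removal), or in part~\ref{item:cleanup-2} because some low-index sub-pair of $(A_0,A_1)$ has become high-degree in $\GGn^{(j+1)}$ (a ``type-2'' removal). So $e(\GG)$ is bounded above by the total number of type-1 removals plus the total number of type-2 removals, and I would bound each sum separately.

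First I would bound the type-1 removals. When $v_j$ is processed, the number of edges of $\cA^{(j)}$ containing $v_j$ in coordinate $c$ is at most $\Delta_{(1,0)}(\GG)$ or $\Delta_{(0,1)}(\GG)$ depending on $c$, which by assumption~\ref{item:assumption-Delta} (together with the explicit bounds~\eqref{eq:Delta10} and~\eqref{eq:Delta01}) is at most $2^{k_0+k_1}K\cdot(b/v(\HH))^{k_1-i_1}(b/m)^{k_0-i_0}\cdot e(\HH)/r$ when $c=0$, and at most $2^{k_1}K\cdot(b/v(\HH))^{k_1-i_1}(b/m)^{k_0-i_0}\cdot e(\HH)/v(\HH)$ when $c=1$. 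If $v_j$ gets a \NO\ answer (so $j\in W$) this is fine; but if $v_j$ gets a \YES\ answer, then by the definition of the $c$-maximum vertex, \emph{every} remaining vertex $v$ has at most this many edges through it in coordinate $c$, and in particular so does each of the $\le v(\HH)$ vertices not yet queried. Here is where I would use the key observation from the outline: when $c=0$, since $h^{-1}(1)$ has at most $m$ elements and must hit $A_0$ for every $(A_0,\emptyset)\in\cA^{(j)}$, the $0$-maximum degree is at least $e(\cA^{(j)})/m$; so if a \YES\ answer is ever given, $e(\cA^{(j)})\le m\cdot\Delta_{(1,0)}(\GG)$, which combined with~\ref{item:assumption-edges} forces the number of \YES\ answers before that to be small unless $|W|$ is already large. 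Balancing these gives either \ref{item:reduce-uniformity} (if \YES\ answers reach $b$, in which case $\GGn$ has received $\ge b$ fresh families of edges, and one checks the edge-count using~\eqref{eq:one-step-deg-change}-type control) or \ref{item:determine-many-1}; the $c=1$ case is analogous but uses the crude bound $e(\cA^{(j)})/v(\HH)$ on the average degree and yields \ref{item:determine-many-0}.

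Second I would bound the type-2 removals. A pair $(A_0,A_1)$ is removed in part~\ref{item:cleanup-2} at step $j$ because some $(T_0,T_1)\in\MDelp(\GGn^{(j+1)})$ with $T_0\subseteq A_0$, $T_1\subseteq A_1$. The number of $(A_0,A_1)\in\GG$ extending a fixed such $(T_0,T_1)$ is at most $\Delta_{(|T_0|,|T_1|)}(\GG)\le\Del$, and every member of $\MDelp(\GGn^{(j+1)})$ has degree $\ge\frac12\Delp$ in $\GGn^{(j+1)}\subseteq\GGn$, so the number of distinct pairs $(T_0,T_1)$ that can ever trigger a type-2 removal is at most $e(\GGn)/(\tfrac12\Delp)$ for each $(\ell_0',\ell_1')$. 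Hence the total number of type-2 removals is at most $\sum_{(\ell_0',\ell_1')} \frac{2e(\GGn)}{\Delp}\cdot\Del$, and the recursion in Definition~\ref{dfn:Delta} is rigged precisely so that $\Del/\Delp\le v(\HH)/b$ (when $c=1$) or $m/b$ (when $c=0$), which makes each summand at most $\tfrac2b\cdot v(\HH)$ or $\tfrac2m\cdot b$ times $e(\GGn)$ — wait, more carefully, each summand is $\le 2\cdot\frac{v(\HH)}{b}\,e(\GGn)$ (resp.\ $2\frac mb e(\GGn)$), and summing over the $O(k_0+k_1)$ choices of $(\ell_0',\ell_1')$ gives a bound of the form $2^{i_0+i_1}\cdot\frac{v(\HH)}{b}\,e(\GGn)$. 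Therefore $e(\GG)\le(\text{type-1 total})+2^{i_0+i_1}\frac{v(\HH)}{b}e(\GGn)$; if \ref{item:reduce-uniformity} fails, then $e(\GGn)$ is small, so the type-2 total is bounded by, say, $\tfrac12 e(\GG)$ using~\ref{item:assumption-edges}, forcing the type-1 total to be $\ge\tfrac12 e(\GG)$, and then the first part of the argument kicks in to deliver \ref{item:determine-many-0} or \ref{item:determine-many-1}.

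I expect the main obstacle to be the bookkeeping in the type-2 bound: one has to be careful that a pair can be removed only once, that $\MDelp(\GGn^{(j+1)})$ is monotone enough across $j$ (a $(T_0,T_1)$ that enters it stays in, since degrees in $\GGn$ only grow), and that the recursion constants in Definition~\ref{dfn:Delta} — the factors of $2$ and the ratios $b/v(\HH)$, $b/m$ — combine with the sum over all $(\ell_0,\ell_1)$ to give exactly the power $2^{-i_0-i_1-1}$ in \ref{item:reduce-uniformity}. The other delicate point is the case analysis on $c$ and the use of $|h^{-1}(1)|\le m$ only when $c=0$: this is what produces the genuine asymmetry between the $v(\HH)$ in \ref{item:determine-many-0} and the $r$ in \ref{item:determine-many-1}, and it must be invoked at exactly the right moment, namely to lower-bound the $0$-maximum degree of each surviving $\cA^{(j)}$ by $e(\cA^{(j)})/m$ rather than by its average degree $e(\cA^{(j)})\cdot i_0/v(\HH)$.
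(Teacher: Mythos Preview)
Your overall architecture matches the paper's proof: split the edge losses into those coming from part~\ref{item:cleanup-1} (``type-1'') and part~\ref{item:cleanup-2} (``type-2''), bound type-2 via the recursion in Definition~\ref{dfn:Delta}, and then do a dichotomy on whether $e(\GGn)$ is large. The type-2 estimate you sketch is essentially the paper's inequality~\eqref{eq:e-Gn-MDel-final} (modulo the binomial factors $\binom{i_0'}{\ell_0}\binom{i_1'}{\ell_1}$, which are what actually produce the constant $2^{i_0'+i_1'}-1$; the sum is over $(i_0'+1)(i_1'+1)-1$ pairs, not $O(k_0+k_1)$).

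The genuine gap is your opening sentence: ``every edge $(A_0,A_1)\in\GG$ is eventually removed''. This is false precisely in the interesting case, namely when the round terminates because $|S|=b$, and then the residual hypergraph $\cA=\cA^{(J)}$ may well be nonempty. Your decomposition therefore only yields $e(\GG)-e(\cA)\le(\text{type-1})+(\text{type-2})$, and you must separately bound $e(\cA)$. Your paragraph ``so if a \YES\ answer is ever given, $e(\cA^{(j)})\le m\cdot\Delta_{(1,0)}(\GG)$, which combined with~\ref{item:assumption-edges} forces the number of \YES\ answers before that to be small'' does not do this: that inequality holds for every $j$ (including $j=0$, where it is just $e(\GG)\le m\,\Delta_{(1,0)}(\GG)$) and says nothing about how many \YES\ answers have occurred.

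The missing ingredient is the paper's inequality~\eqref{eq:e-Gn-S}: since $v_j$ is the $c$-maximum vertex of $\cA^{(j)}\supseteq\cA$, each \YES\ step contributes $\Delta_{\chi(c)}(\cA^{(j)})\ge\Delta_{\chi(c)}(\cA)$ edges to $\GGn$, and there are $b$ such steps (or else $\cA$ is empty), so $e(\GGn)\ge b\,\Delta_{\chi(c)}(\cA)$. \emph{Now} the observation you identified kicks in at the right place: for $c=0$ one has $\Delta_{(1,0)}(\cA)\ge e(\cA)/m$ because $h^{-1}(1)$ is a transversal of $\cA$ of size at most $m$, whence $e(\cA)\le (m/b)\,e(\GGn)$; for $c=1$ the average-degree bound gives $e(\cA)\le (v(\HH)/b)\,e(\GGn)$. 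This extra $e(\cA)$ term combines with your type-2 bound (both are multiples of $e(\GGn)$ times $m/b$ or $v(\HH)/b$) and with the \YES-part of type-1 (which \emph{equals} $e(\GGn)$) to give the single inequality $e(\GG)\le 2^{i_0+i_1}\cdot\tfrac{m}{b}\,e(\GGn)+|W|\,\Delta_{\chi(c)}^{(i_0,i_1)}$ (resp.\ with $v(\HH)$ in place of $m$). The dichotomy on which summand exceeds $e(\GG)/2$ then delivers \ref{item:reduce-uniformity} versus \ref{item:determine-many-0}/\ref{item:determine-many-1} exactly as you intended.
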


\begin{proof}
  Suppose first that $c = 0$ and observe that\footnote{Recall that $\GGn$ (and $\GGn^{(j)}$ etc.) are multi-hypergraphs and that edges are counted with multiplicity.}
  \begin{equation}
    \label{eq:eG-sum}
    e(\GGn) = \sum_{j \in S} \left( e(\GGn^{(j+1)}) - e(\GGn^{(j)}) \right) = \sum_{j \in S} \deg_{\cA^{(j)}}(\{v_j\}, \emptyset),
  \end{equation}
  since $e(\GGn^{(j+1)}) - e(\GGn^{(j)}) = \deg_{\cA^{(j)}}(\{v_j\}, \emptyset)$ for each $j \in S$ and $\GGn^{(j+1)} = \GGn^{(j)}$ for each $j \not\in S$. To bound the right-hand side of~\eqref{eq:eG-sum}, we count the edges removed from $\cA^{(j)}$ in \ref{item:cleanup-1} and \ref{item:cleanup-2} of step \ref{item:alg-cleanup}, which gives
  \[
    e(\cA^{(j)}) - e(\cA^{(j+1)}) \le \deg_{\cA^{(j)}}(\{v_j\}, \emptyset) + \sum_{(\ell_0, \ell_1)} \big| \MDelp(\GGn^{(j+1)}) \setminus \MDelp(\GGn^{(j)}) \big| \cdot \Delta_{(\ell_0, \ell_1)}(\GG).
  \]
  Summing over $j \in \{0, \ldots, J-1\}$, it follows (using~\eqref{eq:eG-sum}) that
  \[
    e(\GG) - e(\cA) \le e(\GGn) + |W| \cdot \Delta_{(1,0)}(\GG) + \sum_{(\ell_0, \ell_1)} \big| \MDelp(\GGn) \big| \cdot \Del,
  \]
  since $\cA = \cA^{(J)} \subseteq \ldots \subseteq \cA^{(0)} =\GG$ and $\Delta_{(\ell_0, \ell_1)}(\GG) \le \Del$ by~\ref{item:assumption-Delta}. Observe also that if $c = 1$, then we obtain an identical bound, with $\Delta_{(1,0)}(\GG)$ replaced by $\Delta_{(0,1)}(\GG)$.

  In order to discuss both cases simultaneously, we set $\chi(0) = (1,0)$ and $\chi(1) = (0,1)$. Observe that
  \begin{equation}
    \label{eq:e-G0-increment-estimate}
    \Delta_{\chi(c)}(\cA) \le \Delta_{\chi(c)}(\cA^{(j)}) \le \Delta_{\chi(c)}(\GG) \le \Delta_{\chi(c)}^{(i_0,i_1)},
  \end{equation}
  since $\cA \subseteq \cA^{(j)} \subseteq \GG$ and $\GG$ satisfies~\ref{item:assumption-Delta}. It follows that, for both $c \in \{0, 1\}$,
  \begin{equation}
    \label{eq:Gn-final-0}
    e(\GG) - e(\cA) \le e(\GGn) + |W| \cdot \Delta_{\chi(c)}^{(i_0,i_1)} + \sum_{(\ell_0, \ell_1)} \big|\MDelp(\GGn) \big| \cdot \Del.
  \end{equation}
  Now, recall that $v_j$ is the $c$-maximum vertex of $\cA^{(j)}$ and observe that therefore, by~\eqref{eq:eG-sum} and~\eqref{eq:e-G0-increment-estimate},  
  \begin{equation}
    \label{eq:e-Gn-S}
    e(\GGn) = \sum_{j \in S} \Delta_{\chi(c)}\big(\cA^{(j)}\big) \ge |S| \cdot \Delta_{\chi(c)}(\cA) = b \cdot \Delta_{\chi(c)}(\cA),
  \end{equation}
  where the equality is due to the fact that $|S| \neq b$ only when $\cA$ is empty, see step~\ref{item:alg-stop}. 
  
 Next, to bound the sum in~\eqref{eq:Gn-final-0}, observe that, by Definition~\ref{dfn:MDel}, we have
 \[
   \big| \MDelp(\GGn) \big| \cdot \frac{1}{2} \cdot \Delnp \le \sum_{(T_0,T_1) \in \binom{V(\GG)}{\ell_0} \times \binom{V(\GG)}{\ell_1}} \deg_{\GGn}(T_0, T_1) = \binom{i_0'}{\ell_0} \binom{i_1'}{\ell_1} \cdot e(\GGn)
 \]
 for each $(\ell_0, \ell_1)$ and therefore
 \begin{equation}
   \label{eq:e-Gn-MDel-final}
   \begin{split}
     \sum_{(\ell_0, \ell_1)} \big| \MDelp(\GGn) \big| \cdot \Del & \le 2 \cdot \sum_{(\ell_0, \ell_1)} \binom{i_0'}{\ell_0} \binom{i_1'}{\ell_1} \cdot e(\GGn) \cdot \left(\Del / \Delnp\right)  \\
     & \le 2 \cdot \big( 2^{i_0'+i_1'} - 1 \big) \cdot e(\GGn) \cdot \max_{(\ell_0, \ell_1)} \left\{\Del / \Delnp\right\}.
   \end{split}
 \end{equation}
 We claim that $\Del / \Delnp \le m / b$ if $c = 0$ and $\Del / \Delnp \le v(\HH) / b$ if $c = 1$. Indeed, both inequalities following directly from Definition~\ref{dfn:Delta}, since if $c = 0$, then $(i_0', i_1') = (i_0-1, i_1)$, and if $c = 1$, then $(i_0', i_1') = (i_0, i_1-1)$. We split the remainder of the proof into two cases, depending on the value of $c$.

  Suppose first that $c = 1$ and observe that substituting~\eqref{eq:e-Gn-MDel-final} into~\eqref{eq:Gn-final-0} yields, using the bound $\Del / \Delnp \le v(\HH) / b$,
  \begin{equation}
    \label{eq:Gn-final-case1}
    e(\GG) - e(\cA) \le e(\GGn) + |W| \cdot \Delta_{(0,1)}^{(i_0,i_1)} + 2 \cdot \big( 2^{i_0'+i_1'} - 1 \big) \cdot e(\GGn) \cdot \frac{v(\HH)}{b}.
  \end{equation}
  Moreover, by~\eqref{eq:e-Gn-S}, and since $i_1 \ge 1$ when $c = 1$, we have
  \begin{equation}
    \label{eq:Delta-estimates-1}
    \frac{e(\GGn)}{b} \ge \Delta_{(0,1)}(\cA) \ge \frac{i_1 \cdot e(\cA)}{v(\cA)} \ge \frac{e(\cA)}{v(\HH)},
  \end{equation}
  since the maximum degree of a hypergraph is at least as large as its average degree. Combining~\eqref{eq:Gn-final-case1} and~\eqref{eq:Delta-estimates-1}, we obtain
  \begin{equation}
    \label{eq:eGG-eGGn-W-1}
    \begin{split}
      e(\GG) & \le e(\GGn) \cdot \frac{v(\HH)}{b} \cdot \left(\frac{b}{v(\HH)} + 1 + 2^{i_0'+i_1'+1} - 2\right) + |W| \cdot \Delta_{(0,1)}^{(i_0,i_1)} \\
      & \le e(\GGn) \cdot \frac{v(\HH)}{b} \cdot 2^{i_0+i_1} + |W| \cdot \Delta_{(0,1)}^{(i_0,i_1)},
    \end{split}
  \end{equation}
  since $b \le v(\HH)$. Now, if the first summand on the right-hand side of~\eqref{eq:eGG-eGGn-W-1} exceeds $e(\GG) / 2$, then~\ref{item:assumption-edges} implies~\ref{item:reduce-uniformity}, since $(i_0', i_1') = (i_0, i_1-1)$. Otherwise, the second summand is at least $e(\GG)/2$ and by~\ref{item:assumption-edges} and~\eqref{eq:Delta01},
  \[
    |W| \ge \frac{e(\GG)}{2 \cdot \Delta_{(0,1)}^{(i_0,i_1)}} \ge \frac{\alpha}{2^{k_1+1}K} \cdot v(\HH),
  \]
  which is~\ref{item:determine-many-0}.

  The case $c = 0$ is slightly more delicate; in particular, we will finally use our assumption that $|h^{-1}(1)| \le m$. Observe first that if $c = 0$, then substituting~\eqref{eq:e-Gn-MDel-final} into~\eqref{eq:Gn-final-0} yields, using the bound $\Del / \Delnp \le m / b$,
 \begin{equation}
    \label{eq:Gn-final-case2}
    e(\GG) - e(\cA) \le e(\GGn) + |W| \cdot \Delta_{(1,0)}^{(i_0,i_1)} + \big( 2^{i_0+i_1} - 2 \big) \cdot e(\GGn) \cdot \frac{m}{b},
  \end{equation}
  cf.~\eqref{eq:Gn-final-case1}. We claim that
  \begin{equation}
    \label{eq:Delta-estimates-0}
    \frac{e(\GGn)}{b} \ge \Delta_{(1,0)}(\cA) \ge \frac{e(\cA)}{m}.
  \end{equation}
 The first inequality follows from~\eqref{eq:e-Gn-S}, so we only need to prove the second inequality. To do so, observe that $\GG$ is an $(i_0, 0)$-uniform hypergraph (since $c = 0$) and therefore each function in $\FF(\GG)$ must take the value $1$ on at least one element of each set $A_0$ such that $(A_0, \emptyset) \in \GG$. Now, recall that $h \in \FF(\GG)$, that $\cA \subseteq \GG$, and that $h$ takes the value $1$ at most $m$ times. It follows that $e(\cA) \le m \cdot \Delta_{(1,0)}(\cA)$, as claimed.
  
 Combining~\eqref{eq:Gn-final-case2} and~\eqref{eq:Delta-estimates-0}, we obtain (cf.~\eqref{eq:eGG-eGGn-W-1})
  \begin{equation}
    \label{eq:eGG-eGGn-W-0}
    \begin{split}
      e(\GG) & \le e(\GGn) \cdot \frac{m}{b} \cdot \left(\frac{b}{m} + 1 + 2^{i_0 + i_1} - 2\right) + |W| \cdot \Delta_{(1,0)}^{(i_0,i_1)} \\
      & \le e(\GGn) \cdot \frac{m}{b} \cdot 2^{i_0+i_1} + |W| \cdot \Delta_{(1,0)}^{(i_0,i_1)},
    \end{split}
  \end{equation}
  since $b \le m$. Now, if the first summand on the right-hand side of~\eqref{eq:eGG-eGGn-W-1} exceeds $e(\GG) / 2$, then~\ref{item:assumption-edges} implies~\ref{item:reduce-uniformity}, since $(i_0', i_1') = (i_0-1, i_1)$. Otherwise, the second summand is at least $e(\GG)/2$ and by~\ref{item:assumption-edges} and~\eqref{eq:Delta10},
  \[
    |W| \ge \frac{e(\GG)}{2 \cdot \Delta_{(1,0)}^{(i_0,i_1)}} \ge \frac{\alpha}{2^{k_0+k_1+1}K} \cdot r,
  \]
  which is~\ref{item:determine-many-1}.
\end{proof}

\subsection{Construction of the container}
\label{sec:constr-container}

In this section, we present the construction of containers for functions in $\FF_{\le m}(\HH)$ and analyse their properties, thus proving Theorem~\ref{thm:container}. For each $s \in \{0, \ldots, k_0+k_1\}$, define
\[
  \alpha_s = 2^{-s(k_0+k_1+1)} \qquad \text{and} \qquad \beta_s = \alpha_s \cdot \left(\frac{b}{v(\HH)}\right)^{\min\{{k_1,s\}}}\left(\frac{b}{m}\right)^{\max\{0,s-k_1\}}.
\]
Given an $h \in \FF_{\le m}(\HH)$, we construct the container $f^*_h$ for $h$ using the following procedure.

\medskip
\noindent
\textbf{Construction of the container.}
Let $\HH^{(k_0,k_1)} = \HH$, let $S_0 = S_1 = \emptyset$, and let $(i_0, i_1) = (k_0, k_1)$. Do the following for $s = 0, \ldots, k_0+k_1-1$:
\begin{enumerate}[label=(C\arabic*)]
\item
  Let $c \in \{0,1\}$ be the number that is compatible with $(i_0, i_1)$ and let $(i_0', i_1')$ be the pair defined by $i_c' = i_c-1$ and $i_{1-c}' = i_{1-c}$.
\item
  \label{item:constr-run-algorithm}  
  Run the algorithm with $\GG \leftarrow \HH^{(i_0, i_1)}$ to obtain the $(i_0', i_1')$-uniform hypergraph $\GGn$, the sequence $v_0, \ldots, v_{J-1} \in V(\HH)$, and the partition $\{0, 1, \ldots, J-1\} = S \cup W$.
\item
  \label{item:constr-update-signature}
  Let $S_c \leftarrow S_c \cup \{v_j \scolon j \in S\}$.
\item
  \label{item:constr-determine-many}
  If $e(\GGn) < \beta_{s+1} \cdot e(\HH)$, then define $f^*_h \colon V(\HH) \to \{0, 1, *\}$, the container for $h$, by
  \[
    f^*_h(v) = \left\{
    \begin{array}{cl}
      1-c & \text{if $v = v_j$ for some $j \in W$}, \\
      * & \text{otherwise},
    \end{array} 
    \right.
  \]
  and \STOP.
\item
  \label{item:constr-reduce-uniformity}
 % If $e(\GGn) \ge \beta_{s+1} \cdot e(\HH)$, then 
  Otherwise, let $\HH^{(i_0', i_1')} \leftarrow \GGn$ and $(i_0, i_1) \leftarrow (i_0', i_1')$ and \texttt{CONTINUE}.
\end{enumerate}

We will show that the above procedure indeed constructs containers for $\FF_{\le m}(\HH)$ that have the desired properties. To this end, we first claim that for each pair $(i_0, i_1) \in \UU \cup \{(0,0)\}$, the hypergraph $\HH^{(i_0,i_1)}$, if it was defined, satisfies:
\begin{enumerate}[label=(\textit{\roman*})]
\item
  \label{item:container-propty-1}
  $h \in \FF(\HH^{(i_0,i_1)})$ and
\item
  \label{item:container-propty-2}
  $\Delta_{(\ell_0,\ell_1)}(\HH^{(i_0,i_1)}) \le \Del$ for every $(0,0) \ne (\ell_0, \ell_1) \in \{0, \ldots, i_0\} \times \{0, \ldots, i_1\}$.
\end{enumerate}
Indeed, one may easily prove~\ref{item:container-propty-1} and~\ref{item:container-propty-2} by induction on $(k_0+k_1) - (i_0+i_1)$. The basis of the induction is trivial as $\HH^{(k_0,k_1)} = \HH$, see Definition~\ref{dfn:Delta}. The inductive step follows immediately from Observation~\ref{obs:h-in-FF-Gn} and Lemma~\ref{lemma:alg-analysis-degrees}.

Second, we claim that for each input $h \in \FF_{\le m}(\HH)$, step~\ref{item:constr-determine-many} is called for some $s$ and hence the function $f^*_h \colon V(\HH) \to \{0,1,*\}$  is defined. If this were not true, the condition in step~\ref{item:constr-reduce-uniformity} would be met $k_0+k_1$ times and, consequently, we would finish with a non-empty $(0,0)$-uniform hypergraph $\HH^{(0,0)}$, i.e., we would have $(\emptyset, \emptyset) \in \HH^{(0,0)}$. But this contradicts~\ref{item:container-propty-1}, since no function satisfies the empty constraint and thus $h \not\in \FF(\HH^{(0,0)})$. 

Suppose, therefore, that step~\ref{item:constr-determine-many} is executed when $\GG = \HH^{(i_0,i_1)}$ for some $(i_0,i_1) \in \UU$, and note that $s = (k_0+k_1) - (i_0+i_1)$. We claim that $e(\HH^{(i_0,i_1)}) \ge \beta_s e(\HH)$. Indeed, this is trivial if $s = 0$, whereas if $s > 0$ and this were not true, then we would have executed step~\ref{item:constr-determine-many} at the previous step. We therefore have
\[
  e(\GG) = e(\HH^{(i_0,i_1)}) \ge \beta_s \cdot e(\HH) \qquad \text{and} \qquad e(\GGn) < \beta_{s+1} \cdot e(\HH),
\]
which, by Lemma~\ref{lemma:alg-analysis-progress} and~\ref{item:container-propty-2}, implies that either~\ref{item:determine-many-0} or~\ref{item:determine-many-1} of Lemma~\ref{lemma:alg-analysis-progress} holds. Note that if $c = 1$, then $k_1 \ge i_1 > 0$ and we have
\[
  |(f^*_h)^{-1}(0)| \ge 2^{-k_1-1}K^{-1} \alpha_s \cdot v(\HH) \ge \alpha_{k_0+k_1}K^{-1} v(\HH) = \delta v(\HH),
\]
where $\delta = 2^{-(k_0+k_1)(k_0+k_1+1)} K^{-1}$. On the other hand, if $c = 0$, then $k_0 \ge i_0 > 0$ and  
\[
  |(f^*_h)^{-1}(1)| \ge 2^{-k_0-k_1-1}K^{-1} \alpha_s \cdot r \ge \alpha_{k_0+k_1}K^{-1} r = \delta r.
\]
This verifies that $f^*_h$ satisfies property~\ref{item:container-2} from the statement of Theorem~\ref{thm:container}.

To complete the proof, we need to show that $f^*$ decomposes as $f^* = f \circ g$ for some $g \colon \FF_{\le m}(\HH) \to \binom{V(\HH)}{\le k_0 b} \times \binom{V(\HH)}{\le k_1 b}$ and to verify that properties~\ref{item:container-1} and~\ref{item:container-3} from the statement of the theorem hold. We claim that one may take $g(h) = (S_0, S_1)$, where $S_0$ and $S_1$ are the sets constructed by the above procedure, see~\ref{item:constr-update-signature}. To this end, it suffices to show that if for some $h, h' \in \FF(\HH)$ the above procedure produces the same pair $(S_0, S_1)$, then $f^*_h = f^*_{h'}$. To see this, observe first that the set $S$ defined in step~\ref{item:constr-run-algorithm} is precisely the set of all indices $j \in \{0, \ldots, J-1\}$ that satisfy $v_j \in S_c$. Indeed, the former set is contained in the latter by construction, see~\ref{item:constr-update-signature}. The reverse inclusion holds because 
\[
  S = \big\{ j \in \{ 0, \dotsc, J-1 \} \scolon h(v_j) = c \big\}
\]
and $h(v) = c$ for every $v \in S_c$. By Observation~\ref{obs:number-of-containers}, it follows that the output of the algorithm depends only on the pair $(S_0,S_1)$ and hence $f^*_h = f^*_{h'}$, as claimed.

Finally, observe that $S_0 \subseteq h^{-1}(0)$ and $S_1 \subseteq h^{-1}(1)$, by construction, and that $h$ belongs to the cylinder $f(g(h)) = f^*_h$, since $h(v) = 1 - c$ for every $v = v_j$ with $j \in W$, by the definition of $W = \big\{ j \in \{ 0, \dotsc, J-1 \} \scolon h(v_j) \neq c \big\}$. This verifies properties~\ref{item:container-1} and~\ref{item:container-3} and hence completes the proof of Theorem~\ref{thm:container}. \qed

\subsection{{Derivation of Theorem~\ref{thm:container-mono}}}
\label{sec:derivation-theorem}

We conclude this part of the paper with the easy derivation of Theorem~\ref{thm:container-mono} from Theorem~\ref{thm:container}. Given a $k$-uniform hypergraph $\HH$ satisfying the assumptions of Theorem~\ref{thm:container-mono} for some $b$ and $r$, one may invoke Theorem~\ref{thm:container} with $\HH_{\ref{thm:container}}$ being the $(0, k)$-uniform hypergraph with the same vertex set as $\HH$ whose edges are all pairs $(\emptyset, A)$ such that $A$ is an edge of $\HH$. Since $k_0 = 0$ and $\Delta_{(0, \ell)}(\HH_{\ref{thm:container}}) = \Delta_\ell(\HH)$ for every $\ell \in \{1, \dotsc, k\}$, one can easily check that $\HH_{\ref{thm:container}}$ satisfies the assumptions of Theorem~\ref{thm:container} with the same $b$, $m_{\ref{thm:container}} \leftarrow v(\HH)$, and $K_{\ref{thm:container}} \leftarrow v(\HH)/r$.

Now, observe that the family $\FF(\HH_{\ref{thm:container}})$ comprises precisely the characteristic functions of all independent sets of $\HH$ and that $\FF_{\le m}(\HH_{\ref{thm:container}}) = \FF_{\le v(\HH)}(\HH_{\ref{thm:container}}) = \FF(\HH_{\ref{thm:container}})$. Given an independent set $I \in \II(\HH)$, let $h \in \FF(\HH_{\ref{thm:container}})$ be its characteristic function. Let $S := (\emptyset, S_1) = g_{\ref{thm:container}}(h)$ and $X := f_{\ref{thm:container}}(S)^{-1}(0)$ and set $g(I) := S_1$ and $f(g(I)) := V(\HH) \setminus X$. Recalling again that $k_0 = 0$, it is straightforward to verify that properties~\ref{item:container-1},~\ref{item:container-2}, and~\ref{item:container-3} from the statement of Theorem~\ref{thm:container} imply the assertion of Theorem~\ref{thm:container-mono}. \qed

\section{Robust balanced stability for induced $C_4$s}

\label{sec:robust-stability}

Recall from Section~\ref{sec:robust-stability-intro} that a pregraph is a pair $(M, E)$ of disjoint subsets of $E(K_n)$. The elements of $E$ are called edges whereas the elements of $M$ are called mixed edges. A~good copy of $C_4$ in a pregraph $(M, E)$ is a copy of $C_4$ in $M$ whose vertex set is independent in $E$. In particular, the vertex set of each good copy of $C_4$ induces four, five, or six edges of $M$, four of which play the roles of edges of $C_4$.\footnote{If the vertex set of a good copy of $C_4$ induces six mixed edges, then there are three choices for these four edges, each corresponding to a different embedding of $C_4$ into $K_4$.}

Given a pregraph $\cP = (M,E)$, we define three hypergraphs with vertex set $M$, denoted $\HH_0^\cP$, $\HH_1^\cP$, and $\HH_2^\cP$. The $(i,4)$-uniform hypergraph $\HH_i^\cP$ comprises all pairs $(A,B)$ such that $B$ is a good copy of $C_4$ and $A$ is the set of the remaining $i$ mixed edges induced by the vertex set of this copy (which induces exactly $4+i$ edges of $M$). Recall that we say that a graph $G$ is $\eps$-close to $K_\ell$ if one can transform $G$ into $K_\ell$ by adding or deleting at most $\eps {\ell \choose 2}$ edges. The following theorem, a robust stability statement for good copies of the $4$-cycle in a pregraph, is the main result of this section.

\begin{thm}
  \label{thm:robust-stability-refined}
  For every $\eps > 0$, there exist positive constants $\beta$, $\delta$, $\lambda$, and $C$ such that the following holds for all $\ell$ and $n$ satisfying $\ell \ge C \sqrt{n}$. Suppose that $\cP = (M,E)$ is a pregraph on $n$ vertices with $e(E) \le \binom{\ell}{2}$ and either
  \begin{enumerate}[label=(M\arabic*)]
  \item
    \label{item:supersaturation-cond}
    $e(M) \ge 4\ell n$, or
  \item
    \label{item:stability-cond}
    $e(M) \ge (1 - \delta)\ell n$, $E$ is not $\eps$-close to $K_\ell$, and $\ell \le \lambda n$, or
  \item
    \label{item:non-split-cond}
   there exists $U \subseteq V(K_n)$ with $|U| = \ell$, $e_E(U) \ge (1-\eps)\binom{\ell}{2}$, and $e_M(U^c) \ge 7\sqrt{\eps}\ell n$.
  \end{enumerate}
  Then there exist $i \in \{0,1,2\}$ and $\HH_i \subseteq \HH_i^\cP$ such that
  \[
    e(\HH_i) \ge \beta \ell^4, \qquad \Delta_{(0,1)}(\HH_i) \le \frac{\ell^3}{n}, \qquad \text{and} \qquad \Delta_{(0,2)}(\HH_i) \le \ell
  \]
  and, if $i > 0$, then also $\Delta_{(1,0)}(\HH_i) \le \ell^2$.
\end{thm}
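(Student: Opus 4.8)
\textbf{Overall strategy.}
The plan is to treat the three hypotheses \ref{item:supersaturation-cond}, \ref{item:stability-cond}, \ref{item:non-split-cond} mostly in a unified way, reducing each to the following core statement: a pregraph $\cP = (M,E)$ on $n$ vertices with $e(E) \le \binom{\ell}{2}$ and $e(M) = \Omega(\ell n)$ that is not ``essentially a split graph with clique-part close to $K_\ell$'' must contain $\Omega(\ell^4)$ good copies of $C_4$, and moreover these good copies can be chosen to form a sub-hypergraph of one of $\HH_0^\cP, \HH_1^\cP, \HH_2^\cP$ whose codegrees obey the stated bounds. For case \ref{item:supersaturation-cond} this is a (robust, bipartite-type) supersaturation statement of Kővári–Sós–Turán flavour: a graph $M$ with $\Omega(\ell n) \ge \Omega(n^{3/2})$ edges has $\Omega(\ell^4/?)$ copies of $C_4$, and we only need to retain those whose four vertices span no edge of $E$; since $E$ is sparse ($\le \binom{\ell}{2} \ll n^2$ once $\ell \ll n$, and for $\ell$ comparable to $n$ we are in case \ref{item:supersaturation-cond} with $e(M) \ge 4\ell n$ much larger) only a negligible fraction of the $C_4$'s in $M$ can be spoiled. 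For case \ref{item:stability-cond}, the extra hypotheses ($E$ not $\eps$-close to $K_\ell$, $\ell \le \lambda n$) are exactly what Theorem~\ref{thm:robust-stability} needs: if $e(M) \ge (1-\delta)\ell n$ and yet $E$ is far from $K_\ell$, then $\cP$ has $\ge \beta\ell^4$ good copies of $C_4$ directly from Theorem~\ref{thm:robust-stability} (this gives the \emph{existence} of many good $C_4$'s; the work is then to prune them to control codegrees). For case \ref{item:non-split-cond}, restrict attention to $U^c$: the induced pregraph on $U^c$ has a mixed-edge set of size $\ge 7\sqrt\eps\,\ell n$, which is at least $4\ell' n'$ for a suitable $\ell' \asymp \sqrt\eps\,\ell$ and $n' = n - \ell$, so case \ref{item:supersaturation-cond} applied inside $U^c$ yields $\Omega((\sqrt\eps\,\ell)^4) = \Omega(\eps^2 \ell^4)$ good copies of $C_4$ avoiding $E$, in particular avoiding $U$; these are automatically good copies in $\cP$.

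\textbf{From many good $C_4$'s to a bounded-codegree sub-hypergraph.}
Once we know $\cP$ has $\Omega(\ell^4)$ good copies of $C_4$, we need to extract a sub-hypergraph $\HH_i \subseteq \HH_i^\cP$ (for some $i \in \{0,1,2\}$) with $e(\HH_i) \ge \beta\ell^4$, $\Delta_{(0,1)}(\HH_i) \le \ell^3/n$, $\Delta_{(0,2)}(\HH_i) \le \ell$, and $\Delta_{(1,0)}(\HH_i) \le \ell^2$ when $i>0$. First, partition the good copies of $C_4$ according to how many mixed edges their vertex set spans ($4+i$, $i\in\{0,1,2\}$, with a factor-$3$ loss when $i=2$ corresponding to the three embeddings); by averaging one value of $i$ carries $\Omega(\ell^4)$ of them, and we work inside $\HH_i^\cP$. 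The codegree bounds are the key point. For $\Delta_{(0,2)}(\HH_i) \le \ell$: a pair of mixed edges $\{e,f\}$ that are the ``opposite sides'' of a $C_4$ determines the other two vertices as a pair with specified neighbours; but in general two fixed edges of $M$ lie in at most $O(1)$ common $C_4$'s unless they share a vertex, in which case a codegree-type argument in $M$ bounds the count by the maximum common neighbourhood size, which we must arrange to be $\le \ell$ by deleting the (few) high-codegree pairs — only $o(\ell^4)$ good $C_4$'s are lost. For $\Delta_{(0,1)}(\HH_i) \le \ell^3/n$: a single mixed edge $e=xy$ lies in at most $d_M(x)\cdot d_M(y)$ copies of $C_4$ (roughly), and since $\sum_v d_M(v) = 2e(M) = O(\ell n)$ the typical product is $O((\ell)^2)$... this needs care, and the right move is to first pass to a sub-pregraph in which $M$ has almost-regular degrees $O(\ell)$ (a standard max-degree / dyadic-pigeonhole cleaning), losing only constant factors, after which $d_M(x)d_M(y) = O(\ell^2) \le \ell^3/n$ precisely because $\ell \ge C\sqrt n$ forces $\ell^2 \le \ell^3/n$. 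Similarly $\Delta_{(1,0)}(\HH_i) \le \ell^2$ (relevant only for $i>0$): a mixed edge playing the role of a ``chord'' (one of the $i$ extra mixed edges) of a good $C_4$ lies in at most $O(\ell^2)$ of them by the same near-regularity, after cleaning.

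\textbf{Main obstacle.}
The hard part is \emph{simultaneously} guaranteeing the supersaturation count $\Omega(\ell^4)$ \emph{and} all of the codegree bounds, while only losing constant (or $\eps$-dependent constant) factors at each cleaning step. The danger is circular: deleting high-degree vertices of $M$ to make it near-regular could in principle destroy too many edges if $M$'s degree sequence is very skewed, and then we might fail the hypothesis $e(M) = \Omega(\ell n)$ needed to re-run supersaturation. The resolution is the standard one — partition $M$'s vertices into $O(\log n)$ dyadic degree classes, and note that the $C_4$-count contribution is dominated (up to the $\log$ factor, which one absorbs by choosing $C$ large, since in the regime $\ell \ge C\sqrt n$ the trivial count $e(M)^2/n^2 \gg \log n$... actually one must be more careful and argue that deleting \emph{all} vertices of degree $\ge \Lambda \ell$ for a large constant $\Lambda$ removes at most half the edges by a Markov/convexity bound, since $\sum d_M(v) = O(\ell n)$). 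After this single clean-up pass $M$ has max degree $O(\ell)$ and still $\Omega(\ell n)$ edges, and then the two-pass argument (supersaturation, then delete the $o(\ell^4)$ good $C_4$'s through high-$(0,2)$-codegree pairs) goes through. A secondary technical nuisance is bounding, in case \ref{item:stability-cond}, how many good $C_4$'s are lost to the cleaning — here one should apply the cleaning \emph{before} invoking Theorem~\ref{thm:robust-stability}, i.e. first reduce to near-regular $M$ with $e(M) \ge (1-\delta')\ell n$ (adjusting $\delta$), so that Theorem~\ref{thm:robust-stability} still applies and directly furnishes $\ge \beta\ell^4$ good $C_4$'s in the already-cleaned pregraph.
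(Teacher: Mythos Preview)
Your approach has a genuine gap in the $(0,1)$-degree bound. You claim that after regularising $M$ to maximum degree $O(\ell)$, an edge $xy$ lies in at most $d_M(x)d_M(y) = O(\ell^2)$ copies of $C_4$, and that ``$O(\ell^2) \le \ell^3/n$ precisely because $\ell \ge C\sqrt{n}$ forces $\ell^2 \le \ell^3/n$.'' This inequality is backwards: $\ell^2 \le \ell^3/n$ is equivalent to $\ell \ge n$, not $\ell \ge C\sqrt{n}$. In the regime of interest (say $\ell \le \lambda n$ in case~\ref{item:stability-cond}), the target $\ell^3/n$ is \emph{strictly smaller} than $\ell^2$ --- when $\ell \asymp \sqrt{n}$ it is only of order $\ell$ --- so no amount of degree-regularisation of $M$ will bring the per-edge $C_4$-count down to $\ell^3/n$. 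The bound $\Delta_{(0,1)}(\HH_i) \le \ell^3/n$ is not a structural fact about $M$; it is an \emph{artificial} constraint that must be imposed on $\HH_i$ by construction.

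The paper's route is therefore different in kind, not just in detail. One builds $\HH_0, \HH_1, \HH_2$ one edge at a time, declaring a mixed edge $f$ \emph{saturated} once $\deg_{\HH_i}(\emptyset,\{f\}) = \lfloor \ell^3/n\rfloor$ (and similarly for $(1,0)$- and $(0,2)$-degrees), and thereafter refusing to add any good $C_4$ through $f$. Since each $\HH_i$ has fewer than $\beta\ell^4$ edges, the number of $(0,1)$-saturated edges is at most $12\beta\ell^4 / \lfloor \ell^3/n\rfloor = O(\beta \ell n)$, the number of $(1,0)$-saturated edges is $O(\beta\ell^2)$, and the number of $(0,2)$-saturated pairs is $O(\beta\ell^3)$. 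Deleting the saturated singletons from $M$ (moving $(1,0)$-saturated ones into $E$) yields a sub-pregraph $\cP'$ still satisfying the relevant hypothesis with $\ell' = \lfloor(1+\delta)\ell\rfloor$, and the whole proof reduces to the following ``increment'' lemma: under the hypotheses \ref{item:supsersaturation-cond-incr} or \ref{item:stability-cond-incr}, for any family $\CC$ of at most $O(\beta\ell^3)$ pairs of mixed edges, there remain $\Omega(\ell^4)$ good $C_4$'s containing no pair from $\CC$. This is Theorem~\ref{thm:robust-stability-increment}, and it is where all the real work (random sparsification, Caro--Wei, the case split on the degree sequence of $E$) takes place. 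Your reduction of \ref{item:non-split-cond} to the supersaturation case on $U^c$ with $\ell' \asymp \sqrt{\eps}\,\ell$ is correct and matches the paper, but the codegree control cannot be done by post-hoc pruning as you propose.
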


Let us say that an $(i,4)$-uniform hypergraph $\HH_i$ is \emph{permissible} if it satisfies both (all three, if $i > 0$) maximum degree conditions stated in Theorem~\ref{thm:robust-stability-refined}. We shall thus be looking for a permissible subhypergraph $\HH_i \subseteq \HH_i^\cP$, for some $i \in \{0,1,2\}$, that has $\Omega(\ell^4)$ edges. We shall build the $\HH_0$, $\HH_1$, and $\HH_2$ by adding to them one edge at a time, making sure that we stay within the class of permissible hypergraphs, until one of them has sufficiently many edges. (Trivially, an empty hypergraph is permissible.) 

It will be convenient to use the following nomenclature. A pair $(S, T)$ of disjoint sets of edges of $K_n$ is \emph{saturated} in a hypergraph $\HH$ if $\deg_\HH(S, T)$ attains or exceeds its maximum permitted value. That is, if
\begin{enumerate}[label=(\textit{\roman*})]
\item
  \label{item:saturated-01}
  $(|S|,|T|) = (0,1)$ and $\deg_\HH(S, T) \ge \lfloor \ell^3/n \rfloor$, or
\item
  \label{item:saturated-10}
  $(|S|,|T|) = (1,0)$ and $\deg_\HH(S, T) \ge \ell^2$, or
\item
  \label{item:saturated-02}
  $(|S|,|T|) = (0,2)$ and $\deg_\HH(S, T) \ge \ell$.
\end{enumerate}
Thus, in the setting of Theorem~\ref{thm:robust-stability-refined}, we shall be looking for an $i \in \{0,1,2\}$ and an edge of $\HH_i^\cP \setminus \HH_i$ which does not contain any saturated pair. We first show how to deduce Theorem~\ref{thm:robust-stability-refined} from the following, seemingly weaker, statement by performing an appropriate preprocessing of the pregraph $\cP$. This preprocessing of $\cP$ will `disable' all saturated pairs of types~\ref{item:saturated-01} and~\ref{item:saturated-10}, so that we will only have to worry about pairs of type~\ref{item:saturated-02}.

\begin{thm}
  \label{thm:robust-stability-increment}
  For every $0 < \eps \le 1/2$, there exist positive constants $\beta$, $\delta$, $\lambda$, and $C$ such that the following holds for all $\ell$ and $n$ satisfying $\ell \ge C \sqrt{n}$.  Suppose that $\cP = (M,E)$ is a pregraph on $n$ vertices with $e(E) \le \binom{\ell}{2}$ and either
  \begin{enumerate}[label=(M\arabic**)]
  \item
    \label{item:supsersaturation-cond-incr}
    $e(M) \ge 3\ell n$ or
  \item
    \label{item:stability-cond-incr}
    $e(M) \ge (1 - \delta)\ell n$, $E$ is not $\eps$-close to $K_\ell$, and $\ell \le \lambda n$.
  \end{enumerate}
  Then for any collection $\CC$ of at most $12 \beta \ell^3$ pairs of elements of $M$, there exist at least $3\beta\ell^4$ good copies of $C_4$ in $\cP$ that contain no pair from $\CC$.
\end{thm}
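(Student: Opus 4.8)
I would prove Theorem~\ref{thm:robust-stability-increment} via a \emph{robust supersaturation} statement: the plan is to produce a family $\GG$ of at least $4\beta\ell^4$ good copies of $C_4$ in $\cP$ that is \emph{well-spread}, in the sense that every pair of mixed edges occurs as a pair of (cycle-)edges in at most $\ell/24$ members of $\GG$. Granting this, the conclusion is immediate: each pair $(e,f)\in\CC$ is contained in at most $\ell/24$ members of $\GG$, so since $|\CC|\le 12\beta\ell^3$, at most $12\beta\ell^3\cdot\ell/24=\beta\ell^4/2$ members of $\GG$ contain a pair of $\CC$, leaving at least $4\beta\ell^4-\beta\ell^4/2>3\beta\ell^4$ good copies avoiding $\CC$. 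So everything reduces to building such a $\GG$, and I would do this by first showing that $\cP$ contains $\Omega(\ell^4)$ good copies of $C_4$ (with an absolute implied constant, once $C$ is large and $\delta,\lambda$ small), and then extracting the well-spread sub-family by the now-standard balanced-supersaturation argument --- iteratively discarding every good copy through a pair of mixed edges that currently lies in more than $\ell/24$ surviving copies --- with $\ell\ge C\sqrt n$ guaranteeing that the process halts with $\ge 4\beta\ell^4$ copies remaining.

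For the supersaturation count under hypothesis~\ref{item:supsersaturation-cond-incr}, I would run the usual Cauchy--Schwarz/convexity count of paths of length two in $M$: since $e(M)\ge 3\ell n$ and $\ell\ge C\sqrt n$, the average $M$-codegree of a pair of vertices is large, so $M$ contains $\Omega\big(e(M)^4/n^4\big)=\Omega(\ell^4)$ copies of $C_4$. A copy of $C_4$ in $M$ is \emph{good} unless one of its two diagonals lies in $E$, and the number of $C_4$'s of $M$ with a diagonal in $E$ is at most $\sum_{uv\in E}\binom{|N_M(u)\cap N_M(v)|}{2}$; since $e(E)\le\binom{\ell}{2}$ is far too small (relative to $e(M)\ge 3\ell n$) for $E$ to capture a constant fraction of the $C_4$-mass of $M$, this is a lower-order term --- if necessary enforced by first deleting the $o(\ell n)$ edges of $M$ responsible for the few pairs on which $M$ and $E$ interact heavily. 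Hence $\Omega(\ell^4)$ good copies remain, as required.

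Hypothesis~\ref{item:stability-cond-incr} is where the real content lies, and I expect it to be the main obstacle. Here I may assume $e(M)<3\ell n$ (otherwise~\ref{item:supsersaturation-cond-incr} applies), so $e(M)=\Theta(\ell n)$, only a constant factor above the K\H{o}v\'ari--S\'os--Tur\'an threshold; taking $C$ large and $\delta$ small still forces $M$ to contain $\ge c\ell^4$ copies of $C_4$. Suppose for contradiction that $\cP$ has fewer than $4\beta\ell^4$ good copies; then all but $O(\beta)\cdot\ell^4$ of the $C_4$'s of $M$ have a diagonal in $E$, so $\sum_{uv\in E}\binom{|N_M(u)\cap N_M(v)|}{2}\ge c'\ell^4$. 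The crux is to convert this into structure: because $E$ has at most $\binom{\ell}{2}$ edges and $M$, having size only $\Theta(\ell n)$, cannot spread this much $C_4$-mass onto pairs of \emph{small} $M$-codegree, most of the $E$-edges carrying this mass must have $M$-codegree $\gg 1$; a deletion / K\H{o}v\'ari--S\'os--Tur\'an stability analysis (using $\ell\le\lambda n$ to exclude degenerate alternatives) then forces the endpoints of these heavy $E$-edges to localise onto a single set $U$ with $|U|=(1+o(1))\ell$ on which $M$ behaves essentially like a union of stars and on which $E$ is almost complete, whence $E$ is $\eps$-close to $K_\ell$ --- contradicting the hypothesis. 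Carefully executing this localisation (in particular, showing the heavy $E$-edges cannot be spread over substantially more than $\ell$ vertices, and that $E$ really does fill in the set $U$) is the technical heart of the argument and is precisely the new robust stability ingredient advertised in the introduction.

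Finally, once the supersaturation count and, under~\ref{item:stability-cond-incr}, the structural dichotomy are in hand, in both cases I extract the well-spread family $\GG$ by the iterative-deletion argument mentioned above, and then the bookkeeping of the first paragraph finishes the proof. The constants are chosen at the very end in the order $C$ (large, for the supersaturation counts), then $\delta$ and $\lambda$ (small, for the stability step), then $\beta$ (small enough that $4\beta\ell^4$ stays below the supersaturation bound while $3\beta\ell^4$ survives the removal of $\CC$).
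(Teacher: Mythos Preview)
Your plan has a genuine gap in the ``well-spread extraction'' step. You propose to first find $\Omega(\ell^4)$ good copies of $C_4$ and \emph{then} prune to a family in which every pair of mixed edges lies in at most $\ell/24$ copies, claiming that iterative deletion together with $\ell\ge C\sqrt n$ makes this work. But a pair of \emph{adjacent} mixed edges (a $K_{1,2}$ with leaves $u,w$) lies in $|N_M(u)\cap N_M(w)|$ copies of $C_4$ in $M$, and this codegree can be as large as $\Theta(n)$; with $|\CC|$ as large as $12\beta\ell^3$, simply deleting all good $C_4$s through pairs in $\CC$ could remove $\Theta(\beta\ell^3 n)$ copies, which is $\gg\ell^4$ whenever $\ell=o(n)$ (and in particular when $\ell\sim C\sqrt n$). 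The iterative-deletion variant does no better: it gives an upper bound on the number of saturated pairs but no lower bound on the number of surviving copies. In fact the usual ``balanced supersaturation'' argument runs in the \emph{opposite} direction --- one uses a robust count (avoiding an arbitrary small $\CC$) to greedily build the well-spread family --- so invoking it here is circular.

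The paper's proof is structurally different and sidesteps this issue by weaving the $\CC$-avoidance into the construction rather than treating it as a post-processing step. Under~\ref{item:supsersaturation-cond-incr} (Proposition~\ref{prop:supersaturation-step}), one first caps the $M$-degree at each vertex by choosing a set $M_v$ of exactly $2\ell$ incident mixed edges, then takes a random subset $R$ with density $q=m/\ell^2$, finds an independent set $I\subseteq R$ in $E$ via Caro--Wei, and counts good $K_{1,2}$s centred in $I$. The degree cap guarantees that any $K_{1,2}$ from $\CC$ lies in at most $2\ell$ of the resulting $C_4$s, and the random sampling ensures that only a $q$- or $q^2$-fraction of $\CC$ is relevant; together these make the subtraction of $\CC$-contaminated copies genuinely lower order. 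Under~\ref{item:stability-cond-incr} the paper does \emph{not} argue by contradiction toward ``$E$ is $\eps$-close to $K_\ell$'' as you outline; instead it performs a direct constructive case analysis on the degree sequence of $E$ (Cases~1 and~2, with further subcases and an auxiliary Proposition~\ref{prop:supersaturation-A} for when many $M$-edges are incident to a small low-$E$-density set), in each case producing the required good $C_4$s while simultaneously accounting for $\CC$. The missing idea in your plan is precisely this degree-capping plus random-sampling mechanism; without it, there is no control on how many good $C_4$s a single forbidden pair can kill.
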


\begin{proof}[Derivation of Theorem~\ref{thm:robust-stability-refined} from Theorem~\ref{thm:robust-stability-increment}]
  Given $0 < \eps \le 2$,\footnote{Note that the result for $\eps > 2$ is implied by the statement for $\eps = 2$, since condition (M3) is then stronger than condition (M1), and every graph with at most ${\ell \choose 2}$ edges is $2$-close to $K_\ell$.} let $\beta_{\ref{thm:robust-stability-increment}}$, $\delta_{\ref{thm:robust-stability-increment}}$, $\lambda_{\ref{thm:robust-stability-increment}}$, and $C_{\ref{thm:robust-stability-increment}}$ be the constants whose existence is asserted by Theorem~\ref{thm:robust-stability-increment} with $\eps_{\ref{thm:robust-stability-increment}} \leftarrow \eps/4$ and let
  \[
    \delta = \min\left\{ \frac{\delta_{\ref{thm:robust-stability-increment}}}{3}, \, \frac{\eps}{10} \right\}, \quad \beta = \min\left\{ \frac{\eps^2 \beta_{\ref{thm:robust-stability-increment}}}{4}, \, \frac{\delta}{20}\right\}, \quad \lambda = \frac{\lambda_{\ref{thm:robust-stability-increment}}}{2}, \quad \text{and} \quad C = \frac{C_{\ref{thm:robust-stability-increment}}}{\sqrt{\eps}}.
  \]
  Suppose that a pregraph $\cP = (M,E)$ satisfies the assumptions of Theorem~\ref{thm:robust-stability-refined}. We shall build the (initially empty) hypergraphs $\HH_0$, $\HH_1$, and $\HH_2$ edge by edge, making sure that we stay within the class of permissible hypergraphs, until one of them has sufficiently many edges. To this end, suppose that we have succeeded in constructing some permissible $\HH_0$, $\HH_1$, and $\HH_2$, but each of them has fewer than $\beta \ell^4$ edges. We shall modify the pregraph $\cP$ by removing from $M$ all mixed edges $f$ for which there exists $i \in \{0,1,2\}$ such that either $(\emptyset, \{f\})$ or $(\{f\},\emptyset)$ (or both) is saturated in $\HH_i$. This will ensure that every good copy of $C_4$ that we will later find in this modified colouring will not contain any saturated pair $(S,T)$ of type~\ref{item:saturated-01} or~\ref{item:saturated-10}. To achieve this, we first move all mixed edges $f$ for which $(\{f\}, \emptyset)$ is saturated in either $\HH_1$ or $\HH_2$ from $M$ to $E$ and then move all $f$ for which $(\emptyset, \{f\})$ is saturated in any of the $\HH_i$ from $M$ to an initially empty set $N$. Denote the modified pregraph by $\cP' = (M', E')$. Observe, crucially, that each good copy of $C_4$ in $\cP'$ is also good in $\cP$, as $E' \supseteq E$ and $M' \subseteq M$. Moreover, each such copy yields an edge of one of the $\HH_i^\cP$ with no saturated pair of type~\ref{item:saturated-01} or~\ref{item:saturated-10}, where $4+i$ is the number of edges of $M' \cup N$ induced by the vertex set of this $4$-cycle.\footnote{The four edges forming a good copy of $C_4$ in $\cP'$ belong to $M'$, but the remaining two edges induced by the vertex set of this cycle could belong to $N \subseteq M \setminus M'$.}

  Let $\ell' = \lfloor (1+\delta)\ell \rfloor$. As each of the $\HH_i$ has fewer than $\beta \ell^4$ edges, then
 % \begin{align*}
 $$   e(E' \setminus E) \le \sum_{i = 1}^2 \frac{ie(\HH_i)}{\ell^2} < 3\beta\ell^2 \le \frac{\delta\ell^2}{2}$$% \\
 and
$$    e(M \setminus M') \le e(E' \setminus E) + \sum_{i=0}^2 \frac{4e(\HH_i)}{\lfloor \ell^3/n \rfloor} < 3\beta\ell^2 + 13\beta\ell n < 20\beta\ell n \le \delta \ell n.$$
%  \end{align*}
  In particular,
  \[
    e(E') \le \binom{\ell}{2} + \frac{\delta\ell^2}{2} \le \binom{\ell'}{2}.
  \]
  Moreover, if $e(M) \ge 4\ell n$, then $e(M') \ge 3\ell'n$, and if $e(M) \ge (1-\delta) \ell n$, then $e(M') \ge (1-3\delta)\ell'n \ge (1-\delta_{\ref{thm:robust-stability-increment}})\ell'n$. Finally, if $E'$ is $(\eps/4)$-close to $K_{\ell'}$, then $E$ is $\eps$-close to $K_\ell$, as $e(K_{\ell'}) - e(K_\ell) \le 2\delta\ell^2$, and $\delta \le \eps/10$. Therefore, if $\cP$ satisfies the assumptions of Theorem~\ref{thm:robust-stability-refined} with either~\ref{item:supersaturation-cond} or~\ref{item:stability-cond}, then $\cP'$ satisfies the assumptions of Theorem~\ref{thm:robust-stability-increment} with $\eps_{\ref{thm:robust-stability-increment}} \leftarrow \eps/4$ and $\ell_{\ref{thm:robust-stability-increment}} \leftarrow \ell'$, see~\ref{item:supsersaturation-cond-incr} and~\ref{item:stability-cond-incr}.

  Now, let $\CC$ be the collection of all $T$ such that $(\emptyset, T)$ is a saturated pair of type~\ref{item:saturated-02} in one of the $\HH_i$ and observe that
  \[
    |\CC| \le \sum_{i=0}^2 \frac{4e(\HH_i)}{\ell} < 12\beta\ell^3 \le 12\beta_{\ref{thm:robust-stability-increment}} (\ell')^3,
  \]
  as each edge of $\HH_i$ contains at most four such saturated pairs (if $f_1, f_2 \in M$ do not share a vertex, then $\deg_{\HH_i^\cP}(\emptyset, \{f_1, f_2\}) \le 2$). Therefore, if $\cP$ satisfies the assumptions of Theorem~\ref{thm:robust-stability-refined} with either~\ref{item:supersaturation-cond} or~\ref{item:stability-cond}, then we may invoke Theorem~\ref{thm:robust-stability-increment} to find at least $3\beta_{\ref{thm:robust-stability-increment}} (\ell')^4 \ge 3 \beta \ell^4$ good copies of $C_4$ in $\cP'$, none of which contains a pair from $\CC$.
  
  On the other hand, if $\cP$ satisfies the assumptions of Theorem~\ref{thm:robust-stability-refined} with~\ref{item:non-split-cond}, then $\cP'$ restricted to the set $U^c$ satisfies the assumptions of Theorem~\ref{thm:robust-stability-increment} with $\ell_{\ref{thm:robust-stability-increment}} \leftarrow 2 \sqrt{\eps} \ell$, as
  \[
    e_{E'}(U^c) \le e_E(U^c) + e(E' \setminus E) \le \eps\binom{\ell}{2} + \frac{\delta\ell^2}{2} \le \eps\ell^2 \le \binom{2\sqrt{\eps}\ell}{2}
  \]
  and
  \[
    e_{M'}(U^c) \ge e_M(U^c) - e(M \setminus M') \ge 7\sqrt{\eps}\ell n - \delta\ell n \ge 6\sqrt{\eps}\ell n \ge 3(2\sqrt{\eps}\ell)|U^c|,
  \]
  see~\ref{item:supsersaturation-cond-incr}. Since
  \[
    2\sqrt{\eps}\ell \ge 2\sqrt{\eps} \cdot C\sqrt{n} \ge C_{\ref{thm:robust-stability-increment}}\sqrt{|U^c|} \quad \text{and} \quad |\CC| < 12\beta \ell^3 \le 12\beta_{\ref{thm:robust-stability-increment}} \cdot \left(2\sqrt{\eps}\ell\right)^3,
  \]
  we may again invoke Theorem~\ref{thm:robust-stability-increment} to find at least $3\beta_{\ref{thm:robust-stability-increment}} (2\sqrt{\eps}\ell)^4 \ge 3\beta \ell^4$ good copies of $C_4$ in $\cP'$, none of which contains a pair from $\CC$.

  Finally, it follows from our construction that each good copy of $C_4$ in $\cP'$ corresponds to an edge of $\HH_i^\cP$ for some $i \in \{0,1,2\}$ that additionally does not contain any saturated pairs of type~\ref{item:saturated-01} or~\ref{item:saturated-10}. Moreover, by our definition of $\CC$, none of the at least $3\beta\ell^4$ copies we have found above contains a saturated pair of type~\ref{item:saturated-02} either. Recalling that $e(\HH_0) + e(\HH_1) + e(\HH_2) < 3\beta\ell^4$, it follows that one of these good $C_4$s yields a pair $(A, B) \in \HH_i^\cP \setminus \HH_i$ such that $\HH_i \cup \{(A, B)\}$ is permissible. Iterating this process, we must eventually arrive at a permissible hypergraph $\HH_i$ (for some $i \in \{0,1,2\}$) with at least $\beta \ell^4$ edges, as required. 
\end{proof}

The remainder of the this section is dedicated to the proof of Theorem~\ref{thm:robust-stability-increment}. We begin by proving the following proposition, which proves Theorem~\ref{thm:robust-stability-increment} when the condition~\ref{item:supsersaturation-cond-incr} holds and will moreover serve as a helpful warm-up for the proof of the theorem. It will also be a step in the proof of the theorem under the assumption~\ref{item:stability-cond-incr}.

%\pagebreak

\begin{prop}
  \label{prop:supersaturation-step}
  Suppose that integers $\ell$ and $n$ satisfy $\ell \ge \sqrt{n}$ and that $\cP = (M,E)$ is a pregraph on $n$ vertices with $e(E) \le \binom{\ell}{2}$ and $e(M) \ge 3 \ell n$. Then for any collection $\CC$ of at most $\ell^3/40$ pairs of elements of $M$, there exist at least $\ell^4/40$ good copies of $C_4$ in $\cP$ that contain no pair from $\CC$.
\end{prop}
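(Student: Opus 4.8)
The plan is to run a straightforward supersaturation-counting argument, controlling the number of $C_4$'s in $M$ from below via convexity, the number of $C_4$'s whose vertex set spans an edge of $E$ from above, and the number of $C_4$'s through any fixed pair of mixed edges from above; then subtract. First I would bound the number of (labelled, say) copies of $C_4$ in the graph $M$ from below. Since $e(M) \ge 3\ell n$ and $M$ has $n$ vertices, the average degree in $M$ is at least $6\ell$, and by the standard moment computation the number of \emph{paths of length two} in $M$ is at least roughly $n \cdot \binom{6\ell}{2} \gtrsim 18 \ell^2 n$; grouping cherries by their pair of endpoints and applying convexity (Cauchy--Schwarz over the $\binom{n}{2}$ pairs), the number of pairs of internally-disjoint cherries with the same endpoints — i.e.\ copies of $C_4$ in $M$ — is at least of order $(\ell^2 n)^2/n^2 = \ell^4$, with a concrete constant comfortably above $\ell^4/20$ once $\ell \ge \sqrt n$ (here the hypothesis $\ell \ge \sqrt n$ is exactly what guarantees $\ell^4 \gg \ell^3$, absorbing lower-order error terms and degenerate configurations such as the two cherries sharing a vertex, which number only $O(\ell^3 n) = O(\ell^4 \cdot \sqrt{n}/\ell) \le O(\ell^4)$ but with a small constant, or more carefully $O(\ell^2 \cdot \ell^2) $ after a cleaner count).

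Next I would discard the bad copies. A copy of $C_4$ on a vertex set $W=\{v_1,v_2,v_3,v_4\}$ fails to be good for $\cP$ only if $W$ spans at least one edge of $E$; so the number of non-good $C_4$'s in $M$ is at most the number of pairs $(e, P)$ where $e \in E$ and $P$ is a $C_4$ in $M$ meeting both endpoints of $e$. For a fixed edge $e = xy \in E$, the number of such $C_4$'s is at most $2$ times the number of common-neighbourhood cherries, which is crudely at most $(\text{something like}) \; d_M(x) d_M(y) \le n^2$ per edge, but a far better and sufficient bound comes from summing: the total is at most $e(E)$ times the maximum codegree-type quantity, and since $e(E) \le \binom{\ell}{2} \le \ell^2/2$ and each edge of $E$ lies in at most $O(\ell^2)$... — more precisely, I would bound $\sum_{xy \in E} (\text{\#} C_4\text{'s through } x,y)$ by first noting each $C_4$ through $x,y$ uses two more vertices, and counting instead over $C_4$'s: each $C_4$ has $\binom{4}{2}=6$ pairs, so the number of non-good $C_4$'s is at most $6 e(E) \cdot \Delta$, where $\Delta$ bounds the number of $C_4$'s in $M$ through a \emph{fixed pair of vertices}; and the number of $C_4$'s through a fixed pair of vertices is at most $n^2$ trivially but really at most $\binom{n}{2} \le n^2/2$, giving at most $3 e(E) n^2 \le \tfrac{3}{2}\ell^2 n^2$. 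Since $\ell \ge \sqrt n$, we have $n \le \ell^2$, so this is at most $\tfrac32 \ell^2 \cdot \ell^2 \cdot n \le$ ... — this is the step where one must be slightly careful with the exponents, and I expect it is cleanest to bound the number of $C_4$'s through a fixed pair of \emph{non-adjacent} vertices in $M$ by the codegree squared and sum via convexity, which again lands at $O(\ell^2 \cdot n) \cdot O(\ell^2/n) = O(\ell^4)$ with a constant one can force below $\ell^4/40$ by taking the implied lower-bound constant large enough; the key point is that the count of good $C_4$'s is still $\ge \ell^4/20$, say.

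Finally, I would remove the copies hitting $\CC$. A pair $\{f_1, f_2\}$ of mixed edges lies in a $C_4$ of $M$ only if $f_1,f_2$ together with two further mixed edges close up into a $4$-cycle; the number of $C_4$'s in $M$ containing a fixed pair of (disjoint) edges is at most $2$ (the two edges, if disjoint, already occupy all four vertices, and there are at most $2$ ways to complete; if they share a vertex, $0$ ways to form a $C_4$ with exactly those two among its four edges, or a bounded number counting degenerate labellings). Hence the number of $C_4$'s containing some pair from $\CC$ is at most, say, $2|\CC| \le \ell^3/20 \le \ell^4/40$ once $\ell \ge \sqrt n \ge 1$ (indeed $\ell^3 \le \ell^4$ trivially, and the constant $1/20$ beats $1/40$ after one more factor of $\ell \ge \sqrt n \ge$ a suitable constant — or one simply notes $\ell^3/20 \le \ell^4/40$ iff $\ell \ge 2$, and the $\ell=1$ case is vacuous). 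Subtracting, we are left with at least $\ell^4/20 - \ell^4/40 = \ell^4/40$ good copies of $C_4$ avoiding $\CC$, as required.

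The main obstacle, and the only place demanding genuine care, is the middle step: getting the constant in the supersaturation lower bound ($\gtrsim \ell^4$ good $C_4$'s) large enough — comfortably above $\ell^4/20$ — while the various error terms (degenerate cherry-pairs, $C_4$'s blocked by an edge of $E$) are provably below their allotted shares. This is entirely a matter of bookkeeping with Cauchy--Schwarz and the hypothesis $\ell \ge \sqrt n$ (equivalently $n \le \ell^2$), which converts every ``$\times n$'' into ``$\times \ell^2$'' and lets the dominant $\ell^4$ term swallow everything of lower order; I would organize it by fixing labelled $C_4$'s throughout to avoid factor-of-$24$ confusion, and only pass to unlabelled copies (or simply keep labelled copies, which only helps) at the very end.
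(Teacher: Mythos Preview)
Your direct supersaturation approach is genuinely different from the paper's, and as written it has two real gaps that I do not see how to close without importing the paper's key ideas.

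\textbf{Gap 1: bounding the $C_4$'s killed by $E$.} You want to show that at most a small fraction of the $C_4$'s in $M$ have an $E$-edge on their vertex set. Since $E\cap M=\emptyset$, any $E$-edge $xy$ can only appear as a \emph{diagonal} of a $C_4$ in $M$, so the number of bad $C_4$'s is exactly $\sum_{xy\in E}\binom{p(x,y)}{2}$ where $p(x,y)$ is the $M$-codegree. Your crude bound $e(E)\cdot n^2/2\le \ell^2 n^2/4$ can be as large as $\ell^6/4$ (when $n=\ell^2$), swamping the $\Theta(\ell^4)$ lower bound you get from Cauchy--Schwarz on cherries. Your proposed refinement ``sum via convexity, which again lands at $O(\ell^2 n)\cdot O(\ell^2/n)=O(\ell^4)$'' only makes sense if codegrees are $O(\ell)$, i.e.\ if $M$ has maximum degree $O(\ell)$---but you never arrange this, and in general $\Delta(M)$ can be $n-1$. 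Concretely, take $M=K_{\ell,\,n-\ell}$ together with a dense graph on the large side, and $E=K_\ell$ on the small side: then \emph{every} $C_4$ using two small-side vertices is bad, and these constitute almost all $C_4$'s touching the bipartite part. One can still find $\Omega(\ell^4)$ good $C_4$'s here, but they live entirely on the large side, and your ``total minus bad'' arithmetic does not see them.

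\textbf{Gap 2: the $\CC$-step.} You assert that a pair $\{f_1,f_2\}\in\CC$ lies in at most $2$ copies of $C_4$, and in $0$ if $f_1,f_2$ share a vertex. This is false: if $f_1=uv$ and $f_2=vw$ share $v$, then every $x\in N_M(u)\cap N_M(w)$ gives a $C_4$ $u\text{--}v\text{--}w\text{--}x\text{--}u$ through both edges, so the count is the codegree $p(u,w)$, which can be as large as $n$. With $|\CC|\le\ell^3/40$ this yields up to $\ell^3 n/40\le \ell^5/40$ forbidden $C_4$'s, again swamping $\ell^4$.

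The paper circumvents both issues simultaneously, and the mechanism is worth noting. After passing to a subgraph with $\delta(M)\ge 2\ell$, it \emph{caps} degrees by fixing, for each $v$, a set $M_v$ of exactly $2\ell$ incident $M$-edges; it then takes a random sample $R$ at rate $q=m/\ell^2$ and applies Caro--Wei to $E[R]$ to extract an $E$-independent set $I$ with $\Ex|I|\gtrsim m^2/\ell^2$. Only cherries \emph{centred at $I$} and with both edges in $M_v$ are counted. This buys two things at once: (i)~any two such cherries with common endpoints $u,w$ automatically give a \emph{good} $C_4$, since the diagonal $uw\notin E$ by the definition of ``good cherry'' and the other diagonal $vv'$ lies inside $I$; (ii)~the degree cap forces any fixed adjacent pair from $\CC$ (with non-centre endpoints in $I$) to lie in at most $2\ell$ of the constructed $C_4$'s, which is what makes the $\CC$-subtraction survive. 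Neither of these is achievable by naked double counting on the whole of $M$.
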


Our proofs will use the following two auxiliary statements. The first is a well-known result of Caro~\cite{Ca79} and Wei~\cite{We81}. We remark that, in this section, if $G$ is a graph (such as $M$ or $E$), we will write $d_G(v)$ and $d_G(v,S)$ to denote the number of neighbours of $v$ and the number of neighbours of $v$ in $S$, respectively.

\begin{lemma}
  \label{lem:CW}
  For every graph $G$, 
  \[
    \alpha(G) \ge \sum_{v \in V(G)} \frac{1}{1+ d_G(v)}.
  \]
\end{lemma}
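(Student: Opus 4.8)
The plan is to give the classical probabilistic argument of Caro and Wei. Fix a uniformly random linear order $\sigma$ on $V(G)$, that is, a uniformly random bijection from $V(G)$ to $\{1, \dots, |V(G)|\}$, and let $N_G(v)$ denote the neighbourhood of $v$ in $G$. Define
\[
  I = \big\{ v \in V(G) \scolon \sigma(v) < \sigma(u) \text{ for every } u \in N_G(v) \big\},
\]
the (random) set of vertices that precede all of their neighbours in the order $\sigma$. First I would check that $I$ is an independent set for \emph{every} outcome of $\sigma$: if $u, v \in I$ were adjacent, then $v \in I$ forces $\sigma(v) < \sigma(u)$, while $u \in I$ forces $\sigma(u) < \sigma(v)$, a contradiction.

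Next I would compute $\Ex[|I|]$ by linearity of expectation. For a fixed vertex $v$, the event $v \in I$ is precisely the event that $v$ is the $\sigma$-minimum of the set $\{v\} \cup N_G(v)$, which has $1 + d_G(v)$ elements; since the restriction of a uniformly random order on $V(G)$ to a fixed subset is a uniformly random order on that subset, each of these $1 + d_G(v)$ elements is equally likely to be the minimum, so $\Pr[v \in I] = \frac{1}{1 + d_G(v)}$. Hence
\[
  \Ex[|I|] = \sum_{v \in V(G)} \Pr[v \in I] = \sum_{v \in V(G)} \frac{1}{1 + d_G(v)}.
\]
Since $|I|$ is the size of an independent set for every outcome, some outcome achieves $|I| \ge \Ex[|I|]$, and therefore $\alpha(G) \ge \sum_{v \in V(G)} \frac{1}{1 + d_G(v)}$, which is the assertion of the lemma.

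This argument is short and I do not anticipate a genuine obstacle; the only points needing a line of justification are that $I$ is independent and that a uniformly random order restricts to a uniformly random order on any subset. Should a fully self-contained deterministic proof be preferred, an alternative is induction on $|V(G)|$: take a vertex $v$ of minimum degree, delete $v$ together with $N_G(v)$ to get $G'$, and combine $\alpha(G) \ge 1 + \alpha(G')$ with the inductive bound for $G'$, using that vertex deletion only decreases degrees and that $\sum_{u \in \{v\} \cup N_G(v)} \frac{1}{1 + d_G(u)} \le \frac{1 + d_G(v)}{1 + d_G(v)} = 1$ because every $u$ in this set satisfies $d_G(u) \ge d_G(v)$. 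I would present the probabilistic proof for brevity.
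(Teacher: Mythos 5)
Your probabilistic argument is correct and is precisely the classical Caro--Wei proof; the paper states the lemma without proof, citing Caro and Wei, and in a later footnote (within the proof of Proposition~\ref{prop:supersaturation-A}) it explicitly refers to the random-permutation proof from \cite{AlSp16}, which is the argument you give. Both the independence of $I$ and the computation $\Pr[v \in I] = 1/(1+d_G(v))$ are justified exactly as they should be, so nothing is missing.
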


The second is an easy consequence of Jensen's inequality applied to the convex function $[0,\infty) \ni x \mapsto 1/(1+x)$. Given a nonnegative integer $d$ and a real number $q \in [0,1]$, we shall denote by $\Bin(d, q)$ the binomial random variable with parameters $d$ and $q$.

\begin{fact}
  \label{fact:Jensen}
  For every $d \in \N$ and $q \in [0,1]$,
  \[
    \Ex\left[ \frac{1}{1 + \Bin(d,q)} \right] \ge \frac{1}{1 + qd}.
  \]
\end{fact}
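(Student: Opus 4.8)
\textbf{Proof proposal for Fact~\ref{fact:Jensen}.}

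The plan is a direct application of Jensen's inequality. The function $\varphi \colon [0,\infty) \to \mathbb{R}$ given by $\varphi(x) = 1/(1+x)$ is convex, since $\varphi''(x) = 2/(1+x)^3 > 0$ for all $x \ge 0$. Jensen's inequality for a convex function states that $\Ex[\varphi(X)] \ge \varphi(\Ex[X])$ for any integrable random variable $X$ taking values in the domain of $\varphi$. Applying this with $X = \Bin(d,q)$, which takes values in $\{0, 1, \dotsc, d\} \subseteq [0,\infty)$ and has expectation $\Ex[\Bin(d,q)] = qd$, yields
\[
  \Ex\left[ \frac{1}{1 + \Bin(d,q)} \right] \ge \frac{1}{1 + \Ex[\Bin(d,q)]} = \frac{1}{1 + qd},
\]
which is exactly the claimed inequality.

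There is essentially no obstacle here; the only points worth spelling out are the convexity of $\varphi$ on $[0,\infty)$ (immediate from the second-derivative computation above, or alternatively from the fact that $1/(1+x)$ is a decreasing, convex rational function with no pole on $[0,\infty)$) and the elementary fact that the binomial distribution has mean $qd$. If one prefers to avoid invoking Jensen's inequality as a black box, one can instead give a one-line self-contained argument: by the tangent-line (supporting-hyperplane) property of the convex function $\varphi$ at the point $x_0 = qd$, we have $\varphi(x) \ge \varphi(x_0) + \varphi'(x_0)(x - x_0)$ for all $x \ge 0$; substituting $x = \Bin(d,q)$ and taking expectations, the linear term vanishes since $\Ex[\Bin(d,q) - qd] = 0$, leaving $\Ex[\varphi(\Bin(d,q))] \ge \varphi(qd) = 1/(1+qd)$. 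Either route completes the proof.
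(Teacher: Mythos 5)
Your proof is correct and is exactly the argument the paper uses: the statement is introduced as "an easy consequence of Jensen's inequality applied to the convex function $[0,\infty) \ni x \mapsto 1/(1+x)$," which is precisely what you do. No discrepancy to note.
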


\begin{proof}[Proof of Proposition~\ref{prop:supersaturation-step}]
  Fix a pregraph $\cP = (M, E)$ on $n$ vertices and a collection $\CC$ satisfying the assumptions of the proposition. We first remove all vertices whose degree in $M$ is less than $2\ell$. As this way we lose at most $2\ell n$ edges of $M$, we arrive at an $m$-vertex subset $W \subseteq V(K_n)$, for some $2\ell \le m \le n$, such that $\delta(M[W]) \ge 2\ell$. Clearly, it is sufficient to find $\ell^4/40$ good copies of $C_4$ in $\cP$ restricted to $W$, none of which contains a pair from $\CC$. Therefore, shall replace the original $M$, $E$, and $\cP$ with their restrictions to the set $W$.

  Set $q = m / \ell^2 \le n / \ell^2 \le 1$ and form a random subset $R \subseteq W$ by retaining each element of $W$ independently with probability $q$. We apply Lemma~\ref{lem:CW} to the graph $E[R]$ to find an independent set $I \subseteq R$ with 
  \[
    |I| \ge \sum_{v \in R} \frac{1}{1 + d_{E[R]}(v)} = \sum_{v \in W} \frac{\indicator[v \in R]}{1 + d_E(v, R)}.
  \]
  By Fact~\ref{fact:Jensen}, we have
  \[
    \Ex[|I|] \ge \Ex\left[ \sum_{v \in W} \frac{\indicator[v \in R]}{1 + d_E(v,R)} \right] = \sum_{v \in W} q \cdot \Ex\left[ \frac{1}{1 + \Bin(d_E(v),q)} \right] \ge \sum_{v \in W} \frac{q}{1 + q d_E(v)}.
  \]
  As the function $[0, \infty) \ni x \mapsto q/(1+qx)$ is convex, the sum in the right-hand side above is minimised when $d_E(v) = 2e(E)/m$ for every $v \in W$. As $e(E) \le \ell^2/2$, then
  \begin{equation}
    \label{eq:supersaturation-I-LB}
    \Ex[|I|] \ge \frac{qm}{1+2qe(E)/m} \ge \frac{qm}{1 + q \cdot \ell^2 / m} = \frac{m^2}{2\ell^2}.
  \end{equation}

  Next, let us choose, for each vertex $v \in W$, an arbitrary set $M_v$ of $2\ell$ edges of $M$ that are incident to $v$. We shall say that a copy of $K_{1,2}$ is \emph{good} if its centre $v$ lies in $I$, both of its edges are in $M_v$, and the pair comprising its two non-centre vertices does not belong to $E$. The number $X_g$ of such good $K_{1,2}$s satisfies
  \begin{equation}
    \label{eq:Xg-lower}
    X_g \ge \sum_{v \in I} \left( \binom{2\ell}{2} - e(E) \right) \ge \left(\binom{2\ell}{2} - \binom{\ell}{2}\right)\cdot |I| \ge \frac{4\ell^2}{3} \cdot |I|.
  \end{equation}
  We shall say that a copy of $K_{1,2}$ in $M$ is \emph{saturated} if (the set consisting of) its two edges belong to $\CC$. Let $X_s$ be the number of saturated $K_{1,2}$s in $M$ whose centre vertex belongs to the (random) set $I \subseteq R$. Writing $X$ for the number of good $K_{1,2}$s that are not saturated, we have $X \ge X_g - X_s$ and hence, recalling that $|\CC| \le \ell^3 / 40$, 
\begin{equation}\label{eq:step:X:lower}
\Ex[X] \ge \Ex[X_g] - \Ex[X_s] \ge \frac{4\ell^2}{3} \cdot \Ex[|I|] - q \cdot |\CC| \ge \frac{2m^2}{3} - \frac{\ell m}{40} \ge \frac{3m^2}{5},
\end{equation}
  where we have used~\eqref{eq:supersaturation-I-LB}, \eqref{eq:Xg-lower}, and the inequality $m \ge 2\ell$.
  
Since $I$ is an independent set in $E$, it follows that any pair of good $K_{1,2}$s with the same non-centre vertices form a good $C_4$ and therefore we have at least $X - \binom{m}{2}$ such $C_4$s. However, we must disregard those $C_4$s that contain a saturated $K_{1,2}$ whose two non-centre vertices lie in $I$, since the two edges of such a saturated $K_{1,2}$ could come from two different good non-saturated $K_{1,2}$s whose centre vertices lie in $I$. The expected number of saturated $K_{1,2}$s of this type is at most $q^2 \cdot |\CC|$ and each of them lies in at most $2\ell$ of our good $C_4$s, since the edges of our good $C_4$s came only from the sets $M_v$. We must therefore discard (in expectation) at most $2\ell q^2 |\CC|$ of the (at least) $X - \binom{m}{2}$ good $C_4$s found using pairs of good $K_{1,2}$s.
  
To summarise, let $Z$ be the number of good $C_4$s that contain no saturated $K_{1,2}$ and at least two vertices of $I$. By~\eqref{eq:step:X:lower} and the argument above, we have
  \[
    \Ex[Z] \ge \Ex[X] - \binom{m}{2} - 2\ell q^2 |\CC| \ge \left(\frac{3}{5} - \frac{1}{2} - \frac{2|\CC|}{\ell^3}\right)m^2 \ge \frac{m^2}{20}.
  \]
Finally, observe that each good copy of $C_4$ containing no saturated $K_{1,2}$ has probability at most $2q^2$ of being counted by $Z$. It therefore follows that the total number of such copies of $C_4$ must be at least $m^2/(40q^2) = \ell^4/40$, as required.
\end{proof}

We next consider pregraphs $\cP = (M,E)$ for which one can find a small set $A$ of vertices of $K_n$ that contains only a tiny proportion of the edges of $E$, but still a large proportion of mixed edges have an endpoint in $A$. The following proposition will be invoked in the proof of Theorem~\ref{thm:robust-stability-increment}.

\begin{prop}
  \label{prop:supersaturation-A}
  Suppose that integers $\ell$ and $n$ satisfy $\ell \ge 4\sqrt{n}$ and set $\alpha := 1/640$. Let $\cP = (M,E)$ be a pregraph on $n$ vertices with $e(E) \le \binom{\ell}{2}$ and suppose that there exists a set $A \subseteq V(K_n)$, with $|A| \le \alpha n$ and $e_E(A) \le \alpha\ell^2$, such that
  \[
    \sum_{w \in A} d_M(w) \ge \frac{\ell n}{2}.
  \]
  Then for any collection $\CC$ of at most $\alpha\ell^3$ pairs of elements of $M$, there exist at least $\alpha\ell^4$ good copies of $C_4$ in $\cP$ that contain no pair from $\CC$.
\end{prop}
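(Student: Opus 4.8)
The plan is to follow the skeleton of the proof of Proposition~\ref{prop:supersaturation-step} — produce many good copies of $C_4$ by gluing two paths of length two that share a pair of endpoints, then discard those meeting $\CC$ — but now forcing the two `central' vertices of each $C_4$ to lie in $A$, so as to exploit the assumed concentration of mixed edges around $A$.

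\textbf{Preprocessing.} Since $\sum_{w\in A}d_M(w)\ge\ell n/2$, each $d_M(w)\le n-1$, and $|A|\le\alpha n$, deleting from $A$ every vertex of $M$-degree below $2\ell$ removes at most $2\ell|A|\le2\alpha\ell n\ll\ell n/2$ from the degree sum; this leaves $A'\subseteq A$ with $d_M(w)\ge2\ell$ for all $w\in A'$, $\sum_{w\in A'}d_M(w)\ge\ell n/4$, $|A'|\le\alpha n$ and $e_E(A')\le\alpha\ell^2$. In particular $|A'|\ge\ell/4$, and comparing $\ell n/2\le\sum_{w\in A}d_M(w)\le|A|n$ gives $n>320\,\ell$, so $A'$ is tiny and $E[A']$ is extremely sparse.

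\textbf{Counting.} For disjoint $u,u'\in V$ write $C(u,u')=N_M(u)\cap N_M(u')\cap A'$. If $uu'\notin E$, then any $\{w,w'\}\subseteq C(u,u')$ with $ww'\notin E$ gives a good $C_4$ on $\{w,u,w',u'\}$ (its diagonals are the non-edges $uu'$ and $ww'$, and its four edges lie in $M$), and each good $C_4$ arises this way at most twice. Hence the number of good $C_4$s is at least $\tfrac12\sum_{uu'\notin E}\bigl(\binom{|C(u,u')|}{2}-e_E(A')\bigr)_{+}$. The key identity is $\sum_{\{u,u'\}}|C(u,u')|=\sum_{w\in A'}\binom{d_M(w)}{2}=:D$, and by convexity together with the degree-sum bound one gets $D=\Omega(\ell^2n)$; by Cauchy--Schwarz over the $\le\binom n2$ leaf-pairs, $\sum|C(u,u')|^2\ge D^2/\binom n2=\Omega(\ell^4)$ — note that the factors of $n$ cancel, which is exactly why the hypothesis forces $\Omega(\ell^4)$ copies. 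One then has to subtract: (a) the leaf-pairs with $uu'\in E$ (at most $\binom\ell2$ of them, each with $|C(u,u')|\le|A'|$); (b) the $e_E(A')$-corrections inside the $\binom{|C|}{2}$, which would be absorbed by restricting the count to leaf-pairs with $|C(u,u')|\gg\ell$; and (c) the good $C_4$s that meet $\CC$ (there are $|\CC|\le\alpha\ell^3$ forbidden pairs of mixed edges). As in Proposition~\ref{prop:supersaturation-step}, these corrections would be made genuinely lower-order by first passing to a random subset of $V$ (keeping each vertex independently with a carefully chosen small probability $q$), estimating all of the above quantities in expectation via Lemma~\ref{lem:CW} and Fact~\ref{fact:Jensen}, and finally dividing through by the per-$C_4$ counting probability $O(q^2)$.

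\textbf{The main obstacle.} Unlike in Proposition~\ref{prop:supersaturation-step}, the hypothesis only gives $\sum_{w\in A}d_M(w)=\Omega(\ell n)$, which does \emph{not} yield an $M$-dense sub-pregraph on $\Omega(n)$ vertices, so Proposition~\ref{prop:supersaturation-step} cannot simply be quoted and the count must run through pair-codegrees into $A'$. The hard part will be that the error terms above, bounded crudely, behave like $e(E)\cdot|A'|^2$ and like $\ell\cdot D$, and since $D$ can be as large as $\Theta(\alpha n^3)$ while $\ell$ ranges up to $\Theta(\sqrt n)$, they can rival the $\Omega(\ell^4)$ main term. Making them negligible — the heart of the proof — appears to require splitting according to whether $D=\sum_{w\in A'}\binom{d_M(w)}{2}$ is dominated by a bounded number of vertices of $M$-degree close to $n$ (in which case good $C_4$s can be found essentially inside $A'\cup N_M(A')$ directly) or is spread out (in which case the codegree computation, with $q$ tuned correctly, succeeds). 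This case analysis, driven by $|A|\le n/640$ and $\ell\ge4\sqrt n$, is where the slack in the constant $\alpha=1/640$ (note $640=16\cdot40$, matching $\ell^2\ge16n$ and the constant of Proposition~\ref{prop:supersaturation-step}) would be spent.
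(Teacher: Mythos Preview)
Your diagnosis of the obstacle is accurate, but the proposal does not overcome it. You correctly observe that a uniform random sample of $V$ combined with Lemma~\ref{lem:CW} and Fact~\ref{fact:Jensen} (the Proposition~\ref{prop:supersaturation-step} machinery) will leave error terms that can match the $\Omega(\ell^4)$ main term, since the $M$-degrees in $A$ are uncontrolled above and can be as large as $n$. Your proposed remedy --- a case split according to whether $\sum_{w\in A'}\binom{d_M(w)}{2}$ is concentrated on a few huge-degree vertices or spread out --- is only sketched, and it is not clear how either case would be closed.

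The paper's argument avoids this dichotomy altogether by sampling \emph{non-uniformly}: order $A'$ as $v_1,\dotsc,v_a$ by increasing $d_M$, and keep $v_i$ with probability $q_i = 8n/(\ell\, d_M(v_i))$. The point is that the expected number of good $K_{1,2}$s centred at a surviving $v$ is then $\Theta(q_v\, d_M(v)^2)=\Theta(n\, d_M(v)/\ell)$, which sums to $\Theta(n^2)$ regardless of how the $M$-degrees are distributed --- no case analysis needed. The independent set is produced not via Caro--Wei but by the greedy rule $I=\{v_i\in R: v_j\notin R\text{ for every }j>i\text{ with }v_iv_j\in E\}$, so $\Pr(v_i\in I)\ge q_i\prod_{j}(1-q_j)$ over $E$-neighbours $v_j$ with $j>i$; since $q_j\le q_i$ for $j>i$, the product is at least $(1-q_i)^{d_E(v_i,A')}$.

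For this last bound to be useful one needs $q_i\, d_E(v_i,A')=O(1)$, i.e.\ $d_E(v,A')=O(\ell\, d_M(v)/n)$, and this is exactly what your preprocessing misses. The paper deletes from $A$ not only the vertices with $d_M(v)<2\ell$ but also those with $d_M(v)<\frac{n}{16\alpha\ell}\, d_E(v,A)$; the second condition costs at most $2e_E(A)\cdot \frac{n}{16\alpha\ell}\le \ell n/8$ from the degree sum and is precisely what forces $\Pr(v\in I)$ to stay within a constant factor of $q_v$. With that in place, the saturated-$K_{1,2}$ corrections are handled just as in Proposition~\ref{prop:supersaturation-step}: for a forbidden pair with non-centre vertices $v_i,v_j$, the expected number of good $C_4$s through it is at most $q_iq_j\, d_M(v_i)=O(n^2/\ell^3)$, so the total loss is $O(|\CC|n^2/\ell^3)$, which is absorbed.
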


\begin{proof}
  The proof of Proposition~\ref{prop:supersaturation-A} follows the general strategy of the proof of Proposition~\ref{prop:supersaturation-step}, but there are some key differences. In particular, we will find the independent set $I$ inside the set $A$ alone and we shall select vertices of $R$ with different probabilities. Rather than invoking Lemma~\ref{lem:CW} and Fact~\ref{fact:Jensen}, we shall give a somewhat finer argument to produce a large independent set $I \subseteq R$ and use it to construct good copies of $C_4$.

  We start by iteratively removing from $A$ all vertices $v$ that do not satisfy
  \begin{equation}
    \label{eq:degE-degM}
    d_M(v) \ge \max\left\{2\ell, \, \frac{n}{16\alpha \ell} \cdot d_E(v,A)\right\}.
  \end{equation}
  Observe that the set $A'$ of vertices remaining after this deletion satisfies
  \begin{equation}
    \label{eq:sum-deg-Ap}
    \begin{split}
      \sum_{v \in A'} d_M(v) & \ge \sum_{v \in A} d_M(v) - 2\cdot\left(|A| \cdot 2\ell + e_E(A) \cdot \frac{n}{16 \alpha \ell}\right) \\
      & \ge \frac{\ell n}{2} - 4\alpha\ell n - \alpha\ell^2 \cdot \frac{n}{8 \alpha \ell} \ge \frac{\ell n}{3}.
  \end{split}
\end{equation}
  Let $a = |A'|$ and order the elements of $A'$ as $v_1, \dotsc, v_a$ so that $d_M(v_i) \le d_M(v_j)$ whenever $1\le i \le j \le a$. For each $i \in [a]$, let
  \[
    q_i = \frac{8n}{\ell \cdot d_M(v_i)} \le \frac{4n}{\ell^2} \le \frac{1}{4}
  \]
  and form a random set $R \subseteq A'$ by keeping each $v_i$ independently with probability $q_i$. Define
  \[
    I = \big\{ v_i \in R \scolon \text{$v_j \not\in R$ for every $j > i$ such that $v_iv_j \in E$} \big\}
  \]
  and observe that $I$ is an independent set in the graph $E$.\footnote{The idea of forming a large independent set this way is taken from the proof of Lemma~\ref{lem:CW} given in~\cite{AlSp16}.}

Similarly to before, we shall say that a copy of $K_{1,2}$ in $M$ is good if its centre lies in $I$ and the pair comprising its two non-centre vertices does not belong to $E$. Observe that the number $X_g$ of good $K_{1,2}$s satisfies
  \[
    X_g \ge \sum_{v \in I}  \left( \binom{d_M(v)}{2} - e(E) \right) \ge  \sum_{v \in I}  \left( \binom{d_M(v)}{2} - \binom{\ell}{2} \right) \ge  \sum_{v \in I}  \frac{d_M(v)^2}{3},
  \]
  as $d_M(v) \ge 2\ell$ for each $v \in I$. We shall now estimate the probability that a given vertex $v \in A'$ belongs to the random set $I$. To this end, suppose that $v = v_i$ for some $i \in [a]$ and note that, by~\eqref{eq:degE-degM}, there are at most $d_M(v_i) \cdot 16\alpha \ell/n$ indices $j$ such that $v_iv_j \in E$. Moreover, by our choice of the ordering, $q_j \le q_i$ whenever $j > i$. Letting $d = d_M(v) = d_M(v_i)$, and recalling that $8n/(\ell d) \le 1/4$, it follows that
  \[
    \Pr\big( v \in I \big) \ge q_i \cdot (1-q_i)^{16 \alpha \ell d/n} = \frac{8n}{\ell d} \cdot \left(1 - \frac{8n}{\ell d}\right)^{16 \alpha \ell d / n} \ge  e^{-160\alpha} \cdot \frac{8n}{\ell d} \ge\frac{6n}{\ell d},
  \]
 where we used the bounds $1-x \ge e^{-5x/4}$ when $0 \le x \le 1/4$ and $e^{-1/4} > 3/4$.

  We will need to disregard the saturated $K_{1,2}$s, that is, all those whose pair of edges belongs to $\CC$. Let $X_s$ be the number of those saturated $K_{1,2}$s whose centre vertex belongs to the set $I$. Writing $X$ for the number of good $K_{1,2}$s that are not saturated, we have $X \ge X_g - X_s$, and hence
  \[
    \begin{split}
      \Ex[X] & \ge \Ex[X_g] - \Ex[X_s] \ge \sum_{v \in A'} \Pr(v \in I) \cdot \frac{d_M(v)^2}{3} - \max_i q_i \cdot |\CC| \\
      & \ge \sum_{v \in A'} \frac{2n d_M(v)}{\ell} - \frac{4n}{\ell^2} \cdot |\CC| \ge\frac{2n^2}{3} - \frac{2|\CC|n^2}{\ell^3} \ge \frac{3n^2}{5},
    \end{split}
  \]
  where we have used~\eqref{eq:sum-deg-Ap} and the inequality $n \ge 2\ell$ (which holds since $A'$ is non-empty).
  
  Since $I$ is an independent set in $E$, it follows that any pair of good $K_{1,2}$s with the same non-centre vertices forms a good $C_4$. Thus we have at least $X - \binom{n}{2}$ such $C_4$s. However, we must still disregard those $C_4$s that contain a saturated $K_{1,2}$ with two non-centre vertices in $I$. Fix some $K_{1,2}$ from $\CC$ and suppose that its non-centre vertices are $v_i$ and $v_j$. Observe that it can lie in at most $d_M(v_i)$ of our good copies of $C_4$. Therefore, the expected number of good $C_4$s that we are forced to disregard because of this single $K_{1,2}$ is at most
  \[
    q_i \cdot q_j \cdot d_M(v_i) = \frac{64 n^2}{\ell^2 d_M(v_j)} \le \frac{32 n^2}{\ell^3}.
  \]
  Consequently, the expected number of good copies of $C_4$ that we have to disregard because of one of the saturated $K_{1,2}$s from $\CC$ is at most $32 |\CC| n^2 / \ell^3$.

  To summarise, let $Z$ be the number of good $C_4$s that contain no saturated $K_{1,2}$ and at least two vertices of $I$. We have shown that
  \[
    \Ex[Z] \ge \Ex[X] - \binom{n}{2} - \frac{32|\CC| \cdot n^2}{\ell^3} \ge \left(\frac{3}{5} - \frac{1}{2} - \frac{32|\CC|}{\ell^3}\right)n^2 \ge \frac{n^2}{20}.
  \]
  But as each good copy of $C_4$ containing no saturated $K_{1,2}$ has chance at most $2q_1^2$ to be counted by $Z$, the number of them is at least $n^2/(40q_1^2) \ge \ell^4/640$. This completes the proof of the proposition.
\end{proof}

\begin{proof}[Proof of Theorem~\ref{thm:robust-stability-increment}]
We begin by defining the constants whose existence is claimed in the statement of the theorem. Given $0 < \eps \le 1/2$, set $\alpha = 2^{-16}$ and define
  \[
 C = \frac{4}{\alpha^3}, \qquad \delta = \min\left\{ \frac{\alpha^3}{2^7}, \frac{\eps}{16} \right\}, \qquad \beta = \frac{\delta^4}{2^{100}},  \qquad \text{and} \qquad \lambda = \frac{\delta^7}{2^{10}}.
  \]
Suppose that $\ell \ge C\sqrt{n}$ and let $\cP = (M,E)$ be a pregraph on $n$ vertices with $e(E) \le \binom{\ell}{2}$. If $\cP$ satisfies~\ref{item:supsersaturation-cond-incr}, then we may immediately invoke Proposition~\ref{prop:supersaturation-step}, noting that $|\CC| \le 12\beta\ell^3 \le \ell^3/40$, to find find $\ell^4/40$ good copies of $C_4$ that contain no pair from $\CC$. 
 
We may therefore assume from now on that $\cP$ satisfies~\ref{item:stability-cond-incr}, that is, 
\[
  e(M) \ge (1-\delta)\ell n, \qquad E \textup{ is not $\eps$-close to $K_\ell$}, \qquad \textup{and} \qquad \ell \le \lambda n.
\]
We begin by iteratively removing all vertices $v$ whose degree in $M$ is smaller than $(1-2\delta)\ell$. As this way we can remove at most $(1-2\delta)\ell n$ edges of $M$, we will eventually arrive at a set $W \subseteq V(K_n)$ with $\delta(M[W]) \ge (1-2\delta)\ell$. Set $m = |W|$, and note that, since we removed at most $(1-2\delta)\ell (n-m)$ edges of $M$, we have $e_M(W) \ge \max\big\{ (1 - \delta) \ell m, \delta \ell n \big\}$, and therefore
 \begin{equation}  \label{eq:m-LB}
    m  \ge  \sqrt{\delta \ell n}  \ge  \sqrt{\frac{\delta}{\lambda}} \cdot \ell  \ge  \frac{32}{\delta^3} \cdot \ell.
  \end{equation}
  Observe that the subgraph of $E$ induced by $W$ is also not $(\eps/2)$-close to $K_\ell$. Indeed, otherwise there would be an $\ell$-element set $U \subseteq W$ with $e_E(U) \ge (1-\eps/2) \binom{\ell}{2}$, which would imply that $E$ itself is $\eps$-close to $K_\ell$, as $e(E) \le \binom{\ell}{2}$. We may thus work with the restrictions of $M$, $E$, and $\cP$ to the set $W$. We shall surpress $W$ from the notation and write $M$, $E$, and $\cP$ in place of $M[W]$, $E[W]$ and $(M[W], E[W])$. In particular,
  \[
    e(M) \ge (1-\delta)\ell m, \qquad e(E) \le \binom{\ell}{2}, \qquad \text{$E$ is not $(\eps/2)$-close to $K_\ell$,}
  \]
 and moreover $\delta(M) \ge (1-2\delta)\ell$.
   
  We split the proof into two cases, depending on the shape of the degree sequence of $E$.

  %\pagebreak
  \bigskip
  \noindent
  \textbf{Case 1.} There is a set $L \subseteq W$ of $\alpha m$ vertices $v$ satisfying $d_E(v) \le (1 - \alpha) \ell^2 / m$.\medskip

  Set $q = C m / \ell^2 \le 1/C$ and form a random subset $R \subseteq W$ by keeping each element of $W$ independently with probability $q$. We apply Lemma~\ref{lem:CW} to the graph $E[R]$ (cf.\ the proof of Proposition~\ref{prop:supersaturation-step}) to find an independent set $I \subseteq R$ with
  \[
    |I| \ge \sum_{v \in R} \frac{1}{1 + d_{E[R]}(v)} = \sum_{v \in W} \frac{\indicator[v \in R]}{1 + d_E(v, R)}.
  \]
  By Fact~\ref{fact:Jensen}, we have
  \[
    \Ex[|I|] \ge \Ex\left[ \sum_{v \in W} \frac{\indicator[v \in R]}{1 + d_E(v,R)} \right] = \sum_{v \in W} q \cdot \Ex\left[ \frac{1}{1 + \Bin(d_E(v),q)} \right] \ge \sum_{v \in W} \frac{q}{1 + q d_E(v)}.
  \]
  As the function $[0, \infty) \ni x \mapsto q/(1+qx)$ is convex, the sum in the right-hand side above is minimised when $d_E(v) = 2e(E)/m$ for every $v \in W$.  However, we assumed that $d_E(v) \le (1-\alpha)\ell^2/m$ for every $v \in L$, so a slightly stronger bound holds. Indeed, since
  \[
    2e(E) \le \ell^2 = \alpha m \cdot (1 - \alpha) \frac{\ell^2}{m} + (1 - \alpha)m  \cdot \left(\frac{1}{1- \alpha} - \alpha\right) \frac{\ell^2}{m},
  \]
  then it follows that
  \begin{equation}
    \label{eq:I-LB-uneven-degrees}
    \Ex[|I|] \ge \frac{\alpha m \cdot q}{1 + q \cdot (1 - \alpha) \ell^2 / m} + \frac{(1 - \alpha) m \cdot q}{1 + q \cdot (1/(1 - \alpha) - \alpha) \ell^2 / m} \ge \left(1+\frac{\alpha^3}{2}\right) \frac{m^2}{\ell^2}.
  \end{equation}
  One may verify the the last inequality in~\eqref{eq:I-LB-uneven-degrees} by multiplying the numerators and the denominators in the left-hand side by $m/(\ell^2q) = 1/C = \alpha^3/4$ and observing that
  \[
    \frac{\alpha}{1 - \alpha} + \frac{1 - \alpha}{1/(1 - \alpha) - \alpha}  = 1 + \frac{\alpha^3}{(1-\alpha)(1-\alpha+\alpha^2)} \ge 1 + \alpha^3.
  \]

  Set $d = \big\lfloor (1-2\delta)\ell \big\rfloor$ and choose, for each vertex $v \in W$, an arbitrary set $M_v$ of $d$ edges of $M$ that are incident to $v$. As before, we shall say that a copy of $K_{1,2}$ is good if its centre $v$ lies in $I$, both of its edges are in $M_v$, and the pair of its non-centre vertices is not in $E$. As $E$ is not $(\eps/2)$-close to $K_\ell$, then for every $v \in W$, the set $\widehat{M}_v$ of the $d$ other endpoints of the edges in $M_v$ contains at least $\binom{d}{2} - (1-\eps/2) \binom{\ell}{2}$ pairs that do not belong to $E$. In particular, as $\delta \le \eps/16$, each vertex of $I$ is the centre of at least $\eps \ell^2/8$ good $K_{1,2}$s. Unfortunately, this lower bound is not sufficiently strong for the naive argument given in the proof of Proposition~\ref{prop:supersaturation-step} to work, as $\Ex[|I|]$ is too small. Instead, we shall exploit the rough structure of $E$.
  
  To this end, we partition the set $W$ into sets $W_L$ and $W_H$ of low and high degree vertices, which are defined as follows:
  \[
    W_H := \big\{ v \in W \scolon d_E(v) \ge \delta \ell / 2 \big\} \qquad \text{and} \qquad W_L := W \setminus W_H.
  \]
  Given an independent set $I$, we split it into $I_L$ and $I_H$, which are defined as follows:
  \[
    I_H := \big\{ v \in I \scolon |\widehat{M}_v \cap W_H| \ge \delta \ell \big\} \qquad \text{and} \qquad I_L := I \setminus I_H.
  \]
  Observe that if $v \in I_L$, then $\widehat{M}_v$ contains at least $\binom{d-\delta\ell}{2} - \delta\ell^2/2 \ge (1-7\delta)\ell^2/2$ pairs that do not belong to $E$. We shall argue differently for different $I$, depending on the relative sizes of the sets $I_L$ and $I_H$.

  In both cases, we will find a (random) collection of at least $\delta^2m^2/16$ good $C_4$s (in expectation) each of which is the union of two $K_{1,2}$s centred at some $v, w \in I$ and such that neither of (the pairs of edges of) these $K_{1,2}$s belongs to $\CC$. We first argue that this is sufficient. Indeed, even though we will still have to disregard those copies of $C_4$ that contain a $K_{1,2}$ with two non-centre vertices in $I$ whose edges belong to $\CC$, the expected number of such saturated $K_{1,2}$s is at most $q^2 \cdot |\CC|$ and each of them lies in at most $d \le \ell$ of our good copies of $C_4$, as the edges of these good $C_4$s came only from the sets $M_v$. Hence, letting $Z$ be the (random) number of good $C_4$s that contain at least two vertices of $I$ and no $K_{1,2}$ whose edges belong to $\CC$, we will have
  \[
    \Ex[Z] \ge \frac{\delta^2m^2}{16} - q^2 \cdot |\CC| \cdot \ell \ge \frac{\delta^2m^2}{16} - 12C^2\beta m^2 \ge \frac{\delta^2m^2}{32}.
  \]
  But as each good copy of $C_4$ containing no saturated $K_{1,2}$ has chance at most $2q^2$ to be counted by $Z$, the number of them is at least
  \[
    \frac{\delta^2m^2}{64q^2} = \frac{\delta^2\ell^4}{64C^2} \ge 3\beta \ell^4.
  \]
  Therefore, in order to complete the proof of the theorem in Case 1, it suffices to prove the existence of (a random collection of) $\delta^2m^2/16$ good copies of $C_4$ (in expectation) of the less restrictive type described above.

  \medskip
  \noindent
  \textbf{Subcase 1A.} $|I_H| \le \delta |I|$.
  \smallskip

  Recall that if $v \in I_L$, then $\widehat{M}_v$ contains at least $(1-7\delta)\ell^2/2$ pairs that do not belong to $E$. It follows that the number $X_g$ of good $K_{1,2}$s satisfies
  \[
    X_g \ge (1 - 7\delta)\frac{\ell^2}{2} \cdot |I_L| \ge (1-8\delta)\frac{\ell^2}{2} \cdot |I|.
  \]
  Writing again $X_s$ for the number of saturated $K_{1,2}s$ (those whose edges belong to $\CC$) whose centre vertex belongs to $I$ and $X$ for the number of good $K_{1,2}$s that are not saturated, we have $X \ge X_g - X_s$ and consequently,
  \[
    \begin{split}
      \Ex[X] & \ge \Ex[X_g] - \Ex[X_s] \ge  (1 - 8\delta)\frac{\ell^2}{2} \cdot \Ex[|I|] - q \cdot |\CC| \\
      & \ge (1-8\delta)\left(1+\frac{\alpha^3}{2}\right) \frac{m^2}{2} - 12C\beta \ell m \ge \left(1+\frac{\alpha^3}{4}\right)\frac{m^2}{2},
    \end{split}
  \]
  where we used~\eqref{eq:I-LB-uneven-degrees}, the facts that $\delta < \alpha^3/2^7$ and $\beta < \alpha^3/(8\cdot24C)$, and the trivial inequality $m \ge (1-2\delta)\ell \ge \ell/2$. Since $I$ is an independent set in $E$, any pair of good $K_{1,2}$s with the same non-centre vertices forms a good $C_4$. Thus we have at least $X - \binom{m}{2}$ such $C_4$ and
  \[
    \Ex[X] - \binom{m}{2} \ge \frac{\alpha^3m^2}{8} \ge \frac{\delta^2m^2}{16},
  \]
  as required.

  \medskip
  \noindent
  \textbf{Subcase 1B.} $|I_H| > \delta |I|$.
  \medskip

  Let us write $X_g$ for the number of good $K_{1,2}$s with at least one non-centre vertex in $W_H$. We will show in this case that 
  \[
    X_g \ge \frac{\delta^2 \ell^2}{4} \cdot |I| \qquad \textup{and} \qquad |W_H| \le \frac{\delta^2m}{16},
  \]
  from which it will be straightforward (as in Subcase~1A) to deduce the existence of the required collection of good $C_4$s.

  To prove the lower bound on $X_g$, recall first that each vertex $v \in I_H$ is the centre of at least $\eps \ell^2 / 8$ good $K_{1,2}$s; we claim that at least $\delta \ell^2 / 4$ of these have at least one non-centre vertex in $W_H$. To prove this, set $w = |\widehat{M}_v \cap W_L|$ and suppose first that $w \le \eps \ell/2$. Then at most $\eps^2 \ell^2 / 8$ good $K_{1,2}$s centred at $v$ have both non-centre vertices in $W_L$ and since $\eps/8 - \eps^2/8 \ge \eps/16 \ge \delta/4$, the claim follows in this case. On the other hand, if $w > \eps \ell/2$, then there are at least 
  \[
    \min\bigg\{ w\bigg( d - w  - \frac{\delta \ell}{2} \bigg) \scolon \frac{\eps \ell}{2} < w \le d - \delta \ell \bigg\} \ge \frac{\delta\ell^2}{4}
  \]
  good $K_{1,2}$s centred at $v$ with at least one non-centre vertex in $W_H$. Indeed, since $|\widehat{M}_v| = d$ and each $u \in \widehat{M}_v \cap W_L$ has degree at most $\delta \ell / 2$ in $E$, there are at least $d - w - \delta \ell / 2$ good $K_{1,2}$s centred at $v$ that contain $u$ and a third vertex from $W_H$. Thus
  \[
    X_g \ge \frac{\delta \ell^2}{4} \cdot |I_H| \ge \frac{\delta^2 \ell^2}{4} \cdot |I|,
  \]
  as claimed. To prove the claimed upper bound on $|W_H|$, observe that
  \[
    \ell^2 \ge 2e(E) \ge \sum_{v \in W_H} d_E(v) \ge |W_H| \cdot \frac{\delta \ell}{2}
  \]
  which implies, by~\eqref{eq:m-LB}, that
  \[
    |W_H| \le \frac{2\ell}{\delta} \le \frac{\delta^2m}{16},
  \]
  as required. Now, writing $X$ for the number of good $K_{1,2}$s with a non-centre vertex in $W_H$ that are moreover not saturated and $X_s$ for the number of saturated $K_{1,2}$s (that is, $K_{1,2}$s whose pair of edges belongs to $\CC$) whose centre vertex belongs to $I$, we have $X \ge X_g - X_s$ and hence,
  \[
    \Ex[X] \ge \Ex[X_g] - \Ex[X_s]  \ge  \frac{\delta^2 \ell^2}{4} \cdot \Ex[|I|] - q \cdot |\CC|  \ge   \frac{\delta^2 m^2}{4} - 12C\beta \ell m  \ge  \frac{\delta^2m^2}{8}, 
  \]
  where we again used the bounds $\beta < \delta^2 / (8 \cdot 24 C)$ and $m \ge (1-2\delta)\ell \ge \ell/2$. 
  
  Finally, since $I$ is an independent set in $E$, it follows that there are at least $X - |W_H|m$ good $C_4$s formed by pairs of $K_{1,2}$s that are counted by $X$ and hence the expected number of good copies of $C_4$ that are formed by two $K_{1,2}$s centred at vertices in $I$, neither of which belongs to $\CC$, is at least 
  \[
    \Ex[X] - |W_H|m \ge \frac{\delta^2m^2}{16},
  \]
  as required. This completes the proof in Case 1.

  \medskip
  \noindent
  \textbf{Case 2.} There are fewer than $\alpha m$ vertices $v$ satisfying $d_E(v) \le (1 - \alpha) \ell^2 / m$.
  
  \smallskip

  In this case, we shall find our good $C_4$s in various ways, depending on the distribution of degrees (in both the graphs $M$ and $E$) on the set $A$ of vertices whose degree in $E$ is somewhat larger than average. To be precise, set $\gamma = 1/32$ and define
  \begin{equation}\label{def:AB}
    A := \big\{ v \in W \scolon d_E(v) \ge (1+\gamma)\ell^2/m \big\} \qquad \text{and} \qquad B := W \setminus A.
  \end{equation}
  We claim that $e_E(A) \le \gamma^2 \ell^2$. To prove this, observe first that
  \[
    \begin{split}
      2e(E) & = \sum_{v \in W} d_E(v) \ge |A| \cdot (1 + \gamma) \frac{\ell^2}{m} + \big( (1 - \alpha)m - |A| \big) \big( 1 - \alpha \big) \frac{\ell^2}{m} \\
      & = \bigg( (1-\alpha)^2 + (\gamma+\alpha) \cdot \frac{|A|}{m} \bigg) \ell^2.
    \end{split}
  \]
  Noting that $\alpha = \gamma^3/2$, and recalling that $e(E) \le \binom{\ell}{2}$, it follows that
  \[
    \frac{|A|}{m} \le \frac{\alpha(2-\alpha)}{\gamma+\alpha} \le \gamma^2,
  \]
  and therefore
  \[
    \begin{split}
      \sum_{v \in A} d_E(v) &  =  2e(E) - \sum_{v \in B} d_E(v)  \le  \ell^2 -  \big( (1-\alpha)m - |A| \big) \big( 1 - \alpha \big) \frac{\ell^2}{m} \\
      &  \le  \big( 2\alpha + \gamma^2 \big) \ell^2  \le  2\gamma^2\ell^2,
    \end{split}
  \]
  so in particular $e_E(A) \le \gamma^2 \ell^2$, as claimed. 
  
  We next use Propositions~\ref{prop:supersaturation-step} and~\ref{prop:supersaturation-A} to show that we may assume that $e_M(A) <  9 \gamma^3 \ell m$ and $e_M(A, B) < \ell m / 2$. Indeed, if $e_M(A) \ge 9 \gamma^3 \ell m$, then let us fix an arbitrary superset $A'$ of $A$ with exactly $\gamma^2 m$ elements and apply Proposition~\ref{prop:supersaturation-step} to the pregraph $\cP$ restricted to the set $A'$, with $\ell_{\ref{prop:supersaturation-step}} \leftarrow 3\gamma\ell$ and $n_{\ref{prop:supersaturation-step}} \leftarrow \gamma^2m$. To see that the conditions of the proposition are satisfied, note that 
  \[
    e_E(A') \le \gamma^2\ell^2 + \gamma^2m \cdot (1 + \gamma) \frac{\ell^2}{m} \le \binom{3\gamma \ell}{2} \quad \text{and} \quad e_M(A') \ge 9 \gamma^3 \ell m = 3(3\gamma \ell) (\gamma^2 m)
  \]
  and that $(3\gamma\ell)^2 \ge 9\gamma^2n\ge\gamma^2m$ and $|\CC| \le 12\beta\ell^3 \le (3\gamma\ell)^3/40$. The proposition provides $(3\gamma\ell)^4/40$ good copies of $C_4$ that contain no pair from $\CC$, and so in this case we are done. Similarly, if $e_M(A,B) \ge \ell m / 2$, then, noting that
  \[
    e_E(A) \le \gamma^2 \ell^2 < \frac{\ell^2}{640}, \qquad |A| \le \gamma^2m < \frac{m}{640} \qquad \textup{and} \qquad |\CC| \le 12\beta\ell^3 < \frac{\ell^3}{640},
  \]
  we may invoke Proposition~\ref{prop:supersaturation-A} to find $\ell^4/640$ good copies of $C_4$, none of which contains a pair from $\CC$, and so in this case we are also done. We may therefore assume from now on that $e_M(A) <  9 \gamma^3 \ell m$ and $e_M(A, B) < \ell m / 2$, and hence that
  \begin{equation}
    \label{eq:sum-degM-B}
    \sum_{v \in B} d_M(v) = 2e(M) - 2e_M(A) - e_M(A, B) \ge \left(2(1-\delta) - 18\gamma^3 - \frac{1}{2} \right)\ell m \ge \frac{4\ell m}{3}.
  \end{equation}

  For the rest of the proof, we will search for good $C_4$s formed by two $K_{1,2}$s whose centre vertices belong to $B$. Let us say that a copy of $K_{1,2}$ in $M$ is good if its centre lies in $B$ and the pair of its non-centre vertices does not belong to $E$. Observe that for each $v \in B$, letting $N_M(v)$ denote the $M$-neighbourhood of $v$, we have
  \begin{equation}    
    \begin{split}
      e_E\big( N_M(v) \big) &  \le  e_E(A) + \sum_{w \in N_M(v) \cap B} d_E(w) \\
      &  \le  \gamma^2\ell^2 + d_M(v) \cdot (1+\gamma) \frac{\ell^2}{m}  \le  \frac{\gamma}{2} \cdot d_M(v)^2,\label{eq:eE-NMv-UB}
    \end{split}
  \end{equation}
  since $d_M(v) \ge \delta(M) \ge \ell/2$ and $\ell/m \le \gamma/16$ by~\eqref{eq:m-LB}. We therefore have at least $(1/2 - \gamma) d_M(v)^2$ good $K_{1,2}$s centred at $v$, for each $v \in B$. 
  
  It only remains to bound the number of good $C_4$s composed of two good $K_{1,2}$s, and remove those that contain a pair from $\CC$. Our strategy will be similar to that used above, but there are two additional problems to overcome in this case: the set $B$ is not an independent set and we do not have an upper bound on the degrees $d_M(v)$. To deal with the first problem, we will use our upper bound on $d_E(v)$ for $v \in B$, together with a slightly more careful application of convexity than was needed earlier in the proof. To deal with the second issue, we will partition $B$ according to the approximate size of $d_M(v)$ and restrict our search to one of the parts.
  
  We first partition $B$ into two parts, depending (roughly speaking) on whether or not $d_M(v) = O(\ell)$. Define
  \[
    B_L := \left\{ v \in B \scolon d_M(v) \le 2^{20} \ell \right\} \qquad \text{and} \qquad B_H := B \setminus B_L.
  \]
  We first consider the case in which sufficiently many of the mixed edges incident to $B$ have an endpoint in $B_L$. 

%\pagebreak
  \medskip
  \noindent
  \textbf{Subcase 2A.}  
   \begin{equation} \label{eq:subcase-BL}
    \sum_{v \in B_L} d_M(v) \ge \frac{5\ell m}{4}.
  \end{equation}
  \smallskip

Let $X$ denote the number of good $K_{1,2}$s whose centre vertex lies in $B_L$ and whose pair of edges does not belong to the family $\CC$. By~\eqref{eq:eE-NMv-UB}, we have
  \[
    \begin{split}
      X &  \ge  \sum_{v \in B_L} \left(\binom{d_M(v)}{2} - \frac{\gamma}{2} d_M(v)^2\right) - |\CC| \\
      &  \ge  \left( \frac{1}{2} - \gamma \right)\sum_{v\in B_L} d_M(v)^2 - 12 \beta \ell^3  \ge  \frac{1}{3} \sum_{v \in B_L} d_M(v)^2,
    \end{split}
  \]
  since by the Cauchy--Schwarz inequality and~\eqref{eq:subcase-BL},
  \begin{equation}
    \label{eq:sum-dM2-BL}
    \sum_{v \in B_L} d_M(v)^2  \ge  \frac{1}{|B_L|} \bigg(\sum_{v \in B_L} d_M(v) \bigg)^2  \ge  \frac{25}{16} \cdot \ell^2m.
  \end{equation}
Let $Y$ denote the number of (ordered) pairs of $K_{1,2}$s that are counted by $X$ and have the same non-centre vertices. By the convexity of the function $x \mapsto x(x-1)$ and by~\eqref{eq:sum-dM2-BL}, we have
  \[
    Y \ge X \left(\frac{X}{\binom{m}{2}}-1\right) \ge \frac{1}{3} \sum_{v \in B_L} d_M(v)^2 \cdot \left(\frac{25\ell^2}{8m}-1\right) \ge \frac{\ell^2}{m} \cdot \sum_{v \in B_L} d_M(v)^2,
  \]
  since $\ell^2 \ge Cn \ge Cm$. Now, let us denote by $Y_b$ the number of (ordered) pairs of $K_{1,2}$s counted by $Y$ that do not correspond to good $C_4$s (that is, pairs of good $K_{1,2}$s with the same non-centre vertices, whose centre vertices are adjacent in $E$). By the definition~\eqref{def:AB} of $B$, this number satisfies
    \[
    Y_b  \le  \sum_{v \in B_L} d_E(v) \binom{d_M(v)}{2}  \le  \bigg( \frac{1 + \gamma}{2} \bigg) \frac{\ell^2}{m} \cdot \sum_{v \in B_L} d_M(v)^2.
  \]
  Thus, writing $Z_g$ for the number of good $C_4$s consisting of pairs of $K_{1,2}$s counted by $Y$ and combining the last three displayed equations, we obtain
  \[
    Z_g  \ge  \frac{Y - Y_b}{4}  \ge  \frac{1}{12} \cdot \frac{\ell^2}{m} \cdot \sum_{v \in B_L} d_M(v)^2  \ge  \frac{\ell^4}{8}.
  \]
  Finally, we must disregard those good $C_4$s, counted in $Z_g$, that contain a $K_{1,2}$ of mixed edges that belongs to the family $\CC$. The edges of such a $K_{1,2}$ must come from different good $K_{1,2}$s counted by $X$ and therefore (by the definition of $B_L$) there are at most $2^{20} \ell \cdot |\CC|$ such $C_4$s. It follows that the number $Z$ of good $C_4$s that contain no $K_{1,2}$s whose edges belong to $\CC$ satisfies
  \[
    Z \ge Z_g - 2^{20} \ell \cdot |\CC| \ge \left( \frac{1}{8} - 2^{24} \beta \right) \ell^4 \ge 3\beta\ell^4,
  \]
  as required.\medskip

Note that if~\eqref{eq:subcase-BL} fails to hold, then $\sum_{v \in B_H} d_M(v) \ge \big( 4/3 - 5/4 \big) \ell m = \ell m / 12$, by~\eqref{eq:sum-degM-B}. In this case we will choose a subset of $B_H$ on which the $M$-degrees are roughly constant and apply the same argument as in Subcase~2A. 
   
  \medskip
  \noindent
  \textbf{Subcase 2B.} 
  \begin{equation}
    \label{eq:subcase-BH}
    \sum_{v \in B_H} d_M(v) \ge \frac{\ell m}{12}.
  \end{equation} 
  \smallskip

For each integer $t \ge 0$, set 
  \[
    b_t = 2^{- 4t - 28} m \qquad \text{and} \qquad d_t = 2^{3t+20} \ell
  \]
  and define
  \[
    B_t = \big\{ v \in B_H \scolon d_t < d_M(v) \le d_{t+1} \big\}.
  \]
  We claim that there exists $t$ such that $|B_t| \ge b_t$. Indeed, since $B_H = \bigcup_{t \ge 0} B_t$, if there were no such $t$, then we would have
  \[
    \sum_{v \in B_H} d_M(v)  <  \sum_{t \ge 0} b_t d_{t+1}  = \sum_{t \ge 0} \frac{\ell m}{2^{t + 5}}  < \frac{\ell m}{12},
  \]
  contradicting~\eqref{eq:subcase-BH}. Fix any such $t$ and let $X$ denote the number of $K_{1,2}$s whose centre vertex lies in $B_t$, whose pair of non-centre vertices is not in $E$, and whose pair of edges does not belong to the family $\CC$. Observe that
  \[
    \begin{split}
      X &  \ge  \sum_{v \in B_t} \left(\binom{d_M(v)}{2} - \frac{\gamma}{2}d_M(v)^2\right) - |\CC|  \ge  b_t \cdot \left(\frac{1}{2} - \gamma\right) d_t^2 - |\CC| \\
      &  \ge  \left(\frac{1}{2}-\gamma\right)2^{2t+12} \ell^2 m - 12\beta\ell^3  \ge  2^{2t+10} \ell^2 m.
    \end{split}
  \]
  As before, let $Y$ denote the number of (ordered) pairs of $K_{1,2}$s that are counted by $X$ and have the same non-centre vertices. By the convexity of the function $x \mapsto x(x-1)$, we have
  \[
    Y \ge X\left(\frac{X}{\binom{m}{2}}-1\right) \ge 2^{4t+20}\ell^4,
  \]
  where we again used the assumption that $\ell^2 \ge Cm$. The number $Y_b$ of (ordered) pairs counted by $Y$ that do not correspond to good $C_4$s (that is, pairs of good $K_{1,2}$s whose centre vertices are adjacent in $E$) satisfies
  \[
    Y_b  \le  \sum_{v \in B_t} d_E(v) \binom{d_M(v)}{2}  \le  b_t \cdot \bigg( \frac{1 + \gamma}{2} \bigg) \frac{\ell^2}{m} \cdot d_{t+1}^2  \le  2^{2t+18} \ell^4.
  \]
  Thus, the number $Z_g$ of good $C_4$s counted by $Y$ satisfies
  \[
    Z_g \ge \frac{Y-Y_b}{4} \ge \big( 2^{4t+18} - 2^{2t+16} \big) \ell^4 \ge 2^{4t+17} \ell^4.
  \]
  Finally, we disregard those good $C_4$s, counted in $Z_g$, that contain a $K_{1,2}$ of mixed edges that belongs to the family $\CC$. For each element of $\CC$, there are at most $d_{t+1}$ such $C_4$s and therefore the number $Z$ of good $C_4$s that contain no $K_{1,2}$s whose edges belong to $\CC$ satisfies
  \[
    Z  \ge  Z_g - d_{t+1} \cdot |\CC|  \ge  \big( 2^{4t+17} - 12\beta \cdot 2^{3t+23} \big) \ell^4  \ge  \ell^4,
  \]
  as required. This completes the proof of the theorem.
\end{proof}

\section{The number of split graphs and the non-structured regime}
\label{sec:non-structured-regime}

In this section, we prove assertions~\ref{item:C4-free-quasirandom} and~\ref{item:C4-free-non-split} of Theorem~\ref{thm:asymC4free}. We first establish two lower bounds on the cardinality of $\FnmCf$: a stronger bound for all $m \ll n^{4/3}$ and a~weaker bound for all $m \ll n^{4/3} (\log n)^{1/3}$. Second, we carefully estimate the number of split graphs with $n$ vertices and $m$ edges for all $n$ and $m$ with $n \ll m \ll n^2$. Third, we provide a simple upper bound on the number of graphs that are not $\eps$-quasirandom. A~straightforward comparison of these bounds yields the claimed results.

\subsection{Lower bounds for $\FnmCf$}
\label{sec:lower-bounds-C4-free}

We first show that if $m \ll n^{4/3}$, then the family $\FnmCf$ forms an $e^{-o(m)}$-proportion of all graphs with $n$ vertices and $m$ edges. In particular, as we shall later verify, if $m \gg n$, then for every fixed $\eps$, graphs with no induced copy of $C_4$ outnumber the graphs that are not $\eps$-quasirandom and thus a typical member of $\FnmCf$ is $\eps$-quasirandom.

\begin{prop}
  \label{prop:lower-bound-strong}
  For every $\gamma > 0$, there exists $\delta > 0$ such that for all sufficiently large $n$ and all $m \le \delta n^{4/3}$,
  \[
    \left| \FnmCf \right| \ge e^{-\gamma m} \cdot \binom{\binom{n}{2}}{m}.
  \]
\end{prop}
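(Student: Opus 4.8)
The plan is to use the classical deletion method. The heuristic is that, when $m \le \delta n^{4/3}$, a uniformly random graph with $m$ edges contains only $O(\delta^3 m)$ copies of $C_4$ (as a subgraph) in expectation, so one can destroy all of them -- and hence all induced copies -- by deleting only $O(\delta^3 m)$ edges; the ``entropy cost'' of changing the edge count by this amount is a factor $e^{O(\delta^3\log(1/\delta)\cdot m)}$, which is smaller than $e^{\gamma m}$ once $\delta = \delta(\gamma)$ is chosen small enough.

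First I would record the moment estimate. Write $N = \binom n2$ and let $G(n,s)$ denote the uniformly random graph with $n$ vertices and $s$ edges. Since $K_n$ contains exactly $3\binom n4$ copies of $C_4$ and each fixed copy lies in $G(n,s)$ with probability $\binom{N-4}{s-4}/\binom Ns \le \big(s/(N-3)\big)^4$, the expected number of copies of $C_4$ in $G(n,s)$ is at most $C_0 s^4/n^4$ for some universal constant $C_0$ and all large $n$; in particular, for $s \le 2\delta n^{4/3}$ it is at most $C\delta^3 s$, where $C := 8C_0$. Now set $t := \lceil 4C\delta^3 m\rceil$, so that $t \le m$ once $\delta$ is small and $m \ge 2$, and apply the deletion argument to graphs with $m+t$ edges: by Markov's inequality, at least half of the graphs $G'$ with $n$ vertices and $m+t$ edges contain at most $t$ copies of $C_4$, and for each such $G'$ we may delete one edge from each of these copies, padding arbitrarily with further edges of $G'$, to obtain a graph $G = G'\setminus F$ with exactly $m$ edges, no copy of $C_4$, and hence $G \in \FnmCf$. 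Every $G \in \FnmCf$ arises in this way from at most $\binom{N-m}{t}$ pairs $(G',F)$ -- one must add back some $t$-element set of non-edges of $G$ -- so
\[
  \big|\FnmCf\big| \ \ge\ \frac{\tfrac12\binom{N}{m+t}}{\binom{N-m}{t}} \ =\ \frac12\cdot\frac{\binom Nm}{\binom{m+t}{t}},
\]
the last equality being the elementary identity $\binom{N}{m+t}\binom{m+t}{t} = \binom Nm\binom{N-m}{t}$.

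It then remains to control the cost factor $\binom{m+t}{t}$. Using $\binom{m+t}{t}\le\big(e(m+t)/t\big)^t \le \exp\!\big(t\log(2em/t)\big)$ together with $t = \Theta(\delta^3 m)$ gives $\binom{m+t}{t} \le \exp\!\big(O(\delta^3\log(1/\delta))\cdot m\big)$, which is at most $e^{\gamma m/2}$ for a suitable $\delta$ depending only on $\gamma$, since $\delta^3\log(1/\delta)\to 0$ as $\delta \to 0$; absorbing the remaining factor $\tfrac12$ then requires only $m \ge 2\log 2/\gamma$. The finitely many smaller values of $m$ (which depend only on $\gamma$) I would treat directly: the number of $n$-vertex graphs with $m$ edges containing a copy of $C_4$ is at most $3\binom n4\binom{N-4}{m-4} \le C_0(m^4/n^4)\binom Nm$, which for each fixed $m$ is $o\big(\binom Nm\big)$, so $|\FnmCf| \ge (1-o(1))\binom Nm \ge e^{-\gamma m}\binom Nm$ for all sufficiently large $n$. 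I do not expect any serious obstacle; the only point requiring care is the uniformity of the bookkeeping in $m$, and this is exactly where the hypothesis $m \le \delta n^{4/3}$ (rather than merely $m \ll n^{4/3}$) is used, as it forces the number $t$ of deleted edges to be a small constant multiple of $m$ rather than merely $o(m)$.
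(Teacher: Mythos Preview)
Your proof is correct and follows essentially the same deletion argument as the paper: pass to a random graph with slightly more edges, use Markov's inequality to guarantee few copies of $C_4$ in half of them, delete one edge per copy, and control the overcount via the identity $\binom{N}{m+t}\binom{m+t}{t}=\binom{N}{m}\binom{N-m}{t}$. The only cosmetic difference is that the paper takes $m' = \lfloor (1+\delta)m\rfloor$ extra edges (so $t\approx\delta m$) and absorbs the cost $\binom{m'}{m}\le (2e/\delta)^{\delta m}$ directly, whereas you take the tighter $t=\Theta(\delta^3 m)$ and then handle the finitely many small values of $m$ separately; both choices lead to the same conclusion.
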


\begin{proof}
  Fix a positive $\gamma$ and choose $\delta > 0$ sufficiently small so that $17(1+\delta)^4\delta^3 < \delta/2$ and $(2e/\delta)^\delta < e^\gamma$. Suppose that $m \le \delta n^{4/3}$, let $m' = \big\lfloor (1+\delta)m \big\rfloor$, and let $G$ be the uniformly chosen random graph with vertex set $\{1, \dotsc, n\}$ and precisely $m'$ edges. Let $X$ be the number of (not necessarily induced) copies of $C_4$ in $G$. As
  \[
    \Ex[X] \le n^4 \cdot (m')^4 \cdot \binom{n}{2}^{-4} \le \frac{17(m')^4}{n^4} \le 17 (1+\delta)^4\delta^3 m \le \frac{\delta m}{2},
  \]
  Markov's inequality gives $\Pr(X \le m'-m) = \Pr(X \le \delta m) \ge 1/2$. In particular, at least half of all graphs with vertex set $\{1, \dotsc, n\}$ and $m'$ edges contain a subgraph with $m$ edges and no copy of $C_4$. This implies that
  \[
    \left|\FnmCf\right|  \ge  \frac{1}{2} \cdot \binom{\binom{n}{2}}{m'} \binom{\binom{n}{2}-m}{m'-m}^{-1} =  \frac{1}{2} \cdot \binom{\binom{n}{2}}{m} \binom{m'}{m}^{-1}.
  \]
  Finally, by our assumption on $\delta$,
  \[
    \binom{m'}{m} \le \binom{(1+\delta)m}{\delta m} \le \left(\frac{e(1+\delta)}{\delta}\right)^{\delta m} \le \frac{1}{2} \cdot e^{\gamma m}.
  \]
  This completes the proof.
\end{proof}

The derivation of our second lower bound on $|\FnmCf|$ follows a similar strategy, but the simple deletion argument is replaced with the following result of Kohayakawa, Kreuter, and Steger~\cite{KoKrSt98}, stated here for the random graph $G_{n,m}$ rather than the binomial random graph $G(n,p)$. The heart of the proof of this theorem (which we shall not give here, but rather refer the reader to~\cite[Theorem~8]{KoKrSt98} or to~\cite[Appendix~A]{FeMKSa}) is a classical result of Ajtai, Koml\'os, Pintz, Spencer, and Szemer\'edi~\cite{AjKoPiSpSz82}, or rather its corollary derived by Duke, Lefmann, and R\"odl~\cite{DuLeRo95}, that gives a lower bound on the independence number of a uniform hypergraph that contains few short cycles.

\begin{thm}[{\cite{KoKrSt98}}]
  \label{thm:KoKrSt}
  There exists a constant $c$ such that if $n^{4/3} \le m \le \binom{n}{2}$, then a.a.s.\
  \[
    \ex(G_{n,m}, C_4) \ge c n^{4/3} \big( \log \big( m / n^{4/3} \big) \big)^{1/3}.
  \]
\end{thm}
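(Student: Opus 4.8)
The plan is to realise $\ex(G_{n,m},C_4)$ as the independence number of a $4$-uniform hypergraph and to apply the classical independent-set bound of Ajtai, Koml\'os, Pintz, Spencer, and Szemer\'edi, in the refined form due to Duke, Lefmann, and R\"odl. Given $G_{n,m}$, let $\mathcal{H}=\mathcal{H}(G_{n,m})$ be the $4$-uniform hypergraph with vertex set $E(G_{n,m})$ whose edges are the edge sets of the copies of $C_4$ in $G_{n,m}$; a set of edges of $G_{n,m}$ is $C_4$-free precisely when it is independent in $\mathcal{H}$, so $\ex(G_{n,m},C_4)=\alpha(\mathcal{H})$. The tool I would invoke asserts that a $4$-uniform hypergraph with $N$ vertices, maximum degree at most $d$ (with $d$ bounded below by an absolute constant), and few short cycles satisfies $\alpha(\mathcal{H})\ge c_4\cdot N d^{-1/3}(\log d)^{1/3}$. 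It therefore remains to control the parameters of $\mathcal{H}$ for a typical $G_{n,m}$ and to verify that $\mathcal{H}$ a.a.s.\ has few short cycles.

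Before doing so I would restrict the range of $m$. Under the monotone coupling $G_{n,m'}\subseteq G_{n,m}$ for $m'\le m$, both $\ex(\cdot,C_4)$ and $\log(m/n^{4/3})$ are non-decreasing in $m$, so it suffices to prove the bound for $m\le m_0:=n^{3/2}(\log n)^{-2}$; for larger $m$ one applies the statement at $m_0$ and absorbs the at most logarithmic loss into $c$, since $m\le\binom{n}{2}$ forces $\log(m/n^{4/3})=O(\log n)=O(\log(m_0/n^{4/3}))$. So assume $n^{4/3}\le m\le m_0$ and write $p=m/\binom{n}{2}$. By the standard concentration of subgraph counts in $G_{n,m}$, a.a.s.\ the number of copies of $C_4$ is $\Theta(p^4n^4)=\Theta(m^4/n^4)$, so $\mathcal{H}$ has $N=m$ vertices and average degree $\bar d=\Theta(m^3/n^4)$, which is $\Omega(1)$ because $m\ge n^{4/3}$. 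Deleting from $\mathcal{H}$ every vertex of degree exceeding $2\bar d$ removes at most $N/2$ of them (Markov), leaving a subhypergraph $\mathcal{H}'$ with at least $m/2$ vertices, maximum degree at most $d:=2\bar d=\Theta(m^3/n^4)\gg 1$, and (being a subhypergraph) still few short cycles.

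The crux is the short-cycle count. A cycle of length $j$ in $\mathcal{H}$ is a cyclic sequence of distinct copies $Q_1,\dots,Q_j$ of $C_4$ in $G_{n,m}$ together with distinct edges $f_1,\dots,f_j$ with $f_i\in E(Q_i)\cap E(Q_{i+1})$, so $Q_1\cup\dots\cup Q_j$ is obtained by gluing $j$ copies of $C_4$ cyclically along single edges (with possible further vertex identifications). The extremal such configuration has about $2j$ vertices and $3j$ edges, i.e.\ edge-to-vertex ratio $3/2=m_2(C_4)$, so a routine first-moment computation in $G_{n,m}$ bounds the expected number of $j$-cycles by $O\big((m^3/n^4)^j\big)=O(d^j)$ for each fixed $j$, configurations with extra identifications contributing strictly less (balancedness of $C_4$ is exactly what prevents denser gluings from dominating). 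Since $d/N=m^2/n^4\le m_0^2/n^4=o(1)$ in our range, $d^j=o(N d^{j-1})$, which is the bound needed for the ``few short cycles'' hypothesis, and Markov makes it hold a.a.s.; hence so does the corresponding statement for $\mathcal{H}'$. Applying the Duke--Lefmann--R\"odl theorem to $\mathcal{H}'$,
\[
  \ex(G_{n,m},C_4)=\alpha(\mathcal{H})\ge\alpha(\mathcal{H}')\ge c_4\cdot\frac{m}{2}\cdot d^{-1/3}(\log d)^{1/3}=\Theta\big(n^{4/3}\big)\cdot\big(\log(m^3/n^4)\big)^{1/3},
\]
and since $\log(m^3/n^4)=3\log(m/n^{4/3})+O(1)\ge\log(m/n^{4/3})$ by $m\ge n^{4/3}$, this is at least $c\,n^{4/3}\big(\log(m/n^{4/3})\big)^{1/3}$ for a suitable absolute constant $c$; in the borderline range $m=\Theta(n^{4/3})$ the right-hand side is $O(n^{4/3})$ and the inequality is immediate from the $\Theta(n^{4/3})$ bound just obtained.

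I expect the main obstacle to be fitting the short-cycle analysis to the precise quantitative hypothesis that the Duke--Lefmann--R\"odl independent-set theorem demands (which cycle lengths must be controlled, and with how much slack) and verifying that hypothesis a.a.s.\ for the copy-hypergraph of $G_{n,m}$; this is also why the direct argument is confined to $m\lesssim n^{3/2}$, with the monotonicity reduction handling the rest. A secondary, routine point is transferring the subgraph-count concentration from $G(n,p)$ to $G_{n,m}$.
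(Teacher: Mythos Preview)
Your proposal is correct and follows precisely the approach the paper describes: the paper does not prove this theorem but instead refers to~\cite{KoKrSt98} and~\cite{FeMKSa}, noting that the heart of the argument is the Ajtai--Koml\'os--Pintz--Spencer--Szemer\'edi / Duke--Lefmann--R\"odl lower bound on the independence number of an uncrowded uniform hypergraph, applied to the $4$-uniform hypergraph of $C_4$-copies on $E(G_{n,m})$. Your reduction to the range $m \lesssim n^{3/2}$ via monotone coupling, together with the first-moment control on short cycles (which indeed fails above $n^{3/2}$, as the $5$-vertex, $6$-edge configuration of two $C_4$s sharing a $P_2$ shows), is exactly the standard execution of this plan.
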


\begin{prop}
  \label{prop:lower-bound-weak}
  For every $\gamma > 0$, there exists a $\delta > 0$ such that for all sufficiently large $n$ and all $m \le \delta n^{4/3} (\log n)^{1/3}$,
  \[
    \left| \FnmCf \right| \ge n^{-\gamma m} \cdot \binom{\binom{n}{2}}{m}.
  \]
\end{prop}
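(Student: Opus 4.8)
The plan is to repeat the proof of Proposition~\ref{prop:lower-bound-strong}, replacing the elementary deletion step by Theorem~\ref{thm:KoKrSt}. Fix $\gamma>0$; we may assume $\gamma\le 1$, as the statement only weakens when $\gamma$ grows. Let $\delta_0=\delta_0(\gamma)$ be the constant provided by Proposition~\ref{prop:lower-bound-strong}. If $m\le\delta_0 n^{4/3}$, then, since $\log n\ge 1$,
\[
  \bigl|\FnmCf\bigr|\ \ge\ e^{-\gamma m}\binom{\binom n2}{m}\ \ge\ n^{-\gamma m}\binom{\binom n2}{m},
\]
and we are done; so from now on I would assume $\delta_0 n^{4/3}<m\le\delta n^{4/3}(\log n)^{1/3}$, where $\delta\le\delta_0$ is a constant to be fixed at the end.

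Let $c$ be the constant of Theorem~\ref{thm:KoKrSt}, put $u=m/(cn^{4/3})$ (so $\delta_0/c<u\le\delta(\log n)^{1/3}/c$), and set $m'=\bigl\lceil n^{4/3}\exp(u^3)\bigr\rceil$. Since $\exp(u^3)\le n^{(\delta/c)^3}$ with $(\delta/c)^3$ arbitrarily small, we have $n^{4/3}\le m'\le\binom n2$ for all large $n$, so Theorem~\ref{thm:KoKrSt} applies and gives, a.a.s.,
\[
  \ex(G_{n,m'},C_4)\ \ge\ c\,n^{4/3}\bigl(\log(m'/n^{4/3})\bigr)^{1/3}\ \ge\ c\,n^{4/3}u\ =\ m .
\]
Hence a.a.s.\ $G_{n,m'}$ contains a $C_4$-free, and therefore induced-$C_4$-free, subgraph with exactly $m$ edges (take an extremal $C_4$-free subgraph and delete surplus edges); in particular $m'\ge m$. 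As ``a.a.s.'' means probability at least $\tfrac12$ once $n$ is large, at least $\tfrac12\binom{\binom n2}{m'}$ of the graphs on $\{1,\dots,n\}$ with $m'$ edges contain some member of $\FnmCf$ as a subgraph. Since every fixed $H\in\FnmCf$ lies in exactly $\binom{\binom n2-m}{m'-m}$ graphs with $m'$ edges, double counting yields
\[
  \tfrac12\binom{\binom n2}{m'}\ \le\ \bigl|\FnmCf\bigr|\cdot\binom{\binom n2-m}{m'-m},
  \qquad\text{so}\qquad
  \bigl|\FnmCf\bigr|\ \ge\ \tfrac12\binom{\binom n2}{m}\binom{m'}{m}^{-1},
\]
exactly as in Proposition~\ref{prop:lower-bound-strong}.

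It remains to bound $\binom{m'}{m}$. From $m'\le 2n^{4/3}\exp(u^3)$ and $cu\ge\delta_0$ one gets $m'/m\le 2\exp(u^3)/(cu)\le(2/\delta_0)\,n^{(\delta/c)^3}$, hence $\binom{m'}{m}\le(em'/m)^m\le\bigl((2e/\delta_0)n^{(\delta/c)^3}\bigr)^m\le n^{2(\delta/c)^3 m}$ once $n$ is large enough that $2e/\delta_0\le n^{(\delta/c)^3}$. Choosing $\delta=\min\{\delta_0,\,c(\gamma/4)^{1/3}\}$ makes $2(\delta/c)^3\le\gamma/2$, so $\binom{m'}{m}\le n^{\gamma m/2}$, and since $\tfrac12 n^{-\gamma m/2}\ge n^{-\gamma m}$ for all large $n$ (because $m\ge\delta_0 n^{4/3}\to\infty$), we conclude $\bigl|\FnmCf\bigr|\ge n^{-\gamma m}\binom{\binom n2}{m}$.

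The one genuinely delicate point is the choice of $m'$: it must be a large enough power-of-$n$ multiple of $n^{4/3}$ for Theorem~\ref{thm:KoKrSt} to force $\ex(G_{n,m'},C_4)\ge m$, while the entropy loss $\binom{m'}{m}\approx(m'/m)^m$ must remain below $n^{\gamma m}$. These two demands are compatible precisely because the hypothesis $m\le\delta n^{4/3}(\log n)^{1/3}$ with $\delta$ small forces $u^3\le(\delta/c)^3\log n$, so $m'/n^{4/3}=\exp(u^3)$ — and hence $m'/m$ — is only an arbitrarily small power of $n$; this is exactly where the polylogarithmic window for $m$ is used. Everything else is a verbatim rerun of the counting in Proposition~\ref{prop:lower-bound-strong}.
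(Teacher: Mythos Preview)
Your proof is correct and follows essentially the same approach as the paper: invoke Theorem~\ref{thm:KoKrSt} to guarantee that a.a.s.\ $G_{n,m'}$ contains an $m$-edge $C_4$-free subgraph, and then double count exactly as in Proposition~\ref{prop:lower-bound-strong}. The only difference is that the paper takes a single $m$-independent $m'=\lceil n^{4/3+\gamma/2}\rceil$, which already satisfies $\ex(G_{n,m'},C_4)\ge c(\gamma/2)^{1/3}n^{4/3}(\log n)^{1/3}\ge m$ for all $m\le\delta n^{4/3}(\log n)^{1/3}$; this avoids your case split on the size of $m$ and the $m$-dependent choice $m'=\lceil n^{4/3}\exp(u^3)\rceil$, making the argument a couple of lines shorter.
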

\begin{proof}
  Let $c$ be the constant from the statement of Theorem~\ref{thm:KoKrSt}. Given a positive $\gamma$, choose $\delta > 0$ sufficiently small so that $\delta \le c (\gamma/2)^{1/3}$, let $m' = \big\lceil n^{4/3 + \gamma/2} \big\rceil$, and observe that
  \[
    c n^{4/3} \left(\log (m'/n^{4/3})\right)^{1/3} \ge c(\gamma/2)^{1/3} n^{4/3} (\log n)^{1/3} \ge \delta n^{4/3} (\log n)^{1/3}.
  \]
  Suppose that $m \le \delta n^{4/3} (\log n)^{1/3}$. It follows from Theorem~\ref{thm:KoKrSt} that at least half of all graphs with vertex set $\{1, \dotsc, n\}$ and $m'$ edges contain a subgraph with $m$ edges and no copy of $C_4$, provided that $n$ is sufficiently large. Therefore, similarly as in the proof of Proposition~\ref{prop:lower-bound-strong},
  \[
    \left|\FnmCf\right| \ge \frac{1}{2} \cdot \binom{\binom{n}{2}}{m} \binom{m'}{m}^{-1} \ge \frac{1}{2} \cdot \left(\frac{m}{em'}\right)^m \binom{\binom{n}{2}}{m} \ge n^{-\gamma m} \cdot \binom{\binom{n}{2}}{m}.
  \]
  This completes the proof.
\end{proof}

\subsection{The number of split graphs}
\label{sec:number-split-graphs}

As we shall need to compare the family of split graphs (and graphs that are close to a split graph) to various other families of graphs, we will need to derive some estimates on its cardinality. Let $\Snm$ denote the family of split graphs with vertex set $\{1, \dotsc, n\}$ that have precisely $m$ edges. Moreover, let $\Nnm(\ell)$ denote the number of those graphs that are complete on the set $\{1, \dotsc, \ell\}$ and empty on its complement. Observe that
\[
  \Nnm(\ell) =
  \begin{cases}
    \binom{\ell(n-\ell)}{m - \binom{\ell}{2}}, & \text{if $\binom{\ell}{2} \le m \le \ell(n-\ell) + \binom{\ell}{2}$}, \\
    0, & \text{otherwise},
  \end{cases}
\]
and
\begin{equation}
  \label{eq:Snm-bounds}
  \max_\ell \Nnm(\ell) \le |\Snm| \le \sum_{\ell} \binom{n}{\ell} \Nnm(\ell).
\end{equation}
Since~\eqref{eq:Snm-bounds} is rather hard to work with due to its inexplicit form, we establish several asymptotic properties of the function $\ell \mapsto \Nnm(\ell)$, summarised in Proposition~\ref{prop:max-Nnm-bounds} below. We postpone the rather dull and technical proof of the proposition to Appendix~\ref{sec:counting-split-graphs}.

\begin{prop}
  \label{prop:max-Nnm-bounds}
  There is a positive constant $\lambda$ such that the following holds for all sufficiently large $n$. If $n \ll m \le \lambda n^2$, then the function $\ell \mapsto \Nnm(\ell)$ attains its maximum for some $\ell$ satisfying $\ellnm/2 < \ell < 2\ellnm$, where $\ellnm$ is defined by
  \[
    \ellnm = \bigg( \frac{m}{\log\big( \ellnm n/m \big)} \bigg)^{1/2}.
  \]
  Moreover, $\Nnm(\ellnm) \ge 5^m$ and if $\ell \le \ellnm/2$ or $\ell \ge 2\ellnm$, then
  \[
    \Nnm(\ell) < \exp(-m/15) \cdot \max_\ell \Nnm(\ell).
  \]
\end{prop}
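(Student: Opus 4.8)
The plan is to study the function $\phi(\ell):=\log\Nnm(\ell)$, writing $m_\ell:=m-\binom{\ell}{2}$ and $M_\ell:=\ell(n-\ell)$, so that $\Nnm(\ell)=\binom{M_\ell}{m_\ell}$ whenever $0\le m_\ell\le M_\ell$ and $\Nnm(\ell)=0$ otherwise. I would sandwich $\phi$ between the entropy upper bound $\psi(\ell):=m_\ell\log(M_\ell/m_\ell)+m_\ell$, valid everywhere via $\binom{N}{k}\le(eN/k)^k$, and, at a few selected values of $\ell$, a matching lower bound obtained from the Stirling estimate $k!\le e\sqrt{k}\,(k/e)^k$ (this recovers the additive ``$+m_\ell$'' that the weaker bound $\binom{N}{k}\ge(N/k)^k$ loses, and it is exactly this term that the $5^m$ estimate needs). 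The parameter controlling all error terms is $L:=m/\ellnm^2=\log(\ellnm n/m)$: squaring the defining relation for $\ellnm$ and using $m\le\lambda n^2$ gives $e^{L}\sqrt{L}=n/\sqrt{m}\ge 1/\sqrt{\lambda}$, hence $L\ge\tfrac14\log(1/\lambda)$ and $e^{-L}\le\lambda^{1/4}$. Throughout the relevant window of $\ell$ this forces $\binom{\ell}{2}\ll m$ and $M_\ell=(1-o(1))\ell n$, so that $M_\ell/m_\ell=(1+o(1))\,\ell n/m$, with every $o(1)$ bounded by a fixed power of $\lambda$ together with $1/L\le 4/\log(1/\lambda)$.

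Next I would establish the shape of $\psi$. Differentiating, $\psi'(\ell)=-(\ell-\tfrac12)\log(M_\ell/m_\ell)+\tfrac{m_\ell(n-2\ell)}{\ell(n-\ell)}$, so $\psi'(\ell)$ has the same sign as $m_\ell-\ell^2\log(M_\ell/m_\ell)$ up to the harmless factors above. Since $\ell\mapsto\ell^2\log(M_\ell/m_\ell)$ is increasing on the range where $M_\ell>m_\ell$ and equals $m$ (to first order) precisely at $\ellnm$, this shows that $\psi$ is increasing on $(m/n,\ellnm/2]$ and decreasing on $[2\ellnm,\sqrt{2m}\,)$ — and, crucially, the inequalities producing these sign patterns hold with room to spare at the endpoints $\ellnm/2$ and $2\ellnm$, since there $\ell^2\log(M_\ell/m_\ell)$ is bounded away from $m$ by a constant factor. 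Plugging in the endpoints (keeping the factor $m_{2\ellnm}=(1-\Theta(1/L))m$ rather than crudely bounding it by $m$) gives $\psi(\ellnm/2)\le(L+1-\log 2+o(1))m$ and $\psi(2\ellnm)\le(L+\log 2-1+o(1))m$, while the Stirling lower bound applied at $\ellnm$ gives $\phi(\ellnm)\ge(L+\tfrac12-o(1))m$.

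It then remains to assemble these estimates. First, $\phi(\ellnm)\ge(L+\tfrac12-o(1))m\ge 2m>(\log 5)m$, using $L\ge\tfrac14\log(1/\lambda)\ge 2$ once $\lambda$ is small, so $\Nnm(\ellnm)\ge 5^m$. Second, $\phi\le\psi$ everywhere and $\psi$ is increasing on $(m/n,\ellnm/2]$ (and $\Nnm\equiv 0$ for $\ell\le m/n$), so every $\ell\le\ellnm/2$ satisfies $\phi(\ell)\le\psi(\ellnm/2)\le\phi(\ellnm)-(\log 2-\tfrac12-o(1))m\le\phi(\ellnm)-m/10$; likewise every $\ell\ge 2\ellnm$ with $\Nnm(\ell)\ne 0$ satisfies $\phi(\ell)\le\psi(2\ellnm)\le\phi(\ellnm)-(\tfrac32-\log 2-o(1))m\le\phi(\ellnm)-m/2$ (for $\ell$ with $\binom{\ell}{2}>m$ we have $\Nnm(\ell)=0$, so there is nothing to prove). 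In particular $\Nnm(\ell)<e^{-m/15}\Nnm(\ellnm)\le e^{-m/15}\max_{\ell'}\Nnm(\ell')$ for every such $\ell$, which is the third assertion; and since $\ellnm\to\infty$ places (the nearest integer to) $\ellnm$ strictly inside the window $(\ellnm/2,2\ellnm)$ while every $\ell$ outside that window has $\Nnm(\ell)<\Nnm(\ellnm)$, the maximum of $\ell\mapsto\Nnm(\ell)$ is attained at some $\ell$ with $\ellnm/2<\ell<2\ellnm$, which is the first assertion.

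The main obstacle is quantitative bookkeeping rather than any conceptual difficulty: the two gaps are driven by the margins $\log 2-\tfrac12\approx 0.19$ and $\tfrac32-\log 2\approx 0.81$, which are positive but not large, so one must verify that each error term — the gap between the Stirling upper and lower bounds, the discrepancies between $M_\ell$ and $\ell n$ and between $m_\ell$ and $m$, and the contribution of $\binom{\ell}{2}$ — is at most a fixed small fraction of $m$, uniformly over the entire range $n\ll m\le\lambda n^2$. This is exactly where the bound $e^{-L}\le\lambda^{1/4}$ (equivalently, $L$ being bounded below by a quantity that tends to infinity as $\lambda\to 0$) is used. I expect that choosing $\lambda$ small enough and $n$ large enough, and then carefully tracking constants through the three evaluations of $\psi$ and the one evaluation of $\phi$, together with checking that $\psi'$ keeps the claimed sign on the whole of each interval (not just at the endpoints), constitutes the bulk of the work.
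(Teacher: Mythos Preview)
Your approach is correct and close in spirit to the paper's, though the execution differs. The paper works directly with the discrete ratio $\Nnm(\ell+1)/\Nnm(\ell)=a(\ell)b(\ell)$, bounding it above and below to show that $\ell\mapsto\Nnm(\ell)$ is increasing on $[2m/n,3\ellnm/4]$ and decreasing on $[3\ellnm/2,\infty)$, and then multiplies these ratios over the intervals $[\ellnm/2,3\ellnm/4]$ and $[3\ellnm/2,2\ellnm]$ to obtain the exponential gaps. You instead sandwich $\phi=\log\Nnm$ between the entropy upper bound $\psi$ (everywhere) and a Stirling lower bound (at the single point $\ellnm$), establish the sign of $\psi'$ to control the tails, and read off the gaps from three pointwise evaluations. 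These are essentially the discrete and continuous versions of the same idea: both amount to locating the zero of the log-derivative of $\Nnm$ near $\ellnm$ and quantifying how fast the log falls off away from it. Your route is arguably tidier—only three evaluations and one derivative computation, instead of four running bounds on $a(\ell)$ and $b(\ell)$—at the cost of needing the sharper Stirling estimate at $\ellnm$ (the weaker $\binom{N}{k}\ge(N/k)^k$ would lose exactly the additive $m$ that produces the margin $\log 2-\tfrac12$, which you correctly flag). The numerical margins you obtain, roughly $0.19$ and $0.81$, comfortably exceed $1/15$, matching the paper's $1/15$ and $1/8$.
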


\subsection{The non-structured regime}
\label{sec:non-struct-regime}

\begin{proof}[{Proof of parts~\ref{item:C4-free-quasirandom} and~\ref{item:C4-free-non-split} of Theorem~\ref{thm:asymC4free}}]
  Fix an arbitrary positive $\eps$, suppose that $m \gg n$, and let $G$ be the uniformly chosen random graph with vertex set $\{1, \dotsc, n\}$ and exactly $m$ edges. A standard averaging argument shows that if $G$ is not $\eps$-quasirandom, then it contains a subset $A$ with exactly $\eps n$ vertices and density differing from $m/\binom{n}{2}$ by more than $\eps m/\binom{n}{2}$. Consequently, Hoeffding's inequality for the hypergeometric distribution~\cite{Ho63} asserts the existence of a positive $\rho$ that depends only on $\eps$ such that
  \[
    \Pr\left(\text{$G$ is not $\eps$-quasirandom}\right) \le \binom{n}{\eps n} \cdot \exp\left(-3\rho m\right).
  \]
  It now follows from Proposition~\ref{prop:lower-bound-strong} invoked with $\gamma \leftarrow \rho$ that if $\delta$ is sufficiently small, then for all sufficiently large $n$ and all $m$ satisfying $n \ll m \le \delta n^{4/3}$,
  \begin{multline*}
    \Pr\left(\text{$G$ is not $\eps$-quasirandom} \mid G \in \FnmCf\right) \le \frac{\Pr\left(\text{$G$ is not $\eps$-quasirandom}\right)}{\Pr\left(G \in \FnmCf\right)} \\
    \le \binom{n}{\eps n} \cdot \exp(-3\rho m + \rho m) \le \exp(-\rho m).
  \end{multline*}
  In other words, graphs that are not $\eps$-quasirandom constitute only an exponentially small fraction of $\FnmCf$.

  Now, denote by $\Snm(\eps)$ the family of graphs with vertex set $\{1, \dotsc, n\}$ and $m$ edges that are $\eps$-close to a split graph. Each graph in $\Snm(\eps)$ can be obtained from some graph in $\Snm$ by removing from it some $\eps m$ edges and replacing them with arbitrarily chosen $\eps m$ edges of $K_n$. Hence, if $m \gg n$ and $n$ is sufficiently large, then
\begin{equation}\label{eq:counting:close:to:split}
    |\Snm(\eps)| \le |\Snm| \cdot \binom{m}{\eps m} \cdot \binom{\binom{n}{2}}{\eps m} \le |\Snm| \cdot \left(\frac{em}{\eps m} \cdot \frac{en^2}{2\eps m}\right)^{\eps m} \le n^{\eps m} \cdot |\Snm|.
  \end{equation}
  Moreover, it follows from~\eqref{eq:Snm-bounds} and Proposition~\ref{prop:max-Nnm-bounds} that
  \[
    \begin{split}
      |\Snm| & \le 2^n \cdot \max_\ell \Nnm(\ell) = 2^n \cdot \max_{\ell \le 2\ellnm} \Nnm(\ell) \\
      & \le 2^n \cdot \binom{2\ellnm n}{m} \le 2^n \cdot \left(\frac{2\ellnm n}{\binom{n}{2}}\right)^m \binom{\binom{n}{2}}{m}.
    \end{split}
  \]
  Suppose now that $n \ll m \le \delta n^{4/3} (\log n)^{1/3}$. As $\ellnm \ll n^{2/3}$, it follows that
  \[
    |\Snm(\eps)| \le n^{(\eps-1/3)m} \cdot \binom{\binom{n}{2}}{m}
  \]
  for all sufficiently large $n$. Therefore, by Proposition~\ref{prop:lower-bound-weak} invoked with $\gamma = 1/24$ implies that if $\delta$ is sufficiently small, then
  \[
    \Pr\left(G \in \Snm(1/4) \mid G \in \FnmCf \right) \le n^{(1/4-1/3)m} \cdot n^{\gamma m} = n^{-m/24}.
  \]
  In other words, graphs that are $1/4$-close to a split graph constitute only a super-exponentially small proportion of $\FnmCf$, as required.
\end{proof}

\section{An approximate structural theorem}
\label{sec:C4s:proof}

In this section, we shall use Theorems~\ref{thm:container} and~\ref{thm:robust-stability-refined} to construct a collection of containers for the family $\FnmCf$ whenever $n^{4/3}(\log n)^4 \le m \ll n^2$. Our aim is to do this in such a way that all but a tiny proportion of the family will be covered by containers that describe predominantly graphs that are close to a split graph. To make this notion precise, let us say that a pregraph $\cP = (M,E)$ on $n$ vertices is an \emph{$\eps$-almost split pregraph} if there exists a partition $V(K_n) = U \cup W$ such that
\[
  e(E) \le \binom{|U|}{2}, \qquad e_E(U) \ge (1-\eps)\binom{|U|}{2}, \qquad \text{and} \qquad e_M(W) \le 7\sqrt{\eps}|U|n.
\]
We will prove the following container theorem for sparse induced-$C_4$-free graphs. Recall from Section~\ref{sec:asymmetric-container-lemma} that a graph $G$ is contained in (described by) a pregraph $\cP = (M, E)$ if $E \subseteq E(G) \subseteq E \cup M$.

\begin{thm}
  \label{thm:containers:for:C4free}
  For every $\eps > 0$, there exists $\lambda > 0$ such that the following holds. For every $n \in \N$ and $n^{4/3}(\log n)^4 \le m \le \lambda n^2$, there exists a collection $\CC$ of $\eps$-almost split pregraphs on $n$ vertices with $|\CC| = e^{o(m)}$ such that all but at most $e^{-\lambda m} \cdot |\FnmCf|$ of the graphs in $\FnmCf$ are contained in some $\cP \in \CC$. 
\end{thm}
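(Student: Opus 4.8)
The plan is to build the collection $\CC$ by iterating the asymmetric container lemma (Theorem~\ref{thm:container}), using the robust balanced stability theorem (Theorem~\ref{thm:robust-stability-refined}) to steer each step. As in Section~\ref{sec:robust-stability}, identify a graph $G \subseteq K_n$ with its indicator $h_G \in \{0,1\}^{E(K_n)}$ and a pregraph $\cP = (M,E)$ with the cylinder of all $h$ satisfying $E \subseteq h^{-1}(1)$ and $E(K_n)\setminus(E\cup M)\subseteq h^{-1}(0)$. An $m$-edge induced-$C_4$-free graph is then an element of $\FF_{\le m}(\HH)$ for the hypergraph $\HH$ of induced $C_4$s of $K_n$; relative to a pregraph $\cP$, the only constraints that are not automatically satisfied are exactly the edges of the $(i,4)$-uniform hypergraphs $\HH_i^\cP$ on vertex set $M$, for $i \in \{0,1,2\}$, introduced in Section~\ref{sec:robust-stability}.

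Starting from the trivial pregraph $(E(K_n),\emptyset)$, we repeatedly process each pregraph $\cP=(M,E)$ in the current collection that still has $e(E)\le m$. First we choose a scale $\ell=\ell(\cP)$ of order $\max\{C\sqrt n,\ \sqrt{e(E)},\ |M|/n\}$, with $C$ from Theorem~\ref{thm:robust-stability-refined}; if $\cP$ is already $\eps$-almost split we add it to $\CC$, and if its volume $\binom{|M|}{m-e(E)}$ has fallen below a threshold $\tau$ fixed below we discard it. Otherwise one of the three alternatives \ref{item:supersaturation-cond}--\ref{item:non-split-cond} of Theorem~\ref{thm:robust-stability-refined} applies at scale $\ell$: either \ref{item:supersaturation-cond} or \ref{item:stability-cond} when $E$ is not $\eps$-close to a clique, or \ref{item:non-split-cond} applied to the pregraph induced on $U^c$ when $E$ is $\eps$-close to $K_\ell$ on some $U$ but $e_M(U^c)$ is still large (the remaining case being precisely that $\cP$ is $\eps$-almost split). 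This yields an $i$ and a permissible $\HH_i\subseteq\HH_i^\cP$ with $e(\HH_i)\ge\beta\ell^4$, $\Delta_{(0,1)}(\HH_i)\le\ell^3/n$, $\Delta_{(0,2)}(\HH_i)\le\ell$, and $\Delta_{(1,0)}(\HH_i)\le\ell^2$ if $i>0$. Using these bounds, the monotonicity of $(\ell_0,\ell_1)\mapsto\Delta_{(\ell_0,\ell_1)}(\HH_i)$, and the trivial bounds $\Delta_{(0,3)}(\HH_i),\Delta_{(0,4)}(\HH_i)=O(1)$, one checks that $\HH_i$ meets condition~\eqref{eq:container-Del} with $v(\HH_i)=|M|$, with its `$m$' taken to be $m-e(E)$, with $r$ of order $(m-e(E))/\log\bigl(|M|/(m-e(E))\bigr)$ (the value that, by the discussion after Theorem~\ref{thm:container}, equalises the efficiency of the two kinds of container), with $b$ roughly $n^{4/3}$ (it is the $(0,4)$-case of~\eqref{eq:container-Del}, together with $|M|=O(\ell n)$, that forces $b\gtrsim n^{4/3}$, and this is the origin of the hypothesis $m\ge n^{4/3}(\log n)^4$), and with $K$ absorbing the constants of Theorem~\ref{thm:robust-stability-refined}; note the choice of $\ell$ is exactly what makes $|M|=O(\ell n)$, which the $(0,1)$-case requires. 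Theorem~\ref{thm:container} then replaces $\cP$ by at most $\binom{|M|}{\le 2b}\binom{|M|}{\le 4b}=n^{O(b)}$ smaller pregraphs $\cP'=(M',E')$, each describing all the $m$-edge graphs of $\FnmCf$ described by $\cP$, and each with either $E'=E$ and $|M'|\le(1-\delta)|M|$, or $e(E')\ge e(E)+\delta r$ (with $M'$ smaller by the same amount).

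It then remains to check two things. First, $|\CC|=e^{o(m)}$: along any root-to-leaf branch there are $O(\log n)$ `$M$-shrinking' steps, because $|M|$ starts at $\binom n2$ and, by the description of $\Nnm$ in Proposition~\ref{prop:max-Nnm-bounds}, any pregraph that is not $\eps$-almost split and has volume at least $\tau$ still has $|M|=\Omega(\ellnm n)=\Omega(n^{5/3})$; and there are $O(\log n)$ `$E$-enlarging' steps, since each adds at least $\delta r=\Omega(m/\log n)$ edges and $e(E)\le m$. Hence every branch has length $O(\log n)$ and the total number $N$ of pregraphs created is at most $n^{O(b\log n)}=\exp\bigl(O(n^{4/3}(\log n)^2)\bigr)=e^{o(m)}$ since $m\ge n^{4/3}(\log n)^4$. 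Second, taking $\tau=e^{-\lambda m}|\Snm|/N$ (legitimate since $\Snm\subseteq\FnmCf$, so $|\Snm|\le|\FnmCf|$, and $N$ has just been bounded explicitly), the total volume discarded is at most $N\cdot\tau=e^{-\lambda m}|\Snm|\le e^{-\lambda m}|\FnmCf|$; and one verifies, using the split-graph count of Section~\ref{sec:number-split-graphs}, that every leaf of volume at least $\tau$ really is $\eps$-almost split — a pregraph of volume $\ge\tau$ is forced to have $|M|$ and $e(E)$ compatible with a near-split structure, after which the contrapositive of Theorem~\ref{thm:robust-stability-refined} leaves only that alternative.

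I expect the main obstacle to be exactly this bookkeeping around the scale $\ell$: one must pick $\ell$ at every stage so that simultaneously (i) $\cP$ falls under one of the three regimes of Theorem~\ref{thm:robust-stability-refined} whenever it is not $\eps$-almost split, (ii) $|M|\le K\beta\ell n$ so the $(0,1)$-hypothesis of Theorem~\ref{thm:container} holds, and (iii) a single fixed pair $b=\Theta(n^{4/3})$, $r=\Theta(m/\log n)$ works throughout; and one must show that the invariant $e(E)\lesssim|M|^2/n^2$ — which is what keeps (i) and (ii) compatible — is preserved as $M$ shrinks and $E$ grows, alongside the volume accounting against the lower bound for $|\FnmCf|$ coming from Section~\ref{sec:non-structured-regime}.
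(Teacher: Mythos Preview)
Your overall architecture is exactly the paper's: build a rooted tree of pregraphs starting from $(E(K_n),\emptyset)$, at each non-terminal node invoke Theorem~\ref{thm:robust-stability-refined} to produce a permissible $\HH_i\subseteq\HH_i^\cP$, feed it to Theorem~\ref{thm:container}, and bound the tree height by $O(\log n)$ via ``$M$ shrinks geometrically'' plus ``$E$ grows in chunks of $\delta r$.'' The paper packages the stopping conditions slightly differently --- it defines explicit \emph{leaf pregraphs} (Definition~\ref{def:leaf}) rather than a volume threshold, and only afterwards (in Cases~1 and~2 of the proof of Theorem~\ref{thm:containers:for:C4free}) shows that non-split leaves have small volume --- but this is an organisational difference, not a mathematical one.

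There is, however, a genuine error in your parameters. You assert that $b=\Theta(n^{4/3})$ works throughout, identifying the $(0,4)$-case of~\eqref{eq:container-Del} as the binding constraint. It is not. Take $i\ge 1$ and the pair $(\ell_0,\ell_1)=(1,2)$: since $\Delta_{(1,2)}(\HH_i)\le\Delta_{(0,2)}(\HH_i)\le\ell$, condition~\eqref{eq:container-Del} requires
\[
\ell \;\le\; K\cdot\frac{b^2}{m\,v(\HH_i)^2}\cdot e(\HH_i)\cdot\frac{m}{r}
\;\asymp\;\frac{b^2\ell^2}{n^2 r},
\qquad\text{i.e.}\qquad b^2\ell \;\gtrsim\; n^2 r.
\]
In the worst case $\ell\asymp\sqrt{r}$ (which does occur, since Lemma~\ref{lem:supersat:corollary} only guarantees $\ell^2\ge r$), this forces $b\gtrsim n\,r^{1/4}\asymp n\,(m/\log n)^{1/4}$, which exceeds $n^{4/3}$ as soon as $m\gg n^{4/3}\log n$. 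The $(2,4)$-case is similarly more demanding than $(0,4)$ for large $m$. The fix is simple and is what the paper does: take $b$ to scale with $m$, specifically $b=\xi(n)\,m/(\log n)^2$ with $\xi(n)\to 0$ slowly; then $b(\log n)^2=o(m)$ still holds, and all the degree conditions go through (see the Claim inside the proof of Lemma~\ref{lem:containers:for:induced:C4free:graphs}). Relatedly, your $r$ should be fixed at $\Theta(m/\log n)$ as in~\eqref{def:r}; letting it vary with $m-e(E)$, as you first suggest, would let $r\to 0$ near the end of a branch and spoil the $O(\log n)$ bound on the number of $E$-enlarging steps that you later rely on.
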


To prove Theorem~\ref{thm:containers:for:C4free}, we will apply Theorem~\ref{thm:container} recursively, starting with the trivial container, which is defined by the `complete' pregraph with $M = E(K_n)$ (and therefore $E$ empty). We continue until we obtain a family of containers, each of which admits only few good copies of $C_4$; we will be able to control this process with the use of Theorem~\ref{thm:robust-stability-refined}, which provides us with a precise structural description of such pregraphs. 
%those pregraphs that contain only few good copies of $C_4$. 
Finally, we will show that the containers that are not $\eps$-almost split pregraphs contain at most  $e^{-\lambda m} \cdot |\FnmCf|$ members of $\FnmCf$. 

More formally, we shall build a rooted tree $\T$ whose vertices are pregraphs with $n$ vertices. The root of $\T$ is the pregraph with $M = E(K_n)$ corresponding to the trivial container. The children (in $\T$) of a pregraph will correspond to refinements of it that we obtain by applying Theorem~\ref{thm:container} to one of the hypergraphs $\HH_i$ supplied by Theorem~\ref{thm:robust-stability-refined}. This way, each graph in $\FnmCf$ that is described by some pregraph $\cP$ in $\T$ will be described by one of the children of $\cP$ in $\T$. As a consequence, each graph in $\FnmCf$ will be accounted for by one of the leaves of $\T$.

In order to decide whether a pregraph $\cP = (M,E)$ should be a leaf of the tree or not (in which case we will apply Theorem~\ref{thm:container} to it), we use the following definition. 

\begin{defn}
  \label{def:leaf}
  A pregraph $\cP = (M,E)$ on $n$ vertices is a \emph{leaf pregraph} (with respect to $m$, $\eps$, and~$\delta$) if either $\cP$ is an $\eps$-almost split pregraph, or there exists $\ell \in \N$ such that 
  \begin{equation}\label{eq:leaf:ratio}
    e(E) \ge \binom{\ell}{2} \qquad \text{and} \qquad e(M) \le (1-\delta)\ell n,
  \end{equation}
  or either of the following holds:
  \begin{equation}\label{eq:leaf:Mtoosmall:or:Etoobig}
    e(E) > m \qquad \text{or} \qquad e(M) < \left( \frac{n^2 m}{2^8 \log (n^2/m)} \right)^{1/2}.
  \end{equation}
\end{defn}

Recall that, given a pregraph $\cP = (M,E)$, the $(i,4)$-uniform hypergraph $\HH_i^\cP$ comprises all pairs $(A,B)$ such that $B$ is a good copy of $C_4$ in $\cP$ and $A$ is the set of the remaining $i$ mixed edges induced by the vertex set of this copy (which induces exactly $4+i$ edges of $M$). Also, with foresight, let us set 
\begin{equation}
  \label{def:r}
  r = \frac{m}{2^{13} \log n}.
\end{equation}
We will use Theorem~\ref{thm:robust-stability-refined} to prove the following lemma. 

\begin{lemma}
  \label{lem:supersat:corollary}
  For every $\eps > 0$, there exist positive constants $\beta$, $\delta$, and $\lambda$ such that the following holds for every $n \in \N$ and $n (\log n)^2 \le m \le \lambda n^2$. Let $\cP = (M,E)$ be a pregraph on $n$ vertices that is not a leaf pregraph with respect to $m$, $\eps$, and~$\delta$. Then there exist an integer $\ell$ with $\ell^2 \ge r$ and a hypergraph $\HH \subseteq \HH_i^\cP$, for some $i \in \{0,1,2\}$, such that
  \begin{equation}
    \label{eq:supersat:corollary}
    v(\HH) \le 5\ell n, \quad  e(\HH) \ge \beta \ell^4, \quad \Delta_{(0,1)}(\HH) \le \frac{\ell^3}{n}, \quad \text{and} \quad \Delta_{(0,2)}(\HH) \le \ell
  \end{equation}
  and, if $i > 0$, then also $\Delta_{(1,0)}(\HH) \le \ell^2$.
\end{lemma}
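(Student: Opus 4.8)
The plan is to apply Theorem~\ref{thm:robust-stability-refined} (with the same $\eps$) to $\cP=(M,E)$ for an appropriately chosen integer $\ell$; write $\beta_0,\delta_0,\lambda_0,C_0$ for the constants it provides, and set $L:=\bigl(n^2m/(2^8\log(n^2/m))\bigr)^{1/2}$. Since $\cP$ is not a leaf pregraph, \eqref{eq:leaf:Mtoosmall:or:Etoobig} gives $e(E)\le m$ and $e(M)\ge L$, while \eqref{eq:leaf:ratio} tells us that for every $\ell'\in\N$ either $e(E)<\binom{\ell'}{2}$ or $e(M)>(1-\delta)\ell'n$. Two crude estimates will do most of the work: from $m\ge n$ we get $\log(n^2/m)\le\log n$, hence $L^2\ge 32\,n^2r$ and so $e(M)\ge 4\sqrt2\,n\sqrt r$; and from $m\ge n(\log n)^2$ we get $r\ge n\log n/2^{13}$, so $\sqrt r\ge C_0\sqrt n$ once $n$ is large. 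I would simply assume $n$ is large, since after shrinking $\lambda$ the admissible range $n(\log n)^2\le m\le\lambda n^2$ is empty for all smaller $n$.

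The choice I would make is as follows. Let $\ell_E$ be the least integer with $e(E)\le\binom{\ell_E}{2}$, and put
\[
\ell:=\max\bigl\{\ell_E,\ \lceil\sqrt r\,\rceil,\ \lceil e(M)/(5n)\rceil\bigr\}.
\]
Taking a maximum makes three of the required conclusions immediate: $\ell^2\ge r$; $e(E)\le\binom{\ell}{2}$; and, since any subhypergraph of $\HH_i^\cP$ may be taken on vertex set $M$, $v(\HH)=e(M)\le 5\ell n$. Also $\ell\ge\sqrt r\ge C_0\sqrt n$, so the size hypotheses of Theorem~\ref{thm:robust-stability-refined} hold and it only remains to verify one of its structural conditions~\ref{item:supersaturation-cond}--\ref{item:non-split-cond}.

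If $e(M)\ge 4\ell n$ then~\ref{item:supersaturation-cond} holds and we are done, so suppose $e(M)<4\ell n$. This forces the maximum to be attained at $\ell_E$: if $\ell=\lceil e(M)/(5n)\rceil$ then $4\ell n\le\frac45 e(M)+4n<e(M)$ (using $e(M)\gg n$), and if $\ell=\lceil\sqrt r\,\rceil$ then $4\ell n\le 4n\sqrt r+4n\le 4\sqrt2\,n\sqrt r\le e(M)$ — both contradictions. Hence $\ell=\ell_E$, so by minimality $e(E)>\binom{\ell-1}{2}$; together with $e(E)\le m\le\lambda n^2$ this gives $\ell^2<8m\le 8\lambda n^2$, so $\ell\le\lambda_0 n$ provided $\lambda\le\lambda_0^2/8$. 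Applying the non-leaf dichotomy at $\ell'=\ell-1$, whose first branch $e(E)<\binom{\ell-1}{2}$ is ruled out by minimality, we get $e(M)>(1-\delta)(\ell-1)n\ge(1-\delta_0)\ell n$ once $\delta:=\delta_0/2$ and $\ell$ is large. Finally, if $E$ is not $\eps$-close to $K_\ell$ then condition~\ref{item:stability-cond} holds; and if $E$ is $\eps$-close to $K_\ell$, then there is a set $U$ of $\ell$ vertices with $e_E(U)\ge(1-\eps)\binom\ell2$, and since also $e(E)\le\binom\ell2$ while $\cP$ is \emph{not} an $\eps$-almost split pregraph, we must have $e_M(U^c)>7\sqrt\eps\,\ell n$, which is condition~\ref{item:non-split-cond}.

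In each case Theorem~\ref{thm:robust-stability-refined} supplies $i\in\{0,1,2\}$ and $\HH_i\subseteq\HH_i^\cP$ with precisely the degree bounds required (four of them when $i>0$, three when $i=0$); taking $\HH:=\HH_i$ on vertex set $M$, and recalling $v(\HH)=e(M)\le5\ell n$ and $\ell^2\ge r$, completes the proof, with $\beta:=\beta_0$, $\delta:=\delta_0/2$, and $\lambda$ chosen small enough for the estimates above. The delicate point — the one I would spend the most care on — is exactly this choice of $\ell$: it must be large enough to force $\ell^2\ge r$, $e(E)\le\binom\ell2$ and $e(M)\le5\ell n$ at once, yet still be compatible with one of~\ref{item:supersaturation-cond}--\ref{item:non-split-cond}; the argument shows that the only dangerous configuration, $e(M)<4\ell n$, can arise only when the maximum is $\ell_E$, and there the non-leaf hypotheses hand us either a near-$K_\ell$ in $E$ (hence~\ref{item:non-split-cond}) or a mixed-edge count exceeding $(1-\delta_0)\ell n$ (hence~\ref{item:stability-cond}).
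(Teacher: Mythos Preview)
Your proof is correct and follows essentially the same approach as the paper: both reduce to Theorem~\ref{thm:robust-stability-refined} by verifying one of conditions~\ref{item:supersaturation-cond}--\ref{item:non-split-cond} for a suitably chosen $\ell$, using the non-leaf hypotheses to rule out the bad alternatives. The only difference is organizational: the paper picks $\ell$ separately in three sequential cases (maximal with $e(M)\ge 4\ell n$; given by a near-clique $U$; minimal with $e(M)\le(1-\delta)\ell n$), whereas your single definition $\ell=\max\{\ell_E,\lceil\sqrt r\rceil,\lceil e(M)/(5n)\rceil\}$ secures $\ell^2\ge r$, $e(E)\le\binom{\ell}{2}$, and $v(\HH)\le 5\ell n$ at once and collapses the argument to one dichotomy on whether $e(M)\ge 4\ell n$.
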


\begin{proof}
  Let $\beta_{\ref{thm:robust-stability-refined}}$, $\delta_{\ref{thm:robust-stability-refined}}$, $\lambda_{\ref{thm:robust-stability-refined}}$, and $C_{\ref{thm:robust-stability-refined}}$ be the constants given by Theorem~\ref{thm:robust-stability-refined} applied with $\eps_{\ref{thm:robust-stability-refined}} \leftarrow \eps$, set $\beta = \beta_{\ref{thm:robust-stability-refined}}$, $C = C_{\ref{thm:robust-stability-refined}}$, $\delta = \delta_{\ref{thm:robust-stability-refined}} / 2$, and $\lambda = 2^{-8} \big( \lambda_{\ref{thm:robust-stability-refined}} / C \big)^2$. We may assume that $C \ge 1$, $\delta \le 1/4$, $\lambda \le \delta^2$, and (by our bounds on $m$) that $n \ge 1/\lambda$.

  Suppose first that there exists $\ell \ge C \sqrt{n}$ such that 
  \begin{equation}
    \label{eq:M:too:large}
    e(E) \le \binom{\ell}{2} \qquad \text{and} \qquad e(M) \ge 4\ell n
  \end{equation}
  and choose $\ell \in \N$ maximal such that $e(M) \ge 4\ell n$. We claim that $\ell^2 \ge r$. Indeed, $\cP$ is not a leaf pregraph and thus the maximality of $\ell$ and the second inequality in~\eqref{eq:leaf:Mtoosmall:or:Etoobig} give
  \[
    2r \le \frac{m}{2^{12} \log(n^2/m)} \le \frac{e(M)^2}{16n^2} \le (\ell+1)^2.
  \]
  In this case it follows immediately from Theorem~\ref{thm:robust-stability-refined} that there exists a hypergraph $\HH$ with the claimed properties. 

  Next, suppose that there exists $\ell \ge C \sqrt{n}$ and a set $U$ of size $\ell$ such that 
  \begin{equation}
    \label{eq:E:too:concentrated}
    e(E) \le \binom{\ell}{2} \qquad \text{and} \qquad e_E(U) \ge (1-\eps)\binom{\ell}{2}.
  \end{equation}
  Note that $e(M) < 4\ell n$, otherwise~\eqref{eq:M:too:large} holds and we are done as above. Since $\cP$ is not a leaf pregraph, it follows that  $\ell^2 \ge r$, as above, and $e_M(U^c) > 7\sqrt{\eps}\ell n$,  as $\cP$ is not an $\eps$-almost split pregraph. This means that $\cP$ satisfies condition~\ref{item:non-split-cond} of Theorem~\ref{thm:robust-stability-refined} and so we obtain a hypergraph $\HH$ with the claimed properties, as before.

  Finally, let $\ell \in \N$ be minimal such that $e(M) \le (1-\delta)\ell n$ and observe that $e(E) \le \binom{\ell}{2}$, since $\cP$ is not a leaf pregraph, and that 
  \[
    \ell \ge \frac{e(M)}{(1-\delta)n} \ge \left( \frac{m}{2^8 \log (n^2/m)} \right)^{1/2} \ge \max\left\{C\sqrt{n}, \sqrt{r}\right\},
  \]
  where the second inequality follows since $\cP$ is not a leaf pregraph and the third by our bounds on $m$, since $n$ is sufficiently large. It follows that $E$ is not $\eps$-close to $K_\ell$, since if it were, then there would exist a set $U$ of size $\ell$ such that $e_E(U) \ge (1-\eps)\binom{\ell}{2}$, in which case~\eqref{eq:E:too:concentrated} would hold and we would be done as before. Note also that $e(M) \ge (1 - 2\delta)\ell n$, by our choice of $\ell$ and since $\delta \ell \ge \delta \sqrt{n} \ge \delta / \sqrt{\lambda} \ge 1$.
  
  Now, observe that if~\eqref{eq:M:too:large} fails to hold, then either $e(M) \le 4Cn^{3/2}$ or 
  \[
    e(M) \le 8 n \sqrt{e(E)} \le 8n \sqrt{m} \le 8\sqrt{\lambda} n^2,
  \]
  where in the second step we used the fact that $e(E) \le m$ (which holds if $\cP$ is not a leaf pregraph) and in the third we used our upper bound on $m$. In either case, it follows that $\ell \le 2e(M) / n \le \lambda_{\ref{thm:robust-stability-refined}} n$, since $\lambda = 2^{-8} \big( \lambda_{\ref{thm:robust-stability-refined}} / C \big)^2$ and $C \ge 1$.
  %  Since $n$ is sufficiently large and $\lambda$ was chosen sufficiently small, in either case it follows that $\ell \le \lambda_{\ref{thm:robust-stability-refined}} n$. 
  Hence $\cP$ satisfies condition~\ref{item:stability-cond} of Theorem~\ref{thm:robust-stability-refined} and we again obtain the desired hypergraph $\HH$. This completes the proof of the lemma.
\end{proof}

We next combine Theorem~\ref{thm:container} and Lemma~\ref{lem:supersat:corollary} to construct a rooted tree whose leaves correspond to a family of containers for the family $\FnmCf$.

\begin{lemma}
  \label{lem:containers:for:induced:C4free:graphs}
  For every $\eps > 0$, there exist positive constants $\delta$ and $\lambda$ such that the following holds. For every $n \in \N$ and $n^{4/3}(\log n)^4 \le m \le \lambda n^2$, there exists a collection $\CC$ of $e^{o(m)}$ pregraphs on $n$ vertices such that 
  \begin{enumerate}[label={(\alph*)}]
  \item
    \label{item:containers-C4-1}
    every $\cP \in \CC$ is a leaf pregraph with respect to $m$, $\eps$, and~$\delta$ and
  \item
    \label{item:containers-C4-2}
    every graph $G \in \FnmCf$ is contained in some $\cP \in \CC$. 
  \end{enumerate}
\end{lemma}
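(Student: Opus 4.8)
The plan is to build the rooted tree $\T$ of pregraphs sketched before the statement. Its root is the trivial pregraph $(E(K_n),\emptyset)$, a node $\cP$ is declared a leaf of $\T$ precisely when it is a leaf pregraph with respect to $m$, $\eps$, and a constant $\delta$ to be fixed (Definition~\ref{def:leaf}), and $\CC$ is taken to be the set of leaves of $\T$. Then property~\ref{item:containers-C4-1} holds by construction, and the two things to prove are that every $G\in\FnmCf$ is contained in some leaf, and that $|\CC| = e^{o(m)}$.

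For the refinement at an internal node $\cP=(M,E)$, first note that a graph $G\in\FnmCf$ contained in $\cP$ is recorded by the function $h_G\colon M\to\{0,1\}$ with $h_G^{-1}(1)=E(G)\cap M$, and that $h_G\in\FF(\HH_i^\cP)$ for every $i\in\{0,1,2\}$: a constraint of $\HH_i^\cP$ violated by $h_G$ would be a $4$-set of vertices inducing precisely a $4$-cycle in $G$, i.e.\ an induced $C_4$. Since $|h_G^{-1}(1)|=m-e(E)\le m$, we have $h_G\in\FF_{\le m}(\HH)$ for any $\HH\subseteq\HH_i^\cP$. Now apply Lemma~\ref{lem:supersat:corollary} to obtain $\ell$ with $\ell^2\ge r$, an index $i$, and $\HH\subseteq\HH_i^\cP$ obeying $\eqref{eq:supersat:corollary}$, and then apply Theorem~\ref{thm:container} to $\HH$ with $k_0=i$, $k_1=4$, with its parameters $m$ and $r$ set to our $m$ and to $r$ from~$\eqref{def:r}$, a constant $K$, and $b$ the least integer satisfying $\eqref{eq:container-Del}$. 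Verifying $\eqref{eq:container-Del}$ is routine: the codegrees $\Delta_{(0,1)},\Delta_{(0,2)}$ of $\HH$, and $\Delta_{(1,0)}$ when $i\ge1$, are exactly those controlled by $\eqref{eq:supersat:corollary}$; for the remaining pairs one uses the monotonicity $\Delta_{(\ell_0,\ell_1)}\le\min\{\Delta_{(\ell_0-1,\ell_1)},\Delta_{(\ell_0,\ell_1-1)}\}$ together with the elementary facts that in $\HH_i^\cP$ any three mixed edges of a good $C_4$ determine that cycle (so $\Delta_{(0,3)}$ and $\Delta_{(0,4)}$ are $O(1)$) and its two diagonals determine it (so $\Delta_{(2,0)}\le1$). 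One checks that this permits $b=O(n^{4/3})$ when $i=0$ and $b=O(r)$ when $i\ge1$; since $r\gg n^{4/3}$ under our hypothesis on $m$, in all cases $b=O(r)$. Each cylinder $V(\HH)=V_0\cup V_1\cup V_*$ produced by Theorem~\ref{thm:container} defines a child of $\cP$, namely the pregraph $\cP'=(M\setminus(V_0\cup V_1),\,E\cup V_1)$, and by Theorem~\ref{thm:container}\ref{item:container-1} and~\ref{item:container-3} every $G\in\FnmCf$ contained in $\cP$ is contained in the child $f(g(h_G))$. Iterating from the root yields property~\ref{item:containers-C4-2}.

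It remains to see that $\T$ is finite and has few leaves. By Theorem~\ref{thm:container}\ref{item:container-2}, each refinement either removes at least $\delta_{\ref{thm:container}}\,v(\HH)\ge 4\beta\delta_{\ref{thm:container}}\,\ell n$ edges from $M$ (note $v(\HH)\ge 4e(\HH)/\Delta_{(0,1)}(\HH)\ge 4\beta\ell n$), or adds at least $\delta_{\ref{thm:container}}\,r$ edges to $E$. Since $e(E)\le m$ at every non-leaf, the second case occurs $O(\log n)$ times along any root-to-leaf path; and in the supersaturation and stability cases of Lemma~\ref{lem:supersat:corollary} one has $v(\HH)=\Theta(\ell n)=\Theta(e(M))$, so the first case shrinks $e(M)$ by a constant factor, and hence occurs $O(\log n)$ times before one of the leaf conditions in~$\eqref{eq:leaf:ratio}$ or~$\eqref{eq:leaf:Mtoosmall:or:Etoobig}$ (or the $\eps$-almost-split condition) is met. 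Thus $\T$ has depth $O(\log n)$ and is finite. To bound $|\CC|$, encode each leaf by the sequence $\big(g_t=(S_0^{(t)},S_1^{(t)})\big)_t$ of signatures output by Theorem~\ref{thm:container} along its root path, where $S_0^{(t)}\in\binom{V(\HH_t)}{\le k_0 b_t}$ and $S_1^{(t)}\in\binom{V(\HH_t)}{\le k_1 b_t}$; by Theorem~\ref{thm:container}\ref{item:container-3} the sets $S_1^{(t)}$ are pairwise disjoint subsets of $E(K_n)$ (once an edge enters $E$ it leaves $M$ for good), and similarly for the sets $S_0^{(t)}$. One then bounds the number of such sequences, treating the $i=0$ levels — where $b_t=O(n^{4/3})$ and $n^{4/3}(\log n)^2=o(m)$ — separately from the $i\ge1$ levels, for which one must exploit the disjointness of the signatures together with $b_t=O(r)=O(m/\log n)$. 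This last estimate, which is exactly where the hypothesis $m\ge n^{4/3}(\log n)^4$ is spent (with polylogarithmic room to spare), is the main technical obstacle; carrying it out gives $|\CC|=e^{o(m)}$ and completes the proof.
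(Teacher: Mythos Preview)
Your overall scheme---build the tree, apply Lemma~\ref{lem:supersat:corollary} and then Theorem~\ref{thm:container} at each internal node, and bound the depth by $O(\log n)$---matches the paper and is correct. The gap is in the leaf count, and it comes from your choice of $b$. You take $b$ to be the least integer satisfying~\eqref{eq:container-Del} and bound it by $O(r)$ when $i\ge1$; with $b=\Theta(r)=\Theta(m/\log n)$ a single node already has $\binom{\binom{n}{2}}{\le 6b}=\exp(\Theta(m))$ possible signatures. You then try to repair this via disjointness of the $S_1^{(t)}$ along a root-to-leaf path, but Theorem~\ref{thm:container}\ref{item:container-3} only gives $S_1\subseteq h^{-1}(1)$, not $S_1\subseteq f(S)^{-1}(1)$: nothing in Theorem~\ref{thm:container} forces an element of $S_1^{(t)}$ to leave $M$ at step $t$, so the signatures can overlap. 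And even granting disjointness, the count you would obtain is of order $\binom{\binom{n}{2}}{m}=\exp(\omega(m))$, since $m\ll n^2$. The ``main technical obstacle'' you flag is therefore not merely unwritten---it does not go through.

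The paper avoids all of this by choosing $b$ small enough that the naive count works: it takes a single $b=\xi(n)\,m/(\log n)^2$ (for any $\xi(n)\to0$), uniformly in $i$, and verifies~\eqref{eq:container-Del} directly. The binding cases are $(\ell_0,\ell_1)\in\{(0,2),(1,1),(1,2)\}$, which require roughly $b\gtrsim n^2/\ell$; this is precisely where $\ell\ge\sqrt r$ and the hypothesis $m\ge n^{4/3}(\log n)^4$ are spent. With this $b$ each node has at most $\exp(O(b\log n))=\exp(o(m/\log n))$ children, and depth $O(\log n)$ gives $|\CC|=e^{o(m)}$ with no signature bookkeeping. (In fact a sharper reading of your own degree estimates shows $b=O(n^{4/3})$ suffices for every $i$, not just $i=0$, which would also give the count directly; your bound $b=O(r)$ for $i\ge1$ is simply too coarse.) One small aside: your parenthetical bounding $v(\HH)\ge4\beta\ell n$ is unnecessary, since $V(\HH)=M$, so $v(\HH)=e(M)$ and the first alternative in Theorem~\ref{thm:container}\ref{item:container-2} already removes a fixed fraction of $M$.
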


\begin{proof}
  We will construct a rooted tree $\T$ whose vertices are pregraphs on $n$ vertices that has the following properties: 
  \begin{enumerate}[label={(\textit{\roman*})}]
  \item
    the root of $\T$ is the complete pregraph with $M = E(K_n)$;
  \item
    if $G \in \FnmCf$ is contained in a pregraph $\cP \in V(\T)$ that is not a leaf of $\T$, then $G$ is contained in some child of $\cP$ in $\T$;
  \item
    the height of $\T$ is $O(\log n)$;
  \item
    the maximum degree of $\T$ is $\exp\big( o( m / \log n) \big)$; 
  \item
    every leaf of $\T$ is a leaf pregraph with respect to $m$, $\eps$, and~$\delta$. 
  \end{enumerate}
  It will then follow immediately that the leaves of $\T$ form a collection $\CC$ as required.
  
  To define the children of a vertex $\cP \in V(\T)$, we will apply Theorem~\ref{thm:container} to the hypergraph given by Lemma~\ref{lem:supersat:corollary}. To begin, let $\beta = \beta_{\ref{lem:supersat:corollary}}$, $\delta = \delta_{\ref{lem:supersat:corollary}}$, and $\lambda = \lambda_{\ref{lem:supersat:corollary}}$ be the constants given by Lemma~\ref{lem:supersat:corollary} applied with $\eps_{\ref{lem:supersat:corollary}} \leftarrow \eps$ and set $\xi(n) = (\log\log n)^{-1}$ (here we could use any function that tends to zero sufficiently slowly as $n \to \infty$). Note that, due to the form of the statement, we may assume throughout that $n$ is sufficiently large.

  Let $\cP \in V(\T)$ and suppose that $\cP$ is not a leaf pregraph with respect to $m$, $\eps$, and~$\delta$. By Lemma~\ref{lem:supersat:corollary}, there exist $\ell \in \N$ with $\ell^2 \ge r$ and an $(i,4)$-uniform hypergraph $\HH \subseteq \HH_i^\cP$, for some $i \in \{0,1,2\}$, satisfying the assertion of the lemma. We claim that we may apply Theorem~\ref{thm:container} to the hypergraph $\HH$ with
  \[
    K = \frac{5}{\beta} \qquad \text{and} \qquad b = \xi(n) \cdot \frac{m}{(\log n)^2},
  \]
  and $r$ as defined in~\eqref{def:r}. To do so, we need to verify that~\eqref{eq:container-Del} is satisfied for every pair $(\ell_0, \ell_1) \in \{0, \ldots, i\} \times \{0, \ldots, 4\}$ with $(\ell_0, \ell_1) \neq (0,0)$. 

  \begin{claim}
    For every $(\ell_0, \ell_1) \in \{0, \ldots, i\} \times \{0, \ldots, 4\}$ with $(\ell_0, \ell_1) \neq (0,0)$, we have 
    \begin{equation}
      \label{eq:container:condition:claim}
      \Delta_{(\ell_0, \ell_1)}(\HH) \le K \cdot \frac{b^{\ell_0+\ell_1-1}}{m^{\ell_0} \cdot v(\HH)^{\ell_1}} \cdot e(\HH) \cdot \left(\frac{m}{r}\right)^{\indicator[\ell_0 > 0]}.
    \end{equation}
  \end{claim}
  
  \begin{proof}[Proof of claim]
    Observe that the right-hand side of~\eqref{eq:container:condition:claim} decreases when $\ell_0$ or $\ell_1$ increase, since $b \le r \le m$ and $v(\HH) = e(M) \ge b$, the latter holding (with room to spare) since $\cP$ is not a leaf pregraph and $m \le \lambda n^2$. Assume first that $\ell_0 \ge 2$ or $\ell_1 \ge 3$ and note that in this case $\Delta_{(\ell_0, \ell_1)}(\HH) \le 1$. It thus suffices to show that the right-hand side of~\eqref{eq:container:condition:claim} is at least $1$. Since $v(\HH) \le 5\ell n$ and $e(\HH) \ge \beta \ell^4$, see~\eqref{eq:supersat:corollary}, we have 
    \[
      K \cdot \frac{b^5}{m^2 ( 5\ell n )^4} \cdot \beta \ell^4 \cdot \frac{m}{r}  \ge  \xi(n)^6 \cdot \frac{m^3}{n^4 (\log n)^9}  \ge  1,
    \]
    since $m \ge n^{4/3} (\log n)^4$ and $n$ is sufficiently large.

    Next, recall that $\Delta_{(0,1)}(\HH) \le \ell^3/n$, by~\eqref{eq:supersat:corollary}, and observe that if $(\ell_0, \ell_1) = (0,1)$, then the right-hand side of~\eqref{eq:container:condition:claim} is at least
    \[
      K \cdot \frac{e(\HH)}{v(\HH)} \ge K \cdot \frac{\beta \ell^4}{5\ell n} \ge \frac{\ell^3}{n},
    \]
    as required. Similarly, if $i \ge 1$ then $\Delta_{(1,0)}(\HH) \le \ell^2$, by~\eqref{eq:supersat:corollary}, and if $(\ell_0, \ell_1) = (1,0)$, then the right-hand side of~\eqref{eq:container:condition:claim} is at least
    \[
      K \cdot \frac{e(\HH)}{r}  \ge K \cdot \frac{\beta \ell^4}{r} \ge \ell^2,
    \]
    since $\ell^2 \ge r$. Finally, note that $\Delta_{(1,1)}(\HH) = \Delta_{(1,2)}(\HH) \le \Delta_{(0,2)}(\HH) \le \ell$, by~\eqref{eq:supersat:corollary}. In particular, if $(\ell_0, \ell_1) \in \{(1,1), (1, 2), (0,2)\}$, then the right-hand side of~\eqref{eq:container:condition:claim} is at least
    \[
      K \cdot \frac{b^2}{m \cdot ( 5\ell n )^2} \cdot \beta \ell^4 \cdot \frac{m}{r}  \ge \xi(n)^2 \cdot \frac{m \ell^2}{n^2 (\log n)^3} \ge  \ell,
    \]
    since $m\ell \ge m \sqrt{r} \ge m^{3/2} / \log n \ge n^2 (\log n)^4$.
  \end{proof}

  Observe that $\FnmCf \cap \cP \subseteq \FF_{\le m}(\HH)$, since each $G \in \FnmCf \cap \cP$ has $m$ edges and each constraint in $\HH$ corresponds to an induced copy of $C_4$. Therefore, by Theorem~\ref{thm:container}, there exists a collection $N(\cP)$ of at most
  \[
    \binom{\binom{n}{2}}{2b} \binom{\binom{n}{2}}{4b} \le \exp\big( 12b \cdot \log n \big) = \exp\left( o\left( \frac{m}{\log n} \right) \right)
  \]
  sub-pregraphs\footnote{This means that $M(\Q) \subseteq M(\cP)$ and $E(\cP) \subseteq E(\Q) \subseteq M(\cP) \cup E(\cP)$.} $\Q$ of $\cP$ with the following properties:
  \begin{enumerate}[label={(\textit{\alph*}$'$)}]
  \item
    \label{item:children-of-P-1}
    if $\Q \in N(\cP)$, then either $M(\Q) \le (1 - c) M(\cP)$ or $E(\Q) \ge E(\cP) + cr$, and
  \item
    \label{item:children-of-P-2}
    each $G \in \FnmCf \cap \cP$ is contained in some $\Q \in N(\cP)$, 
  \end{enumerate}
  where $c = 2^{-42} K^{-1}$. We make $N(\cP)$ the set of children of $\cP$ in $\T$, observing that the degree of $\cP$ in $\T$ is $\exp\big(o(m/\log n)\big)$. By~\ref{item:children-of-P-1} and Definition~\ref{def:leaf}, the height of the tree $\T$ obtained in this way is at most
  \[
    \frac{1}{c} \log \binom{n}{2} + \frac{m}{cr} \le 2^{42} K \left( 2 \log n + 2^{13} \log n \right) = O\big( \log n \big).
  \]
  It follows that the total number of leaves of $\T$ is $e^{o(m)}$ and hence (by the definition of $\T$ and property~\ref{item:children-of-P-2}) the collection of leaves of $\T$ forms a family $\CC$ as required.
\end{proof}

To deduce Theorem~\ref{thm:containers:for:C4free}, we will show that the containers $\cP \in \CC$ that are not $\eps$-almost split pregraphs contain only an exponentially small proportion of the family $\FnmCf$

\begin{proof}[Proof of Theorem~\ref{thm:containers:for:C4free}]
  Let $\delta_{\ref{lem:containers:for:induced:C4free:graphs}}$, $\lambda_{\ref{lem:containers:for:induced:C4free:graphs}}$, and $\CC_{\ref{lem:containers:for:induced:C4free:graphs}}$ be (respectively) the constants and the family of containers given by Lemma~\ref{lem:containers:for:induced:C4free:graphs} applied with $\eps_{\ref{lem:containers:for:induced:C4free:graphs}} \leftarrow \eps$, let $\lambda_{\ref{prop:max-Nnm-bounds}}$ be the constant given by Proposition~\ref{prop:max-Nnm-bounds}, and set $\delta = \delta_{\ref{lem:containers:for:induced:C4free:graphs}}$, $\lambda = \min\{ \lambda_{\ref{prop:max-Nnm-bounds}}, \lambda_{\ref{lem:containers:for:induced:C4free:graphs}}, 2^{-8}\delta^2 \}$, and $\CC' = \CC_{\ref{lem:containers:for:induced:C4free:graphs}}$. Note that we may assume (without loss of generality) that $\delta \le 1$ and recall that $|\CC'| = e^{o(m)}$. We claim that the collection
  \[
    \CC := \big\{ \cP \in \CC' \scolon \cP \textup{ is an $\eps$-almost split pregraph} \big\}
  \]
  has the property that all but at most $e^{-\lambda m} \cdot |\FnmCf|$ of the graphs in $\FnmCf$ are contained in some $\cP \in \CC$ and therefore $\CC$ is the required family of `almost' containers.

  To prove this, we will give an upper bound on the number of graphs in $\FnmCf$ that belong to a single container $\cP \in \CC' \setminus \CC$. Recall that every $\cP \in \CC'$ is a leaf pregraph with respect to $m$, $\eps$, and~$\delta$ and therefore we may assume that $\cP = (M,E)$ satisfies either~\eqref{eq:leaf:ratio} or~\eqref{eq:leaf:Mtoosmall:or:Etoobig}.

  \medskip
  \noindent
  \textbf{Case 1.} Either $e(E) > m$ or $e(M) < 2^{-4} \sqrt{ n^2 m / \log (n^2/m) }$. 
  
  \smallskip

  We may assume that $e(E) \le m$, as otherwise $\FnmCf \cap \cP$ is empty. Therefore
  \[
    |\FnmCf \cap \cP| \le \binom{e(M)}{m-e(E)} \le \binom{e(M) + e(E)}{m} \le \binom{2^{-4} \sqrt{ n^2 m / \log (n^2/m) } + m}{m}
  \]
  and hence, since $m \le 2^{-4} \sqrt{ n^2 m / \log (n^2/m) }$ for every $m \le \lambda n^2$, we obtain
  \[
    |\FnmCf \cap \cP| \le \binom{2^{-3} \sqrt{ n^2 m / \log (n^2/m) }}{m}.
  \]
  We claim that for some well-chosen $\ell \in \N$,
  \begin{equation}
    \label{eq:NP-case-1}
    \binom{2^{-3} \sqrt{ n^2 m / \log (n^2/m) }}{m} \le 2^{-m} \cdot \Nnm(\ell) \le 2^{-m} \cdot |\FnmCf|,
  \end{equation}
  where $\Nnm(\ell)$ (cf.~Section~\ref{sec:number-split-graphs}) denotes the number of graphs with vertex set $\{1, \dotsc, n\}$ and precisely $m$ edges that are complete on the set $\{1, \dotsc, \ell\}$ and empty on its complement. To prove~\eqref{eq:NP-case-1}, note first that 
  \begin{equation}
    \label{eq:binomials}
    \binom{a}{c} \ge \bigg( \frac{a}{b} \bigg)^c \binom{b}{c} \qquad \textup{and} \qquad \binom{a}{b} \le \left(\frac{a}{b-c}\right)^c \binom{a}{b-c}
  \end{equation}
  for every $a \ge b \ge c \ge 0$ and choose $\ell \in \N$ so that 
  \[
    \sqrt{\frac{3m}{2\log(n^2/m)}}  \le  \ell  \le  \sqrt{\frac{2m}{\log(n^2/m)}},
  \]
  so, in particular, $\ell(n-\ell) \ge \sqrt{ n^2 m / \log (n^2/m) }$. It follows that
  \begin{equation}
    \label{eq:NP:binomials:calc}
    \binom{2^{-3} \sqrt{ n^2 m / \log (n^2/m) }}{m} \le 2^{-3m} \left(\frac{\ell(n-\ell)}{m - \binom{\ell}{2}}\right)^{\binom{\ell}{2}} \binom{\ell(n-\ell)}{m - \binom{\ell}{2}}
  \end{equation}
  and, since ${\ell \choose 2} \le m / \log(n^2/m)$ and $m \le \lambda n^2$, the right-hand side of~\eqref{eq:NP:binomials:calc} is at most  
  \[
    2^{-3m} \left(\frac{n^2}{m}\right)^{\binom{\ell}{2}} \binom{\ell(n-\ell)}{m - \binom{\ell}{2}} \le 2^{-3m} \cdot e^m \cdot \Nnm(\ell) \le 2^{-m} \cdot \Nnm(\ell),
  \]
  as claimed. It follows that there are at most $2^{-m} \cdot |\FnmCf |$ graphs in $\FnmCf \cap \cP$.

%\pagebreak
  \medskip
  \noindent
  \textbf{Case 2.} There exists $\ell \in \N$ such that $e(E) \ge \binom{\ell}{2}$ and $e(M) \le (1-\delta)\ell n$.

  \smallskip

  We may again assume that $e(E) \le m$, as otherwise $\FnmCf \cap \cP$ is empty. Since ${\ell \choose 2} \le m \le \lambda n^2 \le 2^{-8} \delta^2 n^2$, it follows (using~\eqref{eq:binomials}) that
  \begin{align*}
    |\FnmCf \cap \cP| &  \le  \binom{(1-\delta)\ell n}{m - e(E)} \le \binom{(1-\delta/2)\ell(n-\ell)}{m-e(E)} \\
                     &  \le  \max\left\{4^m, \left(1-\frac{\delta}{2}\right)^{m - \binom{\ell}{2}} \binom{\ell(n-\ell)}{m-\binom{\ell}{2}}\right\},
  \end{align*}
  where the bound $4^m$ corresponds to the case $(1-\delta/2)\ell(n-\ell) \le 2m$. However, since $\lambda \le \lambda_{\ref{prop:max-Nnm-bounds}}$, it follows from Proposition~\ref{prop:max-Nnm-bounds} that
  \[
    4^m < e^{-m/5} \cdot 5^m \le e^{-m/5} \cdot \Nnm(\ellnm) \le e^{-m/5} \cdot |\FnmCf |,
  \]
  where $\ellnm$ is defined by $\ellnm = \big( m / \log(\ellnm n/m) \big)^{1/2}$, see Proposition~\ref{prop:max-Nnm-bounds}. It will therefore suffice to bound the second term in the maximum above.

  To do so, we will consider the cases $\ell \le 2\ellnm$ and $\ell > 2\ellnm$ separately. If $\ell > 2\ellnm$, then it follows from Proposition~\ref{prop:max-Nnm-bounds} that
  \[
    \binom{\ell(n-\ell)}{m-\binom{\ell}{2}} = \Nnm(\ell) \le e^{-m/15} \cdot \max_\ell \Nnm(\ell) \le e^{-m/15} \cdot |\FnmCf |.
  \]
  On the other hand, if $\ell \le 2\ellnm$, then we will show that ${\ell \choose 2} \le m/2$. Indeed,
  \[
    \binom{\ell}{2}  \le  2\ellnm^2  =  \frac{2m}{\log(\ellnm n/m)}  =  \frac{4m}{\log(n^2/m) - \log\log(\ellnm n/m)}  \le  \frac{m}{2},
  \]
  since $\ellnm \le n$, $m \le \lambda n^2$, and $\lambda \le 2^{-8}$. It follows that
  \[
    \left(1-\frac{\delta}{2}\right)^{m - \binom{\ell}{2}} \binom{\ell(n-\ell)}{m-\binom{\ell}{2}} \le e^{-\delta m / 4} \cdot \Nnm(\ell) \le e^{-\delta m / 4
} \cdot |\FnmCf |.
  \]
  We have thus shown that there are at most $e^{-\delta m / 4} \cdot |\FnmCf|$ graphs in $\FnmCf \cap \cP$
  
  \medskip

  Summing over the $e^{o(m)}$ pregraphs in $\CC' \setminus \CC$, it follows that at most $e^{-\lambda m} \cdot |\FnmCf |$ of the graphs in $\FnmCf$ are contained in some $\cP \in \CC'$ that is not an $\eps$-almost split pregraph. Since, by property~\ref{item:containers-C4-2} of Lemma~\ref{lem:containers:for:induced:C4free:graphs}, the remaining graphs in $\FnmCf$ are contained in some $\cP \in \CC$, the theorem follows.
\end{proof}

We are finally ready to prove part~\ref{item:C4-free-almost-split} of Theorem~\ref{thm:asymC4free}. To deduce from Theorem~\ref{thm:containers:for:C4free} that almost all graphs in $\FnmCf$ have the claimed structure, it only remains to bound the number of such graphs that are not $\eps$-close to a split graph but are contained in a pregraph that is $\eps'$-close to a split pregraph (for some well-chosen $\eps'$).  

\begin{proof}[{Proof of part~\ref{item:C4-free-almost-split} of Theorem~\ref{thm:asymC4free}}]
Assume (without loss of generality) that $\eps > 0$ is sufficiently small, and set $\delta = \min\big\{ \lambda_{\ref{thm:containers:for:C4free}}, \eps^4 \big\}$, where $\lambda_{\ref{thm:containers:for:C4free}}$ is the constant obtained by applying Theorem~\ref{thm:containers:for:C4free} with $\eps_{\ref{lem:containers:for:induced:C4free:graphs}} \leftarrow \eps^3$. Now, given $n^{4/3} (\log n)^4 \le m \le \delta n^2$, it follows from Theorem~\ref{thm:containers:for:C4free} that there exists a collection $\CC$ of $\eps^3$-almost split pregraphs on $n$ vertices with $|\CC| = e^{o(m)}$ and such that at most $e^{-\delta m} \cdot |\FnmCf |$ graphs in $\FnmCf$ are not contained in any $\cP \in \CC$.

  We claim that, for each $\cP \in \CC$, 
  \begin{equation}
    \label{eq:main:proof:aim}
    \big| \big\{ G \in \FnmCf \cap \cP \scolon G \textup{ is not $\eps$-close to a split graph} \big\} \big| \le e^{-\eps m} \cdot |\FnmCf |.
  \end{equation}
  Let $V(K_n) = U \cup W$ be a partition witnessing the fact that $\cP = (M,E)$ is an $\eps^3$-almost split pregraph and recall that 
  \begin{equation}
    \label{eq:almost:split:def:recall}
    e(E) \le \binom{|U|}{2}, \qquad e_E(U) \ge \big( 1 - \eps^3 \big)\binom{|U|}{2}, \qquad \text{and} \qquad e_M(W) \le 7 \eps^{3/2} |U| n.
  \end{equation}
  Let $\ell$ be the largest integer such that $e_E(U) \ge \binom{\ell}{2}$ and note that $\ell \le |U| \le (1+\eps^3)(\ell+2)$. As usual, we may assume that $e(E) \le m$, since otherwise $\FnmCf \cap \cP$ is empty; note that therefore $\ell \le \sqrt{3m} \le \sqrt{3\delta} \cdot n$. We may also assume that $\ell \ge 2^{-5} \sqrt{m / \log (n^2/m)}$, since otherwise 
  \[
    e(M)  \le  \eps^3 \binom{|U|}{2} + |U|(n-|U|) + 7 \eps^{3/2} |U| n  \le  2\ell n  \le  2^{-4} \sqrt{ \frac{n^2 m}{\log (n^2/m)} },
  \]
  and in Case~1 of the proof of Theorem~\ref{thm:containers:for:C4free} we showed that if this is the case then $|\FnmCf \cap \cP| \le 2^{-m} \cdot |\FnmCf |$, as required. It follows that
  \[
    m  \le  2^{10} \ell^2 \log \frac{n^2}{m}  \le  2^{10} \ell n \cdot \frac{\ell}{n} \log \frac{3n^2}{\ell^2}  \le  2^{10} \ell n \cdot \sqrt{3\delta} \log \frac{1}{\delta} \le \eps^{3/2} \ell n,
  \]
  where the first inequality follows from $\ell \ge 2^{-5} \sqrt{m / \log (n^2/m)}$, the second since $\binom{\ell}{2} \le e(E) \le m$, the third since $\ell \le \sqrt{3\delta} \cdot n$, and the fourth since $\delta \le \eps^4$.

  Now, observe that, by~\eqref{eq:almost:split:def:recall}, if $G \in \FnmCf \cap \cP$ is not $\eps$-close to a split graph, then $G$ has at least $\eps m$ edges in the set $W$. It follows that the left-hand side of~\eqref{eq:main:proof:aim} is at most
  \[
    \sum_{s \ge \eps m} \binom{e_M(U) + e_M(U, W)}{m - s - e_E(U) - e_E(U, W)} \binom{e_M(W)}{s - e_E(W)}.
  \]
  Noting that $e_M(U) \le 2\eps^3 {\ell \choose 2}$, $e_M(U, W) \le (1 + 2\eps^3) \ell (n - \ell)$, and $e_M(W) \le 8 \eps^{3/2} \ell n$, this is in turn at most
  \begin{equation}
    \label{eq:main:proof:sum}
    \sum_{s \ge \eps m} \binom{\ell (n - \ell) + 2\eps^3 \ell n}{m - s - e_E(U) - e_E(U, W)} \binom{8 \eps^{3/2} \ell n}{s - e_E(W)}.
  \end{equation}
  To bound this sum, note first that the inequalities $m \le \eps^{3/2} \ell n$ and $\ell < \sqrt{3\delta} n < n/2$ imply that $\ell(n-\ell) \ge 2m$, and hence 
  \begin{equation}
    \label{eq:main:proof:firstterm}
    \binom{\ell (n - \ell) + 2\eps^3 \ell n}{m - s - e_E(U) - e_E(U, W)}  \le  \binom{\ell (n - \ell) + 2\eps^3 \ell n}{m - s - \binom{\ell}{2}},
  \end{equation}
  since $e_E(U) \ge \binom{\ell}{2}$. Now, using the inequalities $\binom{a+c}{b} \le \left(\frac{a+c-b}{a-b}\right)^b \binom{a}{b}$ and $\binom{a}{b-c} \le \left(\frac{b}{a-b}\right)^c \binom{a}{b}$ and the bounds $\ell(n-\ell) \ge 2m$ and $\ell < \sqrt{3\delta} \cdot n \le n/3$, we can bound the right-hand side of~\eqref{eq:main:proof:firstterm} from above by
  \[
    \left( 1 + \frac{4\eps^3 \ell n}{\ell(n-\ell)} \right)^m \left(\frac{2m}{\ell(n-\ell)}\right)^s \binom{\ell(n-\ell)}{m-\binom{\ell}{2}} \le \big( 1 + 6\eps^3 \big)^m \left(\frac{3m}{\ell n} \right)^s \Nnm(\ell).
  \]
  Observe also that
  \[
    \binom{8 \eps^{3/2} \ell n}{s - e_E(W)} \le \binom{8 \eps^{3/2} \ell n}{s} \le \left( \frac{8 e \eps^{3/2} \ell n}{s} \right)^s,
  \]
  since $s = e(G[W]) \le m \le \eps^{3/2} \ell n$. It follows that~\eqref{eq:main:proof:sum} is at most
  \[
    \sum_{s \ge \eps m} \big( 1 + 6\eps^3 \big)^m \left( \frac{8 e \eps^{3/2} \ell n}{s} \right)^s \left(\frac{3m}{\ell n} \right)^s \Nnm(\ell),
  \]
  which is easily bounded from above by
  \[
    e^{6\eps^3 m} \Nnm(\ell) \sum_{s \ge \eps m} \left( \frac{24 e \eps^{3/2} m}{s} \right)^s  \le  m \big( 24 e \eps^{1/2} \big)^{\eps m} e^{6\eps^3 m} \Nnm(\ell)   \le  e^{-\eps m} \Nnm(\ell),
  \]
  proving~\eqref{eq:main:proof:aim}. Since $|\CC| = e^{o(m)}$ and at most $e^{-\delta m} \cdot |\FnmCf |$ graphs in $\FnmCf$ are not contained in any $\cP \in \CC$, it follows that almost all graphs in $\FnmCf$ are $\eps$-close to a split graph, as required. This completes the proof of Theorem~\ref{thm:asymC4free}.
\end{proof}

It only remains to prove Corollary~\ref{cor:GnpC4free}. We will in fact use Theorem~\ref{thm:containers:for:C4free}, together with Theorem~\ref{thm:asymC4free} and Proposition~\ref{prop:lower-bound-strong}, to prove the following slightly stronger (and more technical) statement. 

%\pagebreak

\begin{cor}\label{cor:GnpC4free:technical}
  For every $\eps > 0$, there exists $\delta > 0$ such that the following holds a.a.s.\ for $G \sim \Gnpind(C_4)$:
  \begin{enumerate}[label={(\alph*)}]
  \item
    \label{item:Gnp-C4-free-bottom}
    If $n^{-1} \ll p \le \delta n^{-2/3}$, then $G$ is $\eps$-quasirandom, and
    \begin{equation}\label{eq:cor:GnpC4free:a}
      e(G) \in \big( 1 \pm \eps \big) p \binom{n}{2}.
    \end{equation}
  \item
    \label{item:Gnp-C4-free-middle}
    If $\delta n^{-2/3} \le p \le n^{-1/3} (\log n)^4$, then 
    \[
      \frac{\delta n^{4/3}}{4} \le e(G) \le n^{4/3} (\log n)^{8}.
    \]
  \item
    \label{item:Gnp-C4-free-top}
    If $n^{-1/3} (\log n)^4 \le p \ll 1$, then $G$ is $\eps$-close to a split graph and
    \[
      e(G) = \Theta\left( \frac{p^2 n^2}{\log(1/p)} \right).
    \]
  \end{enumerate}
\end{cor}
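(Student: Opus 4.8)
The plan is to reduce the whole statement to an analysis of the \emph{weight function}
\[
  W(m) \;=\; |\FnmCf|\cdot p^m(1-p)^{\binom n2 - m}, \qquad 0 \le m \le \tbinom n2,
\]
since $\Pr\big(e(\Gnpind(C_4))=m\big)=W(m)/Z$ with $Z=\sum_{m'}W(m')$, and, more generally, the conditional probability that $G\sim\Gnpind(C_4)$ has a given property equals the $W$-weighted average of the corresponding conditional probabilities for a uniformly random member of $\FnmCf$. In each of the three cases I would (i) locate, up to a constant factor, the value $m^\circ$ at which $\log W$ is maximised; (ii) show that $\log W$ drops by $\Omega(m)$ once $m$ leaves a constant-factor window around $m^\circ$, so that $e(\Gnpind(C_4))$ is a.a.s.\ of order $m^\circ$; and (iii) apply the relevant part of Theorem~\ref{thm:asymC4free}, conditioned on $e(G)=m$ with $m$ in that window, to deduce the structural statement. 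Throughout write $m^*:=p\binom n2$ for the typical edge count of $G(n,p)$; note $m^*\gg n$, that $p<\lambda$ for large $n$ (so $m^*<\lambda n^2$), and that $W(m)\le\Pr(\Bin(\binom n2,p)=m)$ always, whence the tail $\sum_{m>\lambda n^2}W(m)\le\Pr(\Bin(\binom n2,p)\ge\lambda n^2)= e^{-\Omega(n^2\log n)}$ by a Chernoff bound (the deviation ratio $\lambda n^2/m^*$ tends to infinity since $m^*\le\tfrac12 n^{5/3}(\log n)^4$ in parts~\ref{item:Gnp-C4-free-bottom} and~\ref{item:Gnp-C4-free-middle}, and is even smaller in the range of~\ref{item:Gnp-C4-free-bottom}).

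For part~\ref{item:Gnp-C4-free-bottom}, $p\le\delta n^{-2/3}$ forces $m^*\le\delta' n^{4/3}$ with $\delta'$ arbitrarily small, and Proposition~\ref{prop:lower-bound-strong} gives $|\FnmCf| = e^{\pm o(m)}\binom{\binom n2}{m}$ throughout $m\le\delta'n^{4/3}$, so $W(m)=e^{\pm o(m)}\Pr(\Bin(\binom n2,p)=m)$ on a window around $m^*$. Combining this with the trivial bound $W(m)\le\Pr(\Bin(\binom n2,p)=m)$ for the remaining $m$ and with Chernoff's inequality yields $Z=e^{\pm o(m^*)}$ and $\sum_{|m-m^*|>\eps m^*}W(m)\le e^{-\Omega(\eps^2m^*)}$, so $e(\Gnpind(C_4))\in(1\pm\eps)m^*$ a.a.s.\ (which is~\eqref{eq:cor:GnpC4free:a}); conditioning on $e(G)=m\in(1\pm\eps)m^*\subseteq(n,\delta'n^{4/3}]$ and invoking Theorem~\ref{thm:asymC4free}\ref{item:C4-free-quasirandom} gives $\eps$-quasirandomness.

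For part~\ref{item:Gnp-C4-free-top} the conditioning is essential, since $m^*\gg n^{4/3}$. The analytic heart is the estimate $\max_\ell\Nnm(\ell)\le|\FnmCf|\le e^{o(m)}\max_\ell\Nnm(\ell)$ for $n^{4/3}(\log n)^4\le m\le\lambda n^2$: the lower bound is immediate from $\Snm\subseteq\FnmCf$, and the upper bound comes from Theorem~\ref{thm:containers:for:C4free} together with the fact that an $\eps$-almost split container whose clique-part has size $\ell$ contains at most $e^{O(\eps m)}\Nnm(\ell)\le e^{o(m)}\max_\ell\Nnm(\ell)$ graphs with $m$ edges (containers with an atypically large clique-part contribute $e^{-\Omega(m\log n)}\max_\ell\Nnm(\ell)$ and are negligible). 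Feeding this, the asymptotics $\log\Nnm(\ellnm)=\tfrac m2\log\!\big(n^2/(m\log(n^2/m))\big)(1+o(1))$ extracted from Proposition~\ref{prop:max-Nnm-bounds} and its proof, and $\log(1-p)=-p(1+o(1))$, into $\log W$, a first-order analysis shows $\log W$ is maximised at $m_p:=\Theta\big(p^2n^2/\log(1/p)\big)$ --- the $\log(1/p)$ denominator being exactly what the $\log(n^2/m)$ inside the extremal count produces --- while a concavity estimate ($\tfrac{d^2}{dm^2}\log W\asymp -1/m$ in the structured regime) shows $\log W$ drops by $\Omega(m)$ outside $[m_p/K,Km_p]$ for $K=K(\eps)$ large; values $m<n^{4/3}(\log n)^4$, where the container estimate is unavailable, are absorbed using only $W(m)\le\Pr(\Bin(\binom n2,p)=m)$. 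Since $n^{4/3}(\log n)^4\le m_p/K\le Km_p\le\lambda n^2$ in this $p$-range, this gives $e(\Gnpind(C_4))=\Theta(p^2n^2/\log(1/p))$ a.a.s., and Theorem~\ref{thm:asymC4free}\ref{item:C4-free-almost-split} then gives $\eps$-closeness to a split graph.

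Part~\ref{item:Gnp-C4-free-middle} is a two-sided squeeze combining the previous two estimates; since the target window $[\delta n^{4/3}/4,\,n^{4/3}(\log n)^8]$ need only be correct up to polylogarithmic factors, one need not locate the maximum of $W$ (which may sit in the awkward transition range $m=\Theta(n^{4/3}(\log n)^{O(1)})$). For the upper bound, the container estimate gives $W(m)\le e^{o(m)}\big(Cnp/\sqrt{m\log(n^2/m)}\big)^m(1-p)^{\binom n2-m}$ for $m\le\lambda n^2$, and when $m\ge n^{4/3}(\log n)^8$ and $p\le n^{-1/3}(\log n)^4$ the base $Cnp/\sqrt{m\log(n^2/m)}$ is $o(1)$; for the lower bound, $W(m)\le\Pr(\Bin(\binom n2,p)=m)$ and $\delta n^{4/3}/4<\tfrac34 m^*$, so $\sum_{m<\delta n^{4/3}/4}W(m)\le\Pr(\Bin(\binom n2,p)\le\tfrac34 m^*)$. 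Each of these sums must be divided by a lower bound on $Z$, obtained by evaluating $W$ at a well-chosen $m_0\in[\delta n^{4/3}/2,\,n^{4/3}(\log n)^8]$ --- at $m_0=\lfloor m^*\rfloor$ via Proposition~\ref{prop:lower-bound-strong} when $m^*\le n^{4/3}(\log n)^8$, at $m_0=\lfloor\delta'n^{4/3}\rfloor$ (boundary of the small-$m$ regime) or $m_0\approx m_p$ in the structured regime otherwise. I expect this last step to be the main obstacle: every $W(m)$ with $m=O(n^{4/3}(\log n)^8)$ carries the factor $(1-p)^{\binom n2-m}=e^{-m^*+o(m^*)}$, and a naive bound loses because this $e^{-m^*}$ is not visibly cancelled; one has to expand $\log(1-p)$ to sufficient order and track this factor on both sides so that the dominant $e^{-m^*}$ disappears, after which the comparison $\sum_{m\notin[\delta n^{4/3}/4,\,n^{4/3}(\log n)^8]}W(m)=o(Z)$ reduces to a clean Chernoff-type estimate for the \emph{unconditioned} binomial distribution (plus, in the structured regime, the positive surplus $\log W(m_0)+m^*=\Omega(m_0\log\log(1/p))$ supplied by the extremal split-graph count). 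This yields $\delta n^{4/3}/4\le e(\Gnpind(C_4))\le n^{4/3}(\log n)^8$ a.a.s., completing the proof of Corollary~\ref{cor:GnpC4free:technical}.
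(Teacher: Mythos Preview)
Your approach is essentially the paper's: both analyse the weight function $\varphi(m)\propto W(m)$ via Proposition~\ref{prop:lower-bound-strong}, Theorem~\ref{thm:containers:for:C4free}, and Proposition~\ref{prop:max-Nnm-bounds}, and then transfer structure from Theorem~\ref{thm:asymC4free}. The paper's execution is a little more direct---rather than doing first- and second-order analysis of $\log W$, it fixes a single reference value $m$ in each regime and bounds every ratio $\varphi(m')/\varphi(m)$ explicitly using three packaged claims---and this dissolves the ``obstacle'' you flag in part~\ref{item:Gnp-C4-free-middle}: the factor $(1-p)^{\binom n2}$ is constant in $m$, so in any ratio $W(m')/W(m_0)$ the $e^{-m^*}$ cancels automatically and no careful tracking is required.
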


\begin{proof}
  Let $G \sim \Gnpind(C_4)$ be the (random) graph obtained by conditioning $G(n,p)$ to contain no induced $4$-cycle and let $E_m$ denote the event that $G$ has exactly $m$ edges. Observe that $\Pr(E_m) \propto \varphi(m)$, where
  \[
    \varphi(m) = |\FnmCf| \cdot \left(\frac{p}{1-p}\right)^m,
  \]
  and that the distribution of $G$ conditioned on $E_m$ is uniform on $\FnmCf$. Our main task will be to find the values of $m$ for which $\varphi(m)$ is the largest. In order to show that $\Pr\left(\bigcup_{m \in R} E_m\right) \ge 1-\alpha$ for some $R \subseteq \{0, \dotsc, \binom{n}{2}\}$ and $\alpha \ge 0$, it is enough to prove that for some $m \in R$ and all $m' \not\in R$, we have $\alpha \cdot \varphi(m) \ge n^2 \cdot \varphi(m')$. We first observe the following straightforward lower and upper bounds on $\varphi(m)$.
  
  \begin{claimGnp}
    \label{claim:Gnp-trivial}
    There exists an absolute constant $c$ such that for every $m$,
    \[
      \left(\frac{cnp}{\sqrt{m \log \left(n^2/m\right)}}\right)^m \le \varphi(m) \le \left(\frac{en^2p}{2m(1-p)}\right)^m.
    \]
  \end{claimGnp}
  \begin{proof}
    The upper bound follows by noting that
    \[
      |\FnmCf| \le \binom{\binom{n}{2}}{m} \le \left(\frac{e\binom{n}{2}}{m}\right)^m \le \left(\frac{en^2}{2m}\right)^m.
    \]
    The lower bound follows by observing that
    \begin{align*}
      |\FnmCf| & \, \ge |\Snm| \ge \Nnm\left(\sqrt{m/\log\left(n^2/m\right)}\right) \\
     & \, \ge \left(\frac{n\sqrt{m}}{2m\sqrt{\log\left(n^2/m\right)}}\right)^{(1-1/\log(n^2/m)) \cdot m} \ge \left(\frac{cn}{\sqrt{m \log \left(n^2/m\right)}}\right)^m.
    \end{align*}
  \end{proof}

  We next choose the constant $\delta = \delta(\eps) > 0$ as follows. First, note that the function $x \mapsto (e/x)^x$ is strictly increasing for $x \in (0,1]$ and strictly decreasing for $x \in [1, \infty)$, so there is a $\gamma > 0$ such that $(e/x)^x \le e-2\gamma$ whenever $x \not\in [1-\eps/2, 1+\eps/2]$. Fix such a $\gamma$ and let $\delta_{\ref{prop:lower-bound-strong}}$ be the constant given by applying Proposition~\ref{prop:lower-bound-strong} with $\gamma_{\ref{prop:lower-bound-strong}} \leftarrow \gamma/4$. Moreover, let $\delta_{\ref{thm:asymC4free}}$ be the constant given by Theorem~\ref{thm:asymC4free} applied with $\eps_{\ref{thm:asymC4free}} \leftarrow \eps$ and set $\delta := \min\{ \delta_{\ref{thm:asymC4free}}, \delta_{\ref{prop:lower-bound-strong}} \}$. The following claim is an immediate consequence of Proposition~\ref{prop:lower-bound-strong}.

  \begin{claimGnp}
    \label{claim:Gnp-deletion}
    If $1 \ll m \le \delta n^{4/3}$, then
    \[
      \varphi(m) \ge \left(\frac{(e-\gamma)n^2p}{2m(1-p)}\right)^m
    \]
    for all sufficiently large $n \in \N$.
  \end{claimGnp}

  \begin{proof}
    By Proposition~\ref{prop:lower-bound-strong} and our choice of $\delta$, we have
    \[
      \left| \FnmCf \right| \ge e^{-\gamma m / 4} \cdot \binom{\binom{n}{2}}{m} \ge e^{-\gamma m / 4} \cdot \left(\frac{(e - \gamma/4)n^2}{2m}\right)^m
    \]
    for all sufficiently large $n$, where in the second inequality we used the fact that $1 \ll m \ll n^2$. Since $e^{-\gamma / 4} \cdot (e - \gamma/4) \ge (1-\gamma/4)(e-\gamma/4) > e - \gamma$, the claimed bound follows.
  \end{proof}

%\pagebreak

  Finally, Theorem~\ref{thm:containers:for:C4free} gives the following upper bound on $\varphi(m)$.

  \begin{claimGnp}
    \label{claim:Gnp-containers}
    There exists an absolute constant $C$ such that if $n^{4/3} (\log n)^4 \le m \ll n^2$, then
    \[
      \varphi(m) \le \left(\frac{Cnp}{\sqrt{m \log \left(n^2/m\right)}}\right)^m.
    \]
  \end{claimGnp}
  \begin{proof}
    Suppose that $n^{4/3}(\log n)^4 \le m \ll n^2$. By Theorem~\ref{thm:containers:for:C4free}, almost all graphs in $\FnmCf$ are contained in one of at most $e^{o(m)}$ pregraphs that are $(1/4)$-almost split. A given pregraph $\cP = (M, E)$ contains exactly $\binom{e(M)}{m-e(E)}$ graphs with $m$ edges and if $\cP$ is $(1/4)$-almost split, then
    \[
      \frac{3}{4}\binom{u}{2} \le e(E) \le \binom{u}{2} \qquad \text{and} \qquad e(M) \le \frac{9}{2}un
    \]
    for some integer $u$. Let $\ell$ be the smallest integer such that $e(E) \le \binom{\ell}{2}$. It follows that $\binom{\ell-1}{2} \le e(E) \le \binom{\ell}{2}$ and $e(M) \le 6\ell n$. In particular,
    \[
      \binom{e(M)}{m-e(E)} \le \max_\ell \binom{6\ell n}{m-\binom{\ell}{2}} \cdot (6\ell n)^\ell \le 7^m \cdot \max_\ell \binom{\ell(n-\ell)}{m - \binom{\ell}{2}}.
    \]
    By Proposition~\ref{prop:max-Nnm-bounds}, the maximum above is attained at some $\ell$ satisfying $\ellnm/2 \le \ell \le 2\ellnm$, that is, $\ell = \Theta\left(\sqrt{m/\log\left(n^2/m\right)}\right)$. It follows that for some absolute constants $C$ and $C'$,
    \[
      |\FnmCf| \le e^{C'm} \cdot \Nnm\left(\sqrt{m/\log\left(n^2/m\right)}\right) \le \left(\frac{Cn}{2\sqrt{m \log \left(n^2/m\right)}}\right)^m,
    \]
    which implies the claimed bound on $\varphi(m)$.
  \end{proof}

  We will now use Claims~\ref{claim:Gnp-trivial},~\ref{claim:Gnp-deletion}, and~\ref{claim:Gnp-containers} to bound the ratios $\varphi(m) / \varphi(m')$ for various $m$ and $m'$. Suppose first that $n^{-1} \ll p \le \delta n^{-2/3}$, set $m = pn^2/2$, and let $m' \not\in (1\pm\eps)p\binom{n}{2}$. By Claim~\ref{claim:Gnp-deletion},
  \[
    \varphi(m) \ge \left(\frac{(e-\gamma)n^2p}{2m(1-p)}\right)^m = \left(\frac{e-\gamma}{1-p}\right)^m
  \]
  and by Claim~\ref{claim:Gnp-trivial},
  \[
    \varphi(m') \le \left(\frac{en^2p}{2m'(1-p)}\right)^{m'} = \left(\frac{em}{m'(1-p)}\right)^{\frac{m'}{m} \cdot m}.
  \]
  If $m' > 2em$, then $\varphi(m') \le 1 \le 2^{-m} \varphi(m)$, so we may assume that $m' \le 2em$. Since $m' \not\in (1\pm\eps/2)m$, then
  \[
    \left(\frac{em}{m'}\right)^{\frac{m'}{m}} \le e-2\gamma
  \]
  by our definition of $\gamma$, implying that
  \[
    \frac{\varphi(m')}{\varphi(m)} \le \left(\frac{e-2\gamma}{e-\gamma}\right)^m \cdot (1-p)^{m-m'} \le \exp\big( - \gamma m / e + 2\eps mp \big) \le \exp\big( - \gamma m / 3 \big).
  \]
  It follows that if $n^{-1} \ll p \le \delta n^{-2/3}$, then $\Pr\big(e(G) \in (1\pm\eps) p \binom{n}{2}\big) \ge 1 - e^{-\gamma m /4}$. In particular, it follows from Theorem~\ref{thm:asymC4free} that $G$ is a.a.s.\ $\eps$-quasirandom. This establishes part~\ref{item:Gnp-C4-free-bottom} of the corollary.

  Suppose now that $\delta n^{-2/3} \le p \le n^{-1/3}(\log n)^4$ and let $m = \delta n^{4/3}/2$. By Claim~\ref{claim:Gnp-deletion},
  \[
    \varphi(m) \ge \left(\frac{(e-\gamma)n^2p}{2m(1-p)}\right)^m \ge 2^m,
  \]
  where the second inequality holds as $m \le pn^2/2$. If $m' \le m/2$, then by Claim~\ref{claim:Gnp-trivial} and since $x \mapsto (a/x)^x$ is increasing for $x \le a/e$,
  \[
    \varphi(m') \le \left(\frac{en^2p}{2m'(1-p)}\right)^{m'} \le \left(\frac{en^2p}{m(1-p)}\right)^{m/2} \le \left(\frac {4 e m (1-p)}{(e-\gamma)^2n^2p}\right)^{m/2}  \cdot \varphi(m).
  \]
  Since $m \le pn^2/2$, it follows that
  \[
    \varphi(m') \le \left(\frac{2 e (1-p)}{(e-\gamma)^2}\right)^{m/2} \cdot \varphi(m) \le \left(\frac{3}{4}\right)^{m/2} \cdot \varphi(m).
  \]
  On the other hand, if $m' \ge n^{4/3}(\log n)^8$, then by Claim~\ref{claim:Gnp-containers},
  \[
    \varphi(m') \le \left(\frac{Cnp}{\sqrt{m' \log \left(n^2/m'\right)}}\right)^{m'} \le 1 \le 2^{-m} \cdot \varphi(m).
  \]
  It follows that if $\delta n^{-2/3} \le p \le n^{-1/3}(\log n)^4$, then $\Pr\big(\delta n^{4/3}/4 \le e(G) \le n^{4/3}(\log n)^8\big) \ge 1 - e^{-m/10}$, establishing part~\ref{item:Gnp-C4-free-middle} of the corollary.

  Finally, suppose that $n^{-1/3}(\log n)^4 \le p \ll 1$, let $m = \beta p^2n^2 / \log(1/p)$ for some small positive constant $\beta$, and observe that $m \ge n^{4/3}(\log n)^6$. By Claim~\ref{claim:Gnp-trivial},
  \[
    \varphi(m) \ge \left(\frac{cnp}{\sqrt{m \log \left(n^2/m\right)}}\right)^m \ge \left(\frac{cnp\sqrt{\log(1/p)}}{\sqrt{\beta} \cdot pn \cdot \sqrt{3\log (1/p)}}\right)^m  \ge e^m,
  \]
  since $\beta$ is small. If $m' \le n^{4/3}(\log n)^4$, then by Claim~\ref{claim:Gnp-trivial},
  \[
    \varphi(m') \le \left(\frac{en^2p}{2m'(1-p)}\right)^{m'} \le n^{2m'} \le \exp\big( 2 n^{4/3}(\log n)^5 \big) \le e^{m/2} \le e^{-m/2} \cdot \varphi(m).
  \]
  Let $\xi$ be a small positive constant. If $m' \ge m/\xi$, then by Claim~\ref{claim:Gnp-containers},
  \[
    \varphi(m') \le \left(\frac{Cnp}{\sqrt{m' \log \left(n^2/m'\right)}}\right)^{m'} \le 1 \le e^{-m} \cdot \varphi(m).
  \]
  Finally, suppose that $n^{4/3} (\log n)^4 \le m' \le \xi m$. By Claim~\ref{claim:Gnp-containers},
  \[
    \varphi(m') \le \left(\frac{Cnp}{\sqrt{m' \log \left(n^2/m'\right)}}\right)^{m'} \le \left(\frac{Cnp}{\sqrt{m' \log \left(n^2/m\right)}}\right)^{m'}.
  \]
  Since the function $x \mapsto (a/\sqrt{x})^x$ is increasing if $x \le a^2/e$, then the right-hand side above is increasing in $m' \le \xi m$ and thus
  \[
    \varphi(m') \le \left(\frac{Cnp}{\sqrt{\xi m \log \left(n^2/m\right)}}\right)^{\xi m} \le \left(\frac{C}{\sqrt{\beta \xi}}\right)^{\xi m} \le e^{m/2} \le e^{-m/2} \cdot \varphi(m). 
  \]
  It follows that there is an absolute constant $K$ such that if $n^{-1/3}(\log n)^4 \le p \ll 1$, then
  \[
    \Pr\left(\frac{p^2n^2}{K \log(1/p)} \le e(G) \le \frac{Kp^2n^2}{\log(1/p)}\right) \ge 1 - e^{-m/3}.
  \]
  In particular, it follows from Theorem~\ref{thm:asymC4free} that a.a.s.\ $G$ is $\eps$-close to a split graph. This completes the proof of the corollary.
\end{proof}

\section{Concluding remarks and open problems}
\label{sec:open:problems}

This paper makes a first step towards understanding the typical structure of a sparse member of a hereditary graph property. In this final section, we discuss a few of the many natural open problems suggested by our main results. We begin with the following conjecture on the typical structure of sparse induced-$C_4$-free graphs.

\begin{conj}
  \label{conj:asymC4free}
  Suppose that $n^{4/3}(\log n)^{1/3} \ll m \le \binom{n}{2} - \Omega(n^2)$ 
  and let $G$ be a uniformly chosen random graph in $\FnmCf$. Then a.a.s.\ $G$ is a split graph.
\end{conj}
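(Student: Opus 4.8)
We outline the strategy we would pursue. By the theorem of F\"oldes and Hammer, a graph is a split graph if and only if it contains none of $2K_2$, $C_4$, $C_5$ as an induced subgraph; since every $G \in \FnmCf$ is already induced-$C_4$-free, the conjecture is equivalent to the statement that a.a.s.\ a uniformly random $G \in \FnmCf$ has no induced $2K_2$ and no induced $C_5$. Moreover, as $\overline{C_4} = 2K_2$ and the class of split graphs is self-complementary, the range $m = \binom{n}{2} - t$ with $t = \Omega(n^2)$ becomes, after complementation, a statement about induced-$2K_2$-free graphs of linear density, which we expect to follow from a Pr\"omel--Steger-type analysis of $2K_2$ analogous to the present paper; likewise the narrow window $n^{4/3}(\log n)^{1/3} \ll m \le n^{4/3}(\log n)^4$ should be reachable by re-optimising the polylogarithmic parameters in Lemma~\ref{lem:containers:for:induced:C4free:graphs} (the exponents $4$ and $\tfrac13$ there come from the crude choices $b = \xi(n)m/(\log n)^2$, $r = m/(2^{13}\log n)$ and from the constant in Theorem~\ref{thm:KoKrSt}). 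We therefore concentrate on the main range $n^{4/3}(\log n)^{1/3} \ll m \ll n^2$ and on the genuinely new ingredient: upgrading ``$\eps$-close to a split graph'' (part~\ref{item:C4-free-almost-split} of Theorem~\ref{thm:asymC4free}) to ``is a split graph''.

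\textbf{Step 1: sharper containers.} Starting from Theorem~\ref{thm:containers:for:C4free}, we iterate the container construction further than is needed there. A rescaled version of Propositions~\ref{prop:supersaturation-step} and~\ref{prop:supersaturation-A} (applied inside $U^c$, with $\ell$ replaced by $\Theta(\eta^{1/2}\ell)$) shows that $e_M(U^c) \ge \eta\ell n$ already forces $\Omega(\eta^4\ell^4)$ good copies of $C_4$ for any slowly decaying $\eta = \eta(n) \ge (\log n)^{-O(1)}$, and similarly one controls the number of non-edges inside $U$ and the number of ``badly attached'' $U$--$W$ pairs. Feeding these hypergraphs into Theorem~\ref{thm:container} (now with a non-constant $K = \eta^{-O(1)}$, at the cost of polylogarithmic factors in $b$) yields a family $\CC$ of $e^{o(m)}$ refined containers $\cP = (M,E)$, each with a partition $V(K_n) = U \cup W$ and parameter $\ell \approx |U|$, such that every $G \in \FnmCf$ lies in some $\cP \in \CC$ and such that $e(E) \le \binom{|U|}{2}$, $e_E(U) \ge (1 - (\log n)^{-10})\binom{|U|}{2}$, and $e_M(W) \le \ell n/(\log n)^{10}$.

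\textbf{Step 2: counting inside a container.} Fix $\cP \in \CC$. The number of genuine split graphs in $\FnmCf \cap \cP$ is at least $\Nnm(\ell')$ for a suitable $\ell' = \Theta(\ell)$, and $\max_\ell\Nnm(\ell) \le |\FnmCf|$, so it suffices to show that the number of \emph{non-split} graphs in $\FnmCf \cap \cP$ is at most $e^{-\gamma m}\max_\ell\Nnm(\ell)$ for a fixed $\gamma > 0$, and then sum over the $e^{o(m)}$ members of $\CC$. A non-split $G \in \FnmCf\cap\cP$ must, by F\"oldes--Hammer, contain an edge inside $W$, and the key structural facts are: (i) because $G$ is induced-$C_4$-free and $G[U]$ is almost complete, for any edge $w_1w_2$ with $w_1, w_2 \in W$ the sets $N_G(w_1)\cap U$ and $N_G(w_2)\cap U$ are comparable, and more generally along any path in $G[W]$ the traces on $U$ are monotone (the latter using induced-$C_5$-freeness); and (ii) if $G$ has no induced $2K_2$, then $U \setminus (N_G(w_1) \cup N_G(w_2))$ spans no edge of $G[U]$, hence has size $o(\ell)$, so a single $W$-edge forces $(1-o(1))\ell$ of the edges of $G$ into $U$. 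One then bounds the number of $G \in \FnmCf\cap\cP$ with a prescribed set of $s$ edges inside $W$ by an explicit sum over the admissible degree sequences into $U$, and compares it, via a Laplace-type estimate, with the corresponding sum for split graphs.

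\textbf{The main obstacle.} Even with all of the above, the union bound over $\CC$ is in jeopardy. Observation~(ii) shows that a non-split graph in $\cP$ costs a factor of only about $e^{-\Omega(\ell\log(\ell n/m))}$ relative to the split graphs in $\cP$; but since $\ell \approx \sqrt{m/\log(n^2/m)}$, this saving is merely $m^{1/2+o(1)}$ in the exponent, whereas the container family produced by Lemma~\ref{lem:containers:for:induced:C4free:graphs} has size $e^{m^{1-o(1)}}$. Bridging this gap appears to require either a substantially sharper, problem-specific container lemma with locality parameter $b = m^{1/2}\cdot\mathrm{polylog}(n)$ rather than $b = m/\mathrm{polylog}(n)$ — so that $|\CC| = e^{m^{1/2+o(1)}}$ — possibly obtained by building the comparability structure of~(i) into the containers themselves (a ``laminar'' or ``directed'' refinement), or a more global argument that estimates $|\FnmCf|$ and the number of its non-split members simultaneously and thereby sidesteps the union bound over containers entirely. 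We regard this as the crux of the conjecture.
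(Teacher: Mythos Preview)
This statement is a \emph{conjecture}; the paper does not prove it. It is explicitly presented in Section~\ref{sec:open:problems} as an open problem, and the authors write that ``it appears that new ideas will be needed.'' There is therefore no proof in the paper to compare your proposal against.

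Your proposal is, appropriately, not a proof either: it sketches a plausible line of attack and then, in the paragraph labelled ``The main obstacle,'' explicitly names the point at which the argument breaks down. Your diagnosis there is essentially correct and matches the paper's own assessment. The container family produced by Lemma~\ref{lem:containers:for:induced:C4free:graphs} has size $e^{o(m)}$, but the $o(m)$ is of order $m/\mathrm{polylog}(n)$ (coming from $b \cdot \log n \cdot \text{height}(\T)$), while the per-container penalty you extract for being non-split is only $e^{-\tilde O(\sqrt{m})}$; the union bound does not close. The paper's suggestion that the methods of~\cite{BaMoSaWa16} ``may be helpful'' for the exact-structure part is in the same spirit as your remark that a problem-specific container lemma with much smaller $b$ would be needed.

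Two minor comments on the sketch itself. First, in Step~2(i), ``comparable'' is too strong as stated: induced-$C_4$-freeness only forces $N_G(w_1)\cap U$ and $N_G(w_2)\cap U$ to be comparable modulo the non-edges of $G[U]$, and you are relying on $G[U]$ being almost complete to make this useful; that should be said. Second, in Step~2(ii) you invoke ``no induced $2K_2$'' while trying to \emph{prove} that there is no induced $2K_2$, so the role of that observation is really to cost out the graphs that \emph{do} contain one --- the logic should be organised as a dichotomy rather than as a structural consequence. Neither of these affects the conclusion that the approach, as it stands, does not resolve the conjecture.
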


Note that Conjecture~\ref{conj:asymC4free} would sharpen Theorem~\ref{thm:asymC4free} in three different ways. First, the power of $\log n$ in the lower bound on $m$ in part~\ref{item:C4-free-almost-split} of Theorem~\ref{thm:asymC4free} would be reduced from $4$ to $1/3$, which is best possible, as shown by part~\ref{item:C4-free-non-split} of Theorem~\ref{thm:asymC4free}. Second, the description of the typical members of $\FnmCf$ would be more precise -- the graphs are required to be split rather than only close to split. Finally, the upper bound on $m$ is increased from $o(n^2)$ to $\binom{n}{2} - \Omega(n^2)$; in fact, we expect Conjecture~\ref{conj:asymC4free} to remain true even when $\binom{n}{2} - m$ is much smaller than $n^2$, but then it is (arguably) more natural to consider the complements of graphs in $\FnmCf$, which are sparse induced-$2K_2$-free graphs.

We made the assumption that $m = o(n^2)$ mainly for convenience (and to simplify the proof of Theorem~\ref{thm:robust-stability}) and it seems plausible that our techniques could be extended to all $m \le \binom{n}{2} - \Omega(n^2)$, but we have not made any serious attempts to do so. A natural alternative approach of resolving Conjecture~\ref{conj:asymC4free} for $m = \Theta(n^2)$ would be to generalise the method of Pr\"omel and Steger~\cite{PrSt91}, who characterised typical members of $\Fn(C_4)$. 

A more substantial step towards resolving the conjecture would be to either determine the precise structure of a typical member of $\FnmCf$ when $m \ge n^{4/3}(\log n)^{O(1)}$ or to determine the approximate structure in the range $n^{4/3}(\log n)^{1/3} \ll m \le n^{4/3}(\log n)^4$. We remark that the methods of~\cite{BaMoSaWa16} may be helpful in achieving the former goal (determining the precise structure), though it appears that new ideas will be needed.

\subsection{Sparse induced-$H$-free graphs}

Perhaps the most natural direction for further investigation would be to describe the typical structure of sparse induced-$H$-free graphs for an arbitrary graph $H$. The first step in this direction was made recently by Kalvari and Samotij~\cite{KaSa}, who used Theorem~\ref{thm:container} to prove the following rough characterisation of a typical member of $\Fnm(H)$ for all non-bipartite graphs $H$ and all $m \ll n^2$.

\begin{thm}
  \label{thm:KaSa}
  Suppose that $H$ is a non-bipartite graph and let $G$ be uniformly chosen random graph in $\Fnm(H)$. The following holds for every $\eps > 0$:
  \begin{enumerate}[label=(\alph*)]
  \item
    If $n \ll m \ll n^{2-1/m_2(H)}$, then a.a.s.\ $G$ is $\eps$-quasirandom.
  \item
    \label{item:KaSa-2}
    If $n^{2-1/m_2(H)} \ll m \ll n^2$, then a.a.s.\ $G$ is $\eps$-close to $(\chi(H)-1)$-partite.
  \end{enumerate}
\end{thm}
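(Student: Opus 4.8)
The plan is to follow the template of the monotone case (Balogh--Morris--Samotij / Saxton--Thomason) and of Sections~\ref{sec:robust-stability}--\ref{sec:C4s:proof}: in each regime one pins down $|\Fnm(H)|$ up to a factor $e^{o(m)}$ on the logarithmic scale, and shows that graphs of the `wrong' shape form an $e^{-\Omega(m)}$-fraction of the family.

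\textbf{Part (a) (the quasirandom regime).} Since $H$ is non-bipartite it contains an odd cycle, so $m_2(H)>1$ and the range $n\ll m\ll n^{2-1/m_2(H)}$ is non-empty and bounded away from $n$. Let $F^\star\subseteq H$ attain $m_2(H)=(e_{F^\star}-1)/(v_{F^\star}-2)$. A first-moment computation shows that a uniformly random graph with $m'=\lceil(1+\gamma)m\rceil$ edges has expected number of copies of $F^\star$ equal to $\Theta\big(m'^{e_{F^\star}}n^{v_{F^\star}-2e_{F^\star}}\big)=o(m')$, precisely because $m'\ll n^{2-1/m_2(H)}$; deleting one edge per copy of $F^\star$ destroys all copies of $F^\star$, hence all (induced or not) copies of $H$, while leaving at least $m$ edges. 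As in the proof of Proposition~\ref{prop:lower-bound-strong}, this gives $|\Fnm(H)|\ge e^{-\gamma m}\binom{\binom n2}m$ for every $\gamma>0$. On the other hand, by averaging together with Hoeffding's inequality for the hypergeometric distribution, the number of $n$-vertex graphs with $m$ edges that fail to be $\eps$-quasirandom is at most $\binom n{\eps n}e^{-cm}=e^{o(m)-cm}$ for some $c=c(\eps)>0$ (using $m\gg n$). Taking $\gamma<c$, such graphs are a vanishing fraction of $\Fnm(H)$, which proves part~(a).

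\textbf{Part (b) (the structured regime): containers.} Encode $\Fn(H)$ as $\FF(\HH)$, where an induced copy of $H$ on a vertex set $W$ with edge set $A_1$ (so $|A_1|=e_H$) contributes the constraint $(A_0,A_1)$ with $A_0=\binom W2\setminus A_1$; this is a $(k_0,k_1)$-uniform family with $k_1=e_H$ and $k_0=\binom{v_H}2-e_H$, and $\Fnm(H)=\FF_m(\HH)$. Following Section~\ref{sec:C4s:proof}, one builds a rooted tree of pregraphs, starting from the trivial pregraph $(E(K_n),\emptyset)$ and, at each internal node $\cP=(M,E)$, applying Theorem~\ref{thm:container} with $b=o(m/\log n)$ and $r\asymp m/\log n$ to an auxiliary hypergraph $\HH_i\subseteq\HH_i^\cP$ of `good' induced copies of $H$ in $\cP$, refining until every leaf is either \emph{close to $(\chi(H)-1)$-partite} --- there is a partition of $V(K_n)$ into $\chi(H)-1$ parts in which $E$ has few edges inside parts and $M$ has few mixed edges inside parts --- or \emph{degenerate} ($E$ has more than $m$ edges, or $M$ is too small). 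The structural engine is a robust stability theorem for induced copies of $H$ in pregraphs, the analogue of Theorem~\ref{thm:robust-stability-refined}: if $E$ is not $\eps$-close to $(\chi(H)-1)$-partite while $M$ is large and suitably spread, then some $\HH_i\subseteq\HH_i^\cP$ has $\Omega(n^{v_H})$ edges and co-degrees $\Delta_{(\ell_0,\ell_1)}$ of exactly the order needed for~\eqref{eq:container-Del} to hold; here the factor $(m/r)^{\indicator[\ell_0>0]}$ compensates for working with constraints of size $\binom{v_H}2$ rather than $e_H$, and this is what produces the sharp exponent $2-1/m_2(H)$ rather than the weaker bound the non-asymmetric lemma would give. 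As in Lemma~\ref{lem:containers:for:induced:C4free:graphs}, the tree has height $O(\log n)$ and degree $e^{o(m/\log n)}$, so its leaves form a family $\CC$ of $e^{o(m)}$ pregraphs covering $\Fnm(H)$.

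\textbf{Part (b): counting and conclusion.} For the lower bound, every spanning subgraph of the Tur\'an graph $T_{\chi(H)-1}(n)$ is $K_{\chi(H)}$-free, hence $H$-free, hence induced-$H$-free, so $|\Fnm(H)|\ge\binom{e(T_{\chi(H)-1}(n))}m=\big(1-\tfrac1{\chi(H)-1}-o(1)\big)^m\binom{\binom n2}m$. A degenerate leaf pregraph contains at most $\binom{e(M)}{m-e(E)}$ graphs with $m$ edges, which one bounds against this Tur\'an count by $e^{-\Omega(m)}|\Fnm(H)|$, exactly as in Cases~1 and~2 of the proof of Theorem~\ref{thm:containers:for:C4free}. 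Inside a leaf that is $\eps^3$-close to $(\chi(H)-1)$-partite, any graph $G$ with $m$ edges that is \emph{not} $\eps$-close to $(\chi(H)-1)$-partite must place at least $\eps m$ of its $m$ edges inside the parts, and a crude count shows such $G$ number at most $e^{-\Omega(\eps m)}$ times the total number of $m$-edge graphs in that container; since the gap between `$\eps^3$-close' and `$\eps$-close' costs only $e^{O(\eps^3 m)}$ on the exponential scale, this is still $e^{-\Omega(\eps m)}|\Fnm(H)|$, and summing over the $e^{o(m)}$ leaves yields part~(b). The a.a.s.\ conclusion then follows since a uniformly random $G\in\Fnm(H)$ lies, with probability $1-e^{-\Omega(m)}$, in a close-to-$(\chi(H)-1)$-partite leaf and is itself $\eps$-close to $(\chi(H)-1)$-partite.

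\textbf{The main obstacle.} The crux is the robust stability theorem for induced copies of $H$ in pregraphs with co-degree control matching~\eqref{eq:container-Del}; for $C_4$ this occupied all of Section~\ref{sec:robust-stability}. For a non-bipartite $H$ one would combine the Erd\H{o}s--Simonovits stability and supersaturation theorems (to locate the $(\chi(H)-1)$-partite structure and to produce $\Omega(n^{v_H})$ good copies) with a randomised sparsening step that selects vertices with carefully tuned probabilities, in the spirit of Propositions~\ref{prop:supersaturation-step} and~\ref{prop:supersaturation-A}, so as to pass to a subhypergraph whose co-degrees are of the right order; one must also handle separately the case where $E$ is already close to $(\chi(H)-1)$-partite but carries many mixed edges inside the parts, mirroring condition~\ref{item:non-split-cond} of Theorem~\ref{thm:robust-stability-refined}. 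The remaining verifications --- that these co-degree bounds feed correctly into Theorem~\ref{thm:container} with the stated $b$ and $r$, that degenerate containers are negligible against the Tur\'an count, and that the construction runs for all $m\gg n^{2-1/m_2(H)}$ --- are routine analogues of the bookkeeping in Section~\ref{sec:C4s:proof}, with $n^{v_H}$, $v_H$, $e_H$, and the Tur\'an graph $T_{\chi(H)-1}(n)$ replacing their $C_4$-specific counterparts.
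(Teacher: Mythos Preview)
The paper does not contain a proof of this theorem: it is stated as a result of Kalvari and Samotij~\cite{KaSa}, with the remark that it uses Theorem~\ref{thm:container}, but no argument is given here. So there is no ``paper's own proof'' to compare against; I can only comment on whether your outline is coherent.

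Part~(a) is fine and essentially complete: the deletion argument via $m_2(H)$ and the Hoeffding bound on non-quasirandom graphs are exactly the standard ingredients, as in Proposition~\ref{prop:lower-bound-strong} and the proof of Theorem~\ref{thm:asymC4free}\ref{item:C4-free-quasirandom}.

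Part~(b) is only a strategy, and it is over-fitted to the $C_4$ case in a way that obscures what is actually needed. You write the stability hypothesis as ``if $E$ is not $\eps$-close to $(\chi(H)-1)$-partite\ldots'', but in this regime $e(E)\le m\ll n^2$, so $E$ is automatically $o(1)$-close to any bounded-parts partition; the real structural obstruction lives in $M$, not in $E$. The $C_4$ stability theorem hinges on a near-clique inside $E$ precisely because the extremal $C_4$-free structure is a split graph; for non-bipartite $H$ with $m\ll n^2$ the extremal structure is simply $(\chi(H)-1)$-partite and $E$ plays no role. Consequently the analogue of condition~\ref{item:non-split-cond} you sketch does not arise, and the ``randomised sparsening in the spirit of Propositions~\ref{prop:supersaturation-step} and~\ref{prop:supersaturation-A}'' is pointed at the wrong target.

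The cleaner line (and presumably closer to~\cite{KaSa}) is this: since $e(E)\le m=o(n^2)$, for almost every $v_H$-set the $\binom{v_H}{2}-e_H$ pairs playing the role of $A_0$ avoid $E$, so good induced copies of $H$ in a pregraph $(M,E)$ are, up to a $1-o(1)$ factor, just copies of $H$ in $M$. One can then feed the (robust) Erd\H{o}s--Simonovits supersaturation/stability for $H$ in $M$ directly into Theorem~\ref{thm:container}; the asymmetry in~\eqref{eq:container-Del} is used only to absorb the $k_0=\binom{v_H}{2}-e_H$ ``non-edge'' coordinates without spoiling the threshold $n^{2-1/m_2(H)}$ coming from the $k_1=e_H$ coordinates. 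Your outline gestures at this with the remark about $(m/r)^{\indicator[\ell_0>0]}$, but the surrounding scaffolding you borrow from Section~\ref{sec:robust-stability} is largely irrelevant here and should be replaced by the monotone stability input.
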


We remark that the structural characterisation of typical sparse induced-$H$-free graphs provided by assertion~\ref{item:KaSa-2} of Theorem~\ref{thm:KaSa} is not as precise as that given in Theorem~\ref{thm:asymC4free}~\ref{item:C4-free-almost-split}. We expect that for many $H$, the following holds for all $m \gg n^{2-1/m_2(H)}$: the vertex set of a typical member of $\Fnm(H)$ can be partitioned into $\chi(H)-1$ sets that are `almost independent' (i.e., they induce $o(m)$ edges) and some number of `almost-cliques' (that is, sets inducing graphs of density $1-o(1)$) of size $\Theta\big(\sqrt{m/\log n}\big)$. However, it is not true, in general, that the typical member of $\Fnm(H)$ remains $o(1)$-close to $(\chi(H)-1)$-partite when $m = \Omega(n^2)$. For example, Pr\"omel and Steger~\cite{PrStBerge} proved that almost all graphs in $\Fn(C_5)$ are so-called generalised split graphs. A graph $G$ is a \emph{generalised split graph} if the vertex set of either $G$ or of its complement can be partitioned into sets $V_1$ and $V_2$ such that $V_1$ induces a union of pairwise disjoint cliques and $V_2$ induces a clique.

\subsection{General hereditary properties of graphs}

A natural generalisation of the family of induced-$H$-free graphs that has been extensively studied in the literature (see, for example,~\cite{Al92,AlBaBoMo11,BoTh97}), are so-called \emph{hereditary properties of graphs}, that is, properties of graphs that are closed under taking induced subgraphs. As we mentioned in the Introduction, the rough structure of a typical member of an arbitrary hereditary property of graphs was determined a few years ago by Alon, Balogh, Bollob\'as, and Morris~\cite{AlBaBoMo11}. It would be very interesting (and, most likely, extremely challenging) to obtain a corresponding statement for a typical sparse graph in a hereditary property.   

In order to give the reader an idea of what it might be possible to prove in this very general setting, let us take this opportunity to state a theorem for \emph{monotone properties of graphs} (that is, properties of graphs that are closed under taking  subgraphs) which follows easily from the container theorems proved in~\cite{BaMoSa15,SaTh15}, but, as far as we are aware, has not previously been stated explicitly in the literature.

Given a monotone property of graphs $\cP$, let $\FF(\cP)$ denote the family of minimal forbidden subgraphs, i.e., the family of all graphs that are not in $\cP$, but all of whose proper subgraphs are in $\cP$. Theorem~\ref{thm:gen-monotone-prop}, below, gives an approximate structural description of a typical member of $\cP$ with (essentially) any given order $n$ and size $m$, as long as $\FF(\cP)$ is finite. In order to state the theorem, we will need the following definition.

\begin{defn}
  Given a non-trivial monotone property of graphs $\cP$ such that $\FF(\cP)$ is finite, we define the sequence
  \[
    m(\cP) = \big( (a_1,r_1), \dotsc, (a_s,r_s) \big)
  \]
  as follows:
  \begin{enumerate}[label=(\roman*)]
  \item
    Set $a_0 = 0$ and $r_0 = \infty$.
  \item
    \label{item:mP-2}
    Let $i \ge 0$ and suppose that we have already defined $(a_i, r_i)$. Let
    \[
      a_{i+1}  := \min\big\{ m_2(H) \scolon \text{$H \in \FF(\cP)$, $m_2(H) > a_i$, and $\chi(H) \le r_i$}\big\},
    \]
    provided that the above set is not empty; otherwise, set $s = i$ and stop.
  \item
    If $a_{i+1}$ was defined in step~\ref{item:mP-2}, then let
    \[
      r_{i+1} := \min\big\{ \chi(H) - 1 \scolon \text{$H \in \FF(\cP)$ and $m_2(H) = a_{i+1}$} \big\}.
    \]
    If $r_{i+1} = 1$, then set $s = i+1$ and stop; otherwise, increase $i$ by one and go to~\ref{item:mP-2}.
  \end{enumerate}
\end{defn}

Given integers $n$ and $m$ and a graph property $\cP$, denote by $\Pnm$ the family of all graphs with vertex set $\{1, \dotsc, n\}$ and precisely $m$ edges that belong to $\cP$.

\begin{thm}
  \label{thm:gen-monotone-prop}
  Let $\cP$ be a non-trivial monotone property of graphs such that $\FF(\cP)$ is finite and let $G$ be a uniformly chosen random graph in $\Pnm$. Suppose that $m(\cP) = \big( (a_1,r_1),\ldots,(a_s,r_s) \big)$. The following holds for every $\eps > 0$:
  \begin{enumerate}[label=(\alph*)]
  \item
    \label{item:gen-prop-quasirandom}
    If $n \ll m \ll n^{2-1/a_1}$, then a.a.s.\ $G$ is $\eps$-quasirandom.
  \item
    \label{item:gen-prop-structure-1}
    If $n^{2-1/a_i} \ll m \ll n^{2-1/a_{i+1}}$ for some $i \in \{1, \dotsc, s-1\}$, then a.a.s.\ $G$ is $\eps$-close to $r_i$-partite.
  \item
    \label{item:gen-prop-structure-2}
    If $m \gg n^{2-1/a_s}$ and $r_s \ge 2$, then a.a.s.\ $G$ is $\eps$-close to $r_s$-partite.
  \end{enumerate}
\end{thm}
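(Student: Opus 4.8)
The plan is to deduce Theorem~\ref{thm:gen-monotone-prop} by iterating the monotone hypergraph container theorem, Theorem~\ref{thm:container-mono}, together with a supersaturation estimate and the Erd\H{o}s--Simonovits stability theorem, peeling off the thresholds recorded in $m(\cP)$ one at a time. Here it is crucial that $\cP$ is \emph{monotone}: a graph lies in $\cP$ if and only if it contains no (not necessarily induced) copy of any $H$ in the \emph{finite} family $\FF(\cP)$, so one only ever needs upper-bound containers and the asymmetric refinement of Section~\ref{sec:asymmetric-container-lemma} is not required. I will describe the argument for a fixed range, say $n^{2-1/a_i} \ll m \ll n^{2-1/a_{i+1}}$ with $1 \le i \le s-1$; range~\ref{item:gen-prop-quasirandom} is the case ``$i=0$'' (with the convention that ``$r_0$-partite'' means quasirandom), and range~\ref{item:gen-prop-structure-2} is the case $i=s$.

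\textbf{Lower bound on $|\Pnm|$.} The target is $|\Pnm| \ge e^{-o(m)} R_{n,m}$, where $R_{n,m}$ counts the $m$-edge graphs that are $\eps$-quasirandom (if $i=0$) or $r_i$-partite (if $i\ge 1$). For $i=0$ this is the deletion argument of Proposition~\ref{prop:lower-bound-strong}: since $a_1=\min_H m_2(H)$, we have $m\ll n^{2-1/a_1}\le n^{2-1/m_2(H)}$ for every $H\in\FF(\cP)$, so $G_{n,m'}$ with $m'=m+o(m)$ has only $o(m)$ copies of each $H$ in expectation and can be thinned to an $\FF(\cP)$-free graph. For $i\ge 1$ one runs the same deletion argument inside a balanced $r_i$-partite host; what makes this work is the purely combinatorial claim that \emph{every $H\in\FF(\cP)$ with $\chi(H)\le r_i$ satisfies $m_2(H)>a_i$}, hence $m_2(H)\ge a_{i+1}$ and the host contains only $o(m)$ copies of $H$. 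This claim follows from the definition of $m(\cP)$: if $m_2(H)\le a_i$ then $m_2(H)=a_j$ for some $j\le i$ (otherwise $H$ violates the minimality defining $a_j$, using that the $r_j$ are strictly decreasing and so $\chi(H)\le r_i\le r_{j-1}$), and then the step defining $r_j$ forces $\chi(H)\ge r_j+1>r_i$, a contradiction. For $i=s$ the claim degenerates: \emph{every} $H\in\FF(\cP)$ has $\chi(H)>r_s$, so every $r_s$-partite graph already lies in $\cP$ and the lower bound is immediate.

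\textbf{Upper bound via containers.} For the matching upper bound one constructs, by iterating Theorem~\ref{thm:container-mono}, a family $\CC$ of $e^{o(m)}$ container graphs covering $\Pnm$ such that each $C\in\CC$ is either \emph{structured} -- $\eps'$-close to $r_i$-partite with at most $\ex(n,\FF(\cP))+o(n^2)$ edges -- or \emph{sparse}, with at most $(1-1/r_i-c)\binom{n}{2}$ edges for a fixed $c>0$. The iteration processes $\FF(\cP)$ in order of increasing $2$-density: at each stage the densest still-relevant forbidden graph $H$ has its copy-hypergraph nearly regular (this is the supersaturation input, valid because $m$ is polynomially above $n^{2-1/m_2(H)}$), so Theorem~\ref{thm:container-mono} applied with $b$ a suitable $o(m)$ and $r$ of order $m/\log n$ yields $e^{o(m)}$ subcontainers each with $o(n^{v(H)})$ copies of $H$; the stability theorem then says each such subcontainer either loses a positive proportion of its edges (becoming sparse) or is $\eps'$-close to $T_{\chi(H)-1}(n)$, and in the latter case one recurses on the next forbidden graph of smaller chromatic number inside the almost-partite host -- the bookkeeping of which chromatic numbers survive, and at which $2$-density, being exactly what the $m(\cP)$ algorithm computes, so that the process terminates with structure ``$r_i$-partite'' in the range under consideration. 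Finally, a routine binomial computation (as in the proof of part~\ref{item:C4-free-almost-split} of Theorem~\ref{thm:asymC4free}) bounds the number of $m$-edge subgraphs of a structured container that are \emph{not} $\eps$-close to $r_i$-partite by $2^{-\eps m}R_{n,m}$, while each sparse container contributes at most $e^{-\Omega(m)}R_{n,m}$ such subgraphs; summing over the $e^{o(m)}$ containers and combining with the lower bound shows that all but an $o(1)$ fraction of $\Pnm$ is $\eps$-close to $r_i$-partite.

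\textbf{Main obstacle.} The delicate part is the recursion inside an almost-$r_{i-1}$-partite host: one must re-run supersaturation and Theorem~\ref{thm:container-mono} for the next forbidden graph $H'$ with $\chi(H')\le r_{i-1}$ relative to a host whose edge set is a nearly complete $r_{i-1}$-partite graph rather than $E(K_n)$, and verify that the resulting thresholds and partiteness parameters coincide with those produced by the $m(\cP)$ algorithm -- in particular that forbidden graphs with chromatic number exceeding the current value drop out automatically. Tracking the accumulated $\eps'$'s and the polylogarithmic slack in $b$ across the boundedly many iterations is otherwise routine, since every threshold is crossed by a polynomial factor; the one genuinely new ingredient relative to the single-graph case~\cite{BaMoSa15,SaTh15} is precisely this correct reading of $m(\cP)$.
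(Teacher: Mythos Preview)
Your proposal is correct and follows the route the paper indicates (iterated containers via Theorem~\ref{thm:container-mono}, robust Erd\H{o}s--Simonovits stability, and a deletion lower bound, as in~\cite[Theorem~1.7]{BaMoSa15}). In particular, the combinatorial lemma you isolate---that every $H \in \FF(\cP)$ with $\chi(H) \le r_i$ satisfies $m_2(H) \ge a_{i+1}$, and that for $i=s$ with $r_s \ge 2$ every $H \in \FF(\cP)$ has $\chi(H) > r_s$---is exactly what makes both the lower and upper bounds go through, and your proof of it is correct.

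One simplification worth noting: the ``recursion inside an almost-$r_{j-1}$-partite host'' that you flag as the main obstacle is not actually needed. For a fixed range $n^{2-1/a_i} \ll m \ll n^{2-1/a_{i+1}}$ (or $m \gg n^{2-1/a_s}$), the definition of $r_i$ hands you a single $H^* \in \FF(\cP)$ with $m_2(H^*) = a_i$ and $\chi(H^*) = r_i + 1$. Since every graph in $\cP$ is in particular $H^*$-free, iterating Theorem~\ref{thm:container-mono} on the hypergraph of copies of $H^*$ alone already covers $\Pnm$ by $e^{o(m)}$ containers, each with $o(n^{v(H^*)})$ copies of $H^*$; robust stability for this one $H^*$ then forces each container to be either $\eps'$-close to $(\chi(H^*)-1)$-partite, i.e.\ $r_i$-partite, or to have at most $(1-1/r_i-\delta)\binom{n}{2}$ edges. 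The binomial comparison you describe then finishes the argument. There is no need to peel through the earlier thresholds $a_1, \dotsc, a_{i-1}$ or to work relative to a partite host: the sequence $m(\cP)$ is used only to identify the correct $H^*$ for the upper bound and to certify, via your lemma, that the deletion argument inside a balanced $r_i$-partite graph succeeds for the lower bound.
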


Since the proof of Theorem~\ref{thm:gen-monotone-prop} is a (nowadays) standard application of the container method, using (a robust version of) the stability theorem of Erd\H{o}s and Simonovits~\cite{Er67, Si68} (cf. the proof of~\cite[Theorem~1.7]{BaMoSa15}), we leave the details to the reader.

Finally, we remark that the assumption that $\FF(\cP)$ is finite is essential. Indeed, suppose that $\FF(\cP)$ contains all (minimal) non-bipartite graphs $H$ with $m_2(H) \ge a$ for a given $a > 1$. If $m \ge an$, then $\Pnm$ contains only bipartite graphs and thus if $\eps > 0$ is sufficiently small, then there are no graphs in $\cP$ that are $\eps$-quasirandom.

\section*{Acknowledgements}

The bulk of this research was conducted over the course of several visits of the second author to IMPA in Rio de Janeiro. We would like to thank IMPA for their hospitality and for nurturing a wonderful research environment.

%BIBLIOGRAPHY

\bibliographystyle{amsplain}
\bibliography{ind-C4-free}

\providecommand{\bysame}{\leavevmode\hbox to3em{\hrulefill}\thinspace}
\providecommand{\MR}{\relax\ifhmode\unskip\space\fi MR }
% \MRhref is called by the amsart/book/proc definition of \MR.
\providecommand{\MRhref}[2]{%
  \href{http://www.ams.org/mathscinet-getitem?mr=#1}{#2}
}
\providecommand{\href}[2]{#2}
\begin{thebibliography}{10}

\bibitem{AjKoPiSpSz82}
M.~Ajtai, J.~Koml{\'o}s, J.~Pintz, J.~Spencer, and E.~Szemer{\'e}di,
  \emph{Extremal uncrowded hypergraphs}, J. Combin. Theory Ser. A \textbf{32}
  (1982), 321--335.

\bibitem{Al92}
V.~E. Alekseev, \emph{Range of values of entropy of hereditary classes of
  graphs}, Diskret. Mat. \textbf{4} (1992), 148--157.

\bibitem{AlBaBoMo11}
N.~Alon, J.~Balogh, B.~Bollob{\'a}s, and R.~Morris, \emph{The structure of
  almost all graphs in a hereditary property}, J. Combin. Theory Ser. B
  \textbf{101} (2011), 85--110.

\bibitem{AlSp16}
N.~Alon and J.~H. Spencer, \emph{The {P}robabilistic {M}ethod}, 4th ed., Wiley
  Series in Discrete Mathematics and Optimization, John Wiley \& Sons, Inc.,
  Hoboken, NJ, 2016.

\bibitem{BaBoSi04}
J.~Balogh, B.~Bollob{\'a}s, and M.~Simonovits, \emph{The number of graphs
  without forbidden subgraphs}, J. Combin. Theory Ser. B \textbf{91} (2004),
  1--24.

\bibitem{BaBoSi09}
\bysame, \emph{The typical structure of graphs without given excluded
  subgraphs}, Random Structures Algorithms \textbf{34} (2009), 305--318.

\bibitem{BaBoSi11}
J.~Balogh, B.~Bollob\'as, and M.~Simonovits, \emph{The fine structure of
  octahedron-free graphs}, J. Combin. Theory Ser. B \textbf{101} (2011),
  67--84.

\bibitem{BaBu11}
J.~Balogh and J.~Butterfield, \emph{Excluding induced subgraphs: critical
  graphs}, Random Structures Algorithms \textbf{38} (2011), 100--120.

\bibitem{BaMoSa-survey}
J.~Balogh, R.~Morris, and W.~Samotij, \emph{The method of hypergraph
  containers}, to appear in Proc. ICM 2018, arXiv:1801.04584.

\bibitem{BaMoSa15}
\bysame, \emph{Independent sets in hypergraphs}, J. Amer. Math. Soc.
  \textbf{28} (2015), 669--709.

\bibitem{BaMoSaWa16}
J.~Balogh, R.~Morris, W.~Samotij, and L.~Warnke, \emph{The typical structure of
  sparse {$K\sb {r+1}$}-free graphs}, Trans. Amer. Math. Soc. \textbf{368}
  (2016), 6439--6485.

\bibitem{BoTh97}
B.~Bollob\'as and A.~Thomason, \emph{Hereditary and monotone properties of
  graphs}, The mathematics of {P}aul {E}rd\H os, {II}, Algorithms Combin.,
  vol.~14, Springer, Berlin, 1997, pp.~70--78.

\bibitem{Ca79}
Y.~Caro, \emph{New results on the independence number}, Tech. report, Tel Aviv
  University, 1979.

\bibitem{CoGo16}
D.~Conlon and W.~T. Gowers, \emph{Combinatorial theorems in sparse random
  sets}, Ann. of Math. (2) \textbf{184} (2016), 367--454.

\bibitem{DeKoLeRoSa-B3}
D.~Dellamonica, Jr., Y.~Kohayakawa, S.~J. Lee, V.~R\"odl, and W.~Samotij,
  \emph{The number of {$B_3$}-sets of a given cardinality}, J. Combin. Theory
  Ser. A \textbf{142} (2016), 44--76.

\bibitem{DeKoLeRoSa-Bh}
\bysame, \emph{The number of {$B_h$}-sets of a given cardinality}, J. Lond.
  Math. Soc. (2) \textbf{116} (2018), 629--669.

\bibitem{DeKa}
B.~DeMarco and J.~Kahn, \emph{Tur\'an's theorem for random graphs},
  arXiv:1501.01340 [math.PR].

\bibitem{DeKa15}
\bysame, \emph{Mantel's theorem for random graphs}, Random Structures
  Algorithms \textbf{47} (2015), 59--72.

\bibitem{DuLeRo95}
R.~A. Duke, H.~Lefmann, and V.~R{\"o}dl, \emph{On uncrowded hypergraphs},
  Random Structures Algorithms \textbf{6} (1995), 209--212.

\bibitem{Er67}
P.~Erd{\H{o}}s, \emph{Some recent results on extremal problems in graph theory.
  {R}esults}, Theory of {G}raphs ({I}nternat. {S}ympos., {R}ome, 1966), Gordon
  and Breach, New York; Dunod, Paris, 1967, pp.~117--123 (English); pp.
  124--130 (French).

\bibitem{ErFrRo86}
P.~Erd{\H{o}}s, P.~Frankl, and V.~R{\"o}dl, \emph{The asymptotic number of
  graphs not containing a fixed subgraph and a problem for hypergraphs having
  no exponent}, Graphs Combin. \textbf{2} (1986), 113--121.

\bibitem{ErKlRo76}
P.~Erd{\H{o}}s, D.~J. Kleitman, and B.~L. Rothschild, \emph{Asymptotic
  enumeration of {$K\sb{n}$}-free graphs}, Colloquio {I}nternazionale sulle
  {T}eorie {C}ombinatorie ({R}ome, 1973), {T}omo {II}, Accad. Naz. Lincei,
  Rome, 1976, pp.~19--27. Atti dei Convegni Lincei, No. 17.

\bibitem{ErSt46}
P.~Erd{\H{o}}s and A.~H. Stone, \emph{On the structure of linear graphs}, Bull.
  Amer. Math. Soc. \textbf{52} (1946), 1087--1091.

\bibitem{FeMKSa}
A.~Ferber, G.~McKinley, and W.~Samotij, \emph{Supersaturated sparse graphs and
  hypergraphs}, to appear in Int. Math. Res. Not.

\bibitem{FuSi13}
Z.~F\"uredi and M.~Simonovits, \emph{The history of degenerate (bipartite)
  extremal graph problems}, Erd\"os centennial, Bolyai Soc. Math. Stud.,
  vol.~25, J\'anos Bolyai Math. Soc., Budapest, 2013, pp.~169--264.

\bibitem{GiSh}
L.~Gishboliner and A.~Shapira, \emph{Efficient removal without efficient
  regularity}, to appear in Combinatorica.

\bibitem{Ho63}
W.~Hoeffding, \emph{Probability inequalities for sums of bounded random
  variables}, J. Amer. Statist. Assoc. \textbf{58} (1963), 13--30.

\bibitem{HuPrSt93}
C.~Hundack, H.~J. Pr\"omel, and A.~Steger, \emph{Extremal graph problems for
  graphs with a color-critical vertex}, Combin. Probab. Comput. \textbf{2}
  (1993), 465--477.

\bibitem{KaSa}
T.~Kalvari and W.~Samotij, \emph{The typical structure of sparse graphs with no
  induced copy of a given subgraph}, in preparation.

\bibitem{KeLo}
P.~Keevash and W.~Lochet, \emph{The structure of typical eye-free graphs and a
  {T}ur{\'a}n-type result for two weighted colours}, Combin. Probab. Comput.
  \textbf{26} (2017), 886--910.

\bibitem{KiKuOsTo}
J.~Kim, D.~K\"uhn, D.~Osthus, and T.~Townsend, \emph{Forbidding induced even
  cycles in a graph: typical structure and counting}, J. Combin. Theory Ser. B
  \textbf{131} (2018), 170--219.

\bibitem{KoKrSt98}
Y.~Kohayakawa, B.~Kreuter, and A.~Steger, \emph{An extremal problem for random
  graphs and the number of graphs with large even-girth}, Combinatorica
  \textbf{18} (1998), 101--120.

\bibitem{KoLuRo97}
Y.~Kohayakawa, T.~\L~uczak, and V.~R\"odl, \emph{On {$K^4$}-free subgraphs of
  random graphs}, Combinatorica \textbf{17} (1997), 173--213.

\bibitem{KoPrRo87}
Ph.~G. Kolaitis, H.~J. Pr{\"o}mel, and B.~L. Rothschild, \emph{{$K_{l+1}$}-free
  graphs: asymptotic structure and a {$0$}-{$1$} law}, Trans. Amer. Math. Soc.
  \textbf{303} (1987), 637--671.

\bibitem{Lu00}
T.~{\L}uczak, \emph{On triangle-free random graphs}, Random Structures
  Algorithms \textbf{16} (2000), 260--276.

\bibitem{MoSa16}
R.~Morris and D.~Saxton, \emph{The number of {$C_{2\ell}$}-free graphs}, Adv.
  Math. \textbf{298} (2016), 534--580.

\bibitem{OsPrTa03}
D.~Osthus, H.~J. Pr{\"o}mel, and A.~Taraz, \emph{For which densities are random
  triangle-free graphs almost surely bipartite?}, Combinatorica \textbf{23}
  (2003), 105--150.

\bibitem{PrSt91}
H.~J. Pr{\"o}mel and A.~Steger, \emph{Excluding induced subgraphs:
  quadrilaterals}, Random Structures Algorithms \textbf{2} (1991), 55--71.

\bibitem{PrSt92}
\bysame, \emph{The asymptotic number of graphs not containing a fixed
  color-critical subgraph}, Combinatorica \textbf{12} (1992), 463--473.

\bibitem{PrStIII}
H.~J. Pr\"omel and A.~Steger, \emph{Excluding induced subgraphs. {III}. {A}
  general asymptotic}, Random Structures Algorithms \textbf{3} (1992), 19--31.

\bibitem{PrStII}
\bysame, \emph{Excluding induced subgraphs. {II}. {E}xtremal graphs}, Discrete
  Appl. Math. \textbf{44} (1993), 283--294.

\bibitem{PrStBerge}
Hans~J\"urgen Pr\"omel and Angelika Steger, \emph{Almost all {B}erge graphs are
  perfect}, Combin. Probab. Comput. \textbf{1} (1992), 53--79.

\bibitem{PrSt96}
\bysame, \emph{On the asymptotic structure of sparse triangle free graphs}, J.
  Graph Theory \textbf{21} (1996), 137--151.

\bibitem{SaTh15}
D.~Saxton and A.~Thomason, \emph{Hypergraph containers}, Invent. Math.
  \textbf{201} (2015), 925--992.

\bibitem{Sc16}
M.~Schacht, \emph{Extremal results for random discrete structures}, Ann. of
  Math. (2) \textbf{184} (2016), 333--365.

\bibitem{Si68}
M.~Simonovits, \emph{A method for solving extremal problems in graph theory,
  stability problems}, Theory of {G}raphs ({P}roc. {C}olloq., {T}ihany, 1966),
  Academic Press, New York, 1968, pp.~279--319.

\bibitem{Tu41}
P.~Tur{\'a}n, \emph{Eine {E}xtremalaufgabe aus der {G}raphentheorie}, Mat. Fiz.
  Lapok \textbf{48} (1941), 436--452.

\bibitem{We81}
V.~K. Wei, \emph{A lower bound on the stability number of a simple graph},
  Tech. Report 81-11217-9, Bell Laboratories, 1981.

\end{thebibliography}

\appendix

\section{The number of split graphs}
\label{sec:counting-split-graphs}

\begin{proof}[{Proof of Proposition~\ref{prop:max-Nnm-bounds}}]
  Suppose that $n \ll m \le \lambda n^2$ for some positive constant $\lambda$ whose value will be specified later. Fix an $\ell$ such that both $\Nnm(\ell)$ and $\Nnm(\ell+1)$ are nonzero and note that this implies that $0 \le m - \binom{\ell+1}{2} \le \ell(n-\ell)$. Therefore, $\ell < 2\sqrt{\lambda}n$ and
  \begin{equation}
    \label{eq:Nnm-ratio}
    \frac{\Nnm(\ell+1)}{\Nnm(\ell)} = \frac{\binom{(\ell+1)(n-\ell-1)}{m - \binom{\ell+1}{2}}}{\binom{\ell(n-\ell)}{m-\binom{\ell+1}{2}}} \cdot \frac{\binom{\ell(n-\ell)}{m-\binom{\ell+1}{2}}}{\binom{\ell(n-\ell)}{m - \binom{\ell}{2}}}.
  \end{equation}
  Denote the first and the second ratios in the right-hand side of~\eqref{eq:Nnm-ratio} by $a(\ell)$ and $b(\ell)$, respectively. In other words, let
  \[
    a(\ell) = \frac{\big((\ell+1)(n-\ell-1)\big)_{m - \binom{\ell+1}{2}}}{\big(\ell(n-\ell)\big)_{m - \binom{\ell+1}{2}}}
    \qquad \text{and} \qquad
    b(\ell)  = \frac{\left(m - \binom{\ell}{2}\right)_\ell}{\left(\ell(n-\ell)-m+\binom{\ell+1}{2}\right)_\ell},
  \]
  where $(\cdot)_k$ denotes the falling factorial, that is, $(a)_k = a!/(a-k)! = a(a-1)\ldots (a-k+1)$. Routine calculation shows that
  \[
    \left(1 + \frac{n-2\ell-1}{\ell(n-\ell)}\right)^{m-\binom{\ell+1}{2}} \le a(\ell) \le \left(1 + \frac{n-2\ell-1}{\ell(n-\ell) - m}\right)^{m-\binom{\ell+1}{2}}
  \]
  and that
  \[
    \left(\frac{m-\binom{\ell + 1}{2}}{\ell (n-\ell) - m + \binom{\ell+1}{2}}\right)^\ell \le b(\ell) \le \left(\frac{m-\binom{\ell}{2}}{\ell (n-\ell) - m + \binom{\ell}{2}}\right)^\ell.
  \]

  \medskip
  Let $\eps$ be a small positive constant. We claim that $\ellnm \le \eps \sqrt{m}$, provided that $\lambda$ is sufficiently small. Indeed, otherwise we would have
  \[
    \ellnm^2 = \frac{m}{\log(\ellnm n/m)} \le \frac{m}{\log(\eps n/ \sqrt{m})} \le \frac{m}{\log(\eps/\sqrt{\lambda})} < \eps^2 m,
  \]
  contradicting our assumption. Moreover, we claim that $m \le \eps \ellnm n$, provided that $\lambda$ is sufficiently small. Indeed, otherwise we would have
  \[
    \ellnm^2 < \frac{m}{\eps^2 n^2 / m} \le \frac{m}{\ellnm n / m} < \frac{m}{\log(\ellnm n / m)}.
  \]

  We claim that the function $\ell \mapsto \Nnm(\ell)$ is increasing on the interval $[2m/n, 3\ellnm/4]$. Indeed, suppose that $2m/n \le \ell \le 3\ellnm/4$. Then $n - 2\ell - 1 \ge (1-\eps) \cdot (n-\ell)$, whenever $\lambda$ is sufficiently small, and hence
  \begin{equation}
    \label{eq:a-ell-lower}
    a(\ell) \ge \left(1 + \frac{1-\eps}{\ell}\right)^{(1-\eps)m} \ge \exp\left((1-3\eps) \cdot \frac{m}{\ell}\right),
  \end{equation}
  as $\ell > 2m/n \gg 1$. On the other hand, since $\ell \le \ellnm \le \eps\sqrt{m}$, then
  \begin{equation}
    \label{eq:b-ell-lower}
    b(\ell) \ge \left(\frac{m-\ell^2}{\ell n}\right)^\ell \ge \left(\frac{(1-\eps^2)m}{\ell n}\right)^\ell.
  \end{equation}
  Finally, one easily checks that
  \begin{equation}
    \label{eq:ellnm-definition}
    \exp\left(\frac{\alpha m}{\ell}\right) \cdot \left(\frac{\beta m}{\ell n}\right)^\ell \ge 1 \qquad \Longleftrightarrow \qquad \ell \le \sqrt{\frac{\alpha m}{\log (\ell n/(\beta m))}},
  \end{equation}
  and thus $a(\ell) \cdot b(\ell) \ge 1$, provided that $\eps$ is sufficiently small. Indeed, our assumption on $\ell$ implies that
  \[
    \ell \le 3\ellnm/4 = \frac{3}{4} \cdot \sqrt{\frac{m}{\log(\ellnm n/m)}} \le \frac{3}{4} \cdot \sqrt{\frac{m}{\log(\ell n/m)}}.
  \]

  \medskip
  Second, we show that the function $\ell \mapsto \Nnm(\ell)$ is decreasing for $\ell \ge 3\ellnm/2$. Indeed, if $\ell \ge 3\ellnm/2$, then $m < \eps \ell n$ and hence, recalling that $\ell < 2\sqrt{\lambda}n \le \eps n$,
  \begin{equation}
    \label{eq:a-ell-upper}
    a(\ell) \le \left(1 + \frac{n}{\ell(n-\ell)-m}\right)^m \le \left(1 + \frac{n}{\ell n - 2\eps \ell n}\right)^m \le \exp\left((1+3\eps)\cdot \frac{m}{\ell}\right)
  \end{equation}
  and
  \begin{equation}
    \label{eq:b-ell-upper}
    b(\ell) \le \left(\frac{m}{\ell n -2m}\right)^\ell \le \left(\frac{m}{(1-2\eps)\ell n}\right)^\ell.
  \end{equation}
  Therefore, $a(\ell) \cdot b(\ell) < 1$, provided that $\eps$ is sufficiently small, see~\eqref{eq:ellnm-definition}.
  
  We conclude that if $m \le \lambda n^2$ for some sufficiently small positive $\lambda$, then the function $[2m/n, \infty) \ni \ell \mapsto \Nnm(\ell)$ attains its maximum value for some $\ell$ satisfying $3\ellnm/4 < \ell < 3\ellnm/2$. In particular, in order to complete the proof of the first assertion of the proposition, it suffices to check that $\Nnm(\ell) < \Nnm(\ellnm)$ whenever $\ell \le 2m/n$. To this end, first note that if $\ell < 2m/n$, then
  \[
    \Nnm(\ell) \le 2^{\ell(n-\ell)} \le 2^{2m},
  \]
  whereas
  \[
    \Nnm(\ellnm) \ge \binom{(1-\eps)\ellnm n}{(1-\eps)m} \ge \binom{(1-\eps)m/\eps}{(1-\eps)m} \ge 5^m,
  \]
  provided that $\eps$ is sufficiently small.
  
  Finally, we establish the remaining `large deviation' assertions of the proposition. To this end, we first note that if $m/n \le \ell \le 3\ellnm/4$, then
  \[
    \exp\left(\frac{m}{\ell}\right) \cdot \left(\frac{m}{\ell n}\right)^\ell \ge \exp\left(\frac{4m}{3\ellnm}\right) \cdot \left(\frac{m}{\ellnm n}\right)^{\ellnm} = \exp\left(\frac{m}{3\ellnm}\right).
  \]
  Consequently, if $\eps$ is sufficiently small, then by~\eqref{eq:a-ell-lower} and~\eqref{eq:b-ell-lower},
  \begin{equation}
    \label{eq:max-Nnm-ell-lower-tail}
    \frac{\Nnm(3\ellnm/4)}{\Nnm(\ellnm/2)} \ge \prod_{\ell=\ellnm/2}^{3\ellnm/4} a(\ell)b(\ell) \ge \exp\left(\frac{4}{5} \cdot \frac{m}{3\ellnm} \cdot \frac{\ellnm}{4}\right) = \exp\left(\frac{m}{15} \right).  
    \end{equation}
  Similarly, if $\ell \ge 3\ellnm / 2 \ge m/n$, then
  \[
    \exp\left(\frac{m}{\ell}\right) \cdot \left(\frac{m}{\ell n}\right)^\ell \le \exp\left(\frac{2m}{3\ellnm}\right) \cdot \left(\frac{m}{\ellnm n}\right)^{\ellnm} = \exp\left(-\frac{m}{3\ellnm}\right)
  \]
  and consequently if $\eps$ is sufficiently small, then by~\eqref{eq:a-ell-upper} and~\eqref{eq:b-ell-upper}, 
  \begin{equation}
    \label{eq:max-Nnm-ell-upper-tail}
    \frac{\Nnm(2\ellnm)}{\Nnm(3\ellnm/2)} \le \prod_{\ell=3\ellnm/2}^{2\ellnm} a(\ell)b(\ell) \le \exp\left(-\frac{3}{4} \cdot \frac{m}{3\ellnm} \cdot \frac{\ellnm}{2}\right) = \exp\left(-\frac{m}{8}\right).
  \end{equation}
  Since the function $\ell \mapsto \Nnm(\ell)$ is increasing when $2m/n \le \ell \le \ellnm/2$ and decreasing when $\ell \ge 2\ellnm$, and since
  \[
    \Nnm(\ell) \le 2^{2m} \le e^{-m/15} \cdot 5^m \le e^{-m/15} \cdot \Nnm(\ellnm)
  \]
  for all $\ell < 2m/n$, we conclude from~\eqref{eq:max-Nnm-ell-lower-tail} and~\eqref{eq:max-Nnm-ell-upper-tail} that if $\ell \le \ellnm/2$ or $\ell \ge 2\ellnm$, then
  \[
    \Nnm(\ell) \le \exp(-m/15) \cdot \max_\ell \Nnm(\ell),
  \]
  as claimed.
\end{proof}

\end{document}